\documentclass{article}
\usepackage{a4wide}

\usepackage{amssymb,amsmath,amsthm}
\usepackage{newpxtext,newpxmath} 
\usepackage{mathtools}
\usepackage{multicol}
\usepackage[normalem]{ulem}

\usepackage{tikz}
\usetikzlibrary{arrows.meta}
\usepackage{mathrsfs,enumerate}

\usepackage[bookmarks=false]{hyperref}

\usepackage{color}

\definecolor{refkey}{gray}{.3}
  \definecolor{labelkey}{gray}{.3}

\numberwithin{equation}{section}  
\newtheorem{theorem}{Theorem}[section]  
\newtheorem{lemma}[theorem]{Lemma}
\newtheorem{corollary}[theorem]{Corollary}
\newtheorem{proposition}[theorem]{Proposition}

\newtheorem{remark}[theorem]{Remark}

\newtheorem{definition}[theorem]{Definition}
\newtheorem{problem}[theorem]{Problem}

\theoremstyle{definition}

\renewcommand{\otimes}{\times}

\newcommand{\N}{\mathbb{N}}

\renewcommand{\P}{\mathbb{P}}

\newcommand{\R}{\mathbb{R}}
\renewcommand{\S}{\mathbb{S}}
\newcommand{\T}{\mathbb{T}}

\newcommand{\cB}{{\ensuremath{\mathcal B}}}

\newcommand{\cG}{{\ensuremath{\mathcal G}}}

\newcommand{\cO}{{\ensuremath{\mathcal O}}}

\newcommand{\cS}{{\ensuremath{\mathcal S}}}

\newcommand{\PP}{\mathscr{P}}

\newcommand{\HH}{\mathscr{H}}

\newcommand{\XX}{\mathscr{X}}

\renewcommand{\vv}{{\boldsymbol v}}

\newcommand{\xx}{{\boldsymbol x}}

\newcommand{\xX}{{\boldsymbol X}}

\newcommand{\aalpha}{{\boldsymbol \alpha}}
\newcommand{\bbeta}{{\boldsymbol \beta}}

\newcommand{\mmu}{{\boldsymbol \mu}}

\newcommand{\sfa}{{\mathsf a}}
\newcommand{\sfb}{{\mathsf b}}
\newcommand{\sfc}{{\mathsf c}}
\newcommand{\sfd}{{\mathsf d}}
\newcommand{\sfe}{{\mathsf e}}

\newcommand{\sfs}{{\mathsf s}}

\newcommand{\sfu}{{\mathsf u}}

\newcommand{\sfx}{{\mathsf x}}
\newcommand{\sfy}{{\mathsf y}}

\newcommand{\sfA}{{\mathsf A}}
\newcommand{\sfB}{{\mathsf B}}

\newcommand{\sfS}{{\mathsf S}}

\newcommand{\rmc}{{\mathrm c}}

\newcommand{\rme}{{\mathrm e}}

\newcommand{\rmA}{{\mathrm A}}

\newcommand{\rmD}{{\mathrm D}}

\newcommand{\rmI}{{\mathrm I}}

\newcommand{\rmX}{{\mathrm X}}

\newcommand{\Kliminf}{K\kern-3pt-\kern-2pt\mathop{\rm lim\,inf}\limits}  
\newcommand{\supp}{\mathop{\rm supp}\nolimits}   
\newcommand{\argmin}{\mathop{\rm argmin}\limits}   
 
\newcommand{\restr}[1]{\lower3pt\hbox{$|_{#1}$}}
\newcommand{\Restriction}[1]{\lower3pt\hbox{$|_{#1}$}}  
\newcommand{\Leb}[1]{{\mathscr L}^{#1}}      

\newcommand{\down}{\downarrow}              

\newcommand{\eps}{\varepsilon}  

\newcommand{\Pc}[2]{\overline{#1}\kern-2pt^{\vphantom 0}_{#2}}
\newcommand{\Pcshift}[3]{\overline{#1}\kern-1pt^{#3}_{#2}}
\newcommand{\Pch}[2]{\overline{#1}^{\kern1pt h}_{\kern-2pt#2}}
\newcommand{\Pck}[2]{\overline{#1}^{\kern1pt k}_{\kern-2pt#2}}
\newcommand{\Pcinfty}[2]{\overline{#1}^{\kern1pt \infty}_{\kern-2pt#2}}
\newcommand{\Pchn}[2]{\overline{#1}^{\kern1pt h_n}_{\kern-2pt#2}}
\newcommand{\Pchnk}[2]{\overline{#1}^{\kern1pt h'_{n_k}}_{\kern-2pt#2}}
\newcommand{\Pchk}[2]{\overline{#1}^{\kern1pt h_{k}}_{\kern-2pt#2}}

\newcommand{\Pl}[2]{{#1}\kern-2pt^{\vphantom 0}_{#2}}
\newcommand{\Pcb}[2]{\underline{\phantom{u}}\kern-6pt #1_{#2}}

\newcommand{\nchi}{{\raise.3ex\hbox{$\chi$}}}

\newcommand{\GeoCon}[3]{\mathrm{Geo}_{#1}[#2\kern-1pt\to\kern-1pt#3]}
\newcommand{\GeoConPhi}[4]{\mathrm{Geo}^{\phi,#2}_{#1}[#3\kern-1pt\to\kern-1pt#4]}
\newcommand{\GeoConArg}[5]{\mathrm{Geo}^{#5,#2}_{#1}[#3\kern-1pt\to\kern-1pt#4]}

\newcommand{\GDom}[1]{[0,1]}

\renewcommand{\d}{{\mathrm d}}
\newcommand{\dt}{{\d t}}

\newcommand{\lambdam}{{\lambda\kern-2pt^-}}

\newcommand{\Urd}{{\frac{ \d}{\d t}\!\!}^+}

\newcommand{\Lrd}{{\frac{ \d}{\dt}\!}_+}

\newcommand{\Uld}{{\frac{ \d}{\d t}\!\!}^-}

\newcommand{\Lld}{{\frac{ \d}{\dt}\!}_-}

\newcommand{\Dom}[1]{\ensuremath{\mathrm{Dom}(#1)}}

\newcommand{\nc}{\normalcolor}

\newcommand{\XXX}{{\rmX}}

\newcommand{\DDD}{{\rmD}}
\newcommand{\dX}{\sfd_\XXX}

\newcommand{\lipmp}[1]{{
\mathfrak {lip}\kern-1pt^\mp\kern-1pt#1}}
\newcommand{\lipm}[1]{{
\mathfrak {lip}\kern-1pt^-\kern-1pt#1}}
\newcommand{\lipp}[1]{{
\mathfrak {lip}\kern-1pt^+\kern-1pt#1}}

\newcommand{\sku}[2]{{\sk_1(#1,#2)}}
\newcommand{\skuname}{{\sk_1}}
\newcommand{\skd}[2]{{\skdname(#1,#2)}}
\newcommand{\bskd}[2]{{\bskdname(#1,#2)}}
\newcommand{\skdname}{{\sk}} 
\newcommand{\bskdname}{{\sk}} 

\newcommand{\blskdname}{{\bskdname^{\cvx}}}

\newcommand{\bskuname}{{\sk_1}}
\newcommand{\blskuname}{{\sk_1^{\cvx}}}
\newcommand{\blskd}[2]{{\blskdname(#1,#2)}}

\newcommand{\blsku}[2]{{\blskuname(#1,#2)}}

\newcommand{\bsku}[2]{{\bskuname(#1,#2)}}
\newcommand{\hatsk}{\skew{-6}{\hat}{\sk}}
\newcommand{\cbskdname}{{\hatsk}}
\newcommand{\cblskdname}{{\hatsk^{\cvx}}}
\newcommand{\cblskd}[2]{{\cblskdname(#1,#2)}}

\newcommand{\cblskuname}{{\hatsk_1^{\cvx}}}
\newcommand{\cblsku}[2]{{\cblskuname(#1,#2)}}

\newcommand{\sk}{\sfb}

\newcommand{\lskdname}{{\sk^{\cvx}}}

\newcommand{\lskuname}{{\sk_1^{\cvx}}}

\newcommand{\lskd}[2]{{\lskdname(#1,#2)}}

\newcommand{\lsku}[2]{{\lskuname(#1,#2)}}

\newcommand{\ol}[1]{\overline{#1}}
\newcommand{\ul}[1]{\underline{#1}}

\newcommand{\sEVI}{$\sfs$-$\mathrm{EVI}$}

\newcommand{\urmder}[2]{|\dot{#1}^+|(#2)}
\newcommand{\ulmder}[2]{|\dot{#1}^-|(#2)}
\newcommand{\lrmder}[2]{|\dot{#1}_+|(#2)}
\newcommand{\llmder}[2]{|\dot{#1}_-|(#2)}
\newcommand{\mder}[2]{|\dot{#1}|(#2)}
\newcommand{\skslope}[2]{\Delta^{#1}(#2)}
\newcommand{\slope}[1]{\Delta(#1)}
\newcommand{\domainslope}[1]{\DDD(\Delta^{#1})}
\newcommand{\bif}{\mathsf b}

\usepackage{tikz}
\usetikzlibrary{calc}

\def\<#1,#2>{\left\langle #1,\,#2\right\rangle}
\DeclarePairedDelimiter\norm{\lVert}{\rVert}
\newcommand{\Wass}{\mathscr W}
\newcommand{\ddt}{\frac{\d}{\d t}}
\newcommand{\dds}{\frac{\d}{\d s}}

\newcommand{\Id}{\text{Id}}
\usepackage{longtable}
\usepackage{amssymb}
\usepackage{bm}
\usepackage{ifthen}
\renewcommand{\div}[1]{\mathrm{div}\left( #1 \right)}
\newcommand{\cvx}{\kappa} 
\newcommand{\dXi}{\sfd_{\XXX^{(i)}}}
\newcommand{\simplex}[1]{\bm{\bigtriangleup}_{#1}}

\newcommand{\bary}[2]{\boldsymbol{\mathsf #1}(#2)}
\newcommand{\baryname}[1]{\boldsymbol{\mathsf #1}}
\newcommand{\baryc}[2]{\baryname{#1}_{#2}}
\newcommand{\barysystem}[1]{\cB_{#1}}
\newcommand{\baryeval}[1]{\mathsf E[#1]}
\newcommand{\baryevalname}{\mathsf E}

\newif\ifincludeallexamples 
\includeallexamplesfalse

\usepackage{thmtools} 
\declaretheorem[name=Example]{example}
\makeatletter
\thmt@define@thmuse@key{tag}{%
  \thmt@suspendcounter{\thmt@envname}{#1}%
}
\makeatother

\begin{document}
 \title{Dissipative Evolutions in Metric Spaces}
 \author{Lauren Conger\thanks{Department of Mathematics, Stanford University, Stanford, USA. \texttt{lconger@stanford.edu}}, Franca Hoffmann\thanks{Computing and Mathematical Sciences, California Institute of Technology, Pasadena, USA. \texttt{franca.hoffmann@caltech.edu}}, Giuseppe Savar\'e\thanks{Department of Decision Sciences and BIDSA, Bocconi University, Milano, Italy. \texttt{giuseppe.savare@unibocconi.it}}}
\date{}
\maketitle
\begin{abstract}
We study a novel class of 
evolution variational inequalities (EVI) driven by 
bifunctions $\bif:\DDD \times \DDD \to \R$ 
defined on a subset $\DDD$ of a metric space $(\XXX,\dX)$ and satisfying a
natural dissipativity condition
$$\bif(x,y)+\bif(y,x)\le \eta \dX^2(x,y).$$
We provide general conditions for existence,  
stability, regularity, approximation 
and asymptotic behavior of solutions, 
 by proposing a metric framework
that generalizes the classical 
structure of evolutions
driven by monotone operators in Hilbert spaces.

 A motivating application is the setting of minimax 
and multispecies coupled gradient flows, in which each of $N$
species evolves in the direction of steepest descent of its own energy functional and the joint system is not  a gradient flow of any single energy.

To construct solutions, we introduce a 
variational movement scheme (VMS), a time discretization scheme in which each update is a saddle point of a metrically  
penalized bifunction, generalizing the classical minimizing movement/ JKO 
 scheme for single species gradient descent. In the coupled  multispecies case, the VMS reduces to a Nash equilibrium problem. We prove existence of discrete solutions 
to the VMS via 
a general approach 
which combines dissipativity of $\bif$
with a new notion of 
convexity along suitable 
\textit{barycentric interpolations} 
(which is inspired by 
convexity along generalized geodesics, a crucial notion for gradient flows).

Together, these conditions provide a zeroth-order notion of game-theoretic monotonicity applicable to general metric spaces without linear structure.
\end{abstract}

\tableofcontents

\section{Introduction}\label{sec:intro}
The theory of gradient flows on a metric space $(\XXX,\dX)$
  driven by an energy $\varphi:\DDD\subset \XXX\to \R$
  with complete sublevels is by now classical.
  Among the various formulations, the strongest one involves a system
  of
  Evolution Variational Inequalities (EVI): 
  for a locally Lipschitz curve $\sfx: [ 0,\infty)\to {\DDD}$ we ask that 
  \begin{equation}
  \label{eq:GF-EVI}
    \frac12\ddt\dX^2(y,\sfx(t))
   + \frac{\cvx}{2}\dX^2(y,\sfx(t))
   \le \varphi(y) - \varphi(\sfx(t)),
   \qquad \text{for a.e.~}\,t>0\text{ and for all } y\in\DDD,
\end{equation}
where $\cvx\in \R$ is a parameter which is tailored to 
  the second order derivative of $\varphi$ along geodesics.

  Sufficient conditions to solve 
  \eqref{eq:GF-EVI} 
  can be expressed through the generating function
    \begin{equation}
        \label{eq:PHI}
        \Phi(\tau,\bar x;x):=
        \frac 1{2\tau}
        \dX^2(\bar x,x)+\varphi(x),\qquad
        \tau>0,\ \bar x,x\in \DDD,
    \end{equation}
    which describes the 
    celebrated  JKO (or Minimizing Movement) scheme
\cite{JordanKinderlehrerOtto98,AGS08}
 with step size $\tau>0$,
\begin{equation}
\label{eq:JKO}
X_\tau^n \in \arg\min_{x\in\DDD}
\Phi(\tau,X_{\tau}^{n-1};x)
 \quad n=1,2,\cdots;\quad 
X_\tau^0=x_0\in \DDD \text{ given.}
\end{equation}
If 
\begin{equation}
    \label{eq:to-be-quoted}
    \begin{gathered}
        \text{every pair $x_1,x_2\in \DDD$
    can be connected by a curve in $\DDD$ 
    along which}\\
    \text{the functionals 
    $x\mapsto \Phi(\tau,\bar x;x)$
    is $(\tau^{-1}+\cvx)$-convex}
    \end{gathered}
\end{equation}    
  then the minimizing movement (MM) scheme 
  \eqref{eq:JKO} is solvable and 
  its solutions converge to a 
  locally Lipschitz curve solving 
  \eqref{eq:GF-EVI} as the time step $\tau$ goes to $0$.
  EVI and JKO have become canonical machinery for evolutions
driven by a single energy \cite{AGS08};
 condition \eqref{eq:to-be-quoted}
provides a purely metric criterion encompassing several relevant cases, including NPC spaces and functionals that are convex along generalized geodesics in Wasserstein spaces. 
\medskip

{Many problems of interest, however, are not driven by a
 single energy.
  Saddle-point
evolutions, coupled multispecies systems, and Nash-type games are
described
in product spaces $\XXX=\prod \XXX^{(i)}$ 
by an antisymmetric Lagrangian
$F(y^{(1)},x^{(2)})-F(x^{(1)},y^{(2)})$ or by $N$ energies
$(F^{(i)})$; they give rise to a 
\textbf{bifunction} $\bif:\DDD\times\DDD\to\R$, which reduces to the
gradient flow case above when
$\bif(y,x)=\varphi(y)-\varphi(x)$.}

\medskip

{Taking also inspiration by the theory of contraction semigroups generated by monotone operators, this paper aims to extend the metric EVI / MM scheme machinery to
bifunction-driven evolutions, where the bifunction $\bif$ takes the
place of $\varphi$ and 
a new variational movement scheme
takes the place
of \eqref{eq:JKO}. 
We propose a new framework using a natural extension of \eqref{eq:GF-EVI} that 
 tries to capture dissipativity at a metric level without any underlying Hilbertian structure,  
both providing existence of solutions and estimates for long-time behavior.

To that end, given 
a driving bifunction 
$\bif:\DDD \times \DDD \to \R$, 
we look for curves $\sfx:[0,\infty)\to \DDD$ that satisfy the
EVI
\begin{align}\label{introeq:EVI}
    \ddt \frac12 \dX^2(y,\sfx(t)) + \frac \cvx 2 \dX^2(y,\sfx(t))  \le \skd y{\sfx(t)}  \,, \quad \text{a.e.~in } (0,\infty), 
    \text{ for all } y\in\DDD\,. \tag{EVI}
\end{align}
The value $\cvx\in\R$  depends on the convexity 
(or lack of convexity) of $\skd \cdot x$. 

The aim of this work is to provide general conditions under which 
for every initial datum $x_0\in \DDD$ there exists some solution to 
\eqref{introeq:EVI} 
starting from $x_0$, 
determine its properties, and characterize its asymptotic behavior. {As a byproduct, we also obtain conditions under which equilibria to \eqref{introeq:EVI} exist.}

\paragraph{The driving bifunctions.}
Several familiar settings fit into \eqref{introeq:EVI} through
specific choices of $\bif$. 
We single out four choices of bifunctions $\bif$
which recur as running examples
throughout the paper. These
range from the classical monotone-operator evolution (I), the only one
requiring a linear Hilbert structure and in which dissipativity is of
pure interaction type, through the convexity-driven gradient and min-max
flows (II)--(III), where $\bif$ is antisymmetric, to the coupled
multispecies systems (IV), in which the two mechanisms act together.
\smallskip

\textbf{(I) Dissipative evolutions in Hilbert spaces.} On a Hilbert space $\XXX=H$,
$\dX(x,y)=\norm{x-y}$, a monotone operator $\sfA:\DDD\to H$ in a convex set $\DDD$ drives
\begin{equation}
    \label{eq:monotone}
   \dot\sfx=-\sfA(\sfx)\,,\qquad
   \skd yx=\<\sfA(x),y-x>\,,\qquad\text{for all } x,y\in\DDD .
\end{equation} 
\nc 

\textbf{(II) Single-species gradient flow.} 
We have already seen that for an energy
$\varphi:\DDD\to\R$, setting
\begin{equation}
    \label{eq:gradient}
   \bif(y,x) = \varphi(y) - \varphi(x), \qquad \text{for all } x,y\in\DDD,
\end{equation}
\eqref{introeq:EVI} 
coincides with the usual 
EVI characterization of the
metric gradient flow of $\varphi$ 
\eqref{eq:GF-EVI}.

{\textbf{(III) Two-species min-max.}} $\XXX=\XXX^{(1)}\times \XXX^{(2)}$
and we consider a Lagrangian 
$F:\DDD^{(1)}\times\DDD^{(2)}\to\R$, with one species minimizing and
the other maximizing $F$. 
When $F$ is a regular function in a 
product of two Hilbert spaces or Riemannian manifolds, 
the evolution corresponds to the system
\begin{displaymath}
    \dot x^{(1)}(t)=-\nabla_{1}F(x^{(1)}(t),x^{(2)}(t)),\quad 
    \dot x^{(2)}(t)=\nabla_{2}F(x^{(1)}(t),x^{(2)}(t)).
\end{displaymath}
For convex--concave $F$ this corresponds to the evolution driven by 
the monotone operator associated with a convex--concave
Lagrangian \cite{rockafellar_monotone_1970}.
If we set 
\begin{equation}
    \label{eqintro:bifunction_minmax}
   \bif(y,x) = F(y^{(1)},x^{(2)}) - F(x^{(1)},y^{(2)}),
   \qquad \text{for all } x=(x^{(1)},x^{(2)}),\, y=(y^{(1)},y^{(2)}) \in\DDD,
\end{equation}
then 
solutions can be  characterized by 
\eqref{introeq:EVI}, which therefore 
can be used to describe a metric 
saddle-point evolution. 
\medskip

\textbf{\textbf{(IV) Multispecies coupled gradient flow.}} 
In a system of $N$ species, each species $i$ is equipped with a metric space structure $(\XXX^{(i)},\dXi)$ and we consider the product space $\XXX \coloneqq \prod_{i=1}^N \XXX^{(i)}$. Each species aims to minimize its own energy functional $F^{(i)}:\DDD \to \R$, which may depend on the other species, and evolves in the direction of steepest descent for the species $i$. We use the notation $F^{(j)}(x^{(i)},y^{(-i)}):=F^{(j)}(y^{(1)},\dots,y^{(i-1)},x^{(i)},y^{(i+1)},\dots,y^{(N)})$ to denote that the $i$-{th} entry in $F^{(j)}$ is $x^{(i)}$, and all other entries are the entries of $y$. Selecting the bifunction
\begin{align}\label{intro:bifunction_multispecies}
    \skd yx = \sum_{i=1}^N 
    F^{(i)}(y^{(i)},
    x^{(-i)} )-F^{(i)}(x^{(i)}, x^{(-i)} )\,, \quad \text{for all }x,y\in\DDD\,,
\end{align}
encodes, for smooth energies $F^{(i)}$ in Euclidean spaces or Riemannian manifolds, the coupled system of gradient flows in
which each species descends its own energy,
\begin{align*}
   \begin{bmatrix}
    \dot x^{(1)} \\ \vdots \\ \dot x^{(N)}
   \end{bmatrix} = -\begin{bmatrix}
       \nabla_{1} F^{(1)}(x^{(1)}, \dots, x^{(N)}) \\ \vdots \\ \nabla_{N} F^{(N)}(x^{(1)}, \dots, x^{(N)})
   \end{bmatrix},
\end{align*}
in the sense that any curve $\sfx(t)$ solving \eqref{introeq:EVI} for $\bif$ 
given by \eqref{intro:bifunction_multispecies} is a
solution of the system. 
Here, while each species evolves in the direction of maximal slope, the joint system is not necessarily evolving according to a gradient flow, because there is not necessarily a single energy that parameterizes the evolution. Additionally, the individual energies can increase, decrease, or oscillate in time even while the entire system contracts to a unique steady state. Moreover, standard notions of convexity for each $F^{(i)}(\cdot,x^{(-i)})$ are not sufficient for guaranteeing existence of joint critical points, which are candidates for steady states of the system.

\paragraph{Interaction dissipativity.}
It is clear that stability properties of solutions to \eqref{introeq:EVI}
should be related to suitable metric dissipativity conditions on $\bif$ involving
$\bif(x,y)+\bif(y,x)$. 
One can easily check that the bifunction $\bif$ 
of the previous examples (II) and (III) 
satisfies the structural condition
\begin{equation}
    \label{introeq:diss0}
    \bif(x,y)+\bif(y,x)=0\quad\text{for every }
    x,y\in \DDD.
\end{equation}
Such a condition is particularly useful to prove
an exponential stability estimate among two solutions $\sfx,\sfy$ of \eqref{introeq:EVI}
of the form
\begin{equation}
    \label{introeq:stability}
    \dX(\sfx(t),\sfy(t))\le 
    \rme^{\lambda (t-s)}\dX(\sfx(s),\sfy(s))
    \quad 
    0\le s\le t,
\end{equation}
for $\lambda:=-\kappa.$
We will systematically adopt a more general and natural condition, which turns out to be  sufficiently flexible to cover interesting examples such as (IV);
it can be expressed by the 
$\eta$-interaction dissipativity condition
\begin{subequations}
\begin{equation}
    \label{introeq:disseta}
    \bif(x,y)+\bif(y,x)\le \eta
    \dX^2(x,y)\quad\text{for every }
    x,y\in \DDD,
\end{equation}
which yields \eqref{introeq:stability}
for $\lambda:=\eta-\kappa$ (see Theorem~\ref{thm:cEVI_solutions}).
We will also show that such a condition, as well as
the diagonal property
\begin{equation}
    \label{introeq:diagonal}
    \bif(x,x)\ge0\quad \text{for every }x\in \DDD,
\end{equation}
\end{subequations}
is useful to study the solvability of the
discrete scheme we propose to solve
\eqref{introeq:EVI}.
 
\paragraph{The Variational Movement Scheme and the Discrete EVI.}
In order to discretize \eqref{introeq:EVI}, we look for a
time-stepping scheme whose iterates satisfy a discrete analogue of
the EVI. 
We have seen that in the case (II) of 
the single-species
gradient flow $\bif(y,x)=\varphi(y)-\varphi(x)$,  the JKO
scheme is obtained by a recursive minimization associated to the generating function
$\Phi$ \eqref{eq:PHI}.

For a general bifunction $\bif$, minimization of a single
functional is no longer available; 
we propose 
 to replace it with a 
system of variational 
inequalities    
associated with the generating bifunction
\begin{equation}
\label{introeq:generatingB}
\sfB(\tau,\bar x; y, x)
   := \frac{1}{2\tau}\dX^2(\bar x, y) - \frac{1}{2\tau}\dX^2(\bar x, x)
      + \bif(y, x),
\end{equation}
which in case (II) reduces to
$\sfB(\tau,\bar x; y, x)=
\Phi(\tau,\bar x;y)-
\Phi(\tau,\bar x;x)$.
For a step size $\tau>0$, starting
from $X_\tau^0:=x_0\in \DDD$, 
the scheme reads:
\begin{equation*}
\boxed{\
\begin{aligned}
&\textbf{Variational movement scheme (VMS).}\\
&\text{Given } X_\tau^{n-1}\in\DDD,\ \text{find } X_\tau^{n}\in\DDD
\text{ such that}\\[4pt]
&
0\;\le\;
\sfB(\tau,X_\tau^{n-1}; Y, X_\tau^{n})
\quad \text{for every }Y\in \DDD.
\end{aligned}\ }
\end{equation*}
We will show that, under general assumptions on $\bif$ and
on $(\XXX,\dX)$, 
involving a reinforced convexity condition 
of the map 
$y\mapsto \sfB(\tau,\bar x;y,x)$,
 which could be considered a
natural counterpart of 
\eqref{eq:to-be-quoted} 
involving arbitrary finite collections of points,
the scheme admits at least one solution at every
step. Moreover, under the 
$\eta$-interaction dissipativity
conditions \eqref{introeq:disseta}, 
\eqref{introeq:diagonal},
solutions to (VMS) can be equivalently 
formulated in terms of the 
dual variational inequality
\begin{equation}
    \label{introeq:dual}
    \sfB(\tau,X_\tau^{n-1}; X_\tau^{n}, X)
\;\le\;0
\quad\text{for every }X\in \DDD,
\end{equation}
so that 
$(X_\tau^{n}, X_\tau^{n})$ is a saddle point of
$(x,y)\mapsto \sfB(\tau, X_\tau^{n-1}; y, x)$ over $\DDD\times\DDD$.

Still assuming 
that 
$y\mapsto \sfB(\tau,\bar x;y,x)$ 
satisfies the $(1/\tau+\kappa)$-convexity condition
as in \eqref{eq:to-be-quoted}, 
we can show that 
any solution 
$(X_\tau^n)_n$ of (VMS) 
satisfies the discrete EVI 
\begin{equation}
\label{introeq:bEVI}
	\frac1{2\tau}\Big(\dX^2(X^n_\tau,Y)-\dX^2(X^{n-1}_\tau,Y)\Big)
		+
	\frac 1{2\tau}	\dX^2(X^n_\tau,X^{n-1}_\tau) + \frac{\cvx}{2}\dX^2(X_\tau^n,Y)
	 \le \skd Y{X^n_\tau} 
		\,, \quad \text{for every}\ Y\in\DDD\,, \ n\ge 1.
	\end{equation}
As in the gradient flow setting, 
we will prove that the
piecewise constant interpolant constructed from (VMS)
 converges, as
$\tau\to 0$, to a solution of \eqref{introeq:EVI}.

   \medskip 
The Variational Movement Scheme
has a nice interpretation in the
reference cases (I)-(II)-(III)-(IV).
In the monotone case (I), it coincides with the implicit (backward) Euler discretization of the dynamics \eqref{eq:monotone}. 
Moreover, in case (II)
the scheme reduces to a JKO
step.
In the saddle-Lagrangian case (III), 
every solution $X^n_\tau=(X^{n,(1)}_\tau,X^{n,(2)}_\tau)$
is a saddle point of the penalized Lagrangian 
\begin{displaymath}
    (x_1,x_2)\mapsto \frac 1{2\tau}\mathsf d_{\rmX^{(1)}}^2(
    X^{n-1,(1)}_\tau,x_1)-
    \frac 1{2\tau}\mathsf d_{\rmX^{(2)}}^2(
    X^{n-1,(2)}_\tau,x_2)+
    F(x_1,x_2).
\end{displaymath}
\nc 
   When $\bif$ is selected to parameterize a multispecies coupled gradient flow \eqref{intro:bifunction_multispecies} as in (IV), the variational inequality is, in fact, a Nash equilibrium problem. A Nash equilibrium $x_*$ of a set of energy functionals $(F^{(i)})$ satisfies
\begin{align*}
    F^{(i)}(x_*^{(i)},x^{(-i)}_*) \le F^{(i)}(y^{(i)},x_*^{(-i)})\qquad \text{for every }y \in \XXX\,, \ \text{for every }i\in[1,\dots,N]\,.
\end{align*}
A Nash equilibrium $x_*\in\XXX$ is such that no species can decrease its energy, given that all the others are fixed.

We can then form the corresponding
distance-penalized functions as in 
\eqref{eq:PHI}
\begin{equation}
    \label{eq:PHIi}
    \Phi^{(i)}(\tau,\bar x;x^{(i)},x^{(-i)}):=
    F^{(i)}(x^{(i)},x^{(-i)})+
    \frac 1{2\tau}
    \dXi^2(x^{(i)},\bar x^{(i)})\,.
\end{equation}
It is immediate to check that $X_\tau^n$ is a solution to one step of (VMS) starting from
$X_{\tau}^{n-1}$ if and only if
\begin{align*}
        X_\tau^{n} \quad \text{is a Nash equilibrium of} \quad \Big\{x \mapsto 
        \Phi^{(i)}(\tau,X_\tau^{n-1};x^{(i)},x^{(-i)})\Big\}_i\,.
\end{align*}

\paragraph{Static versus Evolutionary Games.}
For a static game, when we are interested in finding Nash equilibria, in general, no metric space structure is required; a linear structure together with a topology can give rise to sufficient convexity and compactness properties for the equilibrium problem to admit a solution. 
For evolutionary games, as for gradient flows, one needs to introduce a metric structure that allows differentials to be converted into velocity fields. Here we do not aim at treating genuinely doubly nonlinear settings, such as those induced by non-Hilbertian norms, but rather take inspiration from settings such as Hilbert spaces, NPC spaces, and Euclidean Wasserstein spaces, where one expects a behaviour analogous to the Euclidean case. Defining evolutions by means of the EVI formulation, we are then able to move beyond a strictly Hilbertian framework and obtain a notion of evolutionary game dynamics relying only on the underlying metric structure.

From a game theoretic perspective, the key questions are: (1) existence of equilibria (in a suitable sense) for static games, and (2) existence of time-dependent solutions and long-time behavior for evolutionary games. Studying (1) relies on a notion of equilibrium for the bifunction $\sfb:\DDD\times\DDD\to\R$, whereas existence of solutions for the evolution in (2) is obtained by taking a limit of discrete solutions to (VMS), for which existence of solutions reduces to an equilibrium problem for the generating bifunction $\sfB(\tau,\bar x; \cdot \,, \cdot):\DDD\times\DDD\to\R$ defined in \eqref{introeq:generatingB}. To discuss existence of solutions to equilibrium problems in a common framework for (1) and (2), we therefore state our results for general bifunctions $\sfa:\DDD\times\DDD\to\R$; these results can then be applied to $\sfa=\sfb$ for (1) and to $\sfa=\sfB$ for (2). 

In what follows, we discuss the existence of discrete VMS solutions (requiring to solve an equilibrium problem for $\sfB$), and the discrete to continuum limit providing the existence of solutions to the evolutionary game \eqref{introeq:EVI} driven by $\sfb$. We conclude this introductory exposition by summarizing relevant terminology, relating the notions of dissipativity, monotonicity, accretivity and convexity. Finally, we present four running examples that are used throughout the paper to concretize the meaning of our results in special settings that the reader may be more familiar with.

\paragraph{Existence of Discrete VMS Solutions.} 
A number of challenges arise in proving existence of solutions to VMS, and more generally, existence of solutions to equilibrium problems in general metric spaces defined by a bifunction $\sfa:\DDD\times\DDD\to \R$. If the metric space has some linear structure
and 
$y\mapsto \sfa(y,x)$
has compact convex sublevels,
one can hope to apply 
general existence theorems 
for solutions of 
systems of variational inequalities
(see, e.g., \cite{Baiocchi-Capelo84}).
In the multispecies coupled gradient flow setting with an unbounded domain, from the finite-dimensional game theory literature we expect such an equilibrium point to exist under \textit{strong monotonicity}, which is a first-order (gradient) condition on the energy functionals that guarantees contraction of coupled gradient descent dynamics \cite{conger_monotone_2025}. However, in general metric spaces, we prefer a zeroth-order notion of monotonicity, since the appropriate notion of gradient is often difficult to determine. 

To address these challenges, we employ a notion of \emph{barycentric convexity}, 
inspired by 
\eqref{eq:to-be-quoted}, 
that provides a surrogate of a linear structure, and \textit{interaction dissipativity}, which enables contraction of the dynamics.

Barycentric convexity is a generalization of convexity along a curve which connects two points in $\XXX$ to an interpolation among $J$ points in $\XXX$. 
We thus consider systems $\barysystem{J}$
of barycentric maps 
$\baryname x : \simplex J  \to \DDD$ 
from the $J-1$ dimensional simplex $\simplex J$ to  $\DDD\subset \XXX$ such that 
for every choice of $J$ points $(x_j)_{j=1}^J$
in $\DDD$ there exists a map $\baryname x\in \barysystem{J}$ such that 
$x_j=\bary x {e^j_J}=:\baryc x j$,
where $(e^j_J)_j$ is the canonical basis of $\R^J$.
We say that 
a function $\psi:\DDD  \to \R$ is \textit{barycentrically $\cvx$-convexlike}
	with respect to a system $(\barysystem{J})_{J\in \N_{\ge2}}$
	of barycentric maps
    if for every integer $J\in \N_{\ge 2}$, 
	every $\baryname x\in \barysystem{J}$, and
	every $\aalpha\in \simplex {J}$,
	\begin{equation*}
		\psi(\bary x\aalpha)\le 
	\sum_{j=1}^J\alpha_j\psi(\baryc xj)-
	\frac \cvx4\sum_{j,k=1}^J
	\alpha_j\alpha_k\dX^2(\baryc xj,\baryc xk)\,.
	\end{equation*} 
In the case where $J=2$, the convexity condition becomes the standard two-point convexity inequality along a curve parameterized by $\aalpha=(\alpha,1-\alpha)$, and if the curve parameterized by $\alpha$ is a geodesic, the inequality corresponds to geodesic $\cvx$-convexity.

Equipped with barycentric convexity and interaction dissipativity, we take two approaches for showing existence of the discrete VMS: one via compactness, and one via interaction dissipativity. The first provides a primal solution to the variational equilibrium problem (VMS),  which requires compactness of superlevel sets of $\sfB(y,\cdot)$.
 The linear structure of the simplex $\simplex J$ allows us to prove a version of the Knaster–Kuratowski–Mazurkiewicz (KKM) theorem in the metric space setting. While the required compactness condition for the first approach is more general than the assumptions in the second approach, it can be difficult to show. The second approach uses the dual formulation of the variational equilibrium problem, and relies on barycentric $\cvx$-convexity of $y\mapsto \sfB(y,x)$ and $\eta$ interaction dissipativity, both of which replace the compactness assumption needed in the primal formulation.

\paragraph{Discrete to Continuous Limit.}
In order to take $\tau\to 0$ for the sequence $(X_\tau^n)_n$, the continuous-time piecewise constant interpolant $\ol X_\tau:[0,\infty)\to\XXX$ is constructed,
\begin{equation*}
		\ol X_\tau(0):=X^0_\tau,\quad
		\ol X_\tau(t):=X^n_\tau\quad\text{if }t\in ((n-1)\tau,n\tau].
	\end{equation*}
We show a  Cauchy condition for  piecewise-constant interpolations of $(X_\tau^n)$
which results in the existence of a limit $x(t)$ as $\tau \to 0$.
This estimate is achieved adapting the doubling integration method from \cite{Nochetto-Savare06} to this setting. See also \cite{Crandall86,Crandall-Evans75,kobayashi_difference_1975} for the Crandall-Liggett error estimation method for Banach spaces.
Then, taking the limit of the discrete EVI, the continuous EVI is recovered. The interaction dissipativity inequality in combination with the continuous EVI enables the $\lambda=\eta-\cvx$ dissipativity estimate
\begin{equation}
		\label{introeq:contraction}
		\dX(\sfy(t),\sfx(t))\le \rme^{\lambda(t-s)}\dX(\sfy(s),\sfx(s))\quad\text{whenever}\quad
		0\le s\le t.
\end{equation}
For any curves $\sfx(t),\sfy(t)$ satisfying the continuous EVI, when $\eta < \kappa$ and $(\DDD,\dX)$ is complete, the contraction enables us to establish the existence of a unique steady state. In the multispecies gradient flow setting, under the stronger threshold $\cvx>2\eta_+$, we can also show that the steady state is the unique Nash equilibrium of $(F^{(i)})$ (Corollary~\ref{cor:ss_is_Nash}).

To understand the roles of $\eta$ and $\cvx$ in determining $\lambda$ dissipativity, consider the following four examples for $\bif$
 associated with a monotone flow, a gradient flow,
a min-max flow,
and a general multispecies coupled gradient flow
in a Hilbert space: 
\begin{align*}
\text{(I)}\quad
   \skd yx &= \<\sfA(x),y-x>\,,& \cvx&=0\,,\ \lambda=\eta \quad  \text{ (with } \eta= 0 \text{ if } \sfA \text{ is monotone)}& \\
\text{(II)}\quad
      \skd yx &= \varphi(y)-\varphi(x)\,, & \eta&=0\,, \ \lambda=-\kappa &\\
\text{(III)}\quad
   \skd yx &= F(y^{(1)},x^{(2)})-F(x^{(1)},y^{(2)})\,,& \eta&=0\,, \ \lambda=-\cvx& \\
\text{(IV)}\quad
   \skd yx &= \sum_{i=1}^N F^{(i)}(y^{(i)},x^{(-i)})-F^{(i)}(x^{(i)},x^{(-i)})\,,& \lambda&=\eta-\cvx\,.& 
\end{align*}
In the second and third examples, we have that $\skd xy + \skd yx = 0$, so $\eta=0$ and the dissipativity is entirely dependent on the convexity of $y \mapsto \skd yx$. In the case of a convex-concave Lagrangian $F$ in the third example, we also have $\kappa=0$. 
In contrast, for the first example, $y \mapsto \skd yx$ is $0$-convex, so the dissipativity comes entirely from the interaction dissipativity
\begin{displaymath}
    \bif(x,y)+\bif(y,x)=-
    \langle \sfA(x)-\sfA(y),x-y\rangle\,,
\end{displaymath}
and $\sfA$ is called $\eta$-monotone if interaction dissipativity holds with $\eta\le 0$.
In the last example, both $\cvx$ convexity and $\eta$ interaction dissipativity contribute to the overall system dissipativity. In this paper, we prove that in the multispecies coupled gradient flow setting, $\eta\ge 0$ (Proposition~\ref{prop:eta_ge0}). This means that, for an uncoupled gradient flow system of $N$ species, adding interaction terms to the energies cannot improve the convergence rate, only making it worse. Notice that among these four examples, only the first case needs a linear Hilbertian structure.

\begin{figure}[h]
\centering
\resizebox{\textwidth}{!}{%
\begin{tikzpicture}[
    base_box/.style={rounded corners=20pt, dashed, line width=1pt},
    outer_style/.style={base_box, draw=blue, fill=cyan!10},
    middle_style/.style={base_box, draw=green!60!black, fill=cyan!5!green!5!white!90},
    inner_style/.style={rounded corners=20pt,  draw=none, fill=cyan!5!green!5!white!90},
    title_font/.style={font=\bfseries\Large},
    placeholder_font/.style={font=\normalsize}
]

\def\outerboxwd{24} 
\def\outerboxht{9.5} 
\def\middlemargin{8} 
\def\innermargin{16} 

\draw [outer_style] (0,0) rectangle (\outerboxwd,\outerboxht);

\draw [middle_style] (\middlemargin, 0) rectangle (\outerboxwd, \outerboxht-1.5);

\draw [inner_style] (\innermargin, 0) rectangle (\outerboxwd, \outerboxht-3.);

\node [anchor=center] at (\outerboxwd/2, \outerboxht-0.75) {%
    \Large EVI:   $\quad \ddt \dX^2(y,x(t)) + \frac \kappa 2 \dX^2(y,x(t)) \le \skd y{x(t)}$
};

\node [anchor=north east, title_font] at (\outerboxwd-0.5, \outerboxht-0.5) {Metric Spaces};
\node [anchor=north west, placeholder_font, text width=6.5cm, align=left] at (0.5, \outerboxht-2.5) {%
    \Large 
    Parameters: \\
    \vspace{0.5 cm}
    1. $\eta$ interaction dissipativity \\
    \vspace{0.1 cm}
    $\skd xy + \skd yx \le \eta \dX^2(x,y)$. \\
    \vspace{0.5 cm}
    2. $\kappa$ convexity \\
    \vspace{0.1 cm}
    $y \mapsto \skd yx$ is $\kappa$-convex. \\
    \vspace{0.5 cm} 
    3. $\lambda=\eta-\kappa$ dissipativity 
    \\
    \vspace{0.1 cm}
   \large $\dX(\sfy(t),\sfx(t)) \le e^{\lambda t} \dX(\sfy(0),\sfx(0))$.
   \Large
};

\node [anchor=north east, title_font] at (\outerboxwd-0.5, \outerboxht-2.0) {Hilbert Spaces};
\node [anchor=north west, placeholder_font, text width=15cm, align=left] at (\middlemargin+0.5, \outerboxht-2.5) {%
    \Large 
    -- accretivity= -- monotonicity
    = dissipativity \\ 
   \begin{multicols}{2}
    
   Example: $\XXX=(H,\norm{\cdot})$, \\
    $\skd yx = \langle\sfA(x),y-x\rangle$. \\
    \vspace{0.5 cm}
    1. \large $\langle\sfA(x)-\sfA(y),x-y\rangle\ge -\eta \norm{x-y}^2$. \\
    \Large
   \vspace{0.5 cm}
   2. $y \mapsto \skd yx$ is $\kappa=0$ convex. \\
    \vspace{0.5 cm}
    3. $\sfA$ is $-\lambda=-\eta$ monotone.

   \columnbreak
       Example: $\XXX=(H,\norm{\cdot})$, \\
     $\skd yx = \varphi(y)-\varphi(x)$. \\
    \vspace{0.5 cm}
   \Large 1. $\skd xy + \skd yx =  0$; $\eta=0$.  \\
   \vspace{0.5 cm}
   2. $\varphi$ is $\kappa$-convex.   \\
   \vspace{0.5 cm}
   3. The gradient flow system is $-\lambda=\kappa$ monotone.
   \end{multicols}
};


\end{tikzpicture}%
} 
\caption{The nomenclature and coefficients for convexity, monotonicity, accretivity, and dissipativity as used in this work are illustrated in particular settings.  }
\label{fig:nomenclature}
\end{figure}
\paragraph{Dissipativity, monotonicity, accretivity, and convexity. } 
The solution $x$ to the EVI is $(\lambda=\eta-\kappa)$-dissipative if $\sfb$ is $\eta$-interaction dissipative and $\sfb(\cdot,x)$ is barycentrically $\cvx$-convexlike; 
for $\lambda=0$ the evolution is non-expansive, for $\lambda<0$, the solution is exponentially contractive. This is a generalization of several well-established notions. In Hilbert spaces, 
dissipativity usually refers to the opposite behaviour of
monotonicity, that is, a system is $\lambda$ dissipative if it is $-\lambda$ monotone. This sign convention is also used in game theory, where a monotone system is contracting if it has a positive monotonicity parameter.  
A positive $\cvx$ convexity parameter corresponds to a strongly convex function, carrying the standard sign convention for convexity.
A summary of this nomenclature is provided in Figure~\ref{fig:nomenclature}, along with the examples illustrating the various convexity and dissipativity parameters. 
 In this work, we will use the term \textit{dissipativity} to describe the contractivity. In the particular setting of multispecies coupled gradient flows, we will use the term \textit{game-theoretic monotonicity} for notions parallel with monotonicity in the game theory literature, using a consistent sign for $\lambda$ throughout: $\lambda<0$ means a contracting system.

\paragraph{Running Examples.}
Throughout this paper, we accompany the exposition of the general theory with four running examples (I)--(IV) introduced above.  
A number of other examples are included throughout the paper, and  Section~\ref{sec:multispecies} contains specific choices of energy functionals for multispecies gradient flows in Euclidean and Wasserstein-2 spaces. 
\begin{itemize}
    \item \textbf{Example (I) Evolutions via monotone operators on Hilbert spaces}:
    A simple example is $H=L^2(\R^d)$, $\DDD=\big\{u\in H^1(\R^d):
    u\ge 0\big\}$ and $\sfA u:=-\Delta u+\boldsymbol f\cdot \nabla u$, the Laplacian operator perturbed by Lipschitz drift term $\boldsymbol f$,
    \begin{displaymath}
        \bif(v,u)=\int_{\R^d} \Big(\nabla u\cdot \nabla (v-u) + \boldsymbol f\cdot \nabla u (v-u)\Big)\,\d x\,\qquad u,v\in \DDD.
    \end{displaymath}
    In this setting, we explicitly need a linear structure and the use of an inner product, and thus it is not clear how to envision monotone operators in general metric spaces. Thanks to this special structure, lacking interactions between the $x$ and $y$ components, many assumptions of our theory in general metric spaces are trivially satisfied, with all the conditions reducing to properties of the monotone operator $\sfA(x)$, and $y$ is merely playing the role of a test variable. We do not claim to reproduce the well-known theory for maximal monotone operators,
    however this example is useful to illustrate the role of the dissipativity parameter $\eta$ in the general theory and in more complex systems where Hilbertian and purely metric components interact.

\item \textbf{Example (II) Single-Species Gradient Flows:} 
Here, we consider in particular the Wasserstein-2 space, 
$\XXX=\PP_2(\R^d)$, $\dX(\rho,\sigma)=\Wass_2(\rho,\sigma)$,  yielding a partial differential equation 
driven by the Otto-Wasserstein gradient:
    \begin{align*}
        \partial_t \rho=
        \div{\rho \nabla_{\Wass_2}\varphi[\rho]}\,\qquad
        \nabla_{\Wass_2}\varphi[\rho]:=\nabla \frac{\delta \varphi[\rho]}{\delta \rho}\,.
    \end{align*}
Since the seminal works \cite{Otto01,JordanKinderlehrerOtto98}, it is by now well-known that many partial differential equations in continuity-equation form can be understood in this framework. 

 \item \textbf{Example (III) Min-Max Gradient Flows:} 
Again, we consider in particular the Wasserstein-2 space, where each player is represented by a probability measure on a different feature space, 
$\XXX^{(i)}=\PP_2(\R^{d_i})$, $\dXi(\rho^{(i)},\sigma^{(i)})=\Wass_2(\rho^{(i)},\sigma^{(i)})$,
$$\XXX=\XXX^{(1)}\times \XXX^{(2)},\quad  
\dX^2((\rho^{(1)},\rho^{(2)}),(\sigma^{(1)},\sigma^{(2)}))=\Wass_2^2(\rho^{(1)},\sigma^{(1)})+\Wass_2^2(\rho^{(2)},\sigma^{(2)}),$$
and the solution $\sfx(t)=[\rho^{(1)}(t),\rho^{(2)}(t)]$ satisfies the system of PDEs
\begin{align*}
    \partial_t \rho^{(1)}=
        \div{\rho^{(1)} \nabla_{x^{(1)}} \frac{\delta F[\rho^{(1)},\rho^{(2)}]}{\delta \rho^{(1)}}},\qquad 
       \partial_t \rho^{(2)}=  - \div{\rho^{(2)} \nabla_{x^{(2)}} \frac{\delta F[\rho^{(1)},\rho^{(2)}]}{\delta \rho^{(2)}}}\,.
\end{align*}
In this setting the bifunctional \eqref{eqintro:bifunction_minmax} 
$$\bif([\rho^{(1)},\rho^{(2)}],[\sigma^{(1)},\sigma^{(2)}])=F(\sigma^{(1)},\rho^{(2)})-F(\rho^{(1)},\sigma^{(2)})$$
is associated with a single Lagrangian $F(\rho^{(1)},\rho^{(2)})$, 
and the antisymmetry property $\sfb(y,x)+\sfb(x,y)=0$ means that key assumptions of our theory reduce to the Lagrangian $F$ being (geodesically) convex-concave. Here, $x$ and $y$ play similar roles, since convexity in $x$ implies concavity in $y$ and vice versa. 

\item \textbf{Example (IV) Multispecies Coupled Gradient Flows:}  
We consider $N$ species $\rho^{(i)}\in \XXX^{(i)}=\PP_2(\R^{d_i})$ for $i\in[1,\dots,N]$, each minimizing their own energy functional $F^{(i)}:\XXX\to\R$, where 
$$\XXX=\Pi_{i=1}^N \XXX^{(i)}\quad\text{endowed with}\quad
\dX^2(\rho,\sigma):=\sum_{i=1}^N \Wass_2^2(\rho^{(i)},\sigma^{(i)})\,\quad
\rho=(\rho^{(1)},\dots,\rho^{(N)})$$
and so the $i$th energy may also depend on the state of all the other species. 
The corresponding dynamics become a coupled system of partial differential equations
    \begin{displaymath}
        \begin{bmatrix}
            \partial_t \rho^{(1)} \vphantom{\tfrac{\delta F[\rho^{(1)},\cdots,\rho^{(N)}]}{\delta \rho^{(1)}}}\\
            \vdots \\
            \partial_t \rho^{(N)} \vphantom{\frac{\delta F[\rho^{(1)},\cdots,\rho^{(N)}]}{\delta \rho^{(N)}}} 
        \end{bmatrix}
    =\begin{bmatrix}
       \operatorname{div} 
       \big(\rho^{(1)} 
       \nabla_{x^{(1)}} 
       \tfrac{\delta F[\rho^{(1)},\cdots,\rho^{(N)}]}
       {\delta \rho^{(1)}}
       \big) 
       \\ \vdots \\ \operatorname{div} 
       \big(\rho^{(N)} \nabla_{x^{(N)}} \frac{\delta F[\rho^{(1)},\cdots,\rho^{(N)}]}{\delta \rho^{(N)}}\big)
    \end{bmatrix} \,.
    \end{displaymath}
Many special cases of such systems have been analyzed in the literature, but a general theory is still lacking. A multispecies coupled gradient flow system is not necessarily a joint gradient flow system, and thus the analysis of such systems is challenging especially when the energies contain nonlinearities, nonlocalities, and intricate interplay between species. 
The long-time behavior of such multispecies systems was recently studied in \cite{conger_monotone_2025}, leaving well-posedness as an open question, which our framework is able to address. In particular, we show formally in Lemma~\ref{lem:W2GF_satisfies_EVI} that solutions to coupled Wasserstein-2 dynamics satisfy the EVI for the choice of $\sfb$ above, and Corollary~\ref{cor:ss_is_Nash} identifies assumptions on $(F^{(i)})$ under which such solutions exist. Note that Example (IV) reduces to Example (III) when $N=2$ and $F^{(1)}(\rho^{(1)},\rho^{(2)})=-F^{(2)}(\rho^{(2)},\rho^{(1)})$,  which includes the single-species setting Example (II) for $N=1$, as illustrated in Figure~\ref{fig:multispecies_gradient_flows}.
\end{itemize}

\begin{figure}
    \centering
\resizebox{\textwidth}{!}{%
\begin{tikzpicture}[
    eqnblock/.style={
        rectangle, 
        draw=black!20, 
        fill=blue!5, 
        rounded corners, 
        inner sep=15pt, 
        align=center,
        anchor=west 
    },
    desc/.style={
        text width=3cm, 
        align=center, 
        font=\small\itshape,
        text=black!70,
        anchor=north 
    },
    myarrow/.style={
        ->, >=Stealth, thick, draw=black!60, shorten <=5pt, shorten >=5pt
    }
]

    \node[eqnblock] (eq1) at (0,0) {
        $\begin{aligned}
           \dot{x}=-\nabla \varphi(x)
        \end{aligned}$
    };
    \node[desc, yshift=-0.2cm] at (eq1.south) {single species};

    \node[eqnblock, xshift=2cm] (eq2) at (eq1.east) {
        $\begin{aligned}
           \dot x^{(1)} &= -\nabla_{1} F(x^{(1)},x^{(2)}) \\
            \dot x^{(2)}  &= \nabla_{2} F(x^{(1)},x^{(2)})
        \end{aligned}$
    };
    \node[desc, yshift=-0.2cm] at (eq2.south) {min-max};

    \node[eqnblock, xshift=2cm] (eq3) at (eq2.east) {
        $\begin{aligned}
           {\dot x}^{(i)} = -\nabla_{i} F^{(i)}(x^{(1)},\dots,x^{(N)}) \\
          \text{for all } i \in [1,\dots,N]
        \end{aligned}$
    };
    \node[desc, yshift=-0.2cm] at (eq3.south) {general multispecies};

    \draw[myarrow] (eq2.west) -- node[above, font=\footnotesize] {$N=1$} (eq1.east);
    \draw[myarrow] (eq3.west) -- node[above, font=\footnotesize] {$N=2$ } node[below, font=\footnotesize] {$F_1=-F_2$} (eq2.east);

\end{tikzpicture}
}
    \caption{Relationship between Examples (II), (III), (IV).}
    \label{fig:multispecies_gradient_flows}
\end{figure}

\subsection{Contributions}\label{sec:contributions}
The main contributions of this paper are:
\begin{enumerate}
    \item[(C1)] Existence of solutions to \eqref{introeq:EVI} driven by a bifunction $\bif$ (Section~\ref{sec:metric_dissipative_evolutions}), which includes the classes of systems outlined in our running examples (I), (II), (III), and (IV). In particular, the existence of solutions to general coupled PDE systems of the form in Example (IV) is new. 
    
    \item[(C2)] Existence of solutions to variational equilibrium problems in general metric spaces (Section~\ref{sec:equilibrium_problems}) using three different approaches:
    \begin{enumerate}
        \item[(i)] Primal problem formulation only using compactness and a suitable convexity assumption (Section~\ref{sec:primal});
        \item[(ii)] Dual problem formulation, removing compactness thanks to dissipativity (Section~\ref{sec:dual}); 
        \item[(iii)] Antisymmetric bifunctions only using convexity, identifying a corresponding saddle point problem that is characterized by a Lagrangian (Section~\ref{sec:saddle});
    \end{enumerate}
    as well as relations between the primal and dual formulations in this setting (Section~\ref{sec:dual}).
    \item[(C3)] A zeroth-order definition of game-theoretic monotonicity for $N$-player games in general metric spaces (Definition~\ref{def:monotonicity_zeroth_order}), including finite-dimensional games as well as infinite-dimensional games 
    in spaces of probability measures.
    This notion of monotonicity provides contraction of the system to a unique Nash equilibrium according to \cite{conger_monotone_2025}. We provide a detailed discussion of properties that are implied by our theory in the setting of $N$-player games (Section~\ref{sec:multispecies}), which corresponds to the general multispecies systems discussed in Example IV.
\end{enumerate}
Contributions (C1) and (C2) rely on a suitable notion of admissible bifunctions $\sfb$ (Section~\ref{sec:admissible-b}), which then informs the game-theoretic monotonicity in (C3). In particular, we identify two separate properties of the driving bifunctions $\sfb$ that each contribute differently to dissipativity for evolutions in general metric spaces: interaction-dissipativity \eqref{introeq:disseta}, which by itself is a new notion, and barycentric convexity (Definition~\ref{def:convexlike}). In fact, we show that under sufficient dissipativity we obtain not only existence of solutions to \eqref{introeq:EVI}, but also contraction of the dynamics (Theorem~\ref{thm:main_existence}). These results rely on a suitable notion of solutions to \eqref{introeq:EVI}, which we introduce in Definition~\ref{def:solutions}.

In fact, our results for (C1) crucially rely on (C2). Our strategy for showing existence of solutions for the dynamics in (C1) goes via a discrete scheme that approximates solutions to \eqref{introeq:EVI}. One of our main contributions is to identify a suitable approximation scheme (Definition~\ref{def:VMS}), which we call \emph{Variational Movement Scheme} (VMS); it represents a generalization of the minimizing movement scheme for gradient flows to general bifunctions. Our general existence results from (C2) can be applied to (VMS) to obtain discrete EVI-solutions (Theorem~\ref{thm:saddle-sEVI}), and we show that their piecewise constant interpolations in time converge to a solution of the continuous-time dynamics \eqref{introeq:EVI} in a suitable sense (Theorem~\ref{thm:tau_to_zero}). Thus, we note in particular that Contribution (C2) provides three different approaches for the existence of solutions to \eqref{introeq:EVI}, depending on the properties of the driving bifunction $\sfb$. Finally, (C2) also provides conditions for existence of a variational equilibrium for \eqref{introeq:EVI} itself, solving the corresponding static game, not just for the generating bifunction appearing in (VMS) that is needed to achieve (C1).

In the following informal theorem, we state a representative special case of the results from (C1) and (C2), corresponding to the dissipative setting; the general well-posedness statements, including the antisymmetric and compact-domain cases, are given in Theorems~\ref{thm:main_existence}, \ref{thm:antisym_existence}, and~\ref{thm:tau_to_zero}.
\begin{theorem}[Informal]\label{thm:main}
    Let $(\DDD,\dX)$ be complete, let $\tau_o>0$ be such that
    $\tau_o^{-1}+\cvx > 2\eta_+$ and suppose that
    \begin{itemize}
            \item $\sfb$ is $\eta$ interaction dissipative and nonnegative on the diagonal 
        according to 
        \eqref{introeq:disseta}-\eqref{introeq:diagonal}; 
        \item for every $\tau\in (0,\tau_o)$,
	every $\bar x,x\in \DDD$,
	the functions $y \mapsto \sfB(\tau,\bar x;y,x)$, given by
    \begin{align*}
        \sfB(\tau,\bar x;y,x)=\frac 1{2\tau}\dX^2(\bar x,y) - \frac{1}{2\tau} \dX^2(\bar x,x) + \skd yx\,,
    \end{align*}
    are barycentrically $(\tau^{-1}+\cvx)$ convexlike with respect to some
    continuous barycentric system $\cB$;
        \item $x \mapsto \skd yx$ satisfies a suitable (conditional) upper semicontinuity. 
    \end{itemize}
     Then for any initial condition $x_0\in\DDD$, there exists a solution $(X_\tau^n)_{n\ge 0}$ to (VMS), which satisfies the discrete EVI \eqref{introeq:bEVI}.
     There exists a continuous curve $\sfx : [0,+\infty) \to \DDD$ with $\sfx(0)=x_0$ that satisfies the EVI
     \begin{align*}
         \frac12\frac{\d} \dt \dX^2(\sfx(t),y)+\frac{\cvx}{2} \dX^2(\sfx(t),y)\le \bskd y{\sfx(t)}\quad\text{$t\in (0,+\infty)$,}\quad 
	\text{for every }y\in \DDD\,.
     \end{align*}
     Any continuous curves $\sfx,\sfy$ which satisfy the EVI above also satisfy the contraction estimate
     \begin{align*}
         \dX(\sfx(t),\sfy(t)) \le e^{\lambda(t-s)} \dX(\sfx(s),\sfy(s))\,, \quad \text{for all }0\le s\le t\,.
     \end{align*}
\end{theorem}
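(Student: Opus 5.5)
The proof has three stages: solve one step of (VMS) and derive the discrete EVI, pass to the limit $\tau\to0$, and prove contraction.

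\textbf{Stage 1: one step of (VMS).} Fix $\tau\in(0,\tau_o)$ and $\bar x=X^{n-1}_\tau$, and set $\sfa(y,x):=\sfB(\tau,\bar x;y,x)$. The two distance terms cancel on the diagonal, so $\sfa(x,x)=\bif(x,x)\ge0$. They also cancel in the symmetric sum, so $\sfa(x,y)+\sfa(y,x)\le\eta\,\dX^2(x,y)$. Since $y\mapsto\sfa(y,x)$ is barycentrically $\mu:=(\tau^{-1}+\cvx)$-convexlike with $\mu>2\eta_+$, strong convexity beats interaction dissipativity. I would use the dual route and look for $\bar y$ with $\sfa(\bar y,x)\le0$ for all $x$.

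First, a KKM-type argument on finite families $x_1,\dots,x_J$ uses the continuity of $\aalpha\mapsto\bary x\aalpha$ and Brouwer on $\simplex J$. Next, the convexlike inequality with weight $\frac\mu4\sum\alpha_j\alpha_k\dX^2$ produces approximate dual solutions lying in a set of diameter controlled by $(\mu-2\eta_+)^{-1}$. These approximate solutions form a Cauchy net, so completeness of $\DDD$ yields a dual solution. The conditional upper semicontinuity of $x\mapsto\bif(y,x)$ then converts the dual solution into a primal one, giving $0\le\sfa(Y,X^n_\tau)$ for all $Y$. I expect the Cauchy/diameter estimate replacing compactness to be the main obstacle. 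I would first try testing the convexlike inequality at the two-point barycenter of two approximate solutions and combining it with dissipativity to bound $(\mu/4-\eta/2)\,\dX^2$.

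\textbf{Stage 1, continued: the discrete EVI.} Apply the convexlike inequality to $\sfa(\cdot,X^n_\tau)$ along a two-point barycentric curve joining $X^n_\tau$ and $Y$, at parameter $\alpha$. VMS minimality together with $\sfa(X^n_\tau,X^n_\tau)=\bif(X^n_\tau,X^n_\tau)$ gives
\[
0\le\alpha\,\sfa(Y,X^n_\tau)+(1-\alpha)\,\bif(X^n_\tau,X^n_\tau)-\tfrac\mu2\alpha(1-\alpha)\,\dX^2(Y,X^n_\tau).
\]
Letting $\alpha\downarrow0$ is not directly usable. Instead I would take $\alpha\uparrow1$ after dividing by $\alpha$, which yields $0\le\sfa(Y,X^n_\tau)-\frac\mu2\dX^2(Y,X^n_\tau)+o(1)$, up to the $\bif(x,x)$ term. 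Expanding $\sfa$ then gives exactly \eqref{introeq:bEVI}.

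\textbf{Stage 2: the limit $\tau\to0$.} Compare two discrete solutions with steps $\tau$ and $\sigma$ by plugging $Y=X^m_\sigma$ into the discrete EVI for $X^n_\tau$, and symmetrically. Adding the two inequalities and using $\bif(x,y)+\bif(y,x)\le\eta\dX^2$, the doubling-variable method of \cite{Nochetto-Savare06} yields a Cauchy estimate of the form $\dX(\ol X_\tau(t),\ol X_\sigma(t))\le C\,\rme^{\lambda_+ t}\sqrt{\tau+\sigma}$. This requires a preliminary a priori bound on $\dX(X^n_\tau,X^{n-1}_\tau)/\tau$, obtained by testing the discrete EVI with a fixed $Y$ (or with $x_0$). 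Completeness gives a locally uniform limit $\sfx$. I would pass to the limit in the integrated form of \eqref{introeq:bEVI} using upper semicontinuity of $x\mapsto\bif(y,x)$ and Fatou, which gives the EVI in its integral/distributional form.

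\textbf{Stage 3: contraction.} Given two EVI curves $\sfx$ and $\sfy$, write the EVI for $\sfx$ tested at $y=\sfy(s)$ and the EVI for $\sfy$ tested at $x=\sfx(t)$. Apply the doubling trick in $(s,t)$ and add. Interaction dissipativity then gives
\[
\frac{\d}{\dt}\dX^2(\sfx,\sfy)\le2(\eta-\cvx)\,\dX^2(\sfx,\sfy)
\]
in the distributional sense, and Gronwall yields \eqref{introeq:contraction}.
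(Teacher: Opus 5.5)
Stage~1 has a wrong limit in the discrete EVI step and is missing the key idea of the dual existence step; Stage~2 rests on an a priori bound that does not hold for general $x_0$.

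\textbf{Stage~1, discrete EVI.} Your derivation fails as written. Divide
$0\le\alpha\,\sfB(\tau,\bar x;Y,X^n_\tau)+(1-\alpha)\,\sfb(X^n_\tau,X^n_\tau)-\frac{\mu}{2}\alpha(1-\alpha)\,\dX^2(Y,X^n_\tau)$ by $\alpha$. The coercive term then carries the factor $1-\alpha$. Letting $\alpha\uparrow1$ therefore returns only $\sfB(\tau,\bar x;Y,X^n_\tau)\ge0$, which is the scheme itself, not $\sfB\ge\frac{\mu}{2}\dX^2$.

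The correct limit is $\alpha\downarrow0$, and the obstruction you saw there is not real. Nonnegativity on the diagonal and $\eta$-interaction dissipativity with $x=y$ give $0\le2\sfb(x,x)\le0$, so $\sfb$ vanishes on the diagonal. Hence $\sfB(\tau,\bar x;Y,X^n_\tau)\ge\frac{\mu}{2}(1-\alpha)\dX^2(Y,X^n_\tau)$ for every $\alpha\in(0,1]$, and $\alpha\downarrow0$ gives the coercive bound (Remark~\ref{rem:coercive-primal}, Proposition~\ref{prop:discrete-EVI}).

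\textbf{Stage~1, dual existence.} Take $y_1,y_2$ solving finite subproblems and let $m$ be their barycenter. Nothing controls $\sfa(y_i,m)$, and dissipativity only bounds symmetric sums \emph{from above}. So no bound on $(\frac{\mu}{4}-\frac{\eta}{2})\dX^2(y_1,y_2)$ comes out.

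In Theorem~\ref{thm:equilibrium2} the approximate solutions are instead the unique minimizers $y_\Omega$ of the $\mu$-convexlike marginals $\sfa_\Omega:=\sup_{x\in\Omega}\sfa(\cdot,x)$, for finite $\Omega$. Their minimum values $m_\Omega$ are $\le0$ by the KKM step and nondecreasing in $\Omega$. Choose $\Omega^\eps$ with $m_{\Omega^\eps}\ge\sup_\Omega m_\Omega-\eps$. Then for $\Omega_1,\Omega_2\supset\Omega^\eps$ both $y_{\Omega_i}$ are $\eps$-minimizers of the single function $\sfa_{\Omega^\eps}$. Your barycenter test then gives $\dX^2(y_{\Omega_1},y_{\Omega_2})\le8\eps/\mu$, with no dissipativity at all.

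Dissipativity enters only in showing that the sets $\{\aalpha:\sum_j\alpha_j\sfa(x_j,x_k)\le\frac{\mu}{4}\sum_{j,l}\alpha_j\alpha_l\dX^2(x_j,x_l)\}$ form a KKM cover. These sets are closed without any continuity of the barycentric maps. Finally, Minty's dual-to-primal step needs upper hemicontinuity along the two-point curves. The conditional upper semicontinuity only applies along sequences with bounded slopes, so it does not supply this.

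\textbf{Stage~2, a priori bound.} The bound on $\dX(X^n_\tau,X^{n-1}_\tau)/\tau$ cannot come from testing with a fixed $Y$. That controls $\dX(X^n_\tau,Y)$ and, after summation, $\sum_n\dX^2(X^n_\tau,X^{n-1}_\tau)$, but not $\max_n\dX(X^n_\tau,X^{n-1}_\tau)/\tau$. In fact no such bound holds for general $x_0\in\DDD$: for a gradient flow started at a point of infinite slope, $\dX(X^1_\tau,x_0)/\tau\to+\infty$.

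The paper tests the first-order discrete EVI with $Y=X^{n-1}_\tau$ and uses dissipativity to swap the arguments. This gives $\dX(X^n_\tau,X^{n-1}_\tau)\le\tilde\tau\,\skslope{\cvx}{X^{n-1}_\tau}$ and $\skslope{\cvx}{X^n_\tau}\le(1-\lambda\tau)^{-1}\skslope{\cvx}{X^{n-1}_\tau}$ (Proposition~\ref{le:apriori}). This is useful only when $x_0\in\domainslope{\cvx}$.

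The same slope bound is what makes the conditional upper semicontinuity applicable in the limit. Dissipativity plus the quadratic lower bound \eqref{eq:MY} supplies the uniform upper bound on $\sfb(y,\overline{X}_\tau(t))$ that Fatou needs.

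\textbf{General initial data.} Arbitrary $x_0$ need an argument your plan lacks. First, $\domainslope{\cvx}$ is dense in $\DDD$: the one-step estimate $\dX^2(\bar x,x_\tau)\le2(-\sfb_{\bar\tau}(\bar x))\tau$ shows one step converges to $\bar x$, and every iterate has finite slope. Density is then combined with the contraction estimate. For such data this yields only the weak dual solutions of Theorem~\ref{thm:EVI-flow}, not strong ones.

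\textbf{Stage~3.} Your contraction argument matches the paper's argument for strong solutions.
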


We state below our zeroth-order definition of game-theoretic monotonicity from Contribution (C3) that makes use of the notions of barycentric convexity and interaction dissipativity introduced above; a larger convexity parameter $\cvx$ increases monotonicity whereas a larger interaction dissipativity parameter $\eta$ decreases monotonicity. 
\begin{definition}[Game-Theoretic Monotonicity]\label{def:monotonicity_zeroth_order}
    A set of energy functionals $(F^{(i)})$ are $(\eta,\cvx)$-monotone, 
    with rate $\lambda = \eta-\cvx$, if
    \begin{itemize}
        \item $\cvx$ convexity: $y\mapsto \bif(y,x)$ is barycentrically $\cvx$-convexlike with respect to a system $\cB$.
        \item $\eta$ dissipativity: $\skd yx + \skd xy \le \eta \dX^2(x,y)$.
    \end{itemize}
    Equivalently, each $F^{(i)}(\cdot,x^{(-i)})$ is barycentrically $\cvx$-convexlike and for every $x, y \in \DDD$, 
        \begin{align*}
    \sum_{i=1}^N 
   F^{(i)}(y^{(i)},x^{(-i)})-  F^{(i)}(x^{(i)},x^{(-i)}) + F^{(i)}(x^{(i)},y^{(-i)}) - F^{(i)}(y^{(i)},y^{(-i)}) \le \eta \,\dX^2(x,y)  \,.
    \end{align*}
\end{definition}
We show that, for particular choices of $\cB$, this notion of game-theoretic monotonicity implies the classical definitions in Euclidean space and in the space of measures equipped with the Wasserstein-2 metric, as defined in \cite{conger_monotone_2025,wang_local_2026}. In \cite{conger_monotone_2025}, monotonicity is defined with respect to geodesic curves, whereas in \cite{wang_local_2026} it is defined along linear interpolations of probability measures. 
Note that monotonicity of the generating bifunction $\sfB$ defined in terms of $\sfb$ provides existence of solutions to \eqref{introeq:EVI} via existence of discrete iterates to (VMS); whereas monotonicity of $\sfb$ provides contraction of the dynamics (see \eqref{introeq:contraction} above).

\subsection{Related Literature}
This work extends results for accretive and dissipative evolutions to the metric space setting, using  variational equilibrium problem tools. An important application of the results is the setting of multispecies gradient flows, which can be studied with a game theory point of view, and for which general existence results were left as an open question in \cite{conger_monotone_2025}. Related literature from each of these fields is discussed in this section.

\paragraph{Accretive and Dissipative Evolutions.} The theory of nonlinear evolutions driven by accretive operators in Banach and Hilbert spaces provides a classical foundation for the single-species special case of our framework. The foundational Crandall--Liggett generation theorem \cite{Crandall-Liggett71} established that accretive operators on general Banach spaces generate semigroups, with solutions obtained as limits of implicit time-discretizations. The connection between accretive operators and evolution inequalities in Hilbert spaces was further developed by Brézis \cite{Brezis73}. For the case of $\lambda$-dissipative systems (corresponding to strongly accretive operators), contraction and long-time behavior estimates analogous to our dissipativity inequality were obtained by Kobayashi \cite{kobayashi_difference_1975}; see also Crandall \cite{Crandall86} for a broader survey of nonlinear semigroup theory and its connections to accretive operators. The optimal error estimates between piecewise-constant interpolants in the $\tau \to 0$ limit that we rely upon are adapted from those in \cite{Nochetto-Savare06}, which extended the earlier error analysis of Crandall and Evans \cite{Crandall-Evans75} 
to derive optimal a posteriori error estimates; 
the method of doubling the time variable used therein originates in \cite{kobayashi_difference_1975}.
\paragraph{Variational Equilibrium Problems.}
The existence theory for saddle points and minimax problems in convex-concave settings is classical, with foundational contributions due to von Neumann \cite{vonNeumann_1928}.
The connection between the sub-super differential
of a convex-concave Lagrangian and monotonicity 
has been developed by Rockafellar \cite{Rockafellar-saddle70}. 
The general theory of variational inequalities was developed in the works of Browder \cite{browder_1968} and later unified with fixed-point and KKM theory; see Fan \cite{fan_1984} for the Fan--KKM theorem, which underlies our primal existence proof (Theorem~\ref{thm:primal_soln}).
\paragraph{Gradient Flows and Systems of Coupled Gradient Flows.} 
The development of the theory of gradient flows in Wasserstein and metric spaces received a decisive impetus from \cite{JordanKinderlehrerOtto98,Otto01} which introduced what is now called the JKO scheme
and 
provided many tools and geometric insights to interpret a large class of diffusion equations as gradient flows in the Wasserstein-2 metric. 
In the broader metric setting, \cite{AGS08} provides a rigorous theory for gradient flows in general metric spaces, revisiting 
De Giorgi's approach to curves of maximal slope
\cite{DeGiorgi-Degiovanni-Marino-Tosques83} and introducing the notion of EVI \eqref{eq:GF-EVI} and convexity along generalized geodesics, motivated by applications in the Wasserstein spaces inspired by \cite{JordanKinderlehrerOtto98}. A systematic study of EVI properties for gradient flows 
has been developed by
\cite{Muratori-SavareI}. Recent work \cite{aubin-frankowski_evolution_2026} generalizes the EVI framework from metrics to general cost functions, and analyzing explicit (rather than implicit) update schemes.

When more than one species evolves according to the direction of steepest descent of its own energy, the joint system is not necessarily a gradient flow. This occurs, for example, in cross-diffusion systems. The work \cite{di_francesco_measure_2013} proved existence of solutions for two-species aggregation systems using a semi-implicit JKO scheme in which each species minimizes its own energy with the other held fixed, an approach equivalent to the implicit-explicit update introduced above rather than the fully implicit Nash update (also see Remark~\ref{rmk:choice-scheme} for comments on the choice of scheme). 
The extension to diffusive systems was treated in \cite{benamou_augmented_2016}. 
The coupling-induced loss of contractivity is a central theme in \cite{beck_matthes_zizza_expcv_coupled_diffusion}, which studies two Wasserstein gradient flows coupled through a cross-diffusion term; while coupling destroys the individual contractivity, exponential convergence to a unique steady state is recovered for sufficiently small coupling. Other coupled multispecies Wasserstein-2 gradient flow well-posedness works include \cite{di_francesco_nonlinear_2018,carrillo_zoology_2018,carrillo_measure_2020}; see also references within.  For settings with an infinite number of species, well-posedness was studied in   \cite{debiec_finite_2025,cances_continuum_2024}.

In other cases, long-time behavior of coupled systems has been shown while leaving existence as an open problem. In the min-max setting, \cite{conger_coupled_2024} shows conditions under which a min-max Wasserstein gradient flow converges to a unique steady state. The convergence of two-timescale gradient descent-ascent in a nonconvex setting is analyzed in \cite{an_lu_2025}, which establishes long-time convergence. Similarly, in the  setting of measure optimization, \cite{dvurechensky_zhu_2024} studies a class of functional saddle-point problems, modeling the dynamics as a Fisher-Rao-RKHS gradient flow, proposing a primal-dual kernel mirror prox algorithm and providing convergence guarantees. Existence of solutions in the min-max setting was then addressed in \cite{isobe_gradient_2025}, and our work generalizes their results by moving beyond the min-max setting.  The long-time behavior of general multispecies setting in the Wasserstein-2 metric was analyzed in \cite{conger_monotone_2025}, and our work provides conditions under which solutions exist to that system. Similarly, \cite{wang_local_2026} considers a multispecies coupled Wasserstein-2 gradient flow with potential functions and entropy, and our work provides conditions under which solutions exist to the evolution studied there.  Other coupled gradient flow systems that fall within our more general framework  include Cahn-Hilliard/Allen-Cahn system with cross-kinetic coupling, with existence studied in \cite{BRUNK2024104051} using Galerkin approximations.

There are  coupled gradient flow settings where our results cannot be directly applied, either due to convexity issues with the metric or with the energies. Two recent works address spherical Hellinger gradient flows, also known as Fisher-Rao gradient flows. In \cite{carrillo_fisher-rao_2024}, the authors present convexity properties of $f$-divergence energy functionals along geodesics; however, the squared spherical Hellinger metric is not 1-convex along geodesics due to its positive curvature. While nonnegative cross curvature results in \cite{Leger-Todeschi-Vialard25} (see Example~\ref{ex:NNCC}) provide a set of curves along which the squared metric is 1-convex, it is not clear if nontrivial energies are also convex along these curves. In \cite{lascu_fisher-rao_2024}, the authors analyze the long-time behavior of min-max flows in this metric.
In cases where the joint energy is not jointly convex---such as cross-diffusion systems in which the pressure depends on the total density--- our approach cannot be directly applied. In \cite{elbar_santambrogio_2025}, a cross-diffusion system for two interacting populations is studied in which diffusion is governed by the aggregate density through a fast-diffusion pressure law; the non-convexity of the pressure energy is a central obstruction to a direct gradient flow treatment, and global weak solutions are obtained under a mixing condition on the initial data. The existence theory for this same system was subsequently completed in full generality, without restrictions on the initial data, in \cite{meszaros_parker_2025}. The present framework does not directly apply to these settings without either convexity along some set of curves for each of the individual energies, or suitable interaction-dissipativity.

\paragraph{Game Theory.}
In finite dimensional games, the existence of Nash equilibria follows from compactness and continuity assumptions via the Kakutani-Brouwer fixed-point theorem~\cite{nash_1951,debreu_1952}, and first-order monotonicity conditions on cost functions in games have also been used to prove existence and uniqueness \cite{rosen_existence_1965}. For a summary of Nash equilibrium existence results and their connections to variational inequalities, see \cite{facchinei_finite-dimensional_2003}; our notion of game-theoretic monotonicity extends these definitions to metric spaces. In infinite-dimensional settings, results typically rely on compactness assumptions and  linear structure in the metric space \cite{glicksberg_1952} and our aim is to provide results for spaces without such a structure. For an extensive treatment of Nash equilibria in unbounded, strongly monotone Euclidean settings, see \cite{Facchinei_Pang_2009}. In mean field games, a notion of \textit{displacement monotonicity} was recently proposed in \cite{gangbo_mean_2022}, with several works such as \cite{meszaros_mean_2024} generalizing their results; this is a convexity property along Wasserstein-2 geodesics. The earlier \textit{Lasry-Lions monotonicity} \cite{lasry_mean_2007} is a convexity property along linear interpolants. Both of these definitions are for a single-species mean field game, with dynamics that are not a coupled gradient flow system.

\subsection{Plan of the paper}
In Section~\ref{sec:preliminaries}, we provide preliminary information such as notation and definitions used throughout. Section~\ref{sec:equilibrium_problems} introduces the main variational equilibrium problems and results for existence of solutions to those problems. Section~\ref{sec:soln_to_discrete_EVI} applies the equilibrium problem results to the discrete EVI.  In Section~\ref{sec:metric_dissipative_evolutions}, we define notions of solutions and the corresponding EVIs and the main approximation, stability, and existence theorems. 
The proof of the Cauchy estimates needed for the convergence of the VMS method is postponed to Appendix~\ref{app:cauchy_estimate_proof}. 
Finally, Section~\ref{sec:multispecies} contains results for coupled multispecies gradient flow systems to which our results can be applied and shows how our zeroth-order notion of game theoretic monotonicity implies the classical first-order condition in Euclidean space and recent notions in the Wasserstein-2 space.

\subsection*{List of main notation}
The notation used in this paper is listed in the table below. 
\begin{longtable}{p{3.0cm} l}
$\sfa$ & bifunction $\sfa:\DDD \times \DDD \to \R$ \\
$\aalpha$, $\bbeta$ & $\aalpha\in \simplex {J}$ point in the simplex \\
 $\sfb$ & bifunction $\sfb:\DDD\times \DDD\to \R$ driving \eqref{introeq:EVI}\\
 $\lskdname$ & shifted bifunction $\lskdname(y,x):=\skd yx-\frac{\cvx}{2}\dX^2(x,y)$ for $x,y\in \DDD\times \DDD$\\
 $\cbskdname$ & conjugate bifunction $\cbskdname(y,x):=\eta\,\dX^2(x,y)-\skd xy$\nc\\
$\barysystem J,\ \barysystem{}$ & barysystem with $J$ reference points and 
collections of all $\barysystem J$, $J\in \N_{\ge 2}$\\
    $\barysystem{\bar x,2}$ &  barysystem with $2$ reference points and dependent on $\bar x$ \\
    $\sfB$ & bifunction $\sfB:\DDD\times \DDD\to \R$ in variational movement scheme (VMS) \\
    $\skslope{\cvx}x$ & bifunction slope $\skslope{\cvx}{x}:=
    \sup_{y\in \DDD}\frac{\lskd yx_-}{\dX(x,y)}\in [0,+\infty]$ for $x\in \DDD$\\
    $\slope x$ & local bifunction slope $\slope x:=\limsup_{y\to x}
    \frac{(\skd yx)_-}{\dX(x,y)}\in [0,+\infty]$ for $x\in\DDD$\\
        $\DDD$  &  domain of the bifunction, a subset of the metric space $\XXX$\\
    $\domainslope{\cvx},\ \domainslope{}$ & domains in $\DDD$ where bifunction slopes $\skslope{\cvx}\cdot,\ \slope\cdot$ are finite \\
    $(\sfe_J^i)_{i=1}^J$, $\simplex {J}$ & 
    canonical basis of $\R^J$ and its reference convex simplex for $J$ points \\
    $\cG_2$ & set of geodesic curves, particular choice of $\barysystem 2$\\
    $\Gamma(\mu,\nu),\Gamma_o(\mu,\nu)$ & set of joint ($o$: optimal) probability measures  with marginals $\mu$ and $\nu$
    \\ 
    $H$ & Hilbert space \\
 $\cvx_i$, $\cvx$ &  convexity parameter for $i^{th}$ species, convexity of a given bifunction \\
 $\lambda$ & dissipativity rate with $\lambda=\eta-\kappa$\\
  $\ell$, $\ell_\tau$ & piecewise linear time interpolators \\
    $\eta$ & interaction dissipativity \\
    $\PP,\PP_p$ & space of probability measures (subscript $p$: with 
     finite $p^{th}$ moment) \\
    $\sigma_-$, $\sigma^+$ &  sublevel and superlevel sets of a given function\\
    $\sfS$, $\sfS_t$ & semigroup operators \\
    $\tau$ & time step in a given discrete scheme \\
      $|\dot{\sfu}^{\pm}|(t)$, $|\dot{\sfu}_{\pm}|(t)$ & left/right, upper/lower metric derivatives \\
      $v$, $v_i$, $w$ & elements in vector space \\
$\Wass_2$ & Wasserstein-2 metric \\
     $(\XXX,\dX)$ & reference metric space\\
     $\baryname x$ & barycentric map from simplex to $D$ \\
    $\xx$, $(x_i)_{i=1}^J$ & collection of $J$ points in $\XXX$\\
    $x$, $x^{(i)}$, $x^{(-i)}$ & point in $\XXX$, point in $\XXX^{(i)}$, point in $\prod_{j\ne i}\XXX^{(j)}$ \\
    $x^*$, $x_*$ & primal and dual variational problem solutions \\
    $Y,X_\tau^n$ & object in discrete time, sequence in $\XXX$ parameterized by $\tau$ with index $n$ \\
    $\overline X_\tau(t)$, $\underline X_\tau(t)$  & piecewise constant left/right continuous interpolants of $X_\tau^n$\\
\end{longtable}

\vspace{6pt}

\section{Preliminaries: metric spaces, slopes, and convexlike functions}
\label{sec:preliminaries}

In this section, we establish definitions of convexity in general metric spaces, along with various notions of slopes (Section~\ref{sec:convexity-def}). We illustrate the convexity notions for several examples (Section~\ref{sec:convexity-ex-all}), including for our running examples (I)-(IV). We conclude this section with definitions of semicontinuity and related properties for functions (Section~\ref{sec:semicontinuity}), and definitions of metric and Dini derivatives (Section~\ref{sec:derivatives-def}).

The convexity definitions introduced in Section~\ref{sec:convexity-def} center around the idea of barycentric convexity, which generalizes 
the notion of convexity along a curve connecting two points
by considering general interpolations between an arbitrary number of points. The primary motivation for this 
more restrictive condition (at least in general nonlinear spaces) is to show the existence of solutions to the equilibrium problem that appears in the time-discretized approximation of flow satisfying the evolution variational inequality \eqref{introeq:EVI}. In particular, to prove existence of the equilibrium solution to the variational movement scheme (VMS), we will implement three different arguments
combining 
barycentric convexity of 
$y\mapsto \sfB(y,x)$ 
with topological or dissipativity properties of 
$\sfB$.

A secondary application, just as in the classical gradient flow setting, is to derive the discrete EVI property \eqref{introeq:bEVI} for solutions to the (VMS): 
this introduces a compatibility condition, related to 1-convexity of the distance function, requiring more flexibility for the choice of curves along which convexity is defined. As for the case of generalized geodesics 
for Wasserstein-2 gradient flows, 
this notion involves barycentric systems
depending on a further base point and 
should take into account the 
rescaling of the squared distance function by 
the time step $\tau>0$.

\subsection{Barycentric Convexity and Related Notions}\label{sec:convexity-def}
Let $(\XXX,\dX)$ be a metric space. A geodesic in $\XXX$ is a curve
$\sfx:[0,1]\to \XXX$ such that 
\begin{equation}
	\label{eq:geodesic}
	\dX(\sfx(s),\sfx(t))=|t-s|\dX(\sfx(0),\sfx(1))\quad 
	\text{for every }s,t\in [0,1].
\end{equation}
Given $x_0,x_1\in \XXX$, 
we say that $x_\vartheta$ is a $\vartheta$-intermediate point
between $x_0$ and $x_1$ if 
\begin{equation*}
	\dX(\sfx_0,\sfx_\vartheta)=\vartheta\dX(\sfx_0,\sfx_1)\,\quad
	\dX(\sfx_\vartheta,\sfx_1)=(1-\vartheta)\dX(\sfx_0,\sfx_1)\,, \quad \text{ for all }\vartheta \in (0,1)\,.
\end{equation*}
We denote by 
$(\sfe_J^i)_{i=1}^J$ the canonical basis of $\R^J$ and by 
$\simplex J$ the $J-1$-dimensional 
probability simplex in $\R^J$:
\begin{equation}
	\label{eq:simplex}
	\simplex {J}:=
	\operatorname{co}\{\sfe_J^1,\sfe_J^2,\cdots,\sfe_J^J\}=
	\Big\{\aalpha=(\alpha_1,\cdots,\alpha_J)\in \R^J:
	\alpha_j\ge 0,\ \sum_{j=1}^J\alpha_j=1\Big\}.
\end{equation}
\begin{definition}[Systems of barycentric maps]
    \label{def:barycentric maps}
    A $J$-barycentric map in $\DDD\subset\XXX$ is
a map
$\baryname x:\simplex J\to \DDD$.
We set $\baryc xi:=\bary x{\sfe_J^i}$.

A system $\barysystem J$ of $J$-barycentric maps 
in $\DDD$ is a collection
of $J$-barycentric maps $\baryname x$
such that the evaluation map
\begin{equation}
	\baryevalname 
	:\barysystem J\to \DDD^J,
	\quad
	\baryeval{\baryname x}:=\big(\baryc x1,
	\baryc x2,\cdots 
	\baryc xJ\big)
	\quad\text{is surjective,}
\end{equation}
i.e.~for every collection of $J$ points in $\DDD$, denoted $\xx\in \DDD^J$,
there exists $\baryname x\in \barysystem J$ 
such that $x_j=\baryc xj=\bary x{\sfe_J^j}$, $j=1,\cdots,J$.

A system $\cB$ of barycentric maps
is the union of
$J$-systems $\barysystem J$, for every $J\in \N_{\ge 2}$.
\end{definition}
For an example of a 3-barycentric map for the sphere in two dimensions, see Figure~\ref{fig:barycentric_map}.
\begin{figure}
    \centering



\begin{tikzpicture}[
  >=Stealth,
  font=\small,
  vtx/.style   = {circle, fill, inner sep=2pt},
  vtxB/.style  = {vtx, fill=blue!65!black},
  vtxR/.style  = {vtx, fill=red!60!orange},
  darrow/.style= {->, dashed, thin, shorten >=3pt, shorten <=3pt},
]

\coordinate (E1) at ( 0.00,  3.00);   
\coordinate (E2) at (-2.00,  0.00);   
\coordinate (E3) at ( 2.00,  0.00);   
\coordinate (A)  at ( 0.30,  1.10);   

\fill[blue!7]            (E1)--(E2)--(E3)--cycle;
\draw[gray!55, thick]    (E1)--(E2)--(E3)--cycle;

\node[vtxB]                      at (E1) {};
\node[above,       blue!65!black] at (E1) {$e^1_3$};
\node[vtxB]                      at (E2) {};
\node[left,        blue!65!black] at (E2) {$e^2_3$};
\node[vtxB]                      at (E3) {};
\node[right,       blue!65!black] at (E3) {$e^3_3$};

\node[vtxR] at (A) {};
\node[right=3pt, red!60!orange] at (A) {$\boldsymbol{\alpha}\in\simplex{3}$};

\node[below=4pt, gray!50!black, font=\footnotesize] at (0,0)
  {$\simplex{3}$, $J=3$};

\draw[->, line width=1.2pt, gray!40!black]
  (2.85, 3.0) -- (4.65, 3.0)
  node[midway, above, black, font=\normalsize] {$\baryname x$}
  node[midway, below, black, font=\scriptsize] {barycentric map};

\def\Cx{7.90}  \def\Cy{1.50}
\def\R{2.30}

\draw[fill=gray!5, thick] (\Cx,\Cy) circle (\R cm);
\draw[dashed, gray!45, thin]
  (\Cx,\Cy) ellipse (\R cm and 0.58cm);
\draw[dashed, gray!45, thin]
  (\Cx, {(\Cy)-\R}) arc(-90:90:0.45 and \R);

\coordinate (X1) at (8.05, 3.70);   
\coordinate (X2) at (5.90, 1.10);   
\coordinate (X3) at (9.60, 0.10);   
\coordinate (XA) at (7.75, 2.30);   

\fill[blue!18, opacity=0.75]
  (X1) to[bend right=10] (X2)
       to[bend right=20] (X3)
       to[bend right=10] (X1);
\draw[blue!60!black, thick]
  (X1) to[bend right=10] (X2)
       to[bend right=20] (X3)
       to[bend right=10] (X1);

\node[blue!55!black, font=\footnotesize] at (7.70, 1.85) {$\DDD$};

\node[vtxB]                         at (X1) {};
\node[above right, blue!65!black]   at (X1) {$x_1=\bary{x}{e^1_3}$};
\node[vtxB]                         at (X2) {};
\node[yshift=5pt, left=8pt,  blue!65!black]   at (X2) {$x_2=\bary{x}{e^2_3}$};
\node[vtxB]                         at (X3) {};
\node[below right, blue!65!black]   at (X3) {$x_3=\bary{x}{e^3_3}$};

\node[vtxR] at (XA) {};
\node[right=3pt, red!60!orange, font=\footnotesize] at (XA)
  {$\bar{x}(\boldsymbol{\alpha})$};

\node[gray!50!black, font=\footnotesize, above right] at (9.50, 3.35)
  {$\XXX = \mathbb{S}^2$};

\draw[darrow, blue!40]        (E1) to[bend left=18]  (X1);
\draw[darrow, blue!40]        (E2) to[bend right=8]  (X2);
\draw[darrow, blue!40]        (E3) to[bend right=5]  (X3);
\draw[darrow, red!55!orange]  (A)  to[bend left=12]  (XA);

\end{tikzpicture}
    \caption{Example of a 3-barycentric map to the 2D-sphere: The barycentric map $\baryname x$ maps from the simplex $\simplex {3}$ to the space $\XXX = \S^2$. The simplex vertices $(e_3^j)_j$ map to points in $\XXX$, and interpolations in $\simplex {3}$ map to interpolations in $\XXX$.}
    \label{fig:barycentric_map}
\end{figure}
Notice that no continuity of the maps
$\aalpha\mapsto \bary x\aalpha$ is required: when a semicontinuity
property of the compositions with barycentric maps is needed (as in
Theorem~\ref{thm:primal_soln} below), it will be assumed explicitly.
It is useful to recall that 
in a Hilbert space $\XXX$ 
a function 
$\varphi:\XXX \to (-\infty,+\infty]$ 
    is $\kappa$-convex if for all $\aalpha \in \simplex J$, $J\ge 2$,
    and for every $(x_j)_{j=1}^J\in \XXX^J$
    \begin{align*}
        \varphi 
        \Big(\sum_{j=1}^J\alpha_j x_j\Big)
        \le \sum_{j=1}^J \alpha_j \varphi(x_j)
        -\frac{\kappa}{4}
        \sum_{j,k=1}^J\alpha_j \alpha_k 
        \|x_j-x_k\|^2\,.
    \end{align*}
It is well known that in order to check $\kappa$-convexity
it is sufficient to consider the case $J=2$.
A function $\varphi$ is quasi-convex
if 
\begin{displaymath}
    \varphi 
        \Big(\sum_{j=1}^J\alpha_j x_j\Big)
        \le 
        \max_{1\le j\le J}\varphi(x_j).
\end{displaymath} 
\begin{definition}[Convexlike, Barycentrically Convexlike and Quasi-convexlike Functions]
	\label{def:convexlike}
	Let $\varphi:
     \DDD\to \R$,
    $\DDD\subset \XXX$.
	\begin{enumerate}[\rm (1)]
		\item 
	We say 
	that $\varphi$ is \textbf{$\cvx$-convexlike} 
	with respect to 
	a system $\cB_2$ of 2-barycentric maps
    in $\DDD$ 
	if 
	for every 
	$\baryname x\in \barysystem 2$ and
	$\aalpha\in \simplex2$,
	\begin{equation}
		\label{eq:B2-convex}
		\varphi(\bary x\aalpha)\le 
		\alpha_1\varphi(\baryc x1)+\alpha_2\varphi(\baryc x2)
		-\frac\cvx2\alpha_1\alpha_2
		\dX^2(\baryc x1,\baryc x2).
	\end{equation}
	\item We say that $\varphi$ is \textbf{barycentrically $\cvx$-convexlike}
	with respect to a system $\barysystem{}$
	of barycentric maps in $\DDD$
	if for every integer $J\in \N_{\ge 2}$, 
	every $\baryname x\in \barysystem{}$, and
	every $\aalpha\in \simplex {J}$
	\begin{equation}
		\label{eq:convex}
		\varphi(\bary x\aalpha)\le 
	\sum_{j=1}^J\alpha_j\varphi(\baryc xj)-
	\frac \cvx 4\sum_{j,k=1}^J
	\alpha_j\alpha_k\dX^2(\baryc xj,\baryc xk)
	\end{equation} 
\item We say that $\varphi$ is \textbf{barycentrically quasi-convexlike}
	with respect to a system $\barysystem{}$
	of barycentric maps in $\DDD$
	if for every integer $J\in \N_{\ge 2}$, 
	every $\baryname x\in \barysystem{}$, and
	every $\aalpha\in \simplex {J}$
	\begin{equation}
		\label{eq:quasi-convex}
		\varphi(\bary x\aalpha)\le 
		\max \Big\{\varphi(\baryc xj):
	j\in \{1,\cdots,J\},
	\alpha_j>0\Big\}
	\end{equation} 
	\item
	A collection $(\varphi_a)_{a\in A}$ of functions
is \textbf{jointly} $\cvx$-convexlike (resp.~barycentrically $\cvx$ convexlike, 
barycentrically quasi-convexlike)
if every function
$\varphi_a$ is $\cvx$-convexlike 
(resp.~barycentrically $\cvx$-convexlike,
barycentrically quasi-convexlike)
with respect to a common system $\barysystem 2$
of $2$-barycentric maps
 (resp.~a 
common system $\barysystem {}$ of barycentric maps)
	\end{enumerate}	
\end{definition}
\begin{definition}[Radial Convexity]\label{def:radial-convexity}
    A bifunction $\sfa$ is $\xi$ {\em radially convex} if for every $x,y\in\DDD$ there is a family
	$(z_t)_{t\in[0,1]}$ in $\DDD$, with $z_0=x$ and $z_1=y$, such that
    \begin{equation}
		\label{eq:radial-convex}
		\sfa(z_t,x)\le (1-t)\,\sfa(x,x)+t\,\sfa(y,x)
		-\frac\xi2\, t(1-t)\,\dX^2(x,y),\qquad t\in[0,1],
	\end{equation}
\end{definition}
Radial convexity is barycentric convexity of $\sfa(\cdot, x)$ with respect to $\cB_2$ restricted to maps where one of the end points coincides with the second argument of $\sfa$. This is a weaker assumption than $y \mapsto \sfa(y,x)$ is $\xi$-barycentrically convex with respect to $\cB_2$.  $\xi$ radial convexity holds, in particular, whenever the maps
	$y\mapsto\sfa(y,x)$ are jointly $\xi$-convexlike \eqref{eq:B2-convex}
	(take $z_t=\bary z{(1-t),t}$ along a map joining $x$ to $y$).

\subsection{Examples}\label{sec:convexity-ex-all}
First, we present a number of examples for metric spaces and common choices of curves along which convexity can be verified (Section~\ref{sec:convexity-ex}). Then we discuss convexity properties for each of our  four running examples (Section~\ref{sec:running-ex-convexity}).

\subsubsection{Notions of convexity on common choices of metric spaces}\label{sec:convexity-ex}
\begin{example}[Geodesics]
\label{ex:convex1} 
	In a geodesic metric space $\XXX$
	there is a natural system $\cG_2$ of $2$-barycentric maps given by all the minimal, constant speed geodesic paths
    associated with 
	$\sfx:[0,1]\to\XXX$
	satisfying \eqref{eq:geodesic}.
    We just set
    $\bary x{(1-t),t}:=\sfx_t$.
    
	A function $\varphi$ is geodesically $\cvx$-convex if
	it is $\cvx$-convexlike 
	with respect to a subsystem $\cB_2\subset \cG_2$.	
    \end{example}
    \begin{example}[Convex interpolation in vector spaces]\label{ex:interp_vector_space}
	In a vector space $V$ 
	we can  use 
	the canonical convex combinations
	$\bary x{\vv;\aalpha}=
	\sum_{j=1}^J\alpha_jv_j$
	parametrized by the points $\vv
	=(v_1,\cdots,v_J)\in V^J$.
	Notice that 
	the family of shifted norms
	$\varphi_w(x):=\|x-w\|_{V}$, $w\in V$, 
	is jointly convex.
	If the corresponding squared family 
	$\psi_z(x):=\frac 12 \|x-z\|^2_{V}$ with $z\in V$ 
	is jointly $1$-convex then $V$ is a Hilbert space. See \cite{jordan_inner_1935} for how the parallelogram law ensures that a Banach space is induced by an inner product, $\<x,y>=\frac{1}{4}(\norm{x+y}^2 - \norm{x-y}^2)$.
    \end{example} 
	\begin{example}[Non Positively Curved metric spaces]
    \label{ex:NPC}
     A geodesic space $(\XXX,\dX)$ is a nonpositively curved (NPC) space in the sense of Alexandrov if 
     the collection of functionals
     $x\mapsto 
     \frac 12\dX^2(x,z)$, $z\in \XXX$, 
     is jointly $1$-convex 
     with respect to the 
     geodesic system $\cG_2$
     (see Example~\ref{ex:convex1}).
     
     Equivalently, 
     for every 
     $z,x_0,x_1\in \XXX$ 
     there exists a geodesic $\sfx$ 
     joining $x_0$ to $x_1$ such that 
    \begin{align}\label{eq:cn}
        \dX^2(\sfx(t),z) \le (1-t)\dX^2(x_0,z) + t\dX^2(x_1,z) - t(1-t) \dX^2(x_0,x_1),
        \quad t\in [0,1].
    \end{align}
See \cite{bridson_metric_1999,Laokul_2020} for more details. 
	It is worth noticing 
    that in a NPC space 
	for every $\xx=
    (x_1,\cdots,x_J)\in \XXX^J$ 
	we can define
	\begin{displaymath}
		\bary x{\xx;\aalpha}:=
		\argmin_{x\in \XXX}
		\sum_{j=1}^J \alpha_j\dX^2(x,x_j);
	\end{displaymath}
	The unique minimizer defines a 
	$J$-barycentric map 
	and the collections $\cG$ 
    of all these maps
	is a barycentric system.
	The family
	$x\mapsto \frac 12\dX^2(x,z)$, $z\in \XXX$,
	is jointly $1$-convex 
    with respect to $\cG$ 
    for every choice of $z\in \XXX$.

    Similarly, if $\varphi:\XXX\to (-\infty,+\infty]$
    is geodesically $\kappa$-convex 
    with respect to $\cG_2$ 
    then it is barycentrically
    $\kappa$-convex with respect to $\cG$.
    \end{example}

	\begin{example}[Linear interpolation in Wasserstein spaces]
		Let $\XX=\mathscr P_p(\XXX),$ $p\ge 1,$ 
	$\XXX$ being a complete and separable metric space. We endow $\XX$ with the $p$-Wasserstein metric $\Wass_p$. As for the first example,
	we can consider the collection 
	$\barysystem J$ of maps parametrized by 
	$\mmu=(\mu_1,\cdots,\mu_J)\in \XX^J$:
	$\bary x{\mmu;\aalpha}=\sum_{j=1}^J\alpha_j \mu_j$.
	The family $\mu\mapsto \Wass_p(\mu,\nu)$, 
     $\nu\in \XX$, is
	jointly convex.
	\end{example}
	\begin{example}[Coupling interpolation in Wasserstein spaces]
	    Let us suppose that 
	$\XXX$ is a separable Banach space, with 
	 $\XX=\mathscr P_p(\XXX),$
	 we can consider a reference standard Borel probability space
	 $(\Omega,\mathfrak B,\P)$ with an atomless Borel probability measure $\P.$
	We introduce the collection 
	$\barysystem J$ of maps parametrized by 
	$\xX=(X_1,\cdots,X_J)\in (L^p(\Omega,\P,\XXX))^J$:
	$\bary x{\xX;\aalpha}=\bigg(\sum_{j=1}^J\alpha_j X_j\bigg)_\sharp \P$.
	A function which is 
	jointly convex with respect to $\barysystem{}$ is also called
	\textbf{totally convex}.
	\end{example}
	\begin{example}[Generalized geodesics]
    \label{ex:generalized-geo}
	    Let us now 
	consider $\XX=\PP_2(H)$, $H$ a Hilbert space.
	For a given reference measure
	$\bar \mu\in \XX$ we consider the sets 
	$$\cO_{\bar\mu,J}:=
	\Big\{(\bar X,X_1,X_2,\cdots,X_J)\in (L^2(\Omega,\P,H))^{J+1}\nc:
	\bar X_\sharp \P=\bar \mu,\ 
	(\bar X,X_j)_\sharp\P\text{ optimal in }\XX\Big\}.$$
	For every $\xX\in \cO_{\bar\mu,J}$ 
	we set 
	$\bary x{\xX;\aalpha}:=\bigg(\sum_{j=1}^J\alpha_j X_j\bigg)_\sharp \P$.
	The collection $\barysystem {\bar\mu}$
	(depending on the reference measure $\bar\mu$) of these maps
	provides a natural extension
	to the notion of generalized geodesics.
	Notice that 
	$\mu\mapsto \frac 12 \Wass_2^2(\mu,\bar\mu)$ is
	$1$-convex with respect to 
	$\barysystem {\bar\mu}$
	and 
	the examples considered in 
	\cite[Sec. 9.2]{AGS08}
	are also barycentrically convexlike 
	with respect to $\barysystem{\bar\mu}$ 
	for every choice of $\bar\mu$. Indeed, the barycentric map $\bary x{\xX;\aalpha}$ is the push-forward of the convex combination $\sum_{j}\alpha_j X_j$ of the optimal maps $X_j$, which range in a convex subset of $L^2(\Omega,\P,H)$. Since these energies lift to functionals of the map, their convexity along two-point interpolations---the case established in \cite[Sec.~9.2]{AGS08}---is convexity in the usual sense, and therefore extends to the barycenter $\sum_{j}\alpha_j X_j$ of arbitrarily many maps.
	\end{example}
    \begin{example}[NNCC spaces]\label{ex:NNCC}
        According to 
        \cite{Leger-Todeschi-Vialard25}
        a metric space $(\XXX,\dX)$ 
        endowed with the cost function 
        $\sfc(x,y):=\dX^2(x,y)$
        has nonnegative cross curvature (NNCC)
        if 
        for every $\bar x\in \XXX$
        the collection of functions
        \begin{displaymath}
            x\mapsto \dX^2(x,\bar x)-
            \dX^2(x,y),\quad y\in \XXX,
        \end{displaymath}
        are jointly convexlike
        (with respect to a common system $\mathcal{G}_{\bar x,2}$
        of $2$-barycentric maps called \emph{variational $c$-segments}, which may depend on $\bar x$, and function as generalized geodesics).
        It is worth noticing that 
        in NNCC spaces 
        the collection
        $x\mapsto \frac 12\dX^2(x,\bar x)$
        are jointly $1$-convex
        with respect to 
        $\cG_{\bar x,2}$.  The space of probability measures endowed with the spherical Hellinger metric is an example of an NNCC space; see Example~\ref{ex:convexity-spherical-Hellinger}.    
    \end{example} 
    \begin{example}[Barycentric interpolation in product spaces]
      Consider the product space $\XXX=\PP_2(\R^d) \times \R^d$. Such a space was used for the system describing the dynamics of strategic agents interacting with algorithms, studied in \cite{conger_strategic_2023}. We can define a different interpolation for the component of $\PP_2(\R^d)$ than that in $\R^d$, resulting many different interpolation options. We can select the barycentric maps defined in 6. for $\PP_2(\R^d)$ above with reference measure $\rho\in\PP_2(\R^d)$  and the barycentric maps defined in 1. for $\R^d$,
    \begin{align*}
        \bary x{\aalpha} :=\bigg(\sum_{j=1}^J \aalpha_j \baryc x j ^{(1)}\bigg)_\sharp \P \times  \sum_{j=1}^J \aalpha_j \baryc x j ^{(2)}\,,  \qquad (\baryc x j , \rho)_\sharp \P \text{ optimal in }\Wass_2\,.
    \end{align*}
    Then $x \mapsto \frac{1}{2}\Wass_2^2(x^{(1)},\rho)+\frac 12\norm{x^{(2)}}^2$ is 1-convex with respect to
    $\cB$ parameterized by $\baryname x$ above.
    Alternatively, we could select 4. for $\PP_2(\R^d)$ and 3. for $\R^d$,
    \begin{align*}
        \bary x{\aalpha} :=\sum_{j=1}^J \aalpha_j \baryc x j ^{(1)}\times  \argmin_{x^{(2)}\in\R^d}\sum_{j=1}^J \aalpha_j \norm{\baryc x j ^{(2)}-x^{(2)}}^2_V\,,
    \end{align*}
     where $\norm{\cdot}_V$ is any Euclidean norm. However, while this interpolant results in 1-convexity for the family $x^{(2)} \mapsto \frac12 \norm{x^{(2)}-\bar x^{(2)}}^2$ for any $\bar x^{(2)}\in\R^d$, the family $x^{(1)} \mapsto \Wass_2^2(x^{(1)},\rho)$ for any $\rho\in\PP_2(\R^d)$ is not 1-convex, which precludes the conditions required for taking $\tau\to 0$ in the movement scheme.
    \end{example}
	
Given a barycentric system $\cB$, the convexity of $\alpha \mapsto \dX^2(\baryname x(\alpha),\bar x)$ for any $\baryname x \in \cB$ will be critical for taking the time step to zero in Section~\ref{sec:soln_to_discrete_EVI}; since the choice of $\cB$ determines the notion of convexity, selection of the appropriate set of interpolants is important. Additionally,  the selection of $\cB$ changes the convexity parameter of energy functionals. This is illustrated in the following example.
\begin{example}[Selection of $\cB$ and Energies] 
    Consider the setting $\XXX=\PP_2(\R^d)$ endowed with $\Wass_2$ and energies $E_1(\rho),E_2(\rho):\PP_2(\R^d)\to \R\cup \{+\infty\}$ defined by 
    \begin{align*}
        E_1(\rho)=\int V(z)\d\rho(z)\,, \quad E_2(\rho)=\begin{cases}
            \int \rho(z)\log \rho(\d z) & \text{if } \rho\in L^1(\R^d)\,, \\
            +\infty & \text{otherwise}
        \end{cases}\,,
    \end{align*}
     with $V\in C^2(\R^d,\R)$ satisfying $\nabla^2 V \succeq \cvx \Id$. We set $\baryname x$ according to Wasserstein-2 geodesics $\cG_2$, and $\baryname y$ according to linear interpolation.
     Checking the convexity conditions with respect to each of these barysystems,
     \begin{align*}
         E_1(\baryname x (\alpha)) &\le \alpha_1 E_1(x_1) + \alpha_2 E_1(x_2) -\frac{\cvx}{2} \alpha_1 \alpha_2 \Wass_2^2(x_1,x_2) \\
         E_1(\baryname y(\alpha)) &= \alpha_1 E_1(y_1) + \alpha_2 E_1(y_2) \\
          E_2(\baryname x (\alpha)) &\le \alpha_1 E_2(x_1) + \alpha_2 E_2(x_2)  \\
           E_2(\baryname y(\alpha)) &\le \alpha_1 E_2(y_1) + \alpha_2 E_2(y_2) \,.
     \end{align*}
     If $\cvx>0$, then for $E_1$ $\baryname x$ has a "better" (more positive) barycentric convexity parameter; if $\cvx<0$, then since $\baryname y$ results in 0-convexity, it is a better choice. $E_1$ is barycentrically quasi-convexlike with respect to both $\baryname x$ and $\baryname y$. For $E_2$, geodesics and linear interpolations both result in 0-barycentric convexity. 
\end{example}

\subsubsection{Running Examples: Convexity properties}\label{sec:running-ex-convexity}
For each example, consider a bifunction $\bif:\DDD\times \DDD \to \R$ where $\DDD \subset \XXX$. 
\begin{itemize}
    \item \textbf{Example (I): Monotone operator.}
Let $\XXX=H$ be a Hilbert space and 
$\DDD$ be a convex subset of $H$.
Let 
$\bif(x,y):=\langle A(x),y-x\rangle$, where
$A:\DDD\to H$ is an operator.
 Then 
 $y \mapsto \skd yx=\<A(x),y-x>$ is jointly barycentrically $0$-convexlike for the 
 canonical choice of $\cB$
 given by Example \ref{ex:interp_vector_space} in Section~\ref{sec:convexity-ex}. In fact, barycentric convexity is a trivial thanks to the linearity in $y$; this is a direct consequence of the linear structure provided by a Hilbert space.
 
\item \textbf{Example (II): Single species gradient flow.} Let $\XXX=\PP_2(H)$ and 
let $\varphi:\DDD\subset\PP_2 \to \R$ be an energy functional.
Let 
$\bif(x,y):=\varphi(y)-\varphi(x)$, where
 $\varphi:\DDD\to \R$ is a $\kappa$-displacement convex function, in the sense of McCann~\cite{McCann97}. Then $y \mapsto \skd yx$ is barycentrically $\cvx$-convexlike for the 
  choice of $\cB$
 given by geodesics, as in Example~\ref{ex:convex1} in Section~\ref{sec:convexity-ex}. For a number of common choices of $\varphi$, such as  the potential energy, entropy, and nonlocal energy, convexity along geodesics implies convexity along generalized geodesics, as in Example~\ref{ex:generalized-geo} in Section~\ref{sec:convexity-ex}; see \cite[Section 9.3]{AGS08} for details. 
 
\item \textbf{Example (III): Min-max gradient flows.} Let $\XXX^{(i)}=\PP_2(\R^{d_i})$ with $\XXX=\PP_2(\R^{d_1}) \times \PP_2(\R^{d_2})$, and let $F:\DDD \times \DDD \to \R$ be an energy functional. For $\skd yx = F(y^{(1)},x^{(2)})-F(x^{(1)},y^{(2)})$,  $y \mapsto \skd yx$ is $\cvx$-barycentrically convexlike with respect to $\cB$ if $F(\cdot,x^{(2)})$ and $-F(x^{(1)},\cdot)$ are both $\cvx$-barycentrically convex with respect to $\cB$ for all $x \in \DDD$, where $\cB$ can be chosen similarly to Example (II).

\item \textbf{Example (IV): Multispecies gradient flows.} Let $\XXX^{(i)}=\PP_2(\R^{d_i})$ and recall $\XXX = \prod_{i=1}^N \XXX^{(i)}$. 
In the coupled gradient flow setting,  $\skd yx = \sum_{i=1}^N F^{(i)}(y^{(i)},x^{(-i)})-F^{(i)}(x^{(i)},x^{(-i)}))$; any $\kappa$-barycentrically convexlike $y \mapsto \skd yx$ implies that $F^{(i)}(\cdot, \rho^{(-i)})$ is $\kappa$-barycentrically convexlike for each species $i$.  In the Wasserstein-2 space, two different choices of $J=2$ barysystems result in classical types of convexity. Let $\Gamma_o(\mu,\nu)$ be the set of Wasserstein-2 optimal transport maps with marginals $\mu$ and $\nu$. If we select $\cB=\cG_2$ to be Wasserstein-2 geodesics, then the interpolant $\bary \rho \aalpha$ is given by
\begin{align*}
    \bary {\rho^{(i)}} \aalpha = [\aalpha_1 z_1 + \aalpha_2 z_2]_\sharp \gamma^{(i)}\,, \qquad \text{where } \gamma^{(i)}\in\Gamma_o(\baryc {\rho^{(i)}} 1,\baryc {\rho^{(i)}} 2) \quad \text{and } 1-\alpha_1 = \alpha_2\,.
\end{align*}
Then the convexity inequality $\rho \to \skd \rho \mu$ is
\begin{align*}
     &\sum_i F^{(i)}(\bary  {\rho^{(i)}} \aalpha ,\mu^{(-i)} ) \\
     &\qquad \qquad \le  \aalpha_1  \sum_i F^{(i)} (\baryc {\rho^{(i)}} 1,\mu^{(-i)}) + \aalpha_2 
     \sum_i F^{(i)}(\baryc {\rho^{(i)}} 2,\mu^{(-i)})- \frac\kappa2  \aalpha_1 \aalpha_2 \sum_i \Wass_2^2(\baryc {\rho^{(i)}} 1, \baryc {\rho^{(i)}} 2)\,,
\end{align*}
which follows from $\kappa$ displacement convexity of $F^{(i)}(\cdot,\mu^{(-i)})$ in the sense of McCann \cite{McCann97}.
Later, we will see in Section~\ref{sec:ex-VMS} that
one must consider convexity with respect to generalized geodesics for the generating bifunction $\sfB$, since the Wasserstein-2 distance function is not geodesically 1-convex. In this case, the barysystem will be dependent on a reference measure $\bar \rho$:
\begin{align*}
    &\bary {\rho^{(i)}} \aalpha = [\aalpha_1 z_1 + \aalpha_2 z_3]_\sharp \hat \gamma^{(i)}\,, \quad \alpha_1=1-\alpha_2\,, \\
    &\quad \text{where } \hat \gamma\in \PP_2((\R^d)^3 ) \text{ satisfies }  [z_1,z_2]_\sharp \hat \gamma^{(i)} \in \Gamma_o(\baryname \rho_1^{(i)},\bar \rho^{(i)})\,, \ [z_2,z_3]_\sharp \hat\gamma \in \Gamma_o(\bar \rho^{(i)},\baryname \rho_2^{(i)})\,.
\end{align*}
Convexity along generalized geodesics implies geodesic convexity \cite[Remark 9.2.8]{AGS08}; the reverse implication does not necessarily hold. It does hold, however, for common choices  such as  potential, self-interaction, and internal energy functionals; see \cite[Section 9.3]{AGS08} for details.

\end{itemize}
}

\subsection{Semicontinuity of functions}\label{sec:semicontinuity}

It is useful to recall that 
in a Hilbert space $\XXX$ 
a function 
$\varphi:\XXX \to (-\infty,+\infty]$ is upper hemicontinuous
if its restriction to segments is upper semicontinuous.
\begin{definition}
	\label{def:hemicontinuity}
	Let $\barysystem{}'$ be a collection of barycentric maps in $\DDD$
	(e.g.~a system $\barysystem J$ of $J$-barycentric maps or a system
	$\barysystem{}$ of barycentric maps, as in
	Definition~\ref{def:barycentric maps}).
	We say that $\varphi:\DDD\to(-\infty,+\infty]$ is upper
	hemicontinuous with respect to $\barysystem{}'$ if for every
	$\baryname x\in \barysystem{}'$ the composition
	$\varphi\circ\baryname x:
	\aalpha\mapsto \varphi(\bary x\aalpha)$
	is upper semicontinuous in its simplex of definition.
\end{definition}

\begin{definition}
	We say that a function $\varphi:\DDD\to
    (-\infty,+\infty]$ 
    has 	
	complete sublevels if
	for every $c\in \R$ the corresponding sublevel set 
	$\{x\in \DDD:\varphi(x)\le c\}$ 
	is complete (possibly empty). 
	In particular $\varphi$ is lower  semicontinuous.
	
	$\psi:\DDD \to [-\infty,+\infty)$ has complete superlevels 
	if
	for every $c\in \R$ the corresponding superlevel set 
	$\{x\in \DDD:\psi(x)\ge c\}$ 	is complete. 
	In particular $\psi$ is upper semicontinuous.
\end{definition}
\begin{remark}
	\label{rem:trivial}
	If $\XXX$ is complete
    and 
     $\DDD$ is closed, 
     then $\varphi$ has complete sublevels 
	(resp.~superlevels) iff $\varphi$ 
	is lower (resp.~upper) semicontinuous.

    If $\XXX$ is complete but 
    $\DDD$ is not closed, 
    these conditions are in general stronger than
    lower (resp.~upper) semicontinuity: in fact, 
    they are 
    respectively equivalent to the following property
    \begin{equation}
        \label{eq:complet-sub}
        \begin{aligned}
            x_n\in \DDD,\ x\in \XXX,\ x_n\to x
        \text{ as }n\to\infty,\ 
        \ \varphi(x_n)\le c<\infty 
        \quad
        &\Rightarrow
        \quad
        x\in \DDD,\ \varphi(x)\le c,\\
        x_n\in \DDD,\ x\in \XXX,\ x_n\to x
        \text{ as }n\to\infty,\ 
        \ \varphi(x_n)\ge c>-\infty
        \quad
        &\Rightarrow
        \quad
        x\in \DDD,\ \varphi(x)\ge c.
        \end{aligned}
    \end{equation}
\end{remark}
\begin{lemma}
	\label{le:obvious}
	If $\varphi_\alpha:\XXX\to (-\infty,+\infty]$, $\alpha\in \rmA$, is a collection of 
	functions with complete sublevels, also
	$\varphi:=\sup_{\alpha\in \rmA}\varphi_\alpha$ has complete sublevels.
	Similarly, if
	$\psi_\alpha:\XXX\to [-\infty,+\infty)$, $\alpha\in \rmA$, is a collection of 
	functions with complete superlevels, then
	$\psi:=\inf_{\alpha\in \rmA}\psi_\alpha$ has complete superlevels.
\end{lemma}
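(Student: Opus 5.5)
The plan is to reduce the statement to a direct manipulation of sublevel sets. For every $c\in\R$ we have the identity
\begin{displaymath}
\{x\in \XXX:\varphi(x)\le c\}=\bigcap_{\alpha\in\rmA}\{x\in\XXX:\varphi_\alpha(x)\le c\},
\end{displaymath}
since $\sup_\alpha\varphi_\alpha(x)\le c$ holds exactly when $\varphi_\alpha(x)\le c$ for every $\alpha$. If $\rmA$ is empty, the supremum is $-\infty$ and the sublevel set is all of $\XXX$. I would handle this degenerate case either by assuming $\rmA\neq\emptyset$, which is implicit since $\varphi$ must take values in $(-\infty,+\infty]$, or by simply remarking on it.

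Next I show that an arbitrary intersection of complete subsets of a metric space is complete. Take a Cauchy sequence $(x_n)$ in the intersection $S$. It lies in each $S_\alpha$, and each $S_\alpha$ is complete, so it converges to some $x^\alpha\in S_\alpha$. Limits in a metric space are unique, so all the $x^\alpha$ coincide with a single point $x$. This $x$ belongs to every $S_\alpha$, hence to $S$, and $x_n\to x$. Therefore $S$ is complete.

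The empty intersection can occur only if some sublevel set is empty, and the empty set is complete. So the only real subtlety is uniqueness of limits, which lets the common limit be identified across all $\alpha$. No step is a genuine obstacle.

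For the superlevel statement I apply the same argument to $\{\psi\ge c\}=\bigcap_\alpha\{\psi_\alpha\ge c\}$, which holds because $\inf_\alpha\psi_\alpha(x)\ge c$ if and only if $\psi_\alpha(x)\ge c$ for every $\alpha$. Equivalently, I could apply the first part to the family $-\psi_\alpha$.
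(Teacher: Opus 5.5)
Your proof is correct and follows the paper's own argument: the paper also writes each sublevel $\{\varphi\le c\}$ as $\bigcap_{\alpha}\{\varphi_\alpha\le c\}$ and concludes because an intersection of complete subsets is complete, a fact you additionally justify via uniqueness of limits. One remark is inaccurate but harmless: an empty intersection does not require some individual sublevel to be empty, and the empty set is complete in any case, so nothing in the argument depends on it.
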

\begin{proof}
	For every $c\in \R$ we have
	\begin{displaymath}
		\{x\in \XXX:\varphi(x)\le c\}=
		\bigcap_{\alpha\in \rmA}\{x\in \XXX:\varphi_\alpha(x)\le c\},
	\end{displaymath}
	so that all the sublevels are intersection of complete subsets of $\XXX$ 
	and are therefore complete as well. The same follows for $\psi$.
\end{proof}
If $\varphi:\XXX\to \overline\R=\R\cup\{\pm\infty\}$,
we set
\begin{equation}
	\Dom\varphi:=\big\{x\in \XXX:\varphi\in \R\big\}
\end{equation}
and we say that $\varphi$ is  \textbf{proper if $\Dom\varphi$ is not empty.}

\begin{proposition}
	\label{prop:minima}
	Let
	$\varphi: \DDD  \to (-\infty,+\infty] $ be a proper function
	with complete sublevels.
	If $\varphi$ is $\cvx$-convexlike
	for some $\cvx>0$ 
	with respect to some system $\barysystem 2$ of 
	$2$-barycentric maps in $\DDD$, 
	then 
	\begin{enumerate}
	\item there exists a unique minimizing point 
	$\bar x\in\DDD $ such that $\varphi(\bar x)=\inf_{x\in \DDD }\varphi(x)$;
	\item 
	every infimizing sequence
	$(x_n)_{n\in\N}$ in $\DDD $ converges to $\bar x$.
	\end{enumerate}
\end{proposition}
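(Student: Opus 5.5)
The plan is the classical argument that strong convexity forces every minimizing sequence to be Cauchy. Completeness of sublevels then supplies the limit, and lower semicontinuity identifies it as the minimizer. Set $m:=\inf_{\DDD}\varphi\in[-\infty,+\infty)$; since $\varphi$ is proper, $m<+\infty$.

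The key step compares two points $x,x'\in\DDD$ with finite values. Surjectivity of the evaluation map gives a map $\baryname x\in\barysystem2$ with $\baryc x1=x$ and $\baryc x2=x'$. Applying \eqref{eq:B2-convex} at $\aalpha=(\frac12,\frac12)$ yields
\begin{equation*}
m\le \varphi\big(\bary x{\tfrac12,\tfrac12}\big)\le \tfrac12\varphi(x)+\tfrac12\varphi(x')-\tfrac{\cvx}{8}\dX^2(x,x').
\end{equation*}
This inequality is first used to show $m>-\infty$; I expect this to be the main point needing care. Fix $x_0\in\Dom\varphi$ and take $x'=x$ arbitrary. Rearranging gives
\begin{equation*}
\tfrac{\cvx}{8}\dX^2(x,x_0)\le \tfrac12\varphi(x)+\tfrac12\varphi(x_0)-m,
\end{equation*}
which by itself does not bound $m$ from below. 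I would therefore argue by contradiction. If $m=-\infty$, choose $x_n$ with $\varphi(x_n)\le -2^n$. Apply the inequality with $x=x_n$, $x'=x_0$, and note that the midpoint $y_n:=\bary x{\tfrac12,\tfrac12}$ lies in $\DDD$ with $\varphi(y_n)\ge m$. That still does not close. The cleaner route is to iterate halving towards $x_0$ in a dyadic fashion, or to use general $\aalpha=(1-t,t)$ with $t$ small. This gives points $z$ with
\begin{equation*}
\varphi(z)\le (1-t)\varphi(x_0)+t\varphi(x_n)-\tfrac{\cvx}{2}t(1-t)\dX^2(x_0,x_n),
\end{equation*}
and I would hope to combine this with lower semicontinuity in some bounded region. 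Honestly, without an extra lower bound this step is the real obstacle. I would try to show, in the spirit of \cite{AGS08}, that the sublevel $\{\varphi\le\varphi(x_0)\}$ is bounded and that $\varphi$ is bounded below on it by a quadratic growth estimate, choosing $t=t_n$ so that $t_n\varphi(x_n)$ stays comparable to $-\dX^2(x_0,x_n)$.

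Granting $m\in\R$, let $(x_n)$ be infimizing with $\varphi(x_n)\le m+\eps_n$, $\eps_n\downarrow0$. The midpoint inequality gives
\begin{equation*}
\tfrac{\cvx}{8}\dX^2(x_n,x_k)\le \tfrac12(\eps_n+\eps_k),
\end{equation*}
so $(x_n)$ is Cauchy. All $x_n$ lie in the sublevel $\{\varphi\le m+\eps_1\}$, which is complete, so $x_n\to\bar x$ in that set. By lower semicontinuity, $\varphi(\bar x)\le\liminf\varphi(x_n)=m$, hence $\bar x$ is a minimizer.

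Uniqueness and claim 2 follow from the same estimate. If $\bar x,\bar x'$ are both minimizers, the midpoint inequality gives $\frac{\cvx}{8}\dX^2(\bar x,\bar x')\le 0$, so $\bar x=\bar x'$. For any infimizing sequence $(x_n)$, pairing $x_n$ with $\bar x$ gives $\frac{\cvx}{8}\dX^2(x_n,\bar x)\le\frac12(\varphi(x_n)-m)\to0$. Thus $x_n\to\bar x$.
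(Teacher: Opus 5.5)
Once $m\in\R$ is known, your argument is exactly the paper's. The midpoint estimate \eqref{eq:inf-estimate} makes infimizing sequences Cauchy, and a complete sublevel supplies the limit. Lower semicontinuity makes the limit a minimizer, and the same estimate gives uniqueness and claim~2.

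The genuine gap is the step you flagged yourself: $m>-\infty$. None of your sketched routes can close it. A $2$-barycentric map carries no information about \emph{where} $\bary x{(1-t),t}$ lies, and the paper explicitly imposes no continuity or metric control on these maps. So neither halving nor small $t$ moves you into a region where lower semicontinuity gives a bound.

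In fact the hypotheses as stated do not imply $m>-\infty$. Take $\DDD=\XXX=\R$ and $\varphi(x)=-x^2$, which is proper, continuous, and has closed, hence complete, sublevels. For each pair $(x_0,x_1)$ set $\bary x{1,0}=x_0$, $\bary x{0,1}=x_1$ and, for $t\in(0,1)$,
$\bary x{(1-t),t}:=\big((1-t)x_0^2+tx_1^2+\frac\cvx2t(1-t)|x_0-x_1|^2\big)^{1/2}$.
Then \eqref{eq:B2-convex} holds with equality for every $\cvx>0$, yet $\inf\varphi=-\infty$. In $\R^2$, with $\varphi=-|x|^2$ and a continuously rotating direction, the same construction works even with continuous maps.

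The paper does not derive this step from the stated hypotheses either. It imports claim~1 from \cite[Lemma 2.4.8]{AGS08}, whose setting provides metric information along the connecting curves.

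What is missing is therefore an extra assumption, not a cleverer estimate. One sufficient assumption is $\dX(x_0,x_t)\le t\,\dX(x_0,x)$, where $x_t:=\bary x{(1-t),t}$ along the map with $\baryc x1=x_0$, $\baryc x2=x$ used in the convexity inequality. This holds for geodesics. It is also implied by \eqref{eq:gg-dist-2} with base point $\bar x=x_0$, which gives $\dX^2(x_0,z_t)\le t^2\dX^2(x_0,x_1)$.

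With this assumption your small-$t$ idea works:
\begin{enumerate}
\item Fix $x_0\in\Dom\varphi$ and use lower semicontinuity to get $r>0$ with $\varphi\ge\varphi(x_0)-1$ on $\{\dX(\cdot,x_0)<r\}$.
\item For $D:=\dX(x_0,x)\ge r$, choose $t:=r/(2D)\le\frac12$. Then $x_t$ lies in that ball, so $\varphi(x_t)\ge\varphi(x_0)-1$.
\item The convexity inequality then yields
\begin{equation*}
\varphi(x)\ge\varphi(x_0)-\frac1t+\frac\cvx2(1-t)D^2\ge\varphi(x_0)-\frac{2D}{r}+\frac\cvx4D^2\ge\varphi(x_0)-\frac{4}{\cvx r^2}.
\end{equation*}
\end{enumerate}
Hence $m\ge\varphi(x_0)-\max\{1,4/(\cvx r^2)\}>-\infty$, after which the rest of your proof goes through unchanged.
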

\begin{proof}
	\textbf{1.} follows by \cite[Lemma 2.4.8]{AGS08}.
	To show \textbf{2.},
	let $\bar \varphi:=\inf_{x\in \DDD }\varphi(x)\in \R.$ Fix any $\eps>0$.
	We argue as in the proof of Lemma 2.4.8 \cite{AGS08}
	using the estimate
	\begin{equation}
		\label{eq:inf-estimate}
		\varphi(x_0)\le \bar\varphi+\eps,\quad
		\varphi(x_1)\le \bar\varphi+\eps\quad\Rightarrow\quad
		\dX^2(x_0,x_1)\le \frac {8\eps}\cvx\,, \quad \text{for all } x_0,x_1 \in \DDD\,,
	\end{equation}
    which holds due to the convexity inequality, for any $\baryname x \in \cB$ such that $x_0 =\bary x {\sfe_2^1}$ and $x_1 = \bary x {\sfe_2^2}$,
\begin{align*}
    \bar \varphi \le \varphi(\bary x {1/2,1/2}) \le \frac{1}{2}\varphi(x_0) + \frac{1}{2}\varphi(x_1) - \frac{\cvx}{8}\dX^2(x_0,x_1) \quad 
    \Rightarrow  \quad \frac{\cvx}{8}
    \dX^2(x_0,x_1)  \le \varepsilon \,.
\end{align*}
	\eqref{eq:inf-estimate} immediately shows that 
	an infimizing sequence
	satisfies the Cauchy condition and therefore converges to 
	a point $\bar x\in\DDD$ 
    (since the sequence belongs to a sublevel of $\varphi$, which is complete) and $\bar x$ is a minimizing point for $\varphi$ 
	thanks to its lower semicontinuity. 
    \eqref{eq:inf-estimate}
	also shows that minimizing points are unique.	
\end{proof}
\subsection{Metric and Dini derivatives}\label{sec:derivatives-def}
\begin{definition}
    Let $\sfu:(a,b)\to \XXX$ be a curve. The upper/lower right/left metric derivatives and the metric derivative
of $\sfu$ (if they exist) are defined by
\begin{equation}
	\label{eq:urmder}
	\begin{alignedat}{2}
		\urmder{\sfu}{t}:=
		{}&\limsup_{h\downarrow0}\frac{\dX(\sfu(t+h),\sfu(t))}h,&\quad
		\lrmder{\sfu}{t}:=
		{}&\liminf_{h\downarrow0}\frac{\dX(\sfu(t+h),\sfu(t))}h,\\
				\ulmder{\sfu}{t}:=
		{}&\liminf_{h\downarrow0}\frac{\dX(\sfu(t),\sfu(t-h))}h,&\quad
		\llmder{\sfu}{t}:=
		{}&\limsup_{h\downarrow0}\frac{\dX(\sfu(t),\sfu(t-h))}h,\\
			 \mder{\sfu}{t}:={}&\lim_{h\to0}\frac{\dX(\sfu(t+h),\sfu(t))}{|h|}.
	\end{alignedat}
\end{equation}
\end{definition}
A curve $\sfu:(a,b)\to \XXX$ is absolutely continuous if there exists a positive function 
$m\in L^1(a,b)$ such that 
\begin{equation}
	\label{eq:AC}
	\dX(\sfu(t),\sfu(s))\le \int_s^t m(r)\,\d r\quad\text{for every }a\le s\le t\le b.
\end{equation}
Let $\Leb 1$ be the Lebesgue measure on $\R$. We recall the following general properties:
\begin{itemize}
\item  
If $\sfu:(a,b) \to \XXX$ 
is absolutely continuous, then 
the limit \eqref{eq:urmder} defining its metric derivative $|\dot \sfu|$
exists $\Leb 1$-a.e.~in $(a,b)$ 
and $\mder\sfu t\le m(t)$ for $\Leb 1$-a.e.~$t\in (a,b)$.
In this case,
\begin{equation*}
    d_X(\sfu(t),\sfu(s))\le \int_s^t \mder{\sfu}{\tau} \d \tau \quad \text{for every }a\le s\le t\le b\,.
\end{equation*}
\item Any absolutely continuous curve can be reparametrized to become 1-Lipschitz.
\end{itemize}

\begin{definition}
    For a real function $\zeta:(a,b)\to \R$ and $t\in (a,b)$ 
we consider the upper and lower right/left Dini derivatives
\begin{equation}
	\label{eq:Dini}
	\begin{aligned}
\Urd\zeta(t):={}&\limsup_{h\down0}\frac{\zeta(t+h)-\zeta(t)}h,&
	\Lrd \zeta(t):={}&\liminf_{h\down0}\frac{\zeta(t+h)-\zeta(t)}h\\
	\Uld\zeta(t):={}&\limsup_{h\down0}\frac{\zeta(t)-\zeta(t-h)}h,&
	\Lld \zeta(t):={}&\liminf_{h\down0}\frac{\zeta(t)-\zeta(t-h)}h
	\end{aligned}
\end{equation}
\end{definition}

\begin{lemma}
	\label{le:ulDini}
	
    Let $\zeta:(a,b)\to [0,\infty)$ be continuous
	and let $\upsilon:(a,b)\to \R$ be 
    a function satisfying
    \begin{enumerate}
        \item $\upsilon(t)=0$
        in $Z:=\{t\in (a,b): \zeta(t)=0\}$,
        \item 
        $\upsilon$ is upper semicontinuous
        in $(a,b)\setminus Z$,
        \item $\upsilon$ is locally bounded from above in $(a,b).$
    \end{enumerate}
    If 
    \begin{equation}
        \label{eq:lower-Dini}
        \Lld \zeta(t)\le \upsilon(t)
        \quad \text{for every }t\in (a,b)\setminus Z,
    \end{equation}
	then $\upsilon$ is locally integrable in $(a,b)$
    and for every 
    $a_0\in (a,b)$ 
    the function
	\begin{equation}
		\label{eq:auxiliary}
		\tilde\zeta(t):=\zeta(t)-\int_{a_0}^t \upsilon(s)\,\d s
		\quad\text{is nonincreasing and continuous.}
	\end{equation}
    In particular 
    \eqref{eq:lower-Dini} 
    is equivalent to the 
    inequality with the upper left Dini derivative
     \begin{equation}
        \label{eq:upper-Dini}
        \Uld \zeta(t)\le \upsilon(t)
        \quad \text{for every }t\in (a,b),
    \end{equation}
    and to the distributional inequality
	\begin{equation}
		\label{eq:ddistributional}
		\frac\d{\dt}\zeta\le \upsilon \quad\text{in }\mathscr D'(a,b),
	\end{equation}
    where $\mathscr D'(a,b)$ is the space of distributions dual to $C_c^\infty((a,b),\R)$.
    
	Finally, if $\zeta$ is locally absolutely continuous 
	(in particular if $\zeta$ is locally Lipschitz) 
	then the above conditions are equivalent to
	\begin{equation}
		\frac\d{\dt}\zeta(t)\le \upsilon(t)\quad \text{for a.e. }t\in (a,b)
	\end{equation}
\end{lemma}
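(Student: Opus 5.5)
The plan is to reduce the statement to the classical fact that a continuous function whose lower left Dini derivative is bounded above by a locally integrable, upper semicontinuous majorant is dominated by the integral of that majorant. The set $Z$ needs separate treatment, since there $\upsilon$ vanishes but no Dini inequality is assumed.

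\textbf{Local integrability.} Because $\upsilon$ is locally bounded above, on each compact $[c,d]\subset(a,b)$ we have $\upsilon\le M$. Measurability follows from the structure: $Z$ is closed in $(a,b)$ by continuity of $\zeta$, $\upsilon=0$ on $Z$, and $\upsilon$ is u.s.c.\ (hence Borel) on the open set $(a,b)\setminus Z$. So $\int\upsilon$ is well defined in $[-\infty,\infty)$. Finiteness of $\int_c^d\upsilon_-$ will be obtained at the end, from monotonicity of $\tilde\zeta$ and $\zeta\ge0$. Indeed, $\tilde\zeta(d)\le\tilde\zeta(c)$ gives $\int_c^d\upsilon\ge \zeta(d)-\zeta(c)>-\infty$. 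To avoid circularity, I will first prove monotonicity for the truncations $\upsilon_k:=\max(\upsilon,-k)$, which are integrable, and then pass to the limit $k\to\infty$ by monotone convergence.

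\textbf{Monotonicity on components of $(a,b)\setminus Z$.} Fix $k$ and set $\tilde\zeta_k(t)=\zeta(t)-\int_{a_0}^t\upsilon_k$. On an interval $I\subset(a,b)\setminus Z$, I use the standard approximation: for $\eps>0$, pick a continuous $g\ge\upsilon_k$ on a compact subinterval with $\int(g-\upsilon_k)<\eps$. This is possible since $\upsilon_k$ is u.s.c.\ and bounded, so it is a decreasing limit of continuous functions. Then $G(t):=\zeta(t)-\int^t g-\eps t$ is continuous and satisfies $\Lld G(t)\le\upsilon(t)-g(t)-\eps<0$ at every point. A continuous function whose lower left Dini derivative is negative everywhere is nonincreasing; I would prove this by the usual sup/inf argument on the last point of failure. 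Hence $G$ is nonincreasing, and letting $\eps\to0$ shows $\tilde\zeta_k$ is nonincreasing on $I$.

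\textbf{Handling $Z$, the main obstacle.} The key observation is that $\zeta\ge0$ vanishes on $Z$, so $\zeta$ attains its minimum there. Take $s<t$. If $t\in Z$, then $\zeta(t)=0\le\zeta(s)$, and it remains to control $\int_s^t\upsilon_k$ from below. This would fail in general, since $\upsilon_k$ may be negative on components. So I will instead argue component by component and glue. Write $[s,t]\setminus Z$ as a countable union of intervals $(\alpha_m,\beta_m)$, with endpoints either in $Z$ or equal to $s,t$. On each component, monotonicity and continuity give $\zeta(\beta_m)-\zeta(\alpha_m)\le\int_{\alpha_m}^{\beta_m}\upsilon_k$. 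On $Z$, the increment contributed is zero and $\upsilon_k=0$. Summing requires a Cantor-type argument, because $\zeta$ could increase on $Z$ only through a singular part, and $\zeta$ is zero on $Z$. Concretely, define $\tilde\zeta_k$ and show it is nonincreasing by a sup argument: let $r=\sup\{r'\in[s,t]:\tilde\zeta_k(r')\le\tilde\zeta_k(s)\}$. If $r<t$ and $r\notin Z$, use the component result to move right. If $r\in Z$, then to the right of $r$ either $Z$ accumulates, where $\tilde\zeta_k(r')=-\int_{a_0}^{r'}\upsilon_k$ changes only by integrals over small components (each of which is nonincreasing), or a component starts at $r$. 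Both cases contradict maximality. This gluing step is where care is needed.

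Finally, the equivalences are routine. Monotonicity of $\tilde\zeta$ gives $\Uld\zeta\le\upsilon$ at points of $(a,b)\setminus Z$ by upper semicontinuity (an averaging argument). At points of $Z$ it gives $\Uld\zeta\le0=\upsilon$, since $\zeta(t)=0\le\zeta(t-h)$. The distributional inequality is equivalent to $\tilde\zeta$ being nonincreasing. The absolutely continuous case follows from differentiating $\tilde\zeta$ a.e.
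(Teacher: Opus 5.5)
Your argument is correct and follows essentially the route the paper has in mind (the paper only cites an adaptation of the classical Dini-derivative comparison lemma of Muratori--Savar\'e): truncate $\upsilon$, approximate it from above by continuous functions on each component of $(a,b)\setminus Z$ to get monotonicity there, and glue across $Z$ using $\zeta=0$ on $Z$. The gluing is simpler than you anticipate: $\upsilon_k=0$ on $Z$, and every component $(\alpha_m,\beta_m)$ of $(s,t)\setminus Z$ with both endpoints in $Z$ satisfies $\int_{\alpha_m}^{\beta_m}\upsilon_k\ge\zeta(\beta_m)-\zeta(\alpha_m)=0$, so countable additivity together with the (at most two) boundary components gives $\int_s^t\upsilon_k\ge\zeta(t)-\zeta(s)$ directly, and neither a Cantor-type consideration nor the supremum argument is needed.
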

\begin{proof}
	The proof 
	can be obtained as an adaptation of 
    \cite[Lemma A.1]{Muratori-SavareI}.
\end{proof}

\section{Equilibrium problems}\label{sec:equilibrium_problems}

Given a general bifunction $\sfa:\DDD\times\DDD\to\R$ defined on a subset $\DDD$ of a metric space $(\XXX,\dX)$, we can use it to define a primal and a dual equilibrium problem for elements in $\DDD$. In this section, we identify assumptions under which these equilibrium problems admit a solution. Our main application for the existence results presented in this section will be the generating bifunction $\sfB$ introduced in \eqref{introeq:generatingB}. Indeed, at each timestep $n$, the solution $X_\tau^n$ to the variational movement scheme (VMS) is precisely the solution to the primal equilibrium problem from Section~\ref{sec:primal} below, for the choice of $\sfa=\sfB$ in \eqref{introeq:generatingB} defined in terms of $\sfb$ that is driving \eqref{introeq:EVI}. And the solution to the dual variational inequality \eqref{introeq:dual} is precisely the solution to the dual equilibrium problem from Section~\ref{sec:dual} below, for the same choice $\sfa=\sfB$ in \eqref{introeq:generatingB}. In this way, the primal and dual equilibrium problems (Problem~\ref{prob:equilibrium-1} and Problem~\ref{prob:equilibrium-2}) are the key tools for ensuring existence of solutions to the discrete $\bif$-EVI
\eqref{introeq:bEVI}. This approach is strongly inspired
by the exposition of \cite{Baiocchi-Capelo84}. 
We can also apply the results in this section to the bifunction $\sfa=\sfb$ directly, thus providing a solution to the static game defined by $\sfb$.

Showing 
existence of solutions to the primal formulation (Section~\ref{sec:primal}) relies on the notion of barycentric quasi-convexity and
on compactness. Existence of solutions to the dual formulation (Section~\ref{sec:dual}) on the other hand requires the bifunction to also satisfy an interaction dissipativity condition and stronger barycentric convexity, however no compactness assumption is needed to solve the dual formulation. In Section~\ref{sec:saddle}, we discuss the case of antisymmetric bifunctions $\sfa$; in this case the primal and dual formulations are equivalent. Finally, in Section~\ref{sec:equilibria-ex}, we illustrate settings in which the primal and dual equilibrium problems can be solved.

\renewcommand{\bif}{\mathsf a}

\medskip 
We will always assume that $\bif$ is nonnegative on the diagonal:
\begin{equation}
	\label{eq:diagonal}
	\bif(x,x)\ge0\quad\text{for every }x\in \DDD.
\end{equation}

\subsection{Primal Problem}\label{sec:primal}
Let us first study a one-sided equilibrium problem induced by a bifunction $\bif:\DDD\times \DDD \to\R$.
\begin{problem}[Primal Equilibrium]
	\label{prob:equilibrium-1}
	We say that $x^*\in\DDD$
	is a solution of the Primal Equilibrium Problem if 
	\begin{equation}
	\label{eq:equilibrium}
	\tag{EP}
		\bif(y,x^*)\ge 0
		\quad \text{for every $y\in \DDD$.}
\end{equation}
\end{problem}
\begin{theorem}[Existence of primal equilibria]\label{thm:primal_soln}
    Let us suppose $\bif:\DDD \times \DDD \to \R$ satisfies \eqref{eq:diagonal} and that 
	\begin{enumerate}
		\item
		the functions $y\mapsto \bif(y,x)$, $x\in \DDD$,
		are jointly barycentrically quasi-convexlike with respect to
		a system of
		barycentric maps $\barysystem{}$ in $\DDD;$
		\item
		for every $y\in \DDD$ the function $x\mapsto \bif(y,x)$ is
		upper hemicontinuous with respect to $\barysystem{}$
		(Definition~\ref{def:hemicontinuity});
		\item
		the superlevels $\sigma^+(y):=\{x\in \DDD:
		\bif(y,x)\ge 0\}$ are
		closed in $\DDD$ and at least one of them is compact.
	\end{enumerate}
	Then Problem \ref{prob:equilibrium-1}
	has a solution.
\end{theorem}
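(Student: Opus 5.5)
The plan is a KKM-type argument: by compactness, it is enough to show that the closed sets $\sigma^+(y)$, $y\in\DDD$, have the finite intersection property. The KKM lemma on the simplex $\simplex J$ replaces the missing linear structure. Indeed, if $\bigcap_{y\in\DDD}\sigma^+(y)=\emptyset$, fix $y_0$ with $\sigma^+(y_0)$ compact. The open sets $\DDD\setminus\sigma^+(y)$ cover $\sigma^+(y_0)$, so finitely many suffice. Hence $\sigma^+(y_0)\cap\sigma^+(y_1)\cap\dots\cap\sigma^+(y_m)=\emptyset$ for some $y_1,\dots,y_m$. So it suffices to prove that for every finite family $x_1,\dots,x_J\in\DDD$ we have $\bigcap_{j=1}^J\sigma^+(x_j)\neq\emptyset$. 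Any point in this intersection is an $x^*$ with $\bif(x_j,x^*)\ge0$ for all $j$, and the compactness argument then upgrades this to all $y\in\DDD$.

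Fix $x_1,\dots,x_J$. By surjectivity of the evaluation map, choose $\baryname x\in\barysystem J$ with $\baryc xj=x_j$. Define the closed subsets of the simplex
\begin{displaymath}
	C_j:=\{\aalpha\in\simplex J:\ \bif(x_j,\bary x\aalpha)\ge0\}.
\end{displaymath}
Each $C_j$ is a superlevel set $\{\aalpha: \bif(x_j,\bary x\aalpha)\ge 0\}$ of an upper semicontinuous function on $\simplex J$, by hypothesis 2 (upper hemicontinuity of $x\mapsto\bif(x_j,x)$ along $\baryname x$). Hence each $C_j$ is closed in the compact set $\simplex J$.

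Next I verify the KKM covering condition: for every index set $I\subset\{1,\dots,J\}$, the face $\operatorname{co}\{\sfe_J^i:i\in I\}$ is contained in $\bigcup_{i\in I}C_i$. Take $\aalpha$ in that face, so $\alpha_j>0$ only for $j\in I$, and set $z:=\bary x\aalpha$. By the diagonal property \eqref{eq:diagonal}, $\bif(z,z)\ge0$. By joint barycentric quasi-convexity (hypothesis 1) of $y\mapsto\bif(y,z)$ along $\baryname x$,
\begin{displaymath}
	0\le\bif(z,z)=\bif(\bary x\aalpha,z)\le\max\{\bif(x_j,z):\alpha_j>0\}.
\end{displaymath}
So some $i\in I$ satisfies $\bif(x_i,z)\ge0$, that is, $\aalpha\in C_i$. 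The classical KKM lemma on $\simplex J$ then gives $\bar\aalpha\in\bigcap_jC_j$, and $\bary x{\bar\aalpha}\in\bigcap_j\sigma^+(x_j)$.

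The main subtlety is the use of the common system in hypothesis 1. The inequality above needs the quasi-convexity of $\bif(\cdot,z)$ along the specific map $\baryname x$, with second argument $z=\bary x\aalpha$ depending on that very map. This is exactly why joint quasi-convexity with respect to one system, valid for all second arguments simultaneously, is required. The other point to watch is that only semicontinuity of the composition $\aalpha\mapsto\bif(x_j,\bary x\aalpha)$ is available, not continuity of $\baryname x$ itself. This is enough because KKM only needs the $C_j$ to be closed in $\simplex J$.
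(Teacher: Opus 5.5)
Your proposal is correct and follows essentially the same route as the paper. Both arguments build closed sets $C_j$ in the simplex from the upper hemicontinuity of $x\mapsto\sfa(x_j,x)$ along a barycentric map interpolating $x_1,\dots,x_J$. Both then verify the KKM covering property from the diagonal condition \eqref{eq:diagonal} together with joint quasi-convexity evaluated at the point $z=\bary x\aalpha$ itself, and apply the KKM lemma to get a point of $\bigcap_j\sigma^+(x_j)$. The differences are cosmetic: you check the covering directly rather than by contradiction, and you write out the compactness reduction to the finite intersection property, which the paper leaves implicit.
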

Notice that Condition 2 holds, in particular, when the maps of
$\barysystem{}$ are continuous and, for every $y\in \DDD$, the function
$x\mapsto \bif(y,x)$ is upper semicontinuous.
To show this theorem, we will make use of the Knaster-Kuratowski-Mazurkiewicz (KKM) Lemma.
\begin{lemma}[KKM Lemma \cite{knaster1929beweis}] 
\label{le:KKM}
    Let $\simplex J$ be the $J-1$-dimensional simplex with $J$ vertices and let $C_1,\dots,C_J \subset \simplex J$ be a closed 
    KKM covering of $\simplex J$, i.e.~satisfying 
    \begin{equation}
        \label{eq:KKM}
        \operatorname{co}(\sfe_J^{k}:k\in B)\subset 
        \bigcup_{k\in B} C_{k}\quad\text{for every subset } 
        B\subset \{1,\dots,J\}.
    \end{equation}
    Then $\cap_{j=1}^J C_j \neq \emptyset$.
\end{lemma}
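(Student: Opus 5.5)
The plan is to argue by contradiction and reduce the statement to Brouwer's fixed point theorem on the simplex $\simplex J$, which is homeomorphic to a closed ball. Assume that $\bigcap_{j=1}^J C_j=\emptyset$. For $j=1,\dots,J$ set
\begin{displaymath}
d_j(\aalpha):=\operatorname{dist}(\aalpha,C_j),\qquad \aalpha\in\simplex J,
\end{displaymath}
where the distance is Euclidean in $\R^J$. If some $C_j$ were empty, then \eqref{eq:KKM} with $B=\{j\}$ would fail, since $\sfe_J^j$ must belong to $C_j$. So every $C_j$ is nonempty and closed, and each $d_j$ is a continuous (indeed $1$-Lipschitz) function with $d_j(\aalpha)=0$ if and only if $\aalpha\in C_j$.

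Since the intersection is empty, every $\aalpha\in\simplex J$ lies outside at least one $C_j$. Hence $D(\aalpha):=\sum_{k=1}^J d_k(\aalpha)>0$ on $\simplex J$, and the map
\begin{displaymath}
f(\aalpha):=\frac{1}{D(\aalpha)}\sum_{j=1}^J d_j(\aalpha)\,\sfe_J^j
\end{displaymath}
is well defined and continuous. Its coefficients are nonnegative and sum to $1$, so $f$ maps $\simplex J$ into itself. By Brouwer's theorem there is $\aalpha^*\in\simplex J$ with $f(\aalpha^*)=\aalpha^*$.

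Now let $B:=\{j: d_j(\aalpha^*)>0\}$; it is nonempty because $D(\aalpha^*)>0$. The fixed point identity gives $\alpha^*_j=d_j(\aalpha^*)/D(\aalpha^*)$, so $\alpha^*_j>0$ exactly when $j\in B$. Therefore $\aalpha^*\in\operatorname{co}(\sfe_J^k:k\in B)$. The covering condition \eqref{eq:KKM} then yields some $k\in B$ with $\aalpha^*\in C_k$, i.e.\ $d_k(\aalpha^*)=0$. This contradicts $k\in B$, and hence $\bigcap_{j=1}^J C_j\neq\emptyset$.

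The only nontrivial input is Brouwer's theorem. The step that needs care is choosing $f$ so that the fixed point automatically lies in the face spanned by the indices with $d_j>0$; the normalized map above is designed for exactly this. As an alternative, fully combinatorial route, one could apply Sperner's lemma to finer and finer triangulations of $\simplex J$, labelling each vertex $v$ by an index $j$ with $v\in C_j$ and $v_j>0$ (such a $j$ exists by \eqref{eq:KKM} applied to the support of $v$). Extracting a convergent subsequence of fully labelled cells and using closedness of the $C_j$ then gives a point in every $C_j$.
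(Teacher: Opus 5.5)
Your proof is correct. The paper gives no argument of its own for this lemma; it simply quotes the 1929 result of Knaster, Kuratowski and Mazurkiewicz. That original argument runs in the opposite logical order to your main route: the covering statement is obtained combinatorially from Sperner's lemma, essentially your alternative sketch, and Brouwer's fixed point theorem is then deduced from it.

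Your main argument instead takes Brouwer as the black box and obtains KKM in a few lines through the normalized distance map $f$. The delicate point is handled correctly: the fixed point $\aalpha^*$ has support exactly $B=\{j: d_j(\aalpha^*)>0\}$, so \eqref{eq:KKM} applies on the face spanned by $B$ and forces $d_k(\aalpha^*)=0$ for some $k\in B$. So is the preliminary check that $\sfe_J^j\in C_j$, which makes each $d_j$ well defined.

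What each route buys:
\begin{itemize}
\item The Brouwer route is shorter and is the natural choice when Brouwer is already available.
\item The Sperner route is elementary and self-contained. It is what lets the 1929 paper derive Brouwer from KKM without circularity.
\end{itemize}

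Your labelling rule is a genuine Sperner labelling: you pick $j$ with $v\in C_j$ and $v_j>0$, and such a $j$ exists by \eqref{eq:KKM} applied to the support of $v$. The passage to the limit via compactness of the simplex and closedness of the $C_j$ is the standard one. Either argument fully suffices for how the lemma is used in the proofs of Theorem~\ref{thm:primal_soln} and Theorem~\ref{thm:equilibrium2}.
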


\begin{proof}[Proof of Theorem~\ref{thm:primal_soln}]
	It is sufficient to check that the collection
	of 
	sets $\sigma^+(y)$, $y\in \DDD$,
	have the finite intersection property.
	Let $y_1,\cdots,y_J\in \DDD$,
	let $\baryname y \in \barysystem{J}$
	such that $y_k=\bary y{\sfe_J^k}$,
	and consider the 
	sets 
	$$\cS(k)\subset \simplex J,\quad 
	\cS(k):=
	\Big\{\bbeta\in \simplex J:
	\bif( y_k,\bary y\bbeta )\ge0\Big\}.$$
	Thanks to Condition 2, the compositions
	$\bbeta\mapsto \bif(y_k,\bary y\bbeta)$ are upper semicontinuous, so
	that every $\cS(k)$ is closed. \nc
	Let us check that $\{\cS(k)\}_{k=1}^J$ satisfies the assumption of the 
	KKM lemma (see Lemma \ref{le:KKM} above):
	if  $B$
    is a subset of 
	 $\{1,\cdots, J\}$
	we need to prove that 
	$\operatorname{co}(\sfe_J^{k}:k\in B
    )\subset 
	\cup_{k\in B} \cS(k)$.
	By contradiction, if 
	$\simplex J\ni \aalpha =
	\sum_{k=1}^{ J} \alpha_k \sfe_J^{k}
	\not\in \cup_{k\in B} \cS(k)$, 
	for some 
	coefficients 
	$\alpha_k$ with $\alpha_k=0$ if $k\not\in B$, 
	then 
	$\bif ( y_{k},\bary y\aalpha )<0$
	for every $k\in B$
	and therefore, since
	$y\mapsto \bif( y,\bary y\aalpha )$ is
	quasi-convexlike 
    with respect to
	$\barysystem J$, 
	$\bif(\bary y\aalpha,\bary y\aalpha)
	\le \max_{k\in B}
	\bif (y_{k},\bary y\aalpha)<0$,
	a contradiction with \eqref{eq:diagonal}. Hence, $\{\cS(k)\}_{k=1}^J$ is a KKM covering and there exists $\beta^*\in \cap_{k=1}^J \cS(k)$. Let $x^*=
    \bary y {\beta^*} $. 
    Then $
    \bif ({y_k}, {x^*})  \ge 0$ for all $k\in [1,\dots,J]$,
    i.e.~$x^*\in \cap_{k=1}^J \sigma^+(y_k).$
\end{proof}
\begin{remark}
	\label{rem:coercive-primal}
    If $\sfa$ is $\xi$ radially convex according to Definition~\ref{def:radial-convexity} and $\sfa(x^*,x^*)=0$, then
	the primal inequality \eqref{eq:equilibrium} improves to a coercive
	one: any solution $x^*$ of Problem~\ref{prob:equilibrium-1} satisfies
	\begin{equation}
		\label{eq:coercive-primal-remark}
		\bif(y,x^*)\ge \frac\xi2\,\dX^2(y,x^*)
		\qquad\text{for every }y\in\DDD.
	\end{equation}
	Indeed, taking \eqref{eq:radial-convex} with $x=x^*$, using
	$\bif(x^*,x^*)=0$ and $\bif(z_t,x^*)\ge0$ (since $z_t\in\DDD$ and $x^*$
	solves \eqref{eq:equilibrium}), we get
	$0\le t\,\bif(y,x^*)-\frac\xi2\,t(1-t)\,\dX^2(y,x^*)$ for $t\in(0,1]$;
	dividing by $t$ and letting $t\downarrow0$ gives
	\eqref{eq:coercive-primal-remark}.
\end{remark}

\subsection{Dual problem for $\eta$-interaction dissipative bifunctions}\label{sec:dual}
We want to study now the relation between 
Problem \ref{prob:equilibrium-1} 
and its dual formulation.
\begin{problem}[Dual Equilibrium]
	\label{prob:equilibrium-2}
	We say that $y_*\in \DDD$
	is a solution of the Dual Equilibrium Problem if 
	\begin{equation}
	\label{eq:equilibrium'}
	\tag{EP'}
		\bif(y_*,x)\le 0
		\quad \text{for every $x\in \DDD$}.
\end{equation}
\end{problem}
Before showing  existence of solutions to the Dual Equilibrium Problem~\eqref{eq:equilibrium'}, we investigate under which conditions the primal and the dual problem are equivalent. 
\begin{remark}
	\label{rem:saddle}
	$x_*=x^*$ is a joint solution
	of the two Equilibrium Problems
	\ref{prob:equilibrium-1} and
	\ref{prob:equilibrium-2} 
    if and only if
	$x_*$ is a diagonal saddle point of $\bif$, i.e.
	\begin{equation}
	\label{eq:saddle}
	\tag{Saddle}
		\bif(y,x_*)\ge 0=\bif(x_*,x_*)\ge \bif(x_*,x)
		\quad \text{for every $x,y\in \DDD$.}
\end{equation}
Let us observe that we are looking for saddle points of $\bif$ 
on the diagonal of $\DDD\times \DDD$.
Therefore, if we introduce the marginal functions
\begin{equation}
	\label{eq:marginals}
	f(x):=\inf_{y\in \DDD}\bif(y,x),\quad
	g(y):=\sup_{x\in \DDD}\bif(y,x)
\end{equation}
which satisfy
\begin{equation}
	\label{eq:minmax1}
	\max_{x\in \DDD}f(x)=\max_{x\in \DDD}
	\inf_{y\in\DDD}\bif(y,x),\quad
	\min_{y\in \DDD}g(y)=\min_{y\in \DDD}
	\sup_{x\in\DDD}\bif(y,x),
\end{equation}
it is immediate to observe that 
\eqref{eq:diagonal} yields
\begin{equation}
	\label{eq:lu-bounds}
	g(y)\ge 0\ge f(x)\quad\text{for every }x,y\in \DDD.
\end{equation}
\eqref{eq:saddle} is thus equivalent to
\begin{equation}
	\label{eq:saddle2}
	g(x_*)=f(x_*).
\end{equation}
\end{remark}
The link between the primal and the dual formulation rests on a
dissipativity property of $\bif$: 
\begin{definition}\label{def:int-dissipative}
Given $\eta\in\R$, we say that $\bif$
is \emph{$\eta$-interaction dissipative} if
\begin{equation}
	\label{eq:bif-monotone}
	\bif(x,y)+\bif(y,x)\le \eta \dX^2(y,x)\quad\text{for every }
	x,y\in \DDD.
\end{equation}  
\end{definition}
Together with the standing assumption \eqref{eq:diagonal}, $\eta$-interaction dissipativity implies $\sfa(x,x)=0$ for all $x\in\DDD$. Hence, a vanishing diagonal is the standing assumption in Section~\ref{sec:soln_to_discrete_EVI}, see \eqref{eq:b-regular}.
When $\eta=0$, the passage from the primal to the dual problem is
immediate: if $x^*$ solves \eqref{eq:equilibrium}, then
\begin{displaymath}
	\bif(x^*,x)\le -\bif(x,x^*)\le 0\quad\text{for every }x\in \DDD,
\end{displaymath}
so that $x^*$ also solves \eqref{eq:equilibrium'}; the case of a
general $\eta$ will be discussed in
Proposition~\ref{prop:primal-implies-dual} below.
The 
following metric version of Minty's trick
shows conditions under which (dual $\Rightarrow$ primal), i.e. under which solutions
to Problem \ref{prob:equilibrium-2} 
are also solution to 
Problem~\ref{prob:equilibrium-1}.
\begin{lemma}[A metric Minty's lemma]
	\label{le:metric-Minty}
	Let
    $y_*$ be a solution
    of the dual equilibrium problem 
    \eqref{eq:equilibrium'}
    and let us suppose that 
	\begin{enumerate}
		\item [(1)] 
		the functions $y\mapsto \bif(y,x)$, $x\in \DDD$, 
		are jointly convexlike with respect to
		a 
        system of 
		$2$-barycentric maps $\barysystem{2}$ in $\DDD.$
		\item [(2)] for every $y\in \DDD$ the function $x\mapsto \bif(y,x)$ is
        upper hemicontinuous with respect to $\barysystem{2}$.
	\end{enumerate}
	Then $y_*$ is 
	also a solution of 
	Problem \ref{prob:equilibrium-1}.
\end{lemma}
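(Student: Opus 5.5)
We follow the classical Minty argument: test the dual inequality at points of a barycentric curve running from $y_*$ toward an arbitrary $y$, use convexity in the first variable to isolate $\bif(y,\cdot)$, and then let the curve parameter tend to $0$ using upper hemicontinuity. Note that the statement has no interaction dissipativity assumption. Only the diagonal condition \eqref{eq:diagonal}, joint convexlikeness and hemicontinuity are used. The convexity constant $\cvx$ in (1) may be any real number, since the quadratic term disappears in the limit.

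Fix $y\in\DDD$. By surjectivity of the evaluation map, choose $\baryname x\in\barysystem 2$ with $\baryc x1=y_*$ and $\baryc x2=y$. For $t\in[0,1]$ set $z_t:=\bary x{(1-t,t)}$, so $z_0=y_*$ and $z_1=y$.

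The first step tests the dual inequality \eqref{eq:equilibrium'} at $x=z_t$, which gives $\bif(y_*,z_t)\le0$. The second step applies joint convexlikeness (1) to the function $w\mapsto\bif(w,z_t)$ along the same map $\baryname x$. Since the family is jointly convexlike with respect to the common system $\barysystem 2$, this gives
\begin{displaymath}
0\le \bif(z_t,z_t)\le (1-t)\,\bif(y_*,z_t)+t\,\bif(y,z_t)-\frac\cvx2 t(1-t)\dX^2(y_*,y).
\end{displaymath}
The first inequality is \eqref{eq:diagonal}. Dropping the nonpositive term $(1-t)\bif(y_*,z_t)$ and dividing by $t>0$ yields
\begin{displaymath}
\bif(y,z_t)\ge \frac\cvx2(1-t)\dX^2(y_*,y)\quad\text{for every }t\in(0,1].
\end{displaymath}

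The third step passes to the limit. By (2), the map $\aalpha\mapsto\bif(y,\bary x\aalpha)$ is upper semicontinuous on $\simplex2$. Letting $t\downarrow0$ gives
\begin{displaymath}
\bif(y,y_*)\ge\limsup_{t\downarrow0}\bif(y,z_t)\ge \frac\cvx2\dX^2(y_*,y).
\end{displaymath}

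If $\cvx\ge0$, this immediately gives $\bif(y,y_*)\ge0$; in fact it is the coercive form analogous to Remark~\ref{rem:coercive-primal}. The main obstacle is the case $\cvx<0$, where the right-hand side is negative and the estimate does not close. This is where the quantifier in (1) must be checked carefully. "Convexlike" without a constant is meant in Definition~\ref{def:convexlike} with $\cvx=0$, and then the bound $\bif(y,y_*)\ge0$ follows. So we read hypothesis (1) as $0$-convexlikeness, or any $\cvx\ge0$. If a negative constant were intended, the argument above would only give $\bif(y,y_*)\ge\frac{\cvx}2\dX^2(y_*,y)$, and we would state the lemma in that weaker form. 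Since $y\in\DDD$ was arbitrary, $y_*$ solves \eqref{eq:equilibrium}.
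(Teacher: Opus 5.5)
Your proof is correct and follows essentially the same route as the paper: you test the dual inequality at $z_t$, combine it with the diagonal condition and convexlikeness in the first variable to get $\mathsf a(y,z_t)\ge 0$ (or your sharper bound $\frac\cvx2(1-t)\dX^2(y_*,y)$), and pass to the limit $t\downarrow0$ by upper hemicontinuity at $t=0$. Your reading of ``convexlike'' as $0$-convexlike matches the paper, whose proof uses the convexity inequality without a quadratic term; in the places where the lemma is applied the constant is positive.
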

\begin{proof}
	Let $y_*$ be a solution to 
	Problem \ref{prob:equilibrium-2}, let $y\in \DDD$
	and let $\baryname x\in \barysystem 2$
	satisfying 
	$\baryc x1=y_*$, $\baryc x2=y$.
	We set $x_t:=\bary x{(1-t),t}$.
	Since $y_*$ is a solution to Problem \ref{prob:equilibrium-2}
	we have
	\begin{displaymath}
		\bif(y_*,x_t)\le 0.
	\end{displaymath}
	On the other hand \eqref{eq:diagonal}
	yields 
	\begin{displaymath}
		\bif(x_t,x_t)\ge 0.
	\end{displaymath}
	Since the functions $y\mapsto \bif(y,x)$ are jointly convexlike,
	applying \eqref{eq:B2-convex} to $\baryname x$ with $\aalpha=(1-t,t)$
	gives
	\begin{displaymath}
		\bif(x_t,x_t)\le (1-t)\,\bif(y_*,x_t)+t\,\bif(y,x_t).
	\end{displaymath}
	Since $\bif(x_t,x_t)\ge0$, $\bif(y_*,x_t)\le0$ and $1-t>0$, we obtain
	$0\le t\,\bif(y,x_t)$, hence $\bif(y,x_t)\ge0$.
	Passing to the limit as $t\down0$ and 
	using the upper-semicontinuity of 
	$t\mapsto \bif(y,x_t)$ at $t=0$ we 
	obtain $\bif(y,y_*)\ge0$.
\end{proof}

Notice that the proof just requires 
upper semicontinuity of $t\mapsto \bif(y,\baryname x(1-t,t))$ at $t=0$.

\par\medskip
The coercive form of Remark~\ref{rem:coercive-primal} yields the converse
implication (primal $\Rightarrow$ dual) for every
$\eta$-interaction dissipative bifunction. Indeed, we will see in the next proposition that when $\eta\le0$, it holds without any
convexity assumption; when $\eta>0$ it requires the radial convexity
condition \eqref{eq:radial-convex} of Definition~\ref{def:radial-convexity} with $\xi\ge 2\eta$.
\begin{proposition}[Primal equilibria are dual]
	\label{prop:primal-implies-dual}
	Suppose $\bif$ satisfies the $\eta$-interaction dissipativity
	\eqref{eq:bif-monotone}, and let $x^*\in\DDD$ be a solution of the
	primal problem \eqref{eq:equilibrium}.
	\begin{enumerate}
		\item[(i)] If $\eta\le0$, then $x^*$ is also a solution of the
		dual problem \eqref{eq:equilibrium'}.
		\item[(ii)] If $\eta>0$ and, in addition, $\bif$ satisfies the
		radial convexity \eqref{eq:radial-convex} with $\xi\ge 2\eta$,
        then $x^*$
		is also a solution of the dual
		problem \eqref{eq:equilibrium'}.
	\end{enumerate}
	In either case $x^*$ is a diagonal saddle point \eqref{eq:saddle}.
\end{proposition}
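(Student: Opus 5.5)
The plan is to combine the dissipativity inequality \eqref{eq:bif-monotone} with a lower bound on $\bif(x,x^*)$ coming from the primal inequality. In both cases we fix an arbitrary $x\in\DDD$ and write
\begin{displaymath}
	\bif(x^*,x)\le \eta\,\dX^2(x,x^*)-\bif(x,x^*).
\end{displaymath}
It then suffices to show $\bif(x,x^*)\ge \eta\,\dX^2(x,x^*)$.

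For (i), with $\eta\le 0$, this is immediate. Since $x^*$ solves \eqref{eq:equilibrium}, we have $\bif(x,x^*)\ge0\ge\eta\,\dX^2(x,x^*)$, hence $\bif(x^*,x)\le 0$. No convexity is used, exactly as in the $\eta=0$ computation preceding Lemma~\ref{le:metric-Minty}.

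For (ii), with $\eta>0$, we need the coercive improvement of Remark~\ref{rem:coercive-primal}. First check its hypothesis $\bif(x^*,x^*)=0$. The standing assumption \eqref{eq:diagonal} gives $\bif(x^*,x^*)\ge0$, and \eqref{eq:bif-monotone} with $x=y=x^*$ gives $2\bif(x^*,x^*)\le0$; hence the diagonal vanishes, as already observed after Definition~\ref{def:int-dissipative}. Remark~\ref{rem:coercive-primal} with the radial convexity \eqref{eq:radial-convex} then yields $\bif(y,x^*)\ge\frac\xi2\dX^2(y,x^*)$ for all $y$. Since $\xi\ge2\eta$, this gives $\bif(x,x^*)\ge\eta\,\dX^2(x,x^*)$, so $\bif(x^*,x)\le0$ for every $x$. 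Thus $x^*$ solves \eqref{eq:equilibrium'}.

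Finally, $x^*$ solves both problems and satisfies $\bif(x^*,x^*)=0$ (in case (i) this follows by the same diagonal argument), so Remark~\ref{rem:saddle} shows it is a diagonal saddle point \eqref{eq:saddle}. The only delicate step is the coercive bound in (ii). It rests on the limit $t\downarrow0$ in Remark~\ref{rem:coercive-primal}, which uses only that the family $(z_t)$ lies in $\DDD$ and that $x^*$ is primal; no semicontinuity is needed, since we divide by $t$ before letting $t\downarrow0$.
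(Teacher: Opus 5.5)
Your proof is correct and follows the paper's argument essentially verbatim. Applying \eqref{eq:bif-monotone} to the pair $(x^*,x)$ reduces the dual inequality to $\sfa(x,x^*)\ge\eta\,\dX^2(x,x^*)$. For $\eta\le0$ this comes from the primal inequality. For $\eta>0$ it comes from the coercive bound of Remark~\ref{rem:coercive-primal}, after checking $\sfa(x^*,x^*)=0$ via \eqref{eq:diagonal} and \eqref{eq:bif-monotone}. Remark~\ref{rem:saddle} then gives the saddle-point conclusion.

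Your remark that no semicontinuity is needed as $t\downarrow0$ is also right. After dividing by $t$, the only remaining $t$-dependence is the explicit factor $1-t$.
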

\begin{proof}
	For every $x\in\DDD$, \eqref{eq:bif-monotone} for the pair $(x^*,x)$ gives
	\begin{displaymath}
		\bif(x^*,x)\le \eta\,\dX^2(x^*,x)-\bif(x,x^*).
	\end{displaymath}
	If $\eta\le0$, then $\bif(x,x^*)\ge0$ because $x^*$ solves
	\eqref{eq:equilibrium}, hence $\bif(x^*,x)\le\eta\,\dX^2(x^*,x)\le0$.
	If $\eta>0$, recall that interaction dissipativity (no matter the sign of $\eta$) together with the standing assumption \eqref{eq:diagonal} yield $\sfa(x,x)=0$ for all $x\in\DDD$; then \eqref{eq:coercive-primal-remark} of
	Remark~\ref{rem:coercive-primal} with $y=x$ gives
	$\bif(x,x^*)\ge\frac\xi2\,\dX^2(x,x^*)$, so
	\begin{displaymath}
		\bif(x^*,x)\le\Big(\eta-\frac\xi2\Big)\dX^2(x^*,x)\le0,
	\end{displaymath}
	because $\xi\ge2\eta$. In both cases $x^*$ solves
	\eqref{eq:equilibrium'}, and being a solution of both problems it is a
	diagonal saddle point by Remark~\ref{rem:saddle}.
\end{proof}

If in addition to the conditions in Proposition~\ref{prop:primal-implies-dual} the mild regularity assumptions of Lemma~\ref{le:metric-Minty} hold, then the primal and dual problems are in fact equivalent.

We now state our second existence result,
when $\bif$ satisfies two conditions which are more restrictive compared to the primal existence result Theorem~\ref{thm:primal_soln}: dissipativity  and uniform barycentric convexity. 
In this case, we can remove the compactness assumption and relax the 
upper semicontinuity condition as well.
For every $x\in \DDD$ we set 
	\begin{equation}
		\label{eq:soluno}
		\sigma_-(x):=\Big\{y\in \DDD:
		\bif(y,x)\le 0\Big\}.
	\end{equation}
\begin{theorem}[Existence of dual equilibria]
	\label{thm:equilibrium2}
	Let $\xi> 2\eta_+$ 
    and  
	let $\bif:\DDD\times \DDD\to \R$ 
	be
	a function
	satisfying 
	the following properties:
	\begin{enumerate}
		\item[(i)] $\bif$ is $\eta$-interaction dissipative
		according to \eqref{eq:bif-monotone};
		\item[(ii)] The maps
		$y\mapsto \bif(y,x)$, $x\in \DDD$, 
		have complete sublevels $\sigma_-(x)$ and are jointly
		barycentrically {$\xi$}-convexlike with respect to a system of 
		 barycentric maps $\barysystem{}$. 
	\end{enumerate}
	 Then Problem \ref{prob:equilibrium-2} has a unique solution $y_*$. 
\end{theorem}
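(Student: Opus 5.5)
The plan has three parts: uniqueness, a Cauchy estimate for approximate dual solutions, and the existence of approximate dual solutions. The first two use only the $\xi$-convexity of $y\mapsto\bif(y,x)$ together with $\bif(z,z)=0$ (which follows from (i) and \eqref{eq:diagonal}). Only the third uses dissipativity and the threshold $\xi>2\eta_+$.

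\emph{Uniqueness and the Cauchy estimate.} Let $g(y):=\sup_{x\in\DDD}\bif(y,x)$ as in \eqref{eq:marginals}. Take $y_1,y_2$ with $g(y_i)\le\eps$, pick $\baryname z\in\barysystem 2$ joining them, and set $z:=\bary z{1/2,1/2}$. Applying \eqref{eq:convex} to $y\mapsto\bif(y,z)$ gives
\begin{displaymath}
0=\bif(z,z)\le \tfrac12\bif(y_1,z)+\tfrac12\bif(y_2,z)-\tfrac\xi8\dX^2(y_1,y_2)\le \eps-\tfrac\xi8\dX^2(y_1,y_2),
\end{displaymath}
so $\dX^2(y_1,y_2)\le 8\eps/\xi$. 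With $\eps=0$ this gives uniqueness of the solution of \eqref{eq:equilibrium'}.

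Each $y\mapsto\bif(y,x)$ has complete sublevels by (ii), so $g$ has complete sublevels by Lemma~\ref{le:obvious}. Therefore, as soon as $\inf_\DDD g\le 0$, the same argument as in Proposition~\ref{prop:minima} applies: a sequence $(y_n)$ with $g(y_n)\le 1/n$ is Cauchy and lies in a complete sublevel, so it converges to some $y_*$ with $g(y_*)\le 0$, which is the required dual solution. The whole statement thus reduces to showing
\begin{displaymath}
\inf_{y\in\DDD}\sup_{x\in\DDD}\bif(y,x)\le 0 .
\end{displaymath}

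\emph{Existence of approximate dual solutions (main obstacle).} I would first try to produce a primal solution with coercivity and then convert it, using Remark~\ref{rem:coercive-primal} and Proposition~\ref{prop:primal-implies-dual}(ii). Barycentric $\xi$-convexity gives radial convexity with constant $\xi\ge 2\eta$, so any $x^*$ satisfying \eqref{eq:equilibrium} automatically satisfies \eqref{eq:equilibrium'}.

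To construct $x^*$ without compactness, I would use the proximal map $T(x):=\argmin_{y}\bif(y,x)$. It is well defined by Proposition~\ref{prop:minima}, since $\bif(\cdot,x)$ is $\xi$-convexlike with complete sublevels. Its minimality, together with convexity along the map joining $T(x)$ to $y$, yields the quadratic growth
\begin{displaymath}
\bif(y,x)\ge \bif(T(x),x)+\tfrac\xi2\dX^2(y,T(x)).
\end{displaymath}
A fixed point $T(x^*)=x^*$ is exactly a coercive primal solution. The delicate step is to show that the iterates $x_{n+1}=T(x_n)$ form a Cauchy sequence. Summing the quadratic growth inequalities at consecutive steps and invoking \eqref{eq:bif-monotone} on the pair $(x_{n+1},x_n)$ should produce a recursive estimate in which the factor $\eta/(\xi/2)<1$ appears. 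I expect this to be the hardest point, because dissipativity only controls the pair $\bif(x,y)+\bif(y,x)$ and not four-point differences.

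If the iteration does not close, the fallback works at the level of finite families. For $x_1,\dots,x_J\in\DDD$, apply Theorem~\ref{thm:primal_soln} to an auxiliary bifunction on the image of a barycentric map of $x_1,\dots,x_J$, where compactness of a simplex image can be arranged, and obtain $y_F$ with $\max_k\bif(y_F,x_k)\le0$. The midpoint estimate above, applied uniformly in $F$, then shows that these finite approximate solutions concentrate as $F$ increases. Their limit gives $\inf g\le0$.
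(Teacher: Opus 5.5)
Your uniqueness argument and your reduction to $\inf_{\DDD}g\le 0$ (midpoint estimate plus completeness of the sublevels of $g=\sup_x\sfa(\cdot,x)$) are correct. But that inequality is the entire content of the theorem, and your main route to it fails. The iterates of $T(x)=\argmin_y\sfa(y,x)$ need not converge under these hypotheses. On $\DDD=\R^2$ with the Euclidean distance, take $\sfa(y,x)=\frac\xi2(|y|^2-|x|^2)+\langle Ax,y-x\rangle$, where $A(v_1,v_2)=M(-v_2,v_1)$ and $M>\xi>0$. Then:
\begin{itemize}
\item $\sfa(x,y)+\sfa(y,x)=-\langle A(x-y),x-y\rangle=0$, so $\eta=0$;
\item every $y\mapsto\sfa(y,x)$ is exactly barycentrically $\xi$-convex along convex combinations and has complete sublevels;
\item $y_*=0$ solves \eqref{eq:equilibrium'}, since $\sfa(0,x)=-\frac\xi2|x|^2$.
\end{itemize}
Yet $T(x)=-Ax/\xi$ is a rotation dilated by the factor $M/\xi>1$. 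So $T^n(x_0)$ diverges for every $x_0\neq 0$, and the fixed point $0$ is repelling. Dissipativity bounds only the symmetric part $\sfa(x,y)+\sfa(y,x)$, while $T$ is driven by the antisymmetric part, which can be arbitrarily large. Hence no recursive estimate with factor $\eta/(\xi/2)$ (here $0$) can hold.

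Your fallback, as stated, does not work either. Theorem~\ref{thm:primal_soln} needs upper hemicontinuity of $x\mapsto\sfa(y,x)$ and a compact superlevel. Theorem~\ref{thm:equilibrium2} assumes neither continuity of the barycentric maps nor any semicontinuity of $\sfa$ in its second variable. So $\bary x{\simplex J}$ need not be compact, and it carries no barycentric system of its own. Moreover, your midpoint estimate used $\sfa(y_i,z)\le\eps$ at the midpoint $z$. That bound is unavailable for points satisfying only $\max_k\sfa(y_F,x_k)\le 0$ over a finite family $F\not\ni z$, so arbitrary finite solutions $y_F$ need not concentrate.

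The missing idea, which is what the paper does, is to run KKM on the simplex itself with vertex-value surrogates; this needs no continuity at all. Set $S(\aalpha)=\sum_{j,k}\alpha_j\alpha_k\dX^2(x_j,x_k)$ and $C(k)=\{\aalpha\in\simplex J:\sum_j\alpha_j\sfa(x_j,x_k)\le\frac\xi4S(\aalpha)\}$. These sets are closed, being defined by polynomials in $\aalpha$. For $\aalpha$ supported in $B$,
\[ \sum_{k\in B}\alpha_k\sum_j\alpha_j\sfa(x_j,x_k)=\tfrac12\sum_{j,k}\alpha_j\alpha_k\big[\sfa(x_j,x_k)+\sfa(x_k,x_j)\big]\le\tfrac\eta2S(\aalpha)\le\tfrac\xi4S(\aalpha), \]
which is exactly where dissipativity and $\xi\ge2\eta$ enter. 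Hence $(C(k))_k$ is a KKM covering. Barycentric $\xi$-convexity at $\bary x{\aalpha^*}$, with $\aalpha^*\in\bigcap_kC(k)$, then gives $\sfa(\bary x{\aalpha^*},x_k)\le0$ for every $k$.

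Equivalently, your fallback is repaired by applying Theorem~\ref{thm:primal_soln} on the compact simplex to $\tilde\sfa(\bbeta,\aalpha)=\frac\xi4S(\aalpha)-\sum_{j,k}\alpha_j\beta_k\sfa(x_j,x_k)$. This is nonnegative on the diagonal precisely by dissipativity.

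To pass from finite families to all of $\DDD$ without compactness, do not use arbitrary finite solutions. For finite $\Omega$, take $y_\Omega$ to be the minimizer of $\sfa_\Omega=\max_{x\in\Omega}\sfa(\cdot,x)$ (Proposition~\ref{prop:minima}). The minima $m_\Omega\le0$ increase with $\Omega$. Choose $\Omega^\eps$ with $m_{\Omega^\eps}\ge\sup_\Omega m_\Omega-\eps$; the midpoint estimate for $\sfa_{\Omega^\eps}$ then makes $\Omega\mapsto y_\Omega$ a Cauchy net. Its limit solves \eqref{eq:equilibrium'}.
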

\begin{proof}
It is not restrictive to assume $\eta\ge0$: the $\eta$-interaction dissipativity \eqref{eq:bif-monotone} implies the $\eta_+$-interaction dissipativity, and the assumption $\xi>2\eta_+$ only involves $\eta_+$.
Notice that applying 
\cite[Lemma 2.4.8]{AGS08} we 
deduce that 
\begin{equation}
    \label{eq:lower-bound}
    \text{for every }x_0\in \DDD
		 \text{ we have $\inf_{y\in \DDD}\bif(y,x_0)>-\infty.$}
\end{equation}

\textbf{Step 1.} \emph{The sets $\sigma_-(x)$, $x\in \DDD$,
	have the finite intersection property.}
	
	Let $x_1,x_2,\cdots,x_J\in \DDD$
    and let $\baryname x\in \barysystem J$ such that $x_k=\bary x{\sfe_J^k}$, $k=1,\cdots,J$. 
	We consider 
    the function $S(\aalpha):= 
    \sum_{j,k}\alpha_j\alpha_k \dX^2(x_j,x_k)$.
    For $k=1,\cdots,J$ we define the sets
    \begin{equation}
        \label{eq:sets}
        C(k):=\Big\{\aalpha\in \simplex J:
        \sum_j\alpha_j\bif(x_j,x_k) \le \tfrac \xi 4\, S(\aalpha)\Big\}.
    \end{equation}
    and we want to prove that the collection $(C_k)_k$ is a closed KKM covering according to the definition in \eqref{eq:KKM}. 
    
    Clearly, each $C_k$ is closed.
    Whenever $\aalpha\in \mathrm{co}(\sfe_J^k:k\in B)$ for some subset $B\subset \{1,\cdots,J\}$ (so that $\alpha_k=0$ if $k\not\in B$) we get 
    \begin{align*}
        \sum_{k\in B} \alpha_k\Big[\sum_j \alpha_j\bif(x_j,x_k) \Big]
        &= \sum_{j,k} \alpha_j\alpha_k \bif(x_j,x_k)
        =
        \frac 12 \sum_{j,k} \alpha_j\alpha_k 
        \Big[\bif(x_j,x_k)+\bif(x_k,x_j)\Big]\\
        &\le  \frac{\eta}{2}  \sum_{j,k} \alpha_j\alpha_k 
        \dX^2(x_j,x_k) = \frac \eta 2 S(\aalpha)\,\le \frac \xi 4 S(\aalpha)\,.
    \end{align*} 
    It follows that there exists $k\in B$ such that
    \begin{displaymath}
        \sum_j \alpha_j\bif(x_j,x_k) \le \frac \xi 4 S(\aalpha),
    \end{displaymath}
    i.e.~$\aalpha\in C(k)$, so that $\mathrm{co}(\sfe_J^k:k\in B)\subset \cup_{k\in B} C(k)$ and the KKM property is satisfied. By Lemma \ref{le:KKM} we deduce that there exists $\aalpha^*\in \cap_{k=1}^J C(k)$, so that 
    \begin{equation}
        \label{eq:sets2}
        \sum_j \alpha_j^*\bif(x_j,x_k) \le \tfrac \xi 4 S(\aalpha^*)\quad\text{for every }k=1,\cdots,J.
    \end{equation}  
    Setting $x_*:=\bary x{\aalpha^*}$, since $y\mapsto \bif(y,x_k)$ are jointly $\xi$ barycentrically convexlike we have 
    \begin{align*}
        \bif(x_*,x_k) \le \sum_j \alpha_j^*\bif(x_j,x_k) - \tfrac \xi 4 S(\aalpha^*)
        \le 0\quad\text{for every }k=1,\cdots,J\,,
    \end{align*}
   and therefore $x_*\in \bigcap_{k=1}^J \sigma_-(x_k)$.

	\smallskip\noindent
	\textbf{Step 2.} \emph{$\bigcap_{x\in \DDD}\sigma_-(x)$ is nonempty and contains a unique element}.

    Fix $x_0\in \DDD$.
    For every $\Omega\subset \DDD$ with $\Omega\ni x_0$, where $\inf_{y} \bif( y,{x_0})>-\infty$ from
    \eqref{eq:lower-bound}, 
	 we
	set 
	\begin{equation}
	\label{eq:marginal}
	\bif_\Omega(y):=\sup_{x\in \Omega}\bif(y,x),\quad
	\sigma_-(\Omega):=
	\Big\{y\in \DDD:\bif_\Omega(y)\le 0\Big\}\,.
	\end{equation}
	The function $\bif_\Omega$ is $\xi$-convexlike and uniformly bounded from below since
	$\bif_\Omega\ge \bif(\cdot,x_0)\ge \inf_y \skd y{x_0}>-\infty$, and together with \eqref{eq:bif-monotone}, we conclude that $\bif_\Omega(\cdot)$ is proper.
	Using the assumption that $\bif(\cdot,x)$ has complete sublevel sets, $\bif_\Omega$  has a unique minimizer $y_\Omega$ by Proposition~\ref{prop:minima}.
	In the particular case when $\Omega=\DDD$ 
	we recover the marginal 
	function $g$ of \eqref{eq:marginal} with $g(y)=\bif_\DDD(y)=
	\sup_{x\in \DDD}\bif(y,x)
	\ge \bif(x,x)\ge0$. 
	Therefore $\sigma_-(\DDD)$ 
	is nonempty if $\min \bif_\DDD=0$.
	
		Let $\Lambda=\{\Omega\subset \DDD\, : \, x_0\in \Omega\}$, restricted such that $\Omega_1,\Omega_2\in \Lambda$ implies $\Omega_1 \subseteq \Omega_2$ or $\Omega_2 \subseteq \Omega_1$, be  the collection of 
	 subsets with a finite number of elements in $\DDD$, containing $x_0$, and ordered by inclusion.
	$\Lambda$ is a directed set. 
	Every set $\sigma_-(\Omega)$, $\Omega\in \Lambda$,
	 is nonempty by Step 1 because each $\Omega\in\Lambda$ is finite and it is 
	 included in $\sigma_-(x_0)$, which is a complete set.
	We have $m_\Omega:=\bif_\Omega(y_\Omega)\le 0$ since $\sigma_-(\Omega)$ is nonempty by Step 1;
	in particular $y_\Omega\in \sigma_-(\Omega)$
	for every $\Omega\in \Lambda$.

	The claim of Step 2 then follows if we show that $\Omega\mapsto y_\Omega$ is a Cauchy net.
	We know that $\Omega\mapsto m_\Omega$ is bounded from above and increasing: if $\Omega_1 \subseteq \Omega_2$, then
    \begin{align*}
        m_{\Omega_1} =\min_{y\in \DDD} \bif_{\Omega_{1}}(y) \le \min_{y \in \DDD}\bif_{\Omega_2}(y) = m_{\Omega_{2}}\,.
    \end{align*}
    From this, $\lim_{\Omega\in \Lambda}m_\Omega=\sup_{\Omega\in \Lambda}m_\Omega=:m\le 0$, where the limit is taken in the sense of nets. 
	Given $\eps>0$ we can find $\Omega^\eps\in \Lambda$ such that 
	$m_{\Omega^\eps}\ge m-\eps$
	and therefore
	for every $\Omega_1,\Omega_2$ containing $\Omega^\eps$ 
	with $y_k:=y_{\Omega_k}$ we have
	$$ m_{\Omega^\eps}\le \bif_{\Omega^\eps}(y_k)\le
	\bif_{\Omega_k}(y_k)=m_{\Omega_k}\le m
	\le m_{\Omega^\eps}+\eps\,.
	$$
    We now use that $y \mapsto \bif( y,x)$  is $\xi$-barycentrically convexlike, selecting $J=2$ and test points $y_1,y_2$:
    \begin{align*}
        \bif ({\bary y{0.5,0.5}}, x) \le \frac12 \bif( { {y_1}}, x) + \frac12 \bif({ {y_2}}, x) - \frac{\xi\nc}{8} \dX^2(y_1,y_2)\,.
    \end{align*}
    Using that for all $x\in \Omega^\eps$, $\bif ({y_1},x) \le m_{\Omega^\eps} + \eps$ and $\bif({y_2},x) \le m_{\Omega^\eps} + \eps$, along with $\bif ( {\bary y{0.5,0.5}} ,x) \ge \bif_{\Omega^\eps}(y_{\Omega^\eps})$, we have  
    \begin{align*}
        \dX^2(y_1,y_2) \le \frac{8\eps}\xi\,.
    \end{align*}
    The above estimate in combination with $y_\Omega\in\sigma_-(x_0)$, a complete set, allows us to conclude that $\Omega\mapsto y_\Omega$ is a Cauchy net converging to a limit $\bar y \in \sigma_-(x_0)$. Since $\Omega \cup \{x\} \mapsto y_{\Omega \cup \{x\}}$ is a subnet of $\Omega\mapsto y_\Omega$ for any $x\in \DDD$, if $\Omega \mapsto y_\Omega$ converges to $\bar y$, then $\Omega \cup \{x\} \mapsto y_{\Omega\cup \{x\}}$ also converges to $\bar y$. Lower semicontinuity of $\bif( \cdot, x)$ follows from the assumption that $\sigma_-(x)$ are complete. Therefore,
	for every $x\in \DDD$, 
	$\bif(\bar y,x)\le \liminf_{\Omega\in \Lambda}\bif(y_\Omega,x)
	\le \liminf_{\Omega\in \Lambda}
	\bif_\Omega(y_\Omega)=m\le 0$
	so that $\bar y\in \bigcap_{x\in \DDD}\sigma_-(x)$. Hence, $\bar y$ solves the Dual Equilibrium Problem \ref{prob:equilibrium-2}.
\end{proof}
We can now collect the consequences of the previous results for the primal
problem.
\begin{corollary}[Alternative existence result for the primal problem]
	\label{cor:primal-consequences}
	Under the assumptions of Theorem~\ref{thm:equilibrium2}, let $y_*$ be
	the unique solution of the dual problem \eqref{eq:equilibrium'}.
	\begin{enumerate}
		\item[(i)] The primal problem \eqref{eq:equilibrium} has at most
		one solution, which must coincide with $y_*$.
		\item[(ii)] If in addition the upper-hemicontinuity
		condition~(2) of Lemma~\ref{le:metric-Minty} holds, then $y_*$
		solves \eqref{eq:equilibrium} as well; it is then the unique
		solution of both problems and the unique diagonal saddle point
		\eqref{eq:saddle} of $\bif$.
	\end{enumerate}
\end{corollary}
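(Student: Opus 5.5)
For (i), I will use Proposition~\ref{prop:primal-implies-dual}(ii), with the convexity assumption of Theorem~\ref{thm:equilibrium2} supplying the radial convexity it needs. Let $x^*$ be any solution of \eqref{eq:equilibrium}. If $\eta\le 0$, part (i) of that proposition already shows that $x^*$ solves \eqref{eq:equilibrium'}.

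If $\eta>0$, I first check that the joint barycentric $\xi$-convexlikeness of $y\mapsto\bif(y,x)$ implies $\xi$ radial convexity \eqref{eq:radial-convex}. Given $x,y$, choose $\baryname z\in\barysystem 2$ with $\baryc z1=x$ and $\baryc z2=y$, and set $z_t=\bary z{(1-t),t}$. For $J=2$ the quadratic term in \eqref{eq:convex} is
\[
\frac\xi4\cdot 2\alpha_1\alpha_2\dX^2(x,y)=\frac\xi2 t(1-t)\dX^2(x,y),
\]
which is exactly \eqref{eq:radial-convex}. Since $\xi>2\eta_+=2\eta$, Proposition~\ref{prop:primal-implies-dual}(ii) applies. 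The identity $\bif(x,x)=0$, which that proposition needs, follows from interaction dissipativity together with \eqref{eq:diagonal}. Hence $x^*$ solves \eqref{eq:equilibrium'}.

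By the uniqueness part of Theorem~\ref{thm:equilibrium2}, $x^*=y_*$. So every primal solution equals $y_*$, and there is at most one.

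For (ii), I need the hypotheses of Lemma~\ref{le:metric-Minty}. Its condition (1) asks for joint $0$-convexlikeness with respect to some $\barysystem 2$. I take the $J=2$ part of $\barysystem{}$. The radial computation above shows the maps are $\xi$-convexlike along it. Since $\xi>0$ and the subtracted term is nonnegative, they are also $0$-convexlike along it. Condition (2) is assumed. The lemma then gives that $y_*$ solves \eqref{eq:equilibrium}.

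Combining with (i), $y_*$ is the unique primal solution and also the unique dual solution, so by Remark~\ref{rem:saddle} it is a diagonal saddle point. It is the unique one, because any diagonal saddle point solves \eqref{eq:equilibrium'} and must therefore equal $y_*$.

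The only delicate points are bookkeeping. First, the $\frac\xi4\sum_{j,k}$ normalization must match the $\frac\xi2$ coefficient in the radial convexity definition; it does, as shown above. Second, the $2$-barycentric maps used in the Minty lemma must be the same system for which hemicontinuity is assumed, which is why I use the $J=2$ part of $\barysystem{}$ throughout.
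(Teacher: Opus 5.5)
Your proposal is correct and follows the paper's proof essentially step for step. Barycentric $\xi$-convexity, restricted to $J=2$, gives the radial convexity needed in Proposition~\ref{prop:primal-implies-dual}; uniqueness of the dual solution then gives (i); and Lemma~\ref{le:metric-Minty} together with Remark~\ref{rem:saddle} gives (ii). Your explicit check of the $J=2$ normalization, and your argument that any diagonal saddle point satisfies the dual inequality in \eqref{eq:saddle} and hence equals $y_*$, simply fill in details the paper leaves implicit.
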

\begin{proof}
	The barycentric $\xi$-convexity assumed in
	Theorem~\ref{thm:equilibrium2} implies the radial convexity
	\eqref{eq:radial-convex} (Remark~\ref{rem:coercive-primal}), with
	$\xi>2\eta_+$.

	\emph{(i)} If $x^*$ solves \eqref{eq:equilibrium}, then
	Proposition~\ref{prop:primal-implies-dual} shows that $x^*$ solves
	\eqref{eq:equilibrium'} (the case $\eta\le0$ needs no convexity, the
	case $\eta>0$ uses $\xi>2\eta$); by uniqueness of the dual solution
	$x^*=y_*$.

	\emph{(ii)} The convexity condition~(1) of
	Lemma~\ref{le:metric-Minty} follows from the $\xi$-convexity, so if
	condition~(2) holds as well, Lemma~\ref{le:metric-Minty} shows that
	$y_*$ solves \eqref{eq:equilibrium}. Together with \emph{(i)} and
	Remark~\ref{rem:saddle}, $y_*$ is the unique solution of both
	problems and the unique diagonal saddle point.
\end{proof}
\nc
\subsection{Saddle points of antisymmetric bifunctions}\label{sec:saddle}
The antisymmetric case when
	\begin{equation}
		\label{eq:antisymmetry}
		\bif(x,y)+\bif(y,x)=0
		\quad\text{for every }x,y\in \DDD,
	\end{equation}
    deserves a specific analysis, since we can considerably simplify the convexity assumptions on $\bif$. 
 Notice in particular that $\bif$ is $0$-interaction dissipative according to \eqref{eq:bif-monotone}.
It is clear that in this case
Problems
	\ref{prob:equilibrium-1} and 
	\ref{prob:equilibrium-2} are equivalent 
	and any (common) solution
	is a saddle point according to the Remark
	\ref{rem:saddle}.	
\begin{theorem}[Existence of primal and dual equilibria under antisymmetry]
	\label{thm:antisymmetric}
	Let $\bif:\DDD\times \DDD\to \R$ 
	be 
	a function
	satisfying 
	the antisymmetric identity
	\eqref{eq:antisymmetry}.
	If 
	\begin{quote}
		the maps
		$y\mapsto \bif(y,x)$, $x\in \DDD$, 
		have complete sublevels 
		and are jointly 
        $\xi$-convexlike 
         with respect to some family 
        of 
        $2$-barycentric maps $\cB_2$
        for some $\xi>0.$
        \end{quote}
	then Problems
	\ref{prob:equilibrium-1} and 
	\ref{prob:equilibrium-2} have a unique saddle solution $x^*=y_*$.
\end{theorem}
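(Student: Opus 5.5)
The plan is to work with the marginal function $g(y):=\sup_{x\in\DDD}\bif(y,x)$ of \eqref{eq:marginals}. Antisymmetry gives $f=-g$, and a diagonal saddle point is exactly a zero of $g$ (Remark~\ref{rem:saddle}). Antisymmetry also gives $\bif(x,x)=0$, so $g\ge 0$. Hence the goal is to show that $g$ has a unique minimizer $y_*$ and that $g(y_*)=0$.

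\textbf{Step 1: $g$ is finite, strongly convexlike, and has a unique minimizer.} By Lemma~\ref{le:obvious}, $g$ has complete sublevels. As a supremum of functions that are jointly $\xi$-convexlike with respect to the common system $\cB_2$, $g$ is itself $\xi$-convexlike with respect to $\cB_2$.

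For finiteness, write $g(y)=-\inf_{x}\bif(x,y)$ using antisymmetry. The function $x\mapsto\bif(x,y)$ is $\xi$-convexlike with complete sublevels, so by \cite[Lemma 2.4.8]{AGS08} its infimum is $>-\infty$. Thus $g$ is real valued, hence proper. Proposition~\ref{prop:minima} then yields a unique minimizer $y_*$ of $g$, and we set $m:=g(y_*)\ge0$.

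\textbf{Step 2: concavity in the second variable.} Antisymmetry turns the convexity assumption into strong concavity: for every $y$, the map $x\mapsto\bif(y,x)=-\bif(x,y)$ is $\xi$-concavelike along $\cB_2$ and has complete superlevels. Applying Proposition~\ref{prop:minima} to $-\bif(y,\cdot)$, each $\bif(y,\cdot)$ has a unique maximizer $w(y)$, and every maximizing sequence converges to it. The estimate \eqref{eq:inf-estimate} makes this quantitative: any $\eps$-maximizer lies within $\sqrt{8\eps/\xi}$ of $w(y)$. So we are in a genuinely strongly convex--strongly concave setting on the diagonal.

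\textbf{Step 3: contradiction if $m>0$.} Assume $m>0$ and set $w:=w(y_*)$; then $w\neq y_*$, since otherwise $m=\bif(y_*,y_*)=0$. Take $\baryname z\in\cB_2$ joining $y_*$ to $w$, and put $z_t:=\bary z{(1-t),t}$. Choose $w_t$ with $\bif(z_t,w_t)\ge g(z_t)-t^2$. Convexity in the first variable gives
\begin{displaymath}
g(z_t)-t^2\le \bif(z_t,w_t)\le (1-t)\bif(y_*,w_t)+t\,\bif(w,w_t)-\tfrac\xi2t(1-t)\dX^2(y_*,w).
\end{displaymath}
Using $\bif(y_*,w_t)\le m$ and $g(z_t)\ge m$ (minimality of $y_*$), this yields
\begin{displaymath}
\bif(w,w_t)\ge m-t+\tfrac\xi2(1-t)\dX^2(y_*,w).
\end{displaymath}
The next step is to show $w_t\to w$ as $t\downarrow0$. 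The inequality above with $w$ replaced by $y_*$-side information gives $\bif(y_*,w_t)\ge m-o(1)$, so $w_t$ is an $o(1)$-maximizer of $\bif(y_*,\cdot)$, and Step 2 gives $w_t\to w$. Passing to the limit then requires $\limsup_t\bif(w,w_t)\le\bif(w,w)=0$. Granting this, we obtain $0\ge m+\frac\xi2\dX^2(y_*,w)>0$, a contradiction. Hence $m=0$, so $y_*$ solves both \eqref{eq:equilibrium} and \eqref{eq:equilibrium'}.

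\textbf{Step 4: uniqueness.} Any common solution $x$ of the two problems satisfies $g(x)=0=\min g$, so $x=y_*$ by uniqueness of the minimizer of $g$.

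\textbf{Main obstacle.} The delicate point is the limit passage in Step 3. First, the claim that $w_t$ is an almost-maximizer of $\bif(y_*,\cdot)$ needs some lower semicontinuity of $\bif(\cdot,x)$ along $z_t\to y_*$; complete sublevels provide this, but only after checking that $z_t\to y_*$ in $\dX$. Second, the bound $\limsup_t\bif(w,w_t)\le 0$ needs upper semicontinuity of $\bif(w,\cdot)=-\bif(\cdot,w)$, which is exactly the lower semicontinuity of $\bif(\cdot,w)$ given by its complete sublevels. If the convergence of $z_t$ fails because $\cB_2$ is not continuous, I would instead run a net/Cauchy argument in the spirit of Step 2 of Theorem~\ref{thm:equilibrium2}. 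That argument would replace the curve by iterated midpoints, using only the $\aalpha=(\tfrac12,\tfrac12)$ instance of $\xi$-convexity together with \eqref{eq:inf-estimate}.
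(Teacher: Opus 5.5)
Your argument is correct. It is a streamlined variant of the paper's route rather than the same proof.

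\textbf{Comparison with the paper.} The paper runs Shiffman's argument. It perturbs the minimizer $\bar y$ of $g$ toward an \emph{arbitrary} $y$, takes the exact maximizers $x_\vartheta$ of $\sfa(y_\vartheta,\cdot)$, and derives $g(\bar y)\le\sfa(y,x_\vartheta)$. It then shows $y_\vartheta\to\bar y$, since this is an infimizing sequence for $g$. Next it shows $x_\vartheta\to\bar x$, the maximizer of $\sfa(\bar y,\cdot)$, using lower semicontinuity of $\sfa(\cdot,x)$ along $y_\vartheta$. This gives $g(\bar y)\le\sfa(y,\bar x)$ for every $y$, i.e.\ $g(\bar y)\le f(\bar x)\le0$.

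You instead test only the single direction $y=w$, which is exactly the paper's $\bar x$. Because the limit of the near-maximizers $w_t$ is then known in advance, you can read off $w_t\to w$ directly from the convexity inequality and the quantitative estimate \eqref{eq:inf-estimate}. You then close with the diagonal value $\sfa(w,w)=0$, and keeping the $\frac\xi2\dX^2(y_*,w)$ margin gives $w=y_*$ for free.

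What each route buys: the paper's version gives the inequality against every test point, exhibiting $\bar x$ directly as a primal solution; this is the form of Shiffman's argument used for general convex--concave Lagrangians. Yours buys economy. No convergence of the interpolants is needed, and the only semicontinuity used is upper semicontinuity of $\sfa(w,\cdot)$ at the single point $w$.

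\textbf{Your ``main obstacle'' paragraph.} It misdiagnoses the first point. The near-maximizer property needs neither semicontinuity nor $z_t\to y_*$. Isolate the $y_*$-term in your displayed convexity inequality and use $g(z_t)\ge m$ and $\sfa(w,w_t)\le g(w)<\infty$ (finite by your Step 1). This gives
\[
(1-t)\,\sfa(y_*,w_t)\ \ge\ m-t^2-t\,g(w).
\]
Hence $\eps_t:=m-\sfa(y_*,w_t)\in[0,O(t)]$ and $\dX^2(w_t,w)\le 8\eps_t/\xi\to0$. Write this line explicitly; the phrase ``with $w$ replaced by $y_*$-side information'' does not state it.

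Even if you wanted $z_t\to y_*$, it holds without any continuity of $\cB_2$. Since $g(z_t)\le(1-t)m+t\,g(w)\to m$, the family $(z_t)$ is infimizing for the $\xi$-convexlike $g$, so it converges to $y_*$ by Proposition~\ref{prop:minima}. This is exactly how the paper obtains $y_\vartheta\to\bar y$. The midpoint/net fallback is therefore unnecessary.

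Your limit passage also yields $0\ge m+\frac\xi2\dX^2(y_*,w)$ directly, so $m=0$ follows without arguing by contradiction.
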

\begin{proof}
	We adapt the argument of \cite[Chapter 1, Theorem 1.1]{Brezis73}
	and \cite[Chap. 6, Theorem 2.1]{Ekeland-Temam74} due to Shiffman.
    
    Let us first observe that 
    for $x_0\in \DDD$ 
		 we have $
		 -I:=\inf_{y\in \DDD}\bif(y,x_0)>-\infty$
         thanks to Proposition \ref{prop:minima}. 

	Let us define $
	g(y)=\sup_{x\in \DDD}\bif(y,x)$
	as in \eqref{eq:marginals}.
	Notice that \eqref{eq:antisymmetry}
	yields $f(x)=-g(x)$.
	We observe that the function
	$y\mapsto g(y)$ is $\xi$-convexlike,
	it has complete sublevels, and it is 
	bounded from below since
	$g(y)\ge \bif(y,y)=0$.
	Moreover
	\begin{displaymath}
		\bif(x_0,x)=
		-\bif(x,x_0)\le I\quad\text{for every }x\in \DDD,
	\end{displaymath}
	so that 
	$g(x_0)\le I<+\infty$ and therefore
	$g$ is not identically $+\infty$
	it attains its minimum at some point $\bar y\in \DDD.$ We want to show that 
	$g(\bar y)=0$.
	Let us fix  
	$y\in \DDD$ and 
	let $\baryname y\in \barysystem 2$ 
	satisfy $\baryname y_1=\bar y$,
	$\baryname y_2=y$;
	for every $\vartheta \in (0,1)$ 
	we set $y_\vartheta =\bary{y}{(1-\vartheta),\vartheta}$.
	Since $g(y_\vartheta)=
	\sup_x \bif(y_\vartheta,x)$,
	and $x\mapsto -\bif(y,x)=\bif(x,y)$ 
	is $\xi$-convexlike with respect to $\barysystem 2$, 
	there exists a unique point $x_\vartheta$
	such that 
	$g(y_\vartheta)=\bif(y_\vartheta,x_\vartheta)$.
	We have
	\begin{align*}
		g(\bar y)&\le g(y_\vartheta)=
		\bif(y_\vartheta,x_\vartheta)
		\stackrel{(1)}\le 
		(1-\vartheta)
				\bif(\bar y,x_\vartheta)
				+\vartheta \bif(y,x_\vartheta)
				\stackrel{(2)}\le 
				(1-\vartheta)
			g(\bar y)
				+\vartheta \bif(y,x_\vartheta)
	\end{align*}
	where we used the convexity inequality 
	in (1)
	and the minimality property of $g$ in (2) ($g(\bar y)\ge 
	\skd{\bar y}{x}$ for every $x\in \DDD$).
	We deduce that 
	\begin{equation}
		\label{eq:1bis}
		g(\bar y)\le \bif(y,x_\vartheta)
		\quad\text{for every }\vartheta\in (0,1).
	\end{equation}
	\sloppy As $\vartheta\downarrow0$ 
	$y_\vartheta$ is a minimizing 
	sequence for $g$ since
	$g(y_\vartheta)\le 
	(1-\vartheta)g(\bar y)+\vartheta g(y)$ 
	so that 
	$\liminf_{\vartheta\down0}g(y_\vartheta)=g(\bar y)$.
	We deduce that $y_\vartheta\to \bar y$ as $\vartheta\down0$.
	
	We claim that $x_\vartheta$ 
	converges to 
	the unique maximizer $\bar x$ 
	of $x\mapsto \bif(\bar y,x)$
	as $\vartheta\down0$. In fact
	\begin{align*}
			(1-\vartheta)\bif (\bar y,x_\vartheta)+
		\vartheta g(y)&\ge 
		(1-\vartheta)\bif(\bar y,x_\vartheta)+
		\vartheta \bif(y,x_\vartheta)
		\ge \bif (y_\vartheta,x_\vartheta)
		=g(y_\vartheta)
		\\&\ge \bif(y_\vartheta,x)\quad\text{for every }x\in \DDD,
	\end{align*}
	so that 
	\begin{displaymath}
		\limsup_{\vartheta\downarrow0}
		\bif(\bar y,x_\vartheta)\ge 
\limsup_{\vartheta\downarrow0}
\bif(y_\vartheta,x)\ge \bif(\bar y,x)
		\quad\text{for every }x\in \DDD,
	\end{displaymath}
	and therefore
	\begin{displaymath}
		\limsup_{\vartheta\downarrow0}
		\bif(\bar y,x_\vartheta)\ge 
		\sup_{x\in \DDD} \bif(\bar y,x)=g(\bar y)
	\end{displaymath}
	showing that $x_\vartheta$ is a maximizing sequence
	for 
	the $\xi$-concavelike function 
	$x\mapsto \bif(\bar y,x)$. Proposition \ref{prop:minima}
	then shows that $x_\vartheta\to \bar x$.
	We can eventually pass to the limit in \eqref{eq:1bis}
	as $\vartheta\downarrow0$ using the lower semicontinuity of $\bif$ w.r.t.~$x$
	obtaining
	\begin{displaymath}
			g(\bar y)=\bif(\bar y,\bar x)\le 
	\bif (\bar x,y)
		\quad \text{for every }y\in \DDD,
	\end{displaymath}
	so that $g(\bar y)\le f(\bar x)\le 0$.
	
	To show uniqueness, if $(y',y')$ 
	is another saddle point for $\bif$,
	then 
    \begin{equation*}
	g(y')=\bif(y',y') =0 
    \end{equation*}
    yields
	$g(y')=g(\bar y)=0$ 
	showing that $\bar y=y'$ thanks to the 
	$\xi$-convexity of $g$.

    The fact that $\bar y$ is also
    a solution of the primal problem follows
    by Lemma 
    \ref{le:metric-Minty},
    since the map 
    $x\mapsto \bif(y,x)=
    -\bif(x,y)$ is upper semicontinuous.
\end{proof}

\subsection{Examples}\label{sec:equilibria-ex}
We illustrate settings in which the primal and dual equilibrium problems can be solved, 
using our
four running examples.

\subsubsection{Running Examples: Existence of equilibria}\label{sec:ex-existence}

In some settings, showing the primal problem properties is easier due to compactness. In other settings, showing compactness may be nontrival, and in those cases, with additional dissipativity information, we can check the requirements for existence of solutions to the dual problem instead.

\begin{itemize}
\item \textbf{Example (I): Monotone operator.} 
In the closed convex subset $\DDD$ of the Hilbert space $H$ 
consider, with $\xi\ge 0$,
$$\bif({y},{x}) := \tfrac \xi2\big(|y|^2-|x|^2\big)+\<\sfA(x),y-x>,\quad 
x,y\in \DDD.$$ 
The primal equilibrium is a solution $x^*$ of
the system of variational inequalities:
\begin{align*}
    \<\sfA(x^*),y-x^*> +\tfrac \xi2|y|^2\ge \tfrac \xi 2|x^*|^2 \quad \text{for every } y \in \DDD\,,
\end{align*}
whereas the dual problem is solved for $y_*\in \DDD$ satisfying
\begin{align*}
     \<\sfA(x),y_*-x> +\tfrac \xi2|y_*|^2\le \tfrac \xi2|x|^2 \quad \text{for every } x \in \DDD\,.
\end{align*}
For solving the primal problem, note that $y\mapsto \sfa(y,x)$ is jointly convex if and only if $\xi\ge 0$.
We are in the classic framework of 
\cite{Baiocchi-Capelo84}. 
If the sublevels
$\{x\in \DDD  : \tfrac \xi2|x|^2+ \langle \sfA(x),x-y\rangle\le c\}$ are compact for every $y\in \DDD,c\in \R$, then the primal problem has a solution
(cf. \cite[Theorem 10.3]{Baiocchi-Capelo84}). This is exactly the statement of our Theorem~\ref{thm:primal_soln}. For the sublevels to be bounded, we require $\psi(x):=\tfrac \xi2|x|^2+ \langle \sfA(x),x-y\rangle$ to be coercive; this follows if $\sfA$ is $\eta$-interaction dissipative with $\xi>2\eta$. For the sublevels to be closed, we require $\psi$ to be lower semicontinuous, which is a consequence of the monotonicity ($\eta\le 0$) and hemicontinuity of $\sfA$. In summary, to show existence of equilibria via compactness (Theorem~\ref{thm:primal_soln}), we require $\xi\ge 0$, $\sfA$ monotone ($\eta\le 0$) and hemicontinuous, and $\xi>2\eta$. We will see that the compactness requirement for the primal problem imposes similar conditions as the existence proof via the dual problem.

Turning to the dual problem,
Theorem \ref{thm:equilibrium2} and Corollary \ref{cor:primal-consequences} 
correspond to 
the classical existence theory for variational inequalities,
involving monotonicity ($\eta\le 0$) and hemicontinuity of $\sfA$, 
i.e.~
     for all $x_0,x_1\in\DDD$, $v\in H$
     the map $t \mapsto  
     \langle \sfA((1-t)x_0+t x_1)),v\rangle$ is 
     continuous in $[0,1]$. 
 Here, we need to assume $\xi>0$ in order to satisfy $\xi>2\eta_+$, but using Theorem~\ref{thm:equilibrium2}, every $\eta\in\R$ is admissible (no requirement that $\sfA$ is monotone).

\item \textbf{Example (II): Single species gradient flow.} When $\bif({y},{x}) := \varphi(y)-\varphi(x)$ with $x,y\in \DDD$, 
the primal (dual) equilibrium is a solution $x^*$ ($y_*$ resp.) of:
\begin{align*}
   \varphi(y)\ge \varphi(x^*) \quad \text{for every } y \in \DDD\,, \quad   \varphi(y_*)\le 
     \varphi(x)\quad \text{for every } x \in \DDD\,.
\end{align*}
If the sublevels of $\varphi$, 
$\{x\in \DDD  \ : \
\varphi(x)\le c\}$,
are compact for every $c\in \R\nc$,
then the primal problem has a solution \cite[Lemma 2.4.8]{AGS08}. This is exactly the statement of our Theorem~\ref{thm:primal_soln}.

For the dual problem, Theorem~\ref{thm:equilibrium2} requires
that $\DDD$ is closed, 
    $\varphi$ is lower semicontinuous and 
    $\xi$-uniformly convex for some $\xi>0$. We automatically have $\eta=0$ interaction dissipativity, because $\bif(y,x) + \bif(x,y)=0$.
    
Concerning the $\xi$-uniform convexity of $\varphi$, 
it is worth noticing that 
for the application to evolution problems
the function $\varphi$ is typically perturbed by 
the squared norm: for $\tau>0$ 
the variational movement scheme involves
$\varphi_\tau(x):=\frac 1{2\tau} \|x-\bar x\|^2+\varphi(x)$,
which is $1/\tau$-uniformly convex 
whenever $\varphi$ is convex.

\item \textbf{Example (III): Min-max gradient flows.} The choice of $\bif (y,x)=F(y^{(1)},x^{(2)})-F(x^{(1)},y^{(2)})$ is an example of an antisymmetric bifunction.  To apply Theorem~\ref{thm:antisymmetric}, it is sufficient for $F$ to be lower semicontinuous in the first argument, upper semicontinuous in the second argument, and $\xi>0$ convex-concave, that is, $x \mapsto F(x,y)$ is $\xi$-barycentrically convexlike and $y \mapsto -F(x,y)$ is $\xi$ barycentrically convexlike, both with respect to some family of barycentric maps $\cB_2$. Then both a unique primal and dual solution exists. The antisymmetric structure allows us to relax the convexity assumption in the more general Theorem~\ref{thm:equilibrium2} from barycentric maps interpolating among $J\ge 2$ points to just two points. 

\item \textbf{Example (IV): Multispecies gradient flows.} 
In the multispecies Wasserstein-2 gradient flow example, 
we consider the conditions under which the primal equilibrium problem has a solution $x^*$:
\begin{align*}
    \sum_{i=1}^N   F^{(i)}(y^{(i)},x^{*(-i)})-F^{(i)}(x^{*(i)},x^{*(-i)}) \ge 0 \quad \text{for every } y \in \DDD\,, 
\end{align*}
and conditions under which the dual problem has a solution $y_*$:
\begin{align*}
    \sum_{i=1}^N F^{(i)}(y^{(i)}_*,x^{(-i)})-F^{(i)}(x^{(i)},x^{(-i)}) \le 0\,, \quad \text{for every } x \in \DDD\,.
\end{align*}
The solution to the primal problem is immediately a Nash equilibrium for the game $(F^{(i)})$, whereas the solution for the dual problem $y_*$ is not necessarily a Nash equilibrium. For the primal problem, Theorem~\ref{thm:primal_soln} requires $y\mapsto \sum_{i=1}^N F^{(i)}(y^{(i)},x^{(-i)})-F^{(i)}(x^{(i)},x^{(-i)})$ to be jointly barycentrically quasi-convexlike with respect to a system of continuous barycentric maps $\cB$ in $\DDD$; this holds, in particular, whenever each $F^{(i)}(\,\cdot\,,x^{(-i)})$ is barycentrically $\cvx$-convexlike for some $\cvx\ge0$, since barycentric convexity---unlike quasi-convexity---is additive, so that the sum is barycentrically $\cvx$-convexlike and hence barycentrically quasi-convexlike. 
The second condition is harder to show; the superlevels
\begin{align*}
    \sigma^+(y) = \left\{x \in\DDD\ : \ \sum_{i=1}^N F^{(i)}(y^{(i)},x^{(-i)})-F^{(i)}(x^{(i)},x^{(-i)}) \ge 0 \right\}
\end{align*}
need to be closed in $\DDD$ and at least one compact.

If instead, we aim to show existence of a solution to the dual equilibrium problem, we check if the requirements of Theorem~\ref{thm:equilibrium2} are satisfied.
The $\eta$ interaction dissipativity and $\cvx$-barycentric convexity hold if $(F^{(i)})$ are $(\eta,\cvx)$-monotone according to Definition~\ref{def:monotonicity_zeroth_order}, with $\lambda=\eta-\cvx$.  We also require that
$y\mapsto \sum_{i=1}^N F^{(i)}(y^{(i)},x^{(-i)})-F^{(i)}(x^{(i)},x^{(-i)})$ have complete sublevel sets, which holds if $\XXX$ is complete, $\DDD$ is closed, and each $F^{(i)}(\cdot,x^{(-i)})$ is lower semicontinuous and bounded from below.
\end{itemize}

\section{Variational Movement Scheme (VMS)}\label{sec:soln_to_discrete_EVI}
\renewcommand{\bif}{\mathsf b}

In this section we introduce the \textit{variational movement scheme} (VMS) generated by a driving bifunction $\sfb$, a generalization of the minimizing movement scheme for gradient flows: it produces a sequence $(X_\tau^n)_{n\ge 0}$ which approximates the solution of the continuous-time evolution \eqref{introeq:EVI} for a given time step $\tau>0$. In Section~\ref{sec:driving-b} we introduce the generating bifunction $\sfB$ associated with $\sfb$ and we show how its structural properties and, above all, its convexity can be derived from corresponding \emph{joint} properties of $\sfb$ and of the squared distance along a common system of barycentric maps depending on the reference point. In Section~\ref{sec:resolvent} we apply the variational equilibrium results of Section~\ref{sec:equilibrium_problems} to $\sfB$ and solve a single step of the scheme---its \emph{resolvent}---using three different approaches (compactness, dissipativity, antisymmetry). In Section~\ref{sec:VMS} we iterate the resolvent to define the variational movement scheme and we derive the discrete Evolution Variational Inequality satisfied by its iterates. There we also introduce the local and global slopes of $\sfb$, we show that under suitable convexity assumptions the global slope can be recovered from the local one, and we prove that---without any dissipativity assumption---all the iterates of the scheme lie in the domain of the global slope.  For dissipative bifunctions which are quadratically bounded from below, one step of the scheme approximates every initial point at rate $\sqrt\tau$, so that the domain of the global slope is automatically dense in $\DDD$ (Corollary~\ref{cor:slope-density}). We conclude the section with examples illustrating the interplay between the convexity of $\sfb(\cdot,x)$ and $\dX^2(\cdot,x)$ with respect to various families of barycentric maps $\cB$ (Section~\ref{sec:ex-VMS}), including our four running examples.

\subsection{The generating bifunction $\sfB$}\label{sec:driving-b}
Let $(\XXX,\dX)$ be a metric space, let $\DDD\subset \XXX$, and let
$\sfb:\DDD\times \DDD\to \R$ be the driving bifunction of
\eqref{introeq:EVI}. We will always assume that $\sfb$ vanishes on the
diagonal:
\begin{equation}
	\label{eq:b-regular}
	\skd xx=0
	\qquad\text{for every }x\in \DDD.
\end{equation}
For instance, nonegativity on the diagonal~\eqref{eq:diagonal} together with $\eta$ interaction dissipativity \eqref{eq:bif-monotone} directly implies \eqref{eq:b-regular}.
For $\cvx\in\R$, we also set
\begin{equation}
	\label{eq:s-variant1}
	\lskd yx:=\skd yx-\frac{\cvx}{2}\dX^2(x,y),
	\qquad
	x,y\in \DDD.
\end{equation}
For every time step $\tau>0$ and every base point $\bar x\in \DDD$ we
introduce the generating function
$\sfB(\tau,\bar x;\cdot,\cdot):\DDD\times \DDD\to \R$, already announced
in \eqref{introeq:generatingB}, defined by
\begin{equation}
	\label{eq:generating}
	\sfB(\tau,\bar x;y,x):=
	\frac 1{2\tau}\dX^2(\bar x,y)
	-
	\frac 1{2\tau}\dX^2(\bar x,x)
	+
	\skd yx,\qquad y,x\in \DDD.
\end{equation}
A single step of the scheme, starting from the base point
$\bar x=X^{n-1}_\tau$, consists in solving the primal equilibrium
problem \eqref{eq:equilibrium} for the bifunction
$\sfa=\sfB(\tau,\bar x;\cdot,\cdot)$. The existence results of
Section~\ref{sec:equilibrium_problems} will be applied through the
following properties of $\sfB$, which are directly inherited from
$\sfb$.
\begin{lemma}[Structural properties of $\sfB$]
	\label{le:B-structure}
	Let $\sfb$ satisfy \eqref{eq:b-regular}, let $\tau>0$, $\bar x\in \DDD$,
	and let $\sfB$ be defined by \eqref{eq:generating}.
	\begin{enumerate}
		\item[\rm(1)] For every $x,y\in \DDD$
		\begin{equation}
			\label{eq:B-symmetrized}
			\sfB(\tau,\bar x;x,x)=0,\qquad
			\sfB(\tau,\bar x;y,x)+\sfB(\tau,\bar x;x,y)=\skd yx+\skd xy.
		\end{equation}
		In particular, for every $\eta\in\R$,
		$\sfB(\tau,\bar x;\cdot,\cdot)$ satisfies the $\eta$-interaction
		dissipativity \eqref{eq:bif-monotone} if and only if $\sfb$ does,
		and $\sfB(\tau,\bar x;\cdot,\cdot)$ is antisymmetric
		\eqref{eq:antisymmetry} if and only if $\sfb$ is.
		\item[\rm(2)] If the sublevels of $\sfb(\cdot,x)$ are complete, i.e.
		\begin{equation}
			\label{eq:b-sublevels}
			\text{for every }x\in \DDD\quad
			\text{the map }y\mapsto \skd yx\ \text{has complete sublevels,}
		\end{equation}
		then for every $x\in \DDD$ the map
		$y\mapsto \sfB(\tau,\bar x;y,x)$ has complete sublevels as well.
	\end{enumerate}
\end{lemma}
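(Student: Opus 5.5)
Both claims follow from the definition \eqref{eq:generating} of $\sfB$ by direct computation; the second also needs one completeness argument.

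For (1), I substitute $y=x$ in \eqref{eq:generating}. The two distance terms $\frac1{2\tau}\dX^2(\bar x,x)$ cancel, which leaves $\sfB(\tau,\bar x;x,x)=\skd xx=0$ by \eqref{eq:b-regular}. For the symmetrized identity I write out $\sfB(\tau,\bar x;y,x)+\sfB(\tau,\bar x;x,y)$. The penalization terms $\frac1{2\tau}\dX^2(\bar x,y)-\frac1{2\tau}\dX^2(\bar x,x)$ and $\frac1{2\tau}\dX^2(\bar x,x)-\frac1{2\tau}\dX^2(\bar x,y)$ cancel exactly, so only $\skd yx+\skd xy$ remains. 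The two equivalences then follow at once:
\begin{itemize}
\item \eqref{eq:bif-monotone} for $\sfB(\tau,\bar x;\cdot,\cdot)$ is the inequality $\sfB(\tau,\bar x;y,x)+\sfB(\tau,\bar x;x,y)\le\eta\dX^2(x,y)$, whose left-hand side is identically $\skd yx+\skd xy$, so it holds exactly when it holds for $\sfb$;
\item the same applies to the equality \eqref{eq:antisymmetry}.
\end{itemize}

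For (2), I fix $x$ and $c\in\R$ and consider the sublevel $L:=\{y\in\DDD:\sfB(\tau,\bar x;y,x)\le c\}$. Take a Cauchy sequence $(y_n)$ in $L$. It is bounded, so $\dX^2(\bar x,y_n)\le M$ for some $M$. Hence
\[
\skd{y_n}x\le c':=c+\frac1{2\tau}\dX^2(\bar x,x),
\]
since the term $\frac1{2\tau}\dX^2(\bar x,y_n)$ is nonnegative. So $(y_n)$ lies in the sublevel $\{\sfb(\cdot,x)\le c'\}$, which is complete by \eqref{eq:b-sublevels}, and therefore $y_n\to y\in\DDD$ with $\skd yx\le c'$.

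It remains to show $y\in L$; this is the only mildly delicate step. A function with complete sublevels is lower semicontinuous along convergent sequences inside a sublevel. Indeed, for any $\eps>0$ and any subsequence with $\skd{y_{n_k}}x\le \liminf_n\skd{y_n}x+\eps$, that subsequence lies in a complete sublevel and converges to $y$, so $\skd yx\le\liminf_n\skd{y_n}x$. Combining this with the continuity of $y\mapsto\dX^2(\bar x,y)$ gives
\[
\sfB(\tau,\bar x;y,x)\le\liminf_n\sfB(\tau,\bar x;y_n,x)\le c.
\]
So $y\in L$, and $L$ is complete.
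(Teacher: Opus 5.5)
Your proof is correct and follows the paper's route. Part (1) is obtained by cancelling the penalization terms. Part (2) places the sublevel of $\sfB(\tau,\bar x;\cdot,x)$ inside a complete sublevel of $\sfb(\cdot,x)$ and then uses the lower semicontinuity of $\sfb(\cdot,x)$, inherited from its complete sublevels, together with the continuity of $\dX^2(\bar x,\cdot)$; the paper phrases this last step as ``a closed subset of a complete set is complete.'' Two small remarks: the boundedness of the Cauchy sequence is never used, since nonnegativity of $\dX^2(\bar x,\cdot)$ suffices, and in the semicontinuity step you should argue with an arbitrary real level $m>\liminf_n \sfb(y_n,x)$ so that the (impossible) case $\liminf_n \sfb(y_n,x)=-\infty$ is also excluded.
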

\begin{proof}
	Claim (1) follows by a direct computation, since the terms
	$\frac 1{2\tau}\dX^2(\bar x,\cdot)$ cancel in the symmetrized sum.

	(2) The map $y\mapsto \sfB(\tau,\bar x;y,x)$ is the sum of the lower
	semicontinuous map $y\mapsto \skd yx$, of the continuous map
	$y\mapsto \frac 1{2\tau}\dX^2(\bar x,y)$, and of a constant, so that
	its sublevels are closed in $\DDD$. Since $\dX^2(\bar x,\cdot)\ge 0$,
	for every $c'\in\R$
	\begin{displaymath}
		\Big\{y\in \DDD:\sfB(\tau,\bar x;y,x)\le c'\Big\}\subset
		\Big\{y\in \DDD:\skd yx\le c'+\tfrac 1{2\tau}\dX^2(\bar x,x)\Big\};
	\end{displaymath}
	the latter set is complete by \eqref{eq:b-sublevels} and a closed
	subset of a complete set is complete.
\end{proof}
The convexity of the maps $y\mapsto \sfB(\tau,\bar x;y,x)$ required by
the existence results of Section~\ref{sec:equilibrium_problems} is a
more subtle matter, since $\sfB$ couples $\sfb$ with the squared
distance from the base point $\bar x$. The natural assumption is the
\emph{joint} convexity of the two families along common interpolating
curves in $\DDD$, which are allowed to depend on $\bar x$.
The simplest formulation
involves pairs of points:
\begin{quote}
    Given $\cvx\in\R$, 
    for every $\bar x, x_0,x_1\in \DDD$ there exists a
    family $(z_t)_{t\in[0,1]}$ in $\DDD$, with $z_0=x_0$ and $z_1=x_1$,
such that for every $t\in[0,1]$
\begin{align}
	\label{eq:gg-dist-2}
	\tag{$\mathrm{C}^{\mathsf d}_{\bar x,2}$}
	&\dX^2(\bar x,z_t)\le (1-t)\,\dX^2(\bar x,x_0)+t\,\dX^2(\bar x,x_1)
	-t(1-t)\,\dX^2(x_0,x_1),
	\\
	\label{eq:gg-bif-2}
	\tag{$\mathrm{C}^{\sfb}_{\cvx,\bar x,2}$}
	&\skd{z_t}x\le (1-t)\,\skd{x_0}x+t\,\skd{x_1}x
	-\frac\cvx2\,t(1-t)\,\dX^2(x_0,x_1)
	\quad\text{for every }x\in \DDD.
\end{align}
\end{quote}
Setting
$\bary z{(1-t),t}:=z_t$ the two conditions say precisely that 
\begin{quote}
    for every base point
$\bar x\in \DDD$, the
    function $x\mapsto \tfrac12\dX^2(\bar x,x)$ and the maps
    $y\mapsto \skd yx$, $x\in \DDD$, are jointly $1$-convexlike and
    $\cvx$-convexlike with respect to a common system
    $\barysystem{\bar x,2}$ of $2$-barycentric maps in $\DDD$, possibly
    depending on $\bar x$
(Definitions~\ref{def:barycentric maps} and~\ref{def:convexlike}).
\end{quote}
The KKM-based results of Section~\ref{sec:equilibrium_problems} involve
arbitrary finite collections of points, and thus the stronger
barycentric versions of the previous conditions: 
\begin{quote}
for every base point
$\bar x\in \DDD$, there exists a system $\barysystem{\bar x}$ of barycentric maps
in $\DDD$ (Definition~\ref{def:barycentric maps}), possibly depending
on $\bar x$, such that 
\begin{align}
	\label{eq:gg-dist}
	\tag{$\mathrm{C}^{\mathsf d}_{\bar x}$}
	&\text{the map $x\mapsto \tfrac12\dX^2(\bar x,x)$ is barycentrically
	$1$-convexlike w.r.t.~$\barysystem{\bar x}$,}
	\\
	\label{eq:gg-bif}
	\tag{$\mathrm{C}^{\sfb}_{\cvx,\bar x}$}
	&\text{the maps $y\mapsto \skd yx$, $x\in \DDD$, are 
	barycentrically $\cvx$-convexlike w.r.t.~$\barysystem{\bar x}$.}
\end{align}
\end{quote}
Recalling \eqref{eq:convex}, condition \eqref{eq:gg-dist} explicitly
reads
\begin{equation}
	\label{eq:generalized_metric_convexity}
	\dX^2(\bar x,\bary x\aalpha)\le
	\sum_{j=1}^J \alpha_j\, \dX^2(\bar x,\baryc xj)
	-\frac1{2}\sum_{j,k=1}^J \alpha_j\alpha_k\, \dX^2(\baryc xj,\baryc xk)
\end{equation}
for every $\baryname x\in \barysystem{\bar x}$ and
$\aalpha\in \simplex J$. Restricting \eqref{eq:gg-dist} and
\eqref{eq:gg-bif} to the $2$-barycentric maps of $\barysystem{\bar x}$
interpolating between $x_0$ and $x_1$, and choosing
$z_t:=\bary z{(1-t),t}$, we clearly recover \eqref{eq:gg-dist-2} and
\eqref{eq:gg-bif-2}.
\begin{proposition}[Convexity of the generating bifunction]
	\label{prop:B-convexity}
	Let $\bar x\in \DDD$, $\cvx\in\R$, $\tau>0$, let $\sfB$ be defined by
	\eqref{eq:generating}, and let $\xi:=\dfrac 1\tau+\cvx$.
	\begin{enumerate}
		\item[\rm(1)] If the two-point conditions \eqref{eq:gg-dist-2}
		and \eqref{eq:gg-bif-2} hold, then the maps
		$y\mapsto \sfB(\tau,\bar x;y,x)$, $x\in \DDD$, are jointly
		$\xi$-convexlike with respect to the same system
		$\barysystem{\bar x,2}$:
        for every $x_0,x_1\in \DDD$ the family
		$(z_t)_{t\in[0,1]}$ of \eqref{eq:gg-dist-2}, \eqref{eq:gg-bif-2}
		satisfies, for every $t\in[0,1]$,
		\begin{equation}
			\label{eq:B-two-convex}
			\tag{$\mathrm{C}^{\sfB}_{\xi,2}$}
			\begin{aligned}
				\sfB(\tau,\bar x;z_t,x)\le{}&
				(1-t)\,\sfB(\tau,\bar x;x_0,x)+t\,\sfB(\tau,\bar x;x_1,x)
				\\&-\frac\xi2\,t(1-t)\,\dX^2(x_0,x_1)
				\qquad\text{for every }x\in \DDD.
			\end{aligned}
		\end{equation}
		In particular $\sfB(\tau,\bar x;\cdot,\cdot)$ satisfies the
		radial convexity \eqref{eq:radial-convex} with the same $\xi$.
		\item[\rm(2)] If a system $\barysystem{\bar x}$ of barycentric
		maps in $\DDD$ satisfies \eqref{eq:gg-dist} and
		\eqref{eq:gg-bif}, then the maps
		$y\mapsto \sfB(\tau,\bar x;y,x)$, $x\in \DDD$, are jointly
		barycentrically $\xi$-convexlike with respect to
		$\barysystem{\bar x}$: for every
		$\baryname z\in \barysystem{\bar x}$ and $\aalpha\in \simplex J$
		\begin{equation}
			\label{eq:B-bary-convex}
			\tag{$\mathrm{C}^{\sfB}_{\xi}$}
			\sfB(\tau,\bar x;\bary z\aalpha,x)\le
			\sum_{j=1}^J \alpha_j\,\sfB(\tau,\bar x;\baryc zj,x)
			-\frac{\xi}{4}\sum_{j,k=1}^J
			\alpha_j\alpha_k\,\dX^2(\baryc zj,\baryc zk)
			\quad\text{for every }x\in \DDD.
		\end{equation}
	\end{enumerate}
\end{proposition}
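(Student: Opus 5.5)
The argument is a direct term-by-term summation, exploiting that $\sfB(\tau,\bar x;y,x)$ is affine in the three pieces $\frac1{2\tau}\dX^2(\bar x,y)$, $-\frac1{2\tau}\dX^2(\bar x,x)$ and $\skd yx$. The second piece does not depend on $y$, and the convex weights sum to one, so it passes unchanged through any convex combination with coefficients summing to one. Thus it suffices to add suitably rescaled convexity inequalities for the first and third pieces.

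For (1), fix $x_0,x_1$ and the family $(z_t)$ given by \eqref{eq:gg-dist-2}--\eqref{eq:gg-bif-2}. Multiply \eqref{eq:gg-dist-2} by $\frac1{2\tau}$, which yields a defect term $-\frac1{2\tau}t(1-t)\dX^2(x_0,x_1)$. Add \eqref{eq:gg-bif-2} for a fixed $x$, with defect $-\frac\cvx2t(1-t)\dX^2(x_0,x_1)$. Finally, add the identity
\[
-\tfrac1{2\tau}\dX^2(\bar x,x)=(1-t)\big(-\tfrac1{2\tau}\dX^2(\bar x,x)\big)+t\big(-\tfrac1{2\tau}\dX^2(\bar x,x)\big).
\]
The defects combine to $-\frac12(\frac1\tau+\cvx)t(1-t)\dX^2(x_0,x_1)=-\frac\xi2 t(1-t)\dX^2(x_0,x_1)$, which is \eqref{eq:B-two-convex}. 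Since the same family $(z_t)$ works for every $x$, the convexity is joint. Radial convexity \eqref{eq:radial-convex} follows by specialising to $x_0=x$, $x_1=y$, so that the family starts at the second argument of $\sfB$.

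For (2), fix $\baryname z\in\barysystem{\bar x}$ and $\aalpha\in\simplex J$. Multiply \eqref{eq:generalized_metric_convexity} by $\frac1{2\tau}$, which gives a defect $-\frac1{4\tau}\sum_{j,k}\alpha_j\alpha_k\dX^2(\baryc zj,\baryc zk)$. Add \eqref{eq:convex} for $y\mapsto\skd yx$, with defect $-\frac\cvx4\sum_{j,k}\alpha_j\alpha_k\dX^2(\baryc zj,\baryc zk)$. Then use $\sum_j\alpha_j=1$ to distribute the constant $-\frac1{2\tau}\dX^2(\bar x,x)$. The total defect is $-\frac\xi4\sum_{j,k}\alpha_j\alpha_k\dX^2(\baryc zj,\baryc zk)$, which is \eqref{eq:B-bary-convex}. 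As before, the argument holds for all $x$ with the same $\barysystem{\bar x}$.

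The only point requiring care is the constant bookkeeping, namely that the normalisation of \eqref{eq:generalized_metric_convexity} (coefficient $\frac12$) matches the $\frac\cvx4$ convention of \eqref{eq:convex} at $\cvx=1$ after halving. Checking this, $\frac1{2\tau}\cdot\frac12=\frac1{4\tau}$, which agrees with $\frac{1/\tau}{4}$. There is no genuine obstacle.
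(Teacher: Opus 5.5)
Your proof is correct and matches the paper's argument: multiply the distance inequality by $\frac1{2\tau}$, add the convexity inequality for $\sfb(\cdot,x)$, and subtract the $y$-independent term $\frac1{2\tau}\dX^2(\bar x,x)$, so that the defects combine into the $\xi$-term. The paper likewise obtains radial convexity by taking $x_0=x$, $x_1=y$, and your constant check $\frac1{2\tau}\cdot\frac12=\frac1{4\tau}$ in part (2) is right.
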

\begin{proof}
	(1) Given $x_0,x_1\in \DDD$, let $(z_t)_{t\in[0,1]}$ be a family
	satisfying \eqref{eq:gg-dist-2} and \eqref{eq:gg-bif-2}. Multiplying
	\eqref{eq:gg-dist-2} by $\frac 1{2\tau}$, adding \eqref{eq:gg-bif-2},
	and subtracting the constant $\frac 1{2\tau}\dX^2(\bar x,x)$ from
	both sides, we obtain \eqref{eq:B-two-convex}. The choice $x_0:=x$,
	$x_1:=y$, together with $\sfB(\tau,\bar x;x,x)=0$, yields the radial
	convexity \eqref{eq:radial-convex} of
	Remark~\ref{rem:coercive-primal}.

	(2) Let us fix $x\in \DDD$, $J\in \N_{\ge 2}$,
	$\baryname z\in \barysystem{\bar x}$, and $\aalpha\in \simplex J$.
	Multiplying \eqref{eq:generalized_metric_convexity}, written for
	$\baryname z$, by $\frac 1{2\tau}$, adding the inequality
	\eqref{eq:convex} for the maps $y\mapsto \skd yx$, and subtracting
	the constant $\frac 1{2\tau}\dX^2(\bar x,x)$ from both sides, we
	obtain \eqref{eq:B-bary-convex}.
\end{proof}
Clearly \eqref{eq:B-bary-convex} implies \eqref{eq:B-two-convex} (it is
sufficient to restrict to the $2$-barycentric maps of the system), and
\eqref{eq:B-two-convex} implies the radial convexity
\eqref{eq:radial-convex} of $\sfB(\tau,\bar x;\cdot,\cdot)$. In the
next section, \eqref{eq:B-bary-convex} and \eqref{eq:B-two-convex}
will also be used as assumptions on $\sfB$ for general values
$\xi\in\R$ and systems $\barysystem{}$, $\barysystem{2}$ possibly
depending on $\tau$ and $\bar x$, without any reference to
\eqref{eq:gg-dist} and \eqref{eq:gg-bif}.
\begin{remark}[Metric space curvature]\label{rm:metric_space_curvature}
	Condition \eqref{eq:gg-dist} constrains the geometry of
	$(\XXX,\dX)$ rather than the driving bifunction, and it captures the
	part of the convexity of $\sfB$ which is uniform with respect to the
	time step. Indeed, suppose that for a base point $\bar x\in \DDD$, a
	system $\barysystem{}$ independent of $\tau$, and a vanishing sequence
	$\tau_n\downarrow 0$, the maps $y\mapsto \sfB(\tau_n,\bar x;y,x)$,
	$x\in \DDD$, are jointly barycentrically
	$(\tau_n^{-1}+\cvx)$-convexlike with respect to $\barysystem{}$.
	Writing \eqref{eq:convex} for these maps along a fixed
	$\baryname z\in \barysystem{}$ and $\aalpha\in \simplex J$, multiplying
	by $2\tau_n$, and letting $n\to\infty$, the contribution of the
	real-valued $\sfb$ vanishes in the limit and we recover exactly
	\eqref{eq:generalized_metric_convexity}.
	For $J=2$ and geodesic interpolants,
	\eqref{eq:generalized_metric_convexity} is the NPC comparison
	inequality \eqref{eq:cn} of Example~\ref{ex:NPC}: it holds with
	equality along linear interpolants in Hilbert (and all flat) spaces,
	and it fails in positively curved spaces. This is the reason for
	allowing the system $\barysystem{\bar x}$ to depend on the base point
	and to collect non-geodesic interpolants: in the Wasserstein space
	$(\PP_2(H),\Wass_2)$, which is positively curved, the generalized
	geodesics based at $\bar x$ of Example~\ref{ex:generalized-geo}
	satisfy \eqref{eq:gg-dist}, whereas geodesics do not; NNCC spaces
	(Example~\ref{ex:NNCC}) exhibit the same mechanism.
\end{remark}

\subsection{The single-step resolvent}\label{sec:resolvent}
Throughout this section $\sfb:\DDD\times \DDD\to \R$ satisfies
\eqref{eq:b-regular} and $\sfB$ is the generating bifunction defined by
\eqref{eq:generating}. A single step of the variational movement scheme
with step size $\tau>0$, starting from the base point $\bar x\in \DDD$,
consists in finding a solution $x_\tau\in \DDD$ of the primal
equilibrium problem
\begin{equation}
	\label{eq:B-primal}
	\sfB(\tau,\bar x;y,x_\tau)\ge 0
	\qquad\text{for every }y\in \DDD;
\end{equation}
we will also consider the corresponding dual problem
\begin{equation}
	\label{eq:B-dual}
	\sfB(\tau,\bar x;x_\tau,x)\le 0
	\qquad\text{for every }x\in \DDD.
\end{equation}
Combining Lemma~\ref{le:B-structure} and
Proposition~\ref{prop:B-convexity} with the results of
Section~\ref{sec:equilibrium_problems}, we obtain the solvability of
\eqref{eq:B-primal} along three different routes: via compactness and
the primal problem (Theorem~\ref{thm:VMS-compact}), via interaction
dissipativity and the dual problem (Theorem~\ref{thm:discrete-SEVI}),
and via antisymmetry (Theorem~\ref{thm:VMS-antisym}).
\begin{theorem}[One step via compactness]
	\label{thm:VMS-compact}
	Let $\tau>0$ and $\bar x\in \DDD$, and assume that
	\begin{enumerate}
		\item[\rm(i)] the maps $y\mapsto \sfB(\tau,\bar x;y,x)$,
		$x\in \DDD$, are jointly barycentrically quasi-convexlike with
		respect to a {\em continuous} system $\barysystem{\bar x}$ 
        of barycentric maps in $\DDD$;
		\item[\rm(ii)] for every $y\in \DDD$ the map $x\mapsto \skd yx$ is
		upper semicontinuous; 
		\item[\rm(iii)] there exists $y_0\in \DDD$ such that the
		superlevel set
		$\sigma^+(y_0)=\big\{x\in \DDD:\sfB(\tau,\bar x;y_0,x)\ge 0\big\}$
		is compact.
	\end{enumerate}
	Then the primal problem \eqref{eq:B-primal} has at least one solution.
\end{theorem}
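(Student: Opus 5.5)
The plan is to apply Theorem~\ref{thm:primal_soln} to the bifunction $\sfa:=\sfB(\tau,\bar x;\cdot,\cdot)$ on $\DDD$, with the system $\barysystem{}:=\barysystem{\bar x}$. We check its hypotheses one at a time.

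\textbf{Diagonal condition.} By Lemma~\ref{le:B-structure}(1) and the standing assumption \eqref{eq:b-regular}, we have $\sfB(\tau,\bar x;x,x)=0$. So \eqref{eq:diagonal} holds.

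\textbf{Condition 1.} This is exactly assumption (i).

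\textbf{Condition 2 (upper hemicontinuity in $x$).} Fix $y\in\DDD$. Then
\[
x\mapsto \sfB(\tau,\bar x;y,x)=\tfrac1{2\tau}\dX^2(\bar x,y)-\tfrac1{2\tau}\dX^2(\bar x,x)+\skd yx .
\]
The first term is constant and $x\mapsto -\frac1{2\tau}\dX^2(\bar x,x)$ is continuous. The last term is upper semicontinuous by (ii). Hence the whole map is upper semicontinuous on $\DDD$.

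Every $\baryname x\in\barysystem{\bar x}$ is continuous on the simplex, by (i). Composing the upper semicontinuous map with such a continuous map gives an upper semicontinuous function of $\aalpha$. This is precisely Definition~\ref{def:hemicontinuity}, as already noted in the remark after Theorem~\ref{thm:primal_soln}.

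\textbf{Condition 3 (closed and compact superlevels).} For each $y\in\DDD$, the set $\sigma^+(y)=\{x\in\DDD:\sfB(\tau,\bar x;y,x)\ge0\}$ is a superlevel set of an upper semicontinuous function on $\DDD$. It is therefore closed in $\DDD$. Assumption (iii) supplies one compact superlevel, $\sigma^+(y_0)$.

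\textbf{Conclusion.} Theorem~\ref{thm:primal_soln} now yields $x_\tau\in\DDD$ with $\sfB(\tau,\bar x;y,x_\tau)\ge0$ for all $y$, which is \eqref{eq:B-primal}.

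The only step requiring any care is Condition 2. Hemicontinuity must be deduced from the plain upper semicontinuity in (ii), and this works only because the system is assumed continuous. One must also check that the extra distance term is continuous, which it is: $\dX(\bar x,\cdot)$ is $1$-Lipschitz. Everything else is a direct substitution.
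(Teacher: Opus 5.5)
Your proposal is correct and follows the paper's own proof essentially step by step. Both apply Theorem~\ref{thm:primal_soln} to $\sfa=\sfB(\tau,\bar x;\cdot,\cdot)$, with the diagonal condition from Lemma~\ref{le:B-structure}(1). Both obtain upper hemicontinuity and closed superlevels from (ii), the continuity of the distance term, and the continuity of the barycentric maps, and take the compact superlevel from (iii).
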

\begin{proof}
	We apply Theorem~\ref{thm:primal_soln} to
	$\sfa:=\sfB(\tau,\bar x;\cdot,\cdot)$, which vanishes on the diagonal
	by Lemma~\ref{le:B-structure}(1). Note that the first assumption of Theorem~\ref{thm:primal_soln} is precisely (i) in the statement above.
	For every $y\in \DDD$ the map $x\mapsto \sfB(\tau,\bar x;y,x)$ is the
	sum of the upper semicontinuous map $x\mapsto \skd yx$ by assumption (ii) above, of the
	continuous map $x\mapsto -\frac 1{2\tau}\dX^2(\bar x,x)$, and of a
	constant, hence it is upper semicontinuous; since the system
	$\barysystem{}$ is continuous, the second assumption of
	Theorem~\ref{thm:primal_soln} follows, and every superlevel
	$\sigma^+(y)$ is closed in $\DDD$. By (iii) one of them is compact,
	so Theorem~\ref{thm:primal_soln} provides a solution of
	\eqref{eq:B-primal}.
\end{proof}

    Theorem~\ref{thm:VMS-compact} requires no dissipativity of $\sfb$,
	and uniqueness may fail. 
	Condition (i) holds, in particular, when
	\eqref{eq:B-bary-convex} is satisfied with some $\xi\ge 0$,
	e.g.~under \eqref{eq:gg-dist} and \eqref{eq:gg-bif} with
	$\xi=\tau^{-1}+\cvx\ge 0$ thanks to Proposition~\ref{prop:B-convexity}. The following lemma provides a sufficient condition for Condition (iii) in Theorem~\ref{thm:VMS-compact} to hold uniformly for $\tau\in(0,\tau_o)$.

\begin{lemma}\label{lem:compact}
Assume
    \begin{equation}
        \label{eq:b-growth}
        \text{for some $y_0\in \DDD$ and $\tau_o>0$,
	the superlevel of $x\mapsto \skd{y_0}x-\tfrac1{2\tau_o}\dX^2(y_0,x)$
	is compact.}
    \end{equation}
 Then Condition (iii) in Theorem~\ref{thm:VMS-compact} holds for every $\bar x\in \DDD$ and every
	$\tau\in(0,\tau_o)$.
\end{lemma}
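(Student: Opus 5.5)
We take the same $y_0$ as in \eqref{eq:b-growth} and show that, for every $\bar x\in\DDD$ and $\tau\in(0,\tau_o)$, the superlevel $\sigma^+(y_0)=\{x\in\DDD:\sfB(\tau,\bar x;y_0,x)\ge0\}$ is a closed subset of a compact set. Let
$$K_c:=\{x\in\DDD:\skd{y_0}x-\tfrac1{2\tau_o}\dX^2(y_0,x)\ge c\}.$$
We read \eqref{eq:b-growth} as saying that $K_c$ is compact for every $c\in\R$. Closedness of $\sigma^+(y_0)$ in $\DDD$ comes from the upper semicontinuity of $x\mapsto\sfB(\tau,\bar x;y_0,x)$, exactly as in the proof of Theorem~\ref{thm:VMS-compact}, using assumption (ii). A closed subset of a compact subset of $\DDD$ is compact, so it suffices to find $c$ with $\sigma^+(y_0)\subset K_c$.

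The key step is a quadratic comparison. For $x\in\sigma^+(y_0)$ we have
$$\skd{y_0}x\ge \tfrac1{2\tau}\dX^2(\bar x,x)-\tfrac1{2\tau}\dX^2(\bar x,y_0).$$
We bound $\dX^2(\bar x,x)$ from below in terms of $\dX^2(y_0,x)$. For any $\eps>0$, the triangle inequality and Young's inequality give
$$\dX^2(y_0,x)\le(1+\eps)\dX^2(\bar x,x)+(1+\eps^{-1})\dX^2(\bar x,y_0),$$
that is,
$$\dX^2(\bar x,x)\ge \tfrac1{1+\eps}\dX^2(y_0,x)-\eps^{-1}\dX^2(\bar x,y_0).$$
Since $\tau<\tau_o$, we choose $\eps>0$ with $\tfrac1{\tau(1+\eps)}=\tfrac1{\tau_o}$, i.e.\ $1+\eps=\tau_o/\tau$. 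This yields
$$\skd{y_0}x-\tfrac1{2\tau_o}\dX^2(y_0,x)\ge -C,\qquad C:=\tfrac1{2\tau}\big(1+\eps^{-1}\big)\dX^2(\bar x,y_0),$$
and therefore $\sigma^+(y_0)\subset K_{-C}$.

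The only delicate point is the interpretation of \eqref{eq:b-growth}. If it is meant only for the level $0$, then the shift by $-C$ is not covered directly. In that case we would exploit the strict gap $\tau<\tau_o$: we pick an intermediate $\tau_1\in(\tau,\tau_o)$ and use the leftover quadratic term $(\tfrac1{2\tau_1}-\tfrac1{2\tau_o})\dX^2(y_0,x)$ to absorb the constant. This confines $\sigma^+(y_0)$ to the intersection of a bounded ball around $y_0$ with a superlevel set of the type in \eqref{eq:b-growth}. We expect the statement intends compactness of all superlevels, and in that reading the Young-inequality comparison above is the whole proof.
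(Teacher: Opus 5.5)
Your argument is correct and is essentially the paper's proof: the same Young inequality with $1+\eps=\tau_o/\tau$ gives $\sigma^+(y_0)\subset K_{-C}$, and the paper also reads \eqref{eq:b-growth} as compactness of \emph{all} superlevels (its proof concludes with ``for any small enough constant $c$''), so your main reading is the intended one. Your fallback for the level-$0$-only reading would not close the argument, because a ball intersected with a superlevel at a negative level need not be compact, but it is not needed; also, the closedness of $\sigma^+(y_0)$, which the paper leaves implicit, already follows from \eqref{eq:b-growth} itself (closed superlevels at every level make $x\mapsto\sfb(y_0,x)$ upper semicontinuous), so you do not need to borrow assumption (ii).
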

\begin{proof}
    By \eqref{eq:generating}, $\sigma^+(y_0)$ is a
	superlevel set of $x\mapsto \skd{y_0}x-\tfrac 1{2\tau}\dX^2(\bar x,x)$. From Young's inequality for $p,q\in\R$ and $\theta>0$,
    \begin{align*}
        (p+q)^2 \le (1+\theta) p^2 + (1+\theta^{-1})q^2\,,
    \end{align*}
	applied with $q=\dX(y_0,\bar x)$, $p=\dX(\bar x,x)$, and $\theta=\frac {\tau_o}\tau-1$ and dividing by $1/2\tau_o$, we have
 \begin{align*}
        \frac1{2\tau_o}\dX^2(y_0,x)\le \tfrac{1}{2\tau}\dX^2(\bar x,x)+C\,, \quad C=C(y_0,\bar x, \tau_o, \tau):=\frac{1+\theta^{-1}}{2\tau_o}\dX^2(y_0,\bar x)\,.
    \end{align*}
    Hence, 
	\begin{displaymath}
		\sigma^+(y_0)\subset
		\Big\{x\in \DDD:\skd{y_0}x-\tfrac1{2\tau_o}\dX^2(y_0,x)
		\ge c\Big\}
	\end{displaymath}
    for any small enough constant $c\in\R$. Then, using  \eqref{eq:b-growth}, it follows that Condition (iii) holds for every $\bar x\in \DDD$ and every
	$\tau\in(0,\tau_o)$.
\end{proof}

A notable class of examples for \eqref{eq:b-growth} is provided by
bifunctions admitting a \emph{Lyapunov decomposition}: 
\begin{quote}
There exist a
function $V:\DDD\to\R$ with compact sublevels (in particular $V$ is
lower semicontinuous and bounded from below),
constants $C_1,C_2\ge 0$, and a bifunction $\bif':\DDD\times \DDD\to\R$
upper semicontinuous with respect to its second variable,
 such
that for every $x,y\in \DDD$
\begin{equation}
	\label{eq:lyapunov}
	\tag{$\mathrm V$}
	\skd yx= V(y)-V(x)+\bif'(y,x)
	\quad\text{with}\quad
	\bif'(y,x)\le C_1\,\dX(x,y)+C_2\,\dX^2(x,y).
\end{equation}
\end{quote}
Notice that $\bif'(x,x)=0$ since \eqref{eq:b-regular} holds for $\sfb$, and that the Lyapunov part cancels in the
symmetrized sum, $\bif'(y,x)+\bif'(x,y)=\skd yx+\skd xy$, so that
interaction dissipativity and antisymmetry are entirely carried by
$\bif'$. In the model case of a gradient flow (Example II),
$\skd yx=\varphi(y)-\varphi(x)$, condition \eqref{eq:lyapunov} holds
with $V:=\varphi$ (no continuity of $\varphi$ is required) and
$\bif'\equiv 0$.

Under \eqref{eq:lyapunov} the topological assumptions of
Theorem~\ref{thm:VMS-compact} simplify. By \eqref{eq:generating}, for
every $\tau>0$, $\bar x\in \DDD$, and $y\in \DDD$ the superlevel set
$\sigma^+(y)$ is a sublevel set of the lower semicontinuous map
\begin{equation}
	\label{eq:V-lsc-map}
	x\mapsto V(x)+\frac1{2\tau}\dX^2(\bar x,x)-\bif'(y,x),
\end{equation}
and is therefore closed.

Since the sublevels $\big\{x\in \DDD: V(x)\le c\big\}$, $c\in\R$,
are compact, \eqref{eq:lyapunov} implies \eqref{eq:b-growth} for
every $y_0\in \DDD$ and every $\tau_o>0$ with $2\tau_o C_2<1$: setting
$\delta:=\frac 1{2\tau_o}-C_2>0$, Young's inequality
$C_1\,\dX(y_0,x)\le \delta\,\dX^2(y_0,x)+\frac{C_1^2}{4\delta}$ gives
\begin{displaymath}
	\skd{y_0}x-\frac1{2\tau_o}\dX^2(y_0,x)\le
	V(y_0)+\frac{C_1^2}{4\delta}-V(x),
\end{displaymath}
so that the closed superlevels in \eqref{eq:b-growth} are 
contained in compact sublevels of $V$.
The decomposition \eqref{eq:lyapunov} will play an important role also
in the asymptotic analysis of
Section~\ref{sec:metric_dissipative_evolutions}: the resulting
one-sided bound $\skd yx\le V(y)-V(x)+C_1\,\dX(x,y)+C_2\,\dX^2(x,y)$
provides the a priori estimates for the convergence of the scheme,
while the semicontinuity properties of $V$ and $\bif'$ allow to pass
to the limit as the step size vanishes.
\begin{theorem}[One step via dissipativity]
	\label{thm:discrete-SEVI}
	Let $\sfb$ satisfy \eqref{eq:b-sublevels} and the $\eta$-interaction
	dissipativity
	\begin{equation}
		\label{eq:b-dissipative}
		\skd yx+\skd xy\le \eta\,\dX^2(x,y)
		\qquad\text{for every }x,y\in \DDD,
	\end{equation}
	let $\tau>0$, $\bar x\in \DDD$, $\xi>2\eta_+$, and assume that
	\begin{enumerate}
		\item[\rm(i)] the convexity condition \eqref{eq:B-bary-convex}
		holds for a system $\barysystem{}$ of barycentric maps in $\DDD$,
		possibly depending on $\tau$ and $\bar x$;
		\item[\rm(ii)] for every $y\in \DDD$ the map
		$x\mapsto \sfB(\tau,\bar x;y,x)$ is upper hemicontinuous with
		respect to the system $\barysystem{2}$ of $2$-barycentric maps of
		$\barysystem{}$ (Definition~\ref{def:hemicontinuity}).
	\end{enumerate}
	Then the primal problem \eqref{eq:B-primal} and the dual problem
	\eqref{eq:B-dual} have a unique common solution $x_\tau\in \DDD$, and
	$(x_\tau,x_\tau)$ is the unique diagonal saddle point
	\eqref{eq:saddle} of $\sfB(\tau,\bar x;\cdot,\cdot)$.
\end{theorem}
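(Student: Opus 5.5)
The plan is to apply the abstract results of Section~\ref{sec:equilibrium_problems} to the bifunction $\sfa:=\sfB(\tau,\bar x;\cdot,\cdot)$. The first step is to check the standing assumption \eqref{eq:diagonal}, which is immediate since Lemma~\ref{le:B-structure}(1) gives $\sfB(\tau,\bar x;x,x)=0$. Next I verify the hypotheses of Theorem~\ref{thm:equilibrium2} for $\sfa$:
\begin{itemize}
\item the $\eta$-interaction dissipativity of $\sfa$ follows from \eqref{eq:b-dissipative} and the symmetrization identity \eqref{eq:B-symmetrized}, because the squared-distance terms cancel;
\item the sublevels of $y\mapsto\sfB(\tau,\bar x;y,x)$ are complete by Lemma~\ref{le:B-structure}(2), using \eqref{eq:b-sublevels};
\item joint barycentric $\xi$-convexity with respect to $\barysystem{}$ is exactly assumption (i), i.e.\ \eqref{eq:B-bary-convex}.
\end{itemize}
Since $\xi>2\eta_+$, Theorem~\ref{thm:equilibrium2} yields a unique solution $x_\tau$ of the dual problem \eqref{eq:B-dual}.

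The second step passes from the dual to the primal problem through Corollary~\ref{cor:primal-consequences}(ii), which rests on the metric Minty Lemma~\ref{le:metric-Minty}. Its convexity hypothesis (1) asks that the maps $y\mapsto\sfa(y,x)$ be jointly $\xi$-convexlike with respect to $\barysystem{2}$. This follows by restricting \eqref{eq:B-bary-convex} to $J=2$, since $\frac\xi4\sum_{j,k}\alpha_j\alpha_k\dX^2=\frac\xi2\alpha_1\alpha_2\dX^2(\baryc z1,\baryc z2)$. The coefficient $\xi$ may be negative in principle, but here $\xi>2\eta_+\ge0$, so the inequality in particular implies plain convexlikeness, which is what the Minty argument uses. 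The hemicontinuity hypothesis (2) is exactly assumption (ii). Hence $x_\tau$ also solves \eqref{eq:B-primal}.

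Uniqueness and the saddle structure come from Corollary~\ref{cor:primal-consequences}. Part (i) shows that any primal solution is dual, via Proposition~\ref{prop:primal-implies-dual}, and therefore equals $x_\tau$. This uses the radial convexity of $\sfa$ with parameter $\xi\ge2\eta$. That radial convexity follows from the two-point restriction of (i) with endpoint $x_0=x$, together with $\sfa(x,x)=0$, as in Proposition~\ref{prop:B-convexity}(1). By Remark~\ref{rem:saddle}, $(x_\tau,x_\tau)$ is then a diagonal saddle point. It is the unique one because any diagonal saddle point solves the dual problem, whose solution is unique.

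The main point needing care is bookkeeping rather than substance. One must confirm that the system in (i) may depend on $\tau$ and $\bar x$; this is harmless because all arguments are for fixed $\tau$ and $\bar x$. One must also confirm that the inequality $\inf_y\sfa(y,x_0)>-\infty$, used inside Theorem~\ref{thm:equilibrium2}, holds for $\sfB$. It does, since it follows from the completeness of sublevels and the $\xi$-convexity of $\sfB(\tau,\bar x;\cdot,x_0)$ with $\xi>0$, exactly as in that proof.
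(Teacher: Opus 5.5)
Your proposal is correct and follows the paper's proof essentially step by step. You apply Theorem~\ref{thm:equilibrium2} to $\sfa=\sfB(\tau,\bar x;\cdot,\cdot)$, using Lemma~\ref{le:B-structure} and assumption (i), to get the unique dual solution, and then invoke Corollary~\ref{cor:primal-consequences}(ii) (the metric Minty lemma together with Proposition~\ref{prop:primal-implies-dual}) for the primal problem, uniqueness, and the saddle structure; your extra bookkeeping (the $J=2$ restriction, radial convexity, and the lower bound on $\inf_y\sfa(y,x_0)$) only makes explicit what the paper leaves implicit.
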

\begin{proof}
	We set $\sfa:=\sfB(\tau,\bar x;\cdot,\cdot)$ and check the assumptions
	of Theorem~\ref{thm:equilibrium2}: $\sfa$ satisfies \eqref{eq:diagonal}
	and the $\eta$-interaction dissipativity \eqref{eq:bif-monotone} by
	Lemma~\ref{le:B-structure}(1) and \eqref{eq:b-dissipative}; the maps
	$y\mapsto \sfa(y,x)$, $x\in \DDD$, have complete sublevels by
	\eqref{eq:b-sublevels} and Lemma~\ref{le:B-structure}(2), and are
	jointly barycentrically $\xi$-convexlike with $\xi>2\eta_+$ by (i).
	Theorem~\ref{thm:equilibrium2} thus yields a unique solution $x_\tau$
	of the dual problem \eqref{eq:B-dual}. By (ii), $\sfa$ satisfies the
	upper hemicontinuity condition (2) of Lemma~\ref{le:metric-Minty} with
	respect to the system $\barysystem{2}$ of condition (1) of that Lemma
	(guaranteed by (i)); Corollary~\ref{cor:primal-consequences}(ii) then
	shows that $x_\tau$ is also the unique solution of the primal problem
	\eqref{eq:B-primal} and the unique diagonal saddle point of
	$\sfB(\tau,\bar x;\cdot,\cdot)$.
\end{proof}
\begin{remark}\label{rem:B-hemicontinuity}
	When the maps of $\barysystem{2}$ are continuous, condition
	{\rm(ii)} of Theorem~\ref{thm:discrete-SEVI} is equivalent to the
	$\barysystem{2}$-upper hemicontinuity of the maps
	$x\mapsto \skd yx$, $y\in \DDD$, since the squared-distance terms in
	\eqref{eq:generating} are then continuous along each map.
\end{remark}
\begin{theorem}[One step under antisymmetry]
	\label{thm:VMS-antisym}
	Let $\sfb$ satisfy \eqref{eq:b-sublevels} and the antisymmetry
	condition \eqref{eq:antisymmetry}, let $\tau>0$, $\bar x\in \DDD$,
	$\xi>0$, and assume that the convexity condition
	\eqref{eq:B-two-convex} holds for a system $\barysystem{2}$ of
	$2$-barycentric maps in $\DDD$, possibly depending on $\tau$ and
	$\bar x$. Then the problems \eqref{eq:B-primal} and \eqref{eq:B-dual}
	have a unique common solution $x_\tau$, and $(x_\tau,x_\tau)$ is the
	unique diagonal saddle point \eqref{eq:saddle} of
	$\sfB(\tau,\bar x;\cdot,\cdot)$.
\end{theorem}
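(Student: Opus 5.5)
We would apply Theorem~\ref{thm:antisymmetric} to the bifunction $\sfa:=\sfB(\tau,\bar x;\cdot,\cdot)$, in the same way Theorem~\ref{thm:discrete-SEVI} was obtained from Theorem~\ref{thm:equilibrium2}. The proof consists of checking the hypotheses of Theorem~\ref{thm:antisymmetric} one at a time.

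First, Lemma~\ref{le:B-structure}(1) gives $\sfB(\tau,\bar x;x,x)=0$ and
$$\sfB(\tau,\bar x;y,x)+\sfB(\tau,\bar x;x,y)=\skd yx+\skd xy .$$
Since $\sfb$ is antisymmetric by \eqref{eq:antisymmetry}, it follows that $\sfa$ is antisymmetric as well.

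Second, assumption \eqref{eq:b-sublevels} combined with Lemma~\ref{le:B-structure}(2) shows that every map $y\mapsto \sfa(y,x)$ has complete sublevels.

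Third, the convexity hypothesis \eqref{eq:B-two-convex} with $\xi>0$ for a system $\barysystem{2}$ is exactly the statement that the maps $y\mapsto\sfa(y,x)$, $x\in\DDD$, are jointly $\xi$-convexlike with respect to $\barysystem 2$. Here we may let $\barysystem{2}$ depend on $\tau$ and $\bar x$: once $\tau$ and $\bar x$ are fixed, $\sfa$ is a single bifunction on $\DDD$, and Theorem~\ref{thm:antisymmetric} only asks for some family of $2$-barycentric maps.

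With these three facts, Theorem~\ref{thm:antisymmetric} gives a unique saddle solution $x^*=y_*=:x_\tau$ of Problems~\ref{prob:equilibrium-1} and~\ref{prob:equilibrium-2} for $\sfa$. These two problems are precisely \eqref{eq:B-primal} and \eqref{eq:B-dual}. By Remark~\ref{rem:saddle}, a joint solution is the same thing as a diagonal saddle point, so $(x_\tau,x_\tau)$ is a diagonal saddle point of $\sfB(\tau,\bar x;\cdot,\cdot)$.

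For uniqueness in the strong sense claimed, let $x'$ solve \eqref{eq:B-primal}. By antisymmetry, $\sfa(x',x)=-\sfa(x,x')\le 0$ for every $x$, so $x'$ also solves \eqref{eq:B-dual}. Any solution of either problem is therefore a diagonal saddle point, and Theorem~\ref{thm:antisymmetric} says the saddle point is unique. Hence $x'=x_\tau$, and the same argument applies to solutions of \eqref{eq:B-dual}.

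The only step that needs care is the final part of the proof of Theorem~\ref{thm:antisymmetric}, which uses Lemma~\ref{le:metric-Minty}. That lemma needs upper semicontinuity of $x\mapsto \sfa(y,x)$. In the antisymmetric case this follows from $\sfa(y,x)=-\sfa(x,y)$: the right-hand side is minus a lower semicontinuous function (by completeness of sublevels), hence upper semicontinuous. So no hemicontinuity assumption on $\barysystem 2$ is needed, unlike in Theorem~\ref{thm:discrete-SEVI}.

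We expect no real obstacle. The only point to double-check is that the $\tau$- and $\bar x$-dependence of the system $\barysystem 2$ does not conflict with the hypotheses of Theorem~\ref{thm:antisymmetric}, and, as noted above, it does not, because $\tau$ and $\bar x$ are fixed throughout.
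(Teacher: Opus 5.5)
Your proposal is correct and is essentially the paper's own proof. As in the paper, Lemma~\ref{le:B-structure} transfers antisymmetry and completeness of sublevels to $\sfB(\tau,\bar x;\cdot,\cdot)$, \eqref{eq:B-two-convex} gives joint $\xi$-convexity with respect to $\barysystem{2}$, and Theorem~\ref{thm:antisymmetric} then yields the unique diagonal saddle point.

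Your closing paragraph about Lemma~\ref{le:metric-Minty} is not needed, and its reasoning is slightly off: plain upper semicontinuity of $x\mapsto\sfB(\tau,\bar x;y,x)$ does not by itself give hemicontinuity along possibly discontinuous maps of $\barysystem{2}$. Nothing depends on it, though, since your own antisymmetry argument already shows directly that primal and dual solutions coincide.
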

\begin{proof}
	By Lemma~\ref{le:B-structure}, $\sfB(\tau,\bar x;\cdot,\cdot)$ is
	antisymmetric and the maps $y\mapsto \sfB(\tau,\bar x;y,x)$ have
	complete sublevels; by \eqref{eq:B-two-convex} they are jointly
	$\xi$-convexlike with respect to $\barysystem{2}$. Theorem~\ref{thm:antisymmetric}
	then applies.
\end{proof}
Independently of the route followed to solve \eqref{eq:B-primal}, a
two-point convexity property of $\sfB$ improves the primal inequality
to a coercive one, which is the seed of the discrete Evolution
Variational Inequality studied in
Section~\ref{sec:metric_dissipative_evolutions}.
\begin{proposition}[Discrete Evolution Variational Inequality]
	\label{prop:discrete-EVI}
	Let $\tau>0$, $\cvx\in\R$, $\bar x\in \DDD$, set
	$\xi:=\dfrac 1\tau+\cvx>0$, and suppose that the convexity condition
	\eqref{eq:B-two-convex} holds. Then every solution $x_\tau$ of the
	primal problem \eqref{eq:B-primal} satisfies
	\begin{equation}
		\label{eq:resolvent-evi}
		\frac1{2\tau}
		\Big(\dX^2(x_\tau,y)
		-\dX^2(\bar x,y)+
		\dX^2(\bar x,x_\tau)\Big)
		\le \blskd y{x_\tau}
		\qquad\text{for every }y\in \DDD.
	\end{equation}
\end{proposition}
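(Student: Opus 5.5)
The plan is to apply Remark~\ref{rem:coercive-primal} to the bifunction $\sfa:=\sfB(\tau,\bar x;\cdot,\cdot)$ and then unwind the definition \eqref{eq:generating}. Two hypotheses of that remark have to be checked.

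First, $\sfB(\tau,\bar x;x,x)=0$ by Lemma~\ref{le:B-structure}(1), which uses \eqref{eq:b-regular}. Second, by Proposition~\ref{prop:B-convexity}(1) the two-point condition \eqref{eq:B-two-convex} gives radial convexity \eqref{eq:radial-convex} of $\sfB(\tau,\bar x;\cdot,\cdot)$ with constant $\xi=\tau^{-1}+\cvx$. Concretely, I would take $x_0:=x_\tau$ and $x_1:=y$ in \eqref{eq:B-two-convex}, and evaluate at the second argument $x=x_\tau$.

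With these in place, Remark~\ref{rem:coercive-primal} turns the primal inequality \eqref{eq:B-primal} into the coercive bound
\[
\sfB(\tau,\bar x;y,x_\tau)\ge \frac\xi2\,\dX^2(y,x_\tau)\qquad\text{for every }y\in\DDD.
\]
For completeness, the derivation is as follows. Along the family $z_t$ one has $\sfB(\tau,\bar x;z_t,x_\tau)\ge0$, because $z_t\in\DDD$ and $x_\tau$ solves \eqref{eq:B-primal}. Combining this with \eqref{eq:B-two-convex} gives
\[
0\le t\,\sfB(\tau,\bar x;y,x_\tau)-\tfrac\xi2 t(1-t)\,\dX^2(x_\tau,y).
\]
Dividing by $t>0$ and letting $t\downarrow0$ yields the bound. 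Note that positivity of $\xi$ is not even needed for this step.

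Next I would expand the left-hand side using \eqref{eq:generating}:
\[
\frac1{2\tau}\dX^2(\bar x,y)-\frac1{2\tau}\dX^2(\bar x,x_\tau)+\skd y{x_\tau}\ge \Big(\frac1{2\tau}+\frac\cvx2\Big)\dX^2(y,x_\tau).
\]
Moving $\frac\cvx2\dX^2(y,x_\tau)$ to the right gives $\skd y{x_\tau}-\frac\cvx2\dX^2(x_\tau,y)=\blskd y{x_\tau}$, by the definition \eqref{eq:s-variant1}. Rearranging the remaining distance terms then gives exactly \eqref{eq:resolvent-evi}.

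I do not expect a genuine obstacle. The only delicate point is bookkeeping: the order of the arguments in \eqref{eq:B-two-convex} (the curve must start at $x_\tau$ and end at $y$), and the constant $\frac\xi2$ versus $\frac\xi4$. For $J=2$ the barycentric sum $\frac\xi4\sum_{j,k}\alpha_j\alpha_k\dX^2$ equals $\frac\xi2\alpha_1\alpha_2\dX^2$, so the two-point form matches.
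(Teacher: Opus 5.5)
Your proof is correct and follows the paper's route. Like the paper, you obtain radial convexity of $\sfB(\tau,\bar x;\cdot,\cdot)$ from \eqref{eq:B-two-convex} via Proposition~\ref{prop:B-convexity}(1), apply Remark~\ref{rem:coercive-primal} to get $\sfB(\tau,\bar x;y,x_\tau)\ge\frac\xi2\dX^2(y,x_\tau)$, and then unwind \eqref{eq:generating} and \eqref{eq:s-variant1}. Your side remark that $\xi>0$ plays no role in this step is also accurate.
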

\begin{proof}
	By Proposition~\ref{prop:B-convexity}, condition
	\eqref{eq:B-two-convex} implies the radial convexity
	\eqref{eq:radial-convex} of $\sfB(\tau,\bar x;\cdot,\cdot)$ with the
	same $\xi$; Remark~\ref{rem:coercive-primal} then improves the primal
	inequality \eqref{eq:B-primal} to the coercive form
	\eqref{eq:coercive-primal-remark}
	\begin{displaymath}
		\sfB(\tau,\bar x;y,x_\tau)\ge
		\frac{\xi}2\,\dX^2(y,x_\tau)
		\qquad\text{for every }y\in \DDD,
	\end{displaymath}
	which coincides with \eqref{eq:resolvent-evi} thanks to
	\eqref{eq:generating} and \eqref{eq:s-variant1}.
\end{proof}
\begin{remark}
	\label{rem:radial-enough}
	The proof of Proposition~\ref{prop:discrete-EVI} only uses the radial
	convexity \eqref{eq:radial-convex} of $\sfB(\tau,\bar x;\cdot,\cdot)$
	with $\xi=\frac 1\tau+\cvx$, which is weaker than \eqref{eq:B-two-convex}
	and implied by it (Proposition~\ref{prop:B-convexity}); we have stated
	the assumption as \eqref{eq:B-two-convex} only because radial convexity
	is less transparent. Relaxing to radial convexity is useful, for example, when $\cB_{\bar x}$ are variational $c$-segments in a nonnegative cross curvature metric setting (see Example~\ref{ex:NNCC}) where \eqref{eq:gg-dist-2} does not hold in general; it holds only for maps in $\cB_{\bar x}$ with starting or ending point $\bar x$. 
    In turn, \eqref{eq:B-two-convex} follows from the
	two-point conditions \eqref{eq:gg-dist-2} and \eqref{eq:gg-bif-2}
	thanks to Proposition~\ref{prop:B-convexity}(1). Inequality
	\eqref{eq:resolvent-evi} is precisely a single step of the discrete
	Evolution Variational Inequality \eqref{eq:D-SEVI2}.
\end{remark}\nc 
\begin{remark}
	\label{rem:explain}
	In the typical situation where \eqref{eq:gg-dist} and
	\eqref{eq:gg-bif} hold for every base point $\bar x\in \DDD$ with
	$\xi=\tfrac1\tau+\cvx$, Proposition~\ref{prop:B-convexity} provides
	both the barycentric convexity \eqref{eq:B-bary-convex} required by
	Theorem~\ref{thm:discrete-SEVI} and the two-point condition
	\eqref{eq:B-two-convex} of Proposition~\ref{prop:discrete-EVI}. When
	$\sfb$ is $\eta$-interaction dissipative, the requirement
	$\xi>2\eta_+$ of Theorem~\ref{thm:discrete-SEVI} then reduces to a
	constraint on the step size only: $\tau<\tau_o$, where
	$\tau_o^{-1}:=2\eta_+-\cvx$ if $\cvx<2\eta_+$ and $\tau_o:=+\infty$
	otherwise.
\end{remark}
\nc

\subsection{The variational movement scheme}\label{sec:VMS}
The single-step resolvent of Section~\ref{sec:resolvent} can now be
iterated: given a step size $\tau>0$ and an initial datum
$X^0_\tau\in \DDD$, at each step we take the previous iterate as base
point, $\bar x:=X^{n-1}_\tau$, and solve the corresponding primal
problem.
\begin{definition}[Variational Movement Scheme]\label{def:VMS}
	Let $\tau>0$ be a given step size and let $X^0_\tau\in \DDD$.
	A sequence $(X^n_\tau)_{n\in \N}$ in $\DDD$ is a solution
	of the recursive Variational Movement Scheme (VMS) if for every $n>0$,
	\begin{equation}
		\label{eq:explicit-VMS-primal}
		\sfB(\tau,X^{n-1}_\tau; y,X^{n}_\tau)\ge 0
		\quad\text{for every }y\in \DDD.
	\end{equation}
\end{definition}
The characterization \eqref{eq:resolvent-evi} in Proposition~\ref{prop:discrete-EVI} allows us to write a discrete Evolution Variational Inequality for the (VMS) iterates defined above. 
\begin{definition}[Discrete $\sfb^\cvx$-EVI]\label{def:discrete-bEVI}
	Let $\tau>0$ and $\cvx\in\R$. A sequence $(X^n_\tau)_{n\in \N}$ in
	$\DDD$ is a solution of the discrete $\sfb^\cvx$-Evolution Variational
	Inequality if for every $n>0$,
	\begin{equation}
		\label{eq:D-SEVI2}
		\frac1{2\tau}\Big(\dX^2(X^n_\tau,y)-\dX^2(X^{n-1}_\tau,y)\Big)
		+
		\frac1{2\tau}\dX^2(X^n_\tau,X^{n-1}_\tau)
		\le
		\lskd y{X^n_\tau}
		\qquad\text{for every }y\in \DDD.
	\end{equation}
\end{definition}
\begin{theorem}[Existence for the VMS]
	\label{thm:saddle-sEVI}
	Let $\sfb$ satisfy \eqref{eq:b-regular}, let $\tau>0$, and suppose
	that for every base point $\bar x\in \DDD$ the assumptions of one of
	Theorems~\ref{thm:VMS-compact}, \ref{thm:discrete-SEVI},
	or~\ref{thm:VMS-antisym} hold. 
    \begin{enumerate}
        \item[\rm(i)] 
    For every initial datum
	$X^0_\tau\in \DDD$ the VMS scheme \eqref{eq:explicit-VMS-primal}
	admits a solution $(X^n_\tau)_{n\in\N}$. 
    \item[\rm(ii)] If the assumptions of
	Theorem~\ref{thm:discrete-SEVI} or of Theorem~\ref{thm:VMS-antisym}
	hold at every step, the solution is unique and each iterate
	$X^n_\tau$ also solves the dual problem
    \begin{equation}
		\label{eq:explicit-VMS-dual}
		\sfB(\tau,X^{n-1}_\tau; X^{n}_\tau,x)\le 0
		\quad\text{for every }x\in \DDD.
	\end{equation}
     \item[\rm(iii)] 
	If
    for some $\cvx\in\R$ and every $\bar x\in \DDD$, the
	convexity condition \eqref{eq:B-two-convex} holds with
	$\xi=\tfrac1\tau+\cvx>0$ (see also 
	Remark~\ref{rem:radial-enough}), then $(X^n_\tau)$ defined via the VMS scheme \eqref{eq:explicit-VMS-primal} also
	solves the discrete $\sfb^\cvx$-EVI \eqref{eq:D-SEVI2}.
    \end{enumerate}
\end{theorem}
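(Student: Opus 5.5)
The proof is an induction on $n$ that applies one of the single-step results of Section~\ref{sec:resolvent} at each step, with base point $\bar x:=X^{n-1}_\tau$.

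\emph{Part (i).} Suppose $X^0_\tau,\dots,X^{n-1}_\tau\in\DDD$ have already been constructed. By hypothesis, the assumptions of one of Theorems~\ref{thm:VMS-compact}, \ref{thm:discrete-SEVI}, \ref{thm:VMS-antisym} hold at the base point $X^{n-1}_\tau\in\DDD$. That theorem gives some $X^n_\tau\in\DDD$ solving the primal problem \eqref{eq:B-primal}, which is exactly \eqref{eq:explicit-VMS-primal}. Choosing such an element at every step (by recursion, using dependent choice) produces the whole sequence. Since the hypotheses are stated for every $\bar x\in\DDD$, the base point always lies in $\DDD$ and the recursion never breaks down. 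This is the only point needing care: the choice of route, and the system $\barysystem{\bar x}$ used, may depend on the step, and that causes no problem.

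\emph{Part (ii).} Now assume the assumptions of Theorem~\ref{thm:discrete-SEVI} or Theorem~\ref{thm:VMS-antisym} hold at every step. Then, for each base point, the primal problem \eqref{eq:B-primal} has a unique solution, and this solution coincides with the unique dual solution. Hence $X^n_\tau$ also satisfies \eqref{eq:explicit-VMS-dual}.

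Uniqueness of the whole sequence follows by induction. Take two VMS solutions with the same $X^0_\tau$, and suppose they agree up to index $n-1$. Then they share the base point $X^{n-1}_\tau$, so both $n$-th iterates solve the same primal problem. By the uniqueness just recalled, they coincide.

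\emph{Part (iii).} Fix $n\ge1$ and apply Proposition~\ref{prop:discrete-EVI} with $\bar x=X^{n-1}_\tau$ and $x_\tau=X^n_\tau$. This is allowed because \eqref{eq:B-two-convex} holds at that base point with $\xi=\tau^{-1}+\cvx>0$. The proposition yields \eqref{eq:resolvent-evi}, namely
\[
\frac1{2\tau}\Big(\dX^2(X^n_\tau,y)-\dX^2(X^{n-1}_\tau,y)+\dX^2(X^{n-1}_\tau,X^n_\tau)\Big)\le \lskd y{X^n_\tau}
\qquad\text{for every }y\in\DDD .
\]
Using the symmetry of $\dX$, this is literally \eqref{eq:D-SEVI2}. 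There is no real obstacle in any of the three parts: each is a direct bookkeeping application of results already established.
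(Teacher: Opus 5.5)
Your proposal is correct and follows the paper's proof. At each step you apply the chosen single-step theorem to $\sfB(\tau,X^{n-1}_\tau;\cdot,\cdot)$, read off uniqueness and the dual inequality from Theorems~\ref{thm:discrete-SEVI} and~\ref{thm:VMS-antisym}, and obtain the discrete EVI from Proposition~\ref{prop:discrete-EVI} with $\bar x=X^{n-1}_\tau$; your induction for uniqueness of the whole sequence and your identification of \eqref{eq:resolvent-evi} with \eqref{eq:D-SEVI2} simply spell out what the paper leaves implicit.
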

\begin{proof}
	At each step $n>0$ we apply one of Theorems~\ref{thm:VMS-compact},
	\ref{thm:discrete-SEVI}, or~\ref{thm:VMS-antisym} to the bifunction
	$\sfB(\tau,X^{n-1}_\tau;\cdot,\cdot)$, obtaining $X^n_\tau$ solving the
	primal problem \eqref{eq:explicit-VMS-primal}.
    
    In the cases of
	Theorems~\ref{thm:discrete-SEVI} and~\ref{thm:VMS-antisym} the
	solution is unique and also solves \eqref{eq:explicit-VMS-dual}. 
    
    Under the
	additional convexity assumption, Proposition~\ref{prop:discrete-EVI}
	applied with
	$\bar x:=X^{n-1}_\tau$ turns \eqref{eq:explicit-VMS-primal} into a
	single step of \eqref{eq:D-SEVI2}.
\end{proof}
\nc

\begin{remark}[Choice of Discrete Scheme for Multispecies Flows]\label{rmk:choice-scheme}
Recall the choice of bifunction for multispecies coupled gradient flows \eqref{intro:bifunction_multispecies}:
\begin{align*}
    \skd yx = \sum_{i=1}^N 
    F^{(i)}(y^{(i)},
    x^{(-i)} )-F^{(i)}(x^{(i)}, x^{(-i)} )\,, \quad \text{for all }x,y\in\DDD\,.
\end{align*}
 From a discrete-time gradient flow perspective, consider the following two natural options for how each species $(i)$ can update to move in the direction of steepest descent of $F^{(i)}$. One option is minimizing $F^{(i)}(\cdot,(X_\tau^{(-i),n-1}))$ with a distance penalty,
\begin{align*}
         X_\tau^{(i),n}= \argmin_{x^{(i)}\in\XXX^{(i)}}\Big\{  F^{(i)}(x^{(i)},X_\tau^{(-i),n-1} ) + \frac{1}{2\tau} \dXi^2(x^{(i)},X_\tau^{(i),n-1})\Big\} \,, \quad \text{for every }i\in[1,\dots,N]\,.
\end{align*}
This update is implicit in $x^{(i)}$ but explicit in $x^{(-i)}$ because of the dependence on $X_\tau^{(-i),n-1}$. The second option is to solve
\begin{align*}
        X_\tau^{n} \quad \text{is a Nash equilibrium of} \quad \Big\{x \mapsto F^{(i)}(x^{(i)},x^{(-i)}) + \frac{1}{2\tau} \dXi^2(x^{(i)},X_\tau^{(i),n-1}) \Big\}_i\,, 
\end{align*}
which is a fully implicit update. Proving the existence of minimizers for the partially-explicit update is easier than proving existence of a Nash equilibrium for a set of energies. On the other hand, we expect better stability properties from the fully implicit method. In \cite{di_francesco_measure_2013}, the implicit-explicit update scheme was used along with Lipschitz bounds on the energy functionals to prove existence of solutions for a two-species PDE system with nonlocal interaction potentials and linear coupling potentials. 
To illustrate the lack of stability of the first method, consider the min-max game where $\XXX^{(1)}=\XXX^{(2)}=\R$, and $F^{(1)}(x^{(1)},x^{(2)})=-F^{(2)}(x^{(2)},x^{(1)}) = \frac{b}{2} (x^{(1)})^2+ax^{(1)}x^{(2)}-\frac{b}{2}(x^{(2)})^2$ with $b>0$ and $a\in\R$. The corresponding continuous-time gradient flow solution converges exponentially to the Nash equilibrium $x=0$ with rate $b$.
The schemes via the implicit-explicit and the Nash updates are
\begin{align*}
   &\text{(implicit-explicit) }&\ X^{n+1}_\tau &= (1+\tau b)^{-1} \begin{bmatrix}
         1 & -\tau a \\ \tau a & 1 
    \end{bmatrix} X_\tau^n\,,\\
    &\text{(fully implicit) }&\ X^{n+1}_\tau &= ((1+\tau b)^2 + \tau^2 a^2) ^{-1} \begin{bmatrix}
         1+\tau b & -\tau a \\ \tau a & 1+\tau b
    \end{bmatrix} X^{n}_\tau\,. 
\end{align*}
The maximum eigenvalue magnitudes  for each system are $\sqrt{a^2 \tau^2 + 1}(b \tau + 1)^{-1}$ and $[\tau^2 (a^2 + b^2) + 2 b \tau + 1]^{-1/2}$, with exponential stability if the magnitudes are less than one. If the coupling term $a$ is larger than the convexity parameter $b$, the first system is exponentially stable only for a small enough $\tau$, dependent on $a$ and $b$, whereas the  second system is exponentially stable for all $\tau > 0$, $a \in \R$, and $b>0$.
This mirrors the need for an upper-Lipschitz bound on the gradient of the energy for explicit Euler for the stability of gradient descent, along with a small enough time step relative to the Lipschitz constant. This observation motivates our choice of saddle point scheme (VMS) for general bifunctions $\sfb$, which is precisely the fully implicit Nash equilibrium scheme above in the case of multispecies coupled gradient flows.
\end{remark}

We conclude the subsection with another characterization of the solutions of
the discrete EVI \eqref{eq:D-SEVI2}.
It is convenient to
normalize $\sfb$ by the distance, introducing the function
$\skuname:\DDD\times \DDD\to \R$ defined for every $x,y\in \DDD$ by
\begin{equation}
	\label{eq:link}
\begin{aligned}
	\sku yx:={}&\frac{\skd yx}{\dX(x,y)}\quad\text{if }x\neq y,&
	\sku xx:={}&
    0,
\end{aligned}
\end{equation}
together with its perturbations, as in \eqref{eq:s-variant1},
\begin{equation}
	\label{eq:s-variant2}
	\lskuname(y,x):=\sku yx-\frac{\cvx}{2}\dX(x,y),
	\quad
	x,y\in \DDD,
\end{equation}
so that $\lsku yx=\lskd yx/\dX(x,y)$ whenever $x\neq y$.
The discrete EVI \eqref{eq:D-SEVI2}, which controls the squared
distance of the iterates from an arbitrary point, has a first-order
counterpart involving the distance itself.
\begin{lemma}
	\label{le:simple-but-tricky}
	If a sequence $(X^n_\tau)_{n\in \N}$ is a solution
	of \eqref{eq:D-SEVI2} then it also solves
	\begin{equation}
		\label{eq:D-SEVI1}
		\frac1{\tau}\Big(\dX(X^n_\tau,Y)-\dX(X^{n-1}_\tau,Y)\Big)
	\le
		\lsku Y{X^n_\tau}
		\qquad\text{for every }Y\in \DDD.
	\end{equation}
\end{lemma}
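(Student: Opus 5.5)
The plan is to derive \eqref{eq:D-SEVI1} from \eqref{eq:D-SEVI2} through an elementary inequality on the difference of squares, used pointwise for each fixed $n>0$ and $Y\in\DDD$. Write $a:=\dX(X^n_\tau,Y)$, $b:=\dX(X^{n-1}_\tau,Y)$ and $c:=\dX(X^n_\tau,X^{n-1}_\tau)$. The triangle inequality gives $|a-b|\le c$. We then have
\begin{displaymath}
	a^2-b^2+c^2\ \ge\ a^2-b^2+(a-b)^2 \;=\; 2a(a-b),
\end{displaymath}
and the identity on the right can be checked by expanding: $a^2-b^2+a^2-2ab+b^2=2a^2-2ab$. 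Combining this with \eqref{eq:D-SEVI2}, we get
\begin{displaymath}
	\frac{a(a-b)}{\tau}\ \le\ \frac{1}{2\tau}\big(a^2-b^2+c^2\big)\ \le\ \lskd Y{X^n_\tau}.
\end{displaymath}

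Next we divide by $a$, splitting into two cases. If $a>0$, i.e.\ $X^n_\tau\ne Y$, we divide by $a=\dX(X^n_\tau,Y)$. The right-hand side then becomes $\lskd Y{X^n_\tau}/\dX(X^n_\tau,Y)=\lsku Y{X^n_\tau}$ by \eqref{eq:link} and \eqref{eq:s-variant2}, and this is exactly \eqref{eq:D-SEVI1}.

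If $a=0$, i.e.\ $Y=X^n_\tau$, the left side of \eqref{eq:D-SEVI1} equals $-\dX(X^{n-1}_\tau,X^n_\tau)/\tau\le 0$. The right side is $\sku{X^n_\tau}{X^n_\tau}-0=0$ by the convention in \eqref{eq:link}, so the inequality holds trivially.

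The only step needing care is the case split at $a=0$, where the normalized bifunction is defined by convention rather than by the quotient; the rest is the algebraic estimate above. Note in particular that no sign assumption on $\lskd Y{X^n_\tau}$ is needed, since $a>0$ in the case where we divide.
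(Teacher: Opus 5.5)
Your proof is correct and takes the same approach as the paper: the triangle inequality $|a-b|\le \dX(X^n_\tau,X^{n-1}_\tau)$ turns \eqref{eq:D-SEVI2} into $\frac1\tau a(a-b)\le \lskd Y{X^n_\tau}$, you divide by $a>0$, and the case $Y=X^n_\tau$ is trivial via $\lsku{X^n_\tau}{X^n_\tau}=0$.
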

\begin{proof}
	Let us first suppose $Y\neq X^n_\tau.$
	Setting $a:=\dX(X^n_\tau,Y)$,
	$b:=\dX(X^{n-1}_\tau,Y)$,
	$c=\lsku Y{X^n_\tau}$,
	since $|a-b|\le \dX(X^n_\tau,X^{n-1}_\tau)$,
	\eqref{eq:D-SEVI2} implies
	\begin{displaymath}
		\frac 1{2\tau} (a^2-b^2+(a-b)^2)\le c a,
	\end{displaymath}
	which, after developing the squares, yields
	\begin{displaymath}
		\frac 1{\tau} (a^2-ab)\le c a;
	\end{displaymath}
	dividing by $a>0$ we get \eqref{eq:D-SEVI1}.
	The case $Y=X^n_\tau$ is trivial, since the left-hand side of
	\eqref{eq:D-SEVI1} is nonpositive and
	$\lsku{X^n_\tau}{X^n_\tau}=0$ by \eqref{eq:link}.
\end{proof}
\subsection{Local and global slopes for a bifunction}\label{sec:slope} 
The right-hand side of \eqref{eq:D-SEVI1} naturally leads to a
quantitative control of $\sfb$ near the diagonal, expressed by a
suitable notion of slope. Recall the definition of $\sfb^\kappa$ from \eqref{eq:s-variant1}.
\begin{definition}[Local and global slopes of $\bif$]\label{def:bounded_Delta}
    For every $x\in \DDD$ and $\cvx\in\R$ we set
   \begin{gather}
	\label{eq:local-slope}
	\slope x:=\limsup_{y\to x}
	\frac{\big(\skd yx\big)_-}{\dX(x,y)},\qquad 
	\skslope{\cvx}{x}:=
    \sup_{y\in \DDD}\frac{{\big(\lskd yx\big)}_-}{\dX(x,y)},
\end{gather}
with values in $[0,+\infty]$.
 We set $\domainslope{\cvx}:=\Big\{x\in \DDD:
\skslope{\cvx}{x}<\infty\Big\}$ and $\domainslope{}:=\Big\{x\in \DDD:
\slope{x}<\infty\Big\}$.
\end{definition}
When $x$ is an isolated point of $\DDD$ we adopt the convention
$\slope x:=0$.
Replacing $\skd yx$ with $\lskd yx$ modifies the difference quotient in
\eqref{eq:local-slope} by the vanishing term $\frac{|\cvx|}2\dX(x,y)$,
so that the local slope admits the equivalent representations
\begin{equation}
	\label{eq:local-slope-kappa}
	\slope x=\limsup_{y\to x}\frac{\big(\lskd yx\big)_-}{\dX(x,y)}
	=\limsup_{y\to x}\big(\lsku yx\big)_-
	\qquad\text{for every }\cvx\in\R;
\end{equation}
in particular
\begin{equation}
	\label{eq:loc-le-glob}
	\slope x\le \skslope\cvx x\quad\text{for every }\cvx\in\R.
\end{equation}
\begin{remark}
    In order to better understand
    the role of $\slope{x}$
    it could be useful to observe that
    in the simple case
    $\skd yx=\varphi(y)-\varphi(x)$,
    $\slope x$ coincides with
    the metric slope of $\varphi$
    \cite{AGS08}
    \begin{displaymath}
        \slope x=
        |\partial\varphi|(x)=
        \limsup_{y\to x}
        \frac{(\varphi(y)-\varphi(x))_-}{\dX(y,x)}
    \end{displaymath}
    and $\skslope \kappa x=|\partial\varphi|(x)$  
    whenever $\varphi$ is $\cvx$-convex (this follows from Proposition~\ref{prop:local-global-slope} below). 
    In the operator-Hilbertian case
    $\skd yx=\langle A(x),y-x\rangle$
    where $A$ is globally defined,
    $\skslope \kappa x=\slope x=\|A(x)\|$
    for every $\kappa\le 0$.
    In both cases $\domainslope{}$
    is dense in $\DDD$; see also Corollary~\ref{cor:slope-density} for a more general statement.
\end{remark}
The identity $\skslope\kappa x=\slope x$ of the previous
remark is a particular case of a general localization principle,
which
holds under the same convexity condition \eqref{eq:B-two-convex} used
to derive the discrete EVI.
\begin{proposition}[Localization of the global slope]
	\label{prop:local-global-slope}
	Let $x\in \DDD$, $\cvx\in\R$, $\tau_o>0$, and suppose that there
	exists a system $\barysystem{2}$ of $2$-barycentric maps in $\DDD$,
	independent of $\tau$, such that the convexity condition
	\eqref{eq:B-two-convex} holds at the base point $\bar x:=x$ with
	$\xi:=\frac1\tau+\cvx$ for every $\tau\in(0,\tau_o)$. Then
	\begin{equation}
		\label{eq:local-global-slope}
		\skslope\cvx x=\slope x.
	\end{equation}
\end{proposition}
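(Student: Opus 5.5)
By \eqref{eq:loc-le-glob} we already have $\slope x\le\skslope\cvx x$, so the plan is to prove the reverse inequality $\skslope\cvx x\le\slope x$. We may assume $\slope x<\infty$. It then suffices to fix $y\in\DDD$ with $y\ne x$ and $\lskd yx<0$ and show
\begin{displaymath}
	\frac{(\lskd yx)_-}{\dX(x,y)}\le\slope x.
\end{displaymath}

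The idea is to move from $x$ toward $y$ along the two-point family supplied by \eqref{eq:B-two-convex} at base point $\bar x=x$. Take $x_0=x$, $x_1=y$, and let $z_t$ be the corresponding interpolant, which does not depend on $\tau$. Evaluate \eqref{eq:B-two-convex} at the second argument equal to $x$. The base point coincides with $x$, so the term $-\frac1{2\tau}\dX^2(x,x)$ vanishes, and we have $\sfB(\tau,x;x,x)=0$ and $\sfB(\tau,x;y,x)=\frac1{2\tau}\dX^2(x,y)+\skd yx$. This gives
\begin{displaymath}
	\frac1{2\tau}\dX^2(x,z_t)+\skd{z_t}x\le t\Big(\frac1{2\tau}\dX^2(x,y)+\skd yx\Big)-\frac12\Big(\frac1\tau+\cvx\Big)t(1-t)\dX^2(x,y).
\end{displaymath}
The $\frac1{2\tau}$-terms collect to $\frac{t^2}{2\tau}\dX^2(x,y)$ on the right. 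Letting $\tau\downarrow0$ shows $\dX^2(x,z_t)\le t^2\dX^2(x,y)$, and therefore $z_t\to x$ as $t\downarrow0$. Next, drop the nonnegative term $\frac1{2\tau}\dX^2(x,z_t)$ on the left and let $\tau\downarrow0$ for fixed $t$ (the right-hand side at $\tau=t$ is in any case bounded). This yields a bound independent of $\tau$:
\begin{displaymath}
	\skd{z_t}x\le t\,\skd yx-\tfrac\cvx2 t(1-t)\dX^2(x,y)+\limsup_{\tau\downarrow 0}\tfrac{t^2}{2\tau}\dX^2(x,y).
\end{displaymath}

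\textbf{Main obstacle.} The last term diverges as $\tau\downarrow0$, so $\tau$ must be coupled to $t$ rather than sent to zero independently. I would choose $\tau=t/\varepsilon$, which is admissible for $t<\varepsilon\tau_o$ with $\varepsilon>0$ small. The surplus term is then $\frac{\varepsilon t}{2}\dX^2(x,y)$, and the left side is still bounded below by $\skd{z_t}x$. We obtain
\begin{displaymath}
	\skd{z_t}x\le t\big(\lskd yx+\tfrac\varepsilon2\dX^2(x,y)+O(t)\big),
\end{displaymath}
which is negative for $t,\varepsilon$ small. Using $\dX(x,z_t)\le t\,\dX(x,y)$, we get
\begin{displaymath}
	\frac{(\skd{z_t}x)_-}{\dX(x,z_t)}\ge\frac{(\lskd yx)_- -\tfrac\varepsilon2\dX^2(x,y)-O(t)}{\dX(x,y)}.
\end{displaymath}
Two cases remain. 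If $z_t\ne x$ along a sequence $t\downarrow0$, taking the $\limsup$ and then $\varepsilon\downarrow0$ gives $\slope x\ge(\lskd yx)_-/\dX(x,y)$. If instead $z_t=x$, then $\skd{z_t}x=0$ contradicts the strict negativity just derived, so this case cannot occur for small $t$. Finally, if $x$ is isolated then $z_t=x$ for small $t$, which again forces $\lskd yx\ge0$, consistent with the convention $\slope x=0$.

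The step that needs the most care is checking the $\tau$-dependence when $\tau=t/\varepsilon$. The claim $\dX(x,z_t)\le t\,\dX(x,y)$ should be derived directly from \eqref{eq:B-two-convex} using that the family $z_t$ is $\tau$-independent (via the limit $\tau\downarrow0$ above), and not from the coupled inequality.
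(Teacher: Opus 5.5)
Your argument is correct and follows the paper's proof. It uses the same $\tau$-independent interpolant $z_t$ from $x$ to $y$, the same evaluation of \eqref{eq:B-two-convex} at the second argument $x$, the same metric control $\dX(x,z_t)\le t\,\dX(x,y)$ (multiply by $2\tau$, let $\tau\downarrow0$), and the same comparison with the local slope along $z_t\to x$. The only difference is that your ``main obstacle'' is not real: the paper keeps $\tau\in(0,\tau_o)$ fixed, because the leftover term $\big(\frac\cvx2+\frac1{2\tau}\big)t^2\dX^2(x,y)$ is $O(t^2)$ and vanishes after dividing by $t$ and letting $t\downarrow0$, so your coupling $\tau=t/\varepsilon$ and the extra limit $\varepsilon\downarrow0$ are valid but unnecessary.
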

\begin{proof}
	By \eqref{eq:loc-le-glob} it is sufficient to prove that
	$\skslope\cvx x\le \slope x$, and we may assume that
	$S:=\slope x$ is finite.
	Let us fix $y\in \DDD$, $y\neq x$, and a $2$-barycentric map of
	$\barysystem{2}$ connecting $x_0:=x$ to $x_1:=y$; we set
	$z_t:=\bary z{(1-t),t}$, $t\in [0,1]$.
	Evaluating \eqref{eq:B-two-convex} at the point $x$, since
	$\sfB(\tau,x;x,x)=0$ and
	$\sfB(\tau,x;z,x)=\skd zx+\frac1{2\tau}\dX^2(x,z)$
	by \eqref{eq:generating}, we obtain, for every $\tau\in(0,\tau_o)$
	and $t\in[0,1]$,
	\begin{equation}
		\label{eq:radial-B-at-x}
		\skd{z_t}x+\frac1{2\tau}\dX^2(x,z_t)\le
		t\,\skd yx+\frac{t^2}{2\tau}\dX^2(x,y)
		-\frac\cvx2\,t(1-t)\,\dX^2(x,y).
	\end{equation}
    Multiplying \eqref{eq:radial-B-at-x} by $2\tau$ and letting
	$\tau\down0$, the contribution of the real-valued $\sfb$ vanishes
	and, as in Remark~\ref{rm:metric_space_curvature}, we recover the
	metric control
	\begin{displaymath}
		\dX(x,z_t)\le t\,\dX(x,y);
	\end{displaymath}
	in particular $z_t\to x$ as $t\down0$. On the other hand, keeping
	$\tau$ fixed in \eqref{eq:radial-B-at-x}, neglecting the nonnegative
	term $\frac1{2\tau}\dX^2(x,z_t)$, and recalling
	\eqref{eq:s-variant1}, we get
	\begin{displaymath}
		\skd{z_t}x\le
		t\,\lskd yx
		+\Big(\frac\cvx2+\frac1{2\tau}\Big)\,t^2\,\dX^2(x,y). 
	\end{displaymath}
	For every $\eps>0$ there exists $r>0$ such that
	$\big(\skd zx\big)_-\le (S+\eps)\,\dX(x,z)$ whenever
	$z\in \DDD$, $\dX(x,z)\le r$; hence, for $t$ small enough,
	\begin{displaymath}
		-t\,\lskd yx
		-\Big(\frac\cvx2+\frac1{2\tau}\Big)\,t^2\,\dX^2(x,y)\le
		-\skd{z_t}x\le
		\big(\skd{z_t}x\big)_-\le
		(S+\eps)\,\dX(x,z_t)\le
		(S+\eps)\,t\,\dX(x,y).
	\end{displaymath}
	Dividing by $t>0$ and passing to the limit as $t\down0$ we obtain
	$\big(\lskd yx\big)_-\le (S+\eps)\,\dX(x,y)$. Dividing by $\dX(x,y)$ and taking the supremum
	with respect to $y\in \DDD$ and letting $\eps\down0$ we conclude.
\end{proof}
\begin{remark}
	\label{rem:local-global-weaker}
	By Proposition~\ref{prop:B-convexity}(1), the assumption of
	Proposition~\ref{prop:local-global-slope} is satisfied when the
	two-point conditions \eqref{eq:gg-dist-2} and \eqref{eq:gg-bif-2}
	hold at the base point $\bar x:=x$, since they provide
	\eqref{eq:B-two-convex} for every $\tau>0$ along the same system.
\end{remark}
The first-order inequality \eqref{eq:D-SEVI1} immediately shows that,
whatever the initial datum $X^0_\tau\in \DDD$, all the iterates of the
scheme lie in the domain of the global slope.
\begin{proposition}[The iterates lie in the domain of the slope]
	\label{prop:iterates-slope}
	Let $\cvx\in\R$, $\tau>0$, and let $(X^n_\tau)_{n\in\N}$ be a
	solution of the discrete $\sfb^\cvx$-EVI \eqref{eq:D-SEVI2}. Then
	\begin{equation}
		\label{eq:iterates-slope}
		\tau\,\skslope{\cvx}{X^n_\tau}\le \dX(X^n_\tau,X^{n-1}_\tau)
		\qquad\text{for every }n\ge 1.
	\end{equation}
	In particular $X^n_\tau\in \domainslope\cvx$ for every $n\ge 1$.
\end{proposition}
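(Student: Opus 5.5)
The plan is to derive \eqref{eq:iterates-slope} directly from the first-order form \eqref{eq:D-SEVI1} of the discrete EVI, which Lemma~\ref{le:simple-but-tricky} provides for any solution of \eqref{eq:D-SEVI2}. Fix $n\ge1$ and set $x:=X^n_\tau$, $\bar x:=X^{n-1}_\tau$. By Definition~\ref{def:bounded_Delta}, $\skslope\cvx x$ is the supremum over $y\in\DDD$ of $(\lskd yx)_-/\dX(x,y)$. Points $y=x$ contribute nothing, because $\lskd xx=\skd xx=0$ by \eqref{eq:b-regular}; we adopt this convention for the quotient at $y=x$. So it is enough to show that, for every $y\neq x$,
\[
\tau\,\big(\lsku yx\big)_-\le \dX(x,\bar x),
\]
since $\lsku yx=\lskd yx/\dX(x,y)$ for $y\ne x$.

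Fix $y\in\DDD$ with $y\neq x$. Apply \eqref{eq:D-SEVI1} with $Y=y$ and multiply by $\tau$. This gives
\[
\dX(x,y)-\dX(\bar x,y)\le \tau\,\lsku yx .
\]
The triangle inequality gives $\dX(\bar x,y)-\dX(x,y)\le \dX(x,\bar x)$, so
\[
-\tau\,\lsku yx\le \dX(\bar x,y)-\dX(x,y)\le \dX(x,\bar x).
\]
If $\lsku yx\ge0$, the negative part vanishes and the claim is trivial. Otherwise the left-hand side equals $\tau(\lsku yx)_-$, which is exactly the claimed bound.

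Taking the supremum over $y\in\DDD$ yields \eqref{eq:iterates-slope}. Since the right-hand side of \eqref{eq:iterates-slope} is finite, we get $\skslope\cvx{X^n_\tau}<\infty$, that is, $X^n_\tau\in\domainslope\cvx$ for every $n\ge1$.

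There is no real obstacle: the work is already done in Lemma~\ref{le:simple-but-tricky}. The only points needing care are the case $y=x$, which is handled by the convention in \eqref{eq:link} and the vanishing diagonal, and the sign bookkeeping when passing to the negative part.
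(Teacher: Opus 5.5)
Your proof is correct and follows the paper's argument: you combine the first-order inequality \eqref{eq:D-SEVI1} from Lemma~\ref{le:simple-but-tricky} with the triangle inequality $\dX(X^{n-1}_\tau,Y)-\dX(X^n_\tau,Y)\le\dX(X^n_\tau,X^{n-1}_\tau)$, then take the supremum over $Y\neq X^n_\tau$. Your explicit handling of the negative part and of the diagonal point (via \eqref{eq:b-regular}) is consistent with the paper's convention, which excludes $Y=X^n_\tau$ from the supremum.
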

\begin{proof}
	By \eqref{eq:D-SEVI1} and the triangle inequality, for every
	$Y\in \DDD$ we have
	\begin{displaymath}
		-\lsku Y{X^n_\tau}\le
		\frac1{\tau}\Big(\dX(X^{n-1}_\tau,Y)-\dX(X^{n}_\tau,Y)\Big)
		\le \frac{\dX(X^n_\tau,X^{n-1}_\tau)}\tau;
	\end{displaymath}
	taking the supremum with respect to
	$Y\in \DDD\setminus\{X^n_\tau\}$ and recalling
	\eqref{eq:local-slope} we obtain \eqref{eq:iterates-slope}.
\end{proof}
Let us stress that \eqref{eq:iterates-slope} is a consequence of the
discrete EVI alone. The converse estimate---a discrete Lipschitz bound
propagating the value of the slope along the iterates starting from
$X^0_\tau\in \domainslope\cvx$---requires the dissipativity of $\sfb$
and will be obtained in Proposition~\ref{le:apriori}, as a first step of the
convergence analysis in
Section~\ref{sec:metric_dissipative_evolutions}.

Under the dissipativity of $\sfb$, a single step of the scheme also
approximates its initial point with an explicit rate. Combined with
Proposition~\ref{prop:iterates-slope}, we will show that the density of
$\domainslope\cvx$ in $\DDD$ is a \emph{consequence} of the solvability
of the scheme (Corollary \eqref{cor:slope-density}) and need not be assumed a priori, in analogy with the
theories of gradient flows and of monotone operators. 

In order to highlight the essential condition for this property to hold, we say that
$y\mapsto \bif(y,\bar x)$, $\bar x\in \DDD$, is \emph{quadratically bounded from below}, if
there exists $\bar\tau=\bar\tau(\bar x)>0$  such that
\begin{equation}
	\label{eq:MY}
	\bif_{\bar \tau}(\bar x):=
	\inf_{y\in \DDD}\sfB(\bar \tau,\bar x;y,\bar x)=
	\inf_{y\in \DDD}\Big(\frac1{2\bar \tau}\dX^2(\bar x,y)+\skd y{\bar x}\Big)
	>-\infty.
\end{equation}
Since $\bif_{\bar\tau'}(\bar x)\ge \bif_{\bar\tau}(\bar x)$ whenever
$0<\bar\tau'\le \bar\tau$, condition \eqref{eq:MY} is stable under
reducing the step size $\bar\tau$.
The next lemma, a direct application of
Proposition~\ref{prop:minima}, shows that \eqref{eq:MY} is a
consequence of the uniform convexity of
$y\mapsto \sfB(\bar \tau,\bar x;y,\bar x)$ and of the completeness of its
sublevels; in particular, it imposes no additional restriction under
the convexity assumptions of our well-posedness theorems.
\begin{lemma}[Uniform convexity implies \eqref{eq:MY}]
	\label{le:MY-convex}
	Let $\sfb$ satisfy \eqref{eq:b-regular} and \eqref{eq:b-sublevels},
	let $\bar\tau>0$, $\cvx\in\R$, $\bar x\in \DDD$ with
	$\xi=\frac 1{\bar \tau}+\cvx>0$,
    and assume that the convexity condition
	\eqref{eq:B-two-convex} holds for a system $\barysystem{2}$ of
	$2$-barycentric maps in $\DDD$.
    Then the map
	$y\mapsto \sfB(\bar \tau,\bar x;y,\bar x)$ attains its minimum on
	$\DDD$; in particular \eqref{eq:MY} holds at $\bar x$ with step
	size $\bar \tau$.
\end{lemma}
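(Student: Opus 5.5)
The plan is to apply Proposition~\ref{prop:minima} to the function $\varphi(y):=\sfB(\bar\tau,\bar x;y,\bar x)=\frac1{2\bar\tau}\dX^2(\bar x,y)+\skd y{\bar x}$, defined on $\DDD$. Its hypotheses (properness, complete sublevels, uniform convexlike behaviour) are each inherited from the structure already established for $\sfB$.

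\textbf{Properness.} By \eqref{eq:b-regular}, $\varphi(\bar x)=\sfB(\bar\tau,\bar x;\bar x,\bar x)=0$. This is the identity in Lemma~\ref{le:B-structure}(1). Hence $\varphi$ is real-valued everywhere, since $\sfb$ is real-valued, and $\Dom\varphi\ni\bar x$ is nonempty.

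\textbf{Complete sublevels.} Since \eqref{eq:b-sublevels} holds, Lemma~\ref{le:B-structure}(2) applied with $x=\bar x$ shows that $y\mapsto\sfB(\bar\tau,\bar x;y,\bar x)$ has complete sublevels.

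\textbf{Convexity.} Condition \eqref{eq:B-two-convex} with $x=\bar x$ states exactly the following. For every $x_0,x_1\in\DDD$ there is a family $z_t$ with $z_0=x_0$ and $z_1=x_1$ along which $\varphi(z_t)\le(1-t)\varphi(x_0)+t\varphi(x_1)-\frac\xi2t(1-t)\dX^2(x_0,x_1)$. Setting $\bary z{(1-t),t}:=z_t$ therefore makes $\varphi$ $\xi$-convexlike with respect to the system $\barysystem 2$ in the sense of \eqref{eq:B2-convex}, with $\xi=\frac1{\bar\tau}+\cvx>0$. One small point needs checking. Definition~\ref{def:barycentric maps} asks for surjectivity of the evaluation map onto pairs, and \eqref{eq:B-two-convex} is assumed for every pair $x_0,x_1$, so this is available.

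\textbf{Conclusion.} Proposition~\ref{prop:minima}(1) then yields a (unique) minimizer $\hat y\in\DDD$ of $\varphi$. Consequently $\bif_{\bar\tau}(\bar x)=\varphi(\hat y)\in\R$, which is in particular greater than $-\infty$, giving \eqref{eq:MY}.

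No step is a genuine obstacle. The only point to watch is that Proposition~\ref{prop:minima} relies on the AGS lemma to exclude $\inf\varphi=-\infty$; that is the content being borrowed, and we invoke it as stated rather than reprove it.
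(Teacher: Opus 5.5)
Your proposal is the paper's own proof essentially line by line: properness from $\sfB(\bar\tau,\bar x;\bar x,\bar x)=0$, completeness from Lemma~\ref{le:B-structure}(2), $\xi$-convexlikeness from \eqref{eq:B-two-convex} at $x=\bar x$, then Proposition~\ref{prop:minima}. The step you chose to borrow without checking is exactly where it breaks. Proposition~\ref{prop:minima}(1) is false under its stated hypotheses, because convexlikeness along an arbitrary (even smooth) system $\barysystem 2$ gives no lower bound. In fact the lemma itself fails as stated.

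\textbf{Counterexample.} Take $\DDD=\XXX=\R$, $\bar x=0$, $\bar\tau=1$, $\cvx=0$ (so $\xi=1$) and $\sfb(y,x):=x-y-\tfrac12(y^2-x^2)$. Then:
\begin{itemize}
\item \eqref{eq:b-regular} holds;
\item $y\mapsto\sfb(y,x)$ is continuous on $\R$, so \eqref{eq:b-sublevels} holds;
\item $\sfB(1,0;y,x)=x-y$.
\end{itemize}
For each pair $x_0,x_1$, the smooth map $z_t:=(1-t)x_0+tx_1+\tfrac12 t(1-t)(x_1-x_0)^2$ joins $x_0$ to $x_1$. It satisfies \eqref{eq:B-two-convex} with equality for every $x\in\R$. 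Yet $y\mapsto\sfB(1,0;y,0)=-y$ has no minimum and $\inf_y\sfB(1,0;y,0)=-\infty$, so \eqref{eq:MY} fails with step size $\bar\tau=1$.

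\textbf{Missing ingredient.} What is needed is a quantitative metric control of the curves issued from $\bar x$: $\dX(\bar x,z_t)\le t\,\dX(\bar x,y)$ whenever $z_0=\bar x$, $z_1=y$. Your counterexample violates it, since $z_t$ overshoots the segment. It is available in two ways:
\begin{itemize}
\item From \eqref{eq:gg-dist-2} with $x_0=\bar x$, which gives $\dX^2(\bar x,z_t)\le t^2\dX^2(\bar x,y)$. This condition is assumed in Theorems~\ref{thm:main_existence} and~\ref{thm:antisym_existence}, where the lemma is actually used.
\item From \eqref{eq:B-two-convex} for all small $\tau$ along a $\tau$-independent system, as in the proof of Proposition~\ref{prop:local-global-slope}.
\end{itemize}

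\textbf{Repair.} With this control, let $\psi:=\sfB(\bar\tau,\bar x;\cdot,\bar x)$. It is lower semicontinuous with $\psi(\bar x)=0$, so you can choose $r>0$ with $\psi\ge-1$ on $\{y\in\DDD:\dX(y,\bar x)\le r\}$. If $\dX(\bar x,y)>r$, apply \eqref{eq:B-two-convex} at $x=\bar x$ with $x_0=\bar x$, $x_1=y$, $t=r/\dX(\bar x,y)$. This gives
\[
-1\le\psi(z_t)\le t\,\psi(y)-\tfrac\xi2\,t(1-t)\,\dX^2(\bar x,y),
\quad\text{hence}\quad
\psi(y)\ge\tfrac\xi2\,\dX^2(\bar x,y)-\Big(\tfrac1r+\tfrac{\xi r}2\Big)\dX(\bar x,y),
\]
so $\inf\psi>-\infty$. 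Only after this does the midpoint estimate in the proof of Proposition~\ref{prop:minima}(2) apply. It makes infimizing sequences Cauchy inside a complete sublevel, and lower semicontinuity then yields the minimizer.
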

\begin{proof}
	By \eqref{eq:B-two-convex} with $x:=\bar x$, the map
	$\psi(y):=\sfB(\bar \tau,\bar x;y,\bar x)$ is $\xi$-convexlike with
	$\xi=\bar \tau^{-1}+\cvx>0$ with respect to $\barysystem{2}$; it is
	proper, since $\psi(\bar x)=0$ by \eqref{eq:b-regular}, and it has
	complete sublevels by \eqref{eq:b-sublevels} and
	Lemma~\ref{le:B-structure}(2). Proposition~\ref{prop:minima} then
	shows that $\psi$ attains its minimum on $\DDD$, so that
	$\bif_{\bar \tau}(\bar x)>-\infty$.
\end{proof}
\begin{lemma}[One-step approximation of the initial point]
	\label{le:one-step}
	Let $\sfb$ satisfy \eqref{eq:b-regular} and the $\eta$-interaction
	dissipativity \eqref{eq:b-dissipative}, let $\bar\tau>0,\,\bar x\in \DDD$ satisfy
	\eqref{eq:MY} and 
    $\bar \tau(2\eta-\cvx)_+ \le 1$.  
    If
	$\tau\in(0,\bar\tau/2)$ and $x_\tau\in \DDD$ solves a single step of the
	discrete $\sfb^\cvx$-EVI \eqref{eq:resolvent-evi},
    then
	\begin{equation}
		\label{eq:one-step-rate}
		\dX^2(\bar x,x_\tau)\le
		2\,\big({-}\bif_{\bar \tau}(\bar x)\big)\,\tau.
	\end{equation}
\end{lemma}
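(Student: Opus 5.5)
The plan is to test the single-step discrete EVI \eqref{eq:resolvent-evi} with the base point itself, $y:=\bar x$. Then combine it with the interaction dissipativity \eqref{eq:b-dissipative} and the quadratic lower bound \eqref{eq:MY}, to obtain a linear inequality in $d:=\dX^2(\bar x,x_\tau)$ alone.

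\textbf{Step 1: the EVI at $y=\bar x$.} With $y=\bar x$ the term $\dX^2(\bar x,y)$ vanishes, and \eqref{eq:resolvent-evi} together with \eqref{eq:s-variant1} gives
\begin{displaymath}
	\frac1\tau\, d\le \skd{\bar x}{x_\tau}-\frac\cvx2\, d .
\end{displaymath}

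\textbf{Step 2: swap the arguments of $\sfb$.} The $\eta$-interaction dissipativity \eqref{eq:b-dissipative} yields $\skd{\bar x}{x_\tau}\le \eta\, d-\skd{x_\tau}{\bar x}$.

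\textbf{Step 3: use \eqref{eq:MY}.} By definition of $\bif_{\bar\tau}(\bar x)$, taking $y=x_\tau$ in the infimum gives
\begin{displaymath}
	\skd{x_\tau}{\bar x}\ge \bif_{\bar\tau}(\bar x)-\frac1{2\bar\tau}\, d .
\end{displaymath}
Note also that $\bif_{\bar\tau}(\bar x)\le \sfB(\bar\tau,\bar x;\bar x,\bar x)=0$ by \eqref{eq:b-regular}, so the right-hand side of \eqref{eq:one-step-rate} is a nonnegative finite quantity.

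\textbf{Step 4: combine.} Chaining Steps 1–3 gives
\begin{displaymath}
	\Big(\frac1\tau-\eta+\frac\cvx2-\frac1{2\bar\tau}\Big) d\le -\bif_{\bar\tau}(\bar x).
\end{displaymath}
It then remains to check that the coefficient on the left is at least $\frac1{2\tau}$, which amounts to
\begin{displaymath}
	\frac1{2\tau}\ge \eta-\frac\cvx2+\frac1{2\bar\tau}.
\end{displaymath}
This is where the two step-size restrictions enter. From $\tau<\bar\tau/2$ we get $\frac1{2\tau}>\frac1{\bar\tau}=\frac1{2\bar\tau}+\frac1{2\bar\tau}$. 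From $\bar\tau(2\eta-\cvx)_+\le1$ we get $\frac1{2\bar\tau}\ge \frac12(2\eta-\cvx)_+\ge \eta-\frac\cvx2$. Hence $\frac1{2\tau}\, d\le -\bif_{\bar\tau}(\bar x)$, which is exactly \eqref{eq:one-step-rate}.

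There is no real obstacle in this argument. The only delicate point is the bookkeeping in Step 4: the factor $\tfrac12$ coming from \eqref{eq:MY} must be absorbed using \emph{both} hypotheses $\tau<\bar\tau/2$ and $\bar\tau(2\eta-\cvx)_+\le 1$, and one needs the positive-part form of the second one so that no sign assumption on $2\eta-\cvx$ is required.
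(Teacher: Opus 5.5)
Your proof is correct and is essentially the paper's argument. Both test \eqref{eq:resolvent-evi} at $y=\bar x$, swap the arguments via \eqref{eq:b-dissipative}, and bound $-\sfb(x_\tau,\bar x)$ through \eqref{eq:MY}. Both then absorb the coefficient into $\frac1{2\tau}$ using $\tau<\bar\tau/2$ and $\bar\tau(2\eta-\cvx)_+\le 1$, exactly as in the paper's chain $\frac1\tau+\frac\cvx2-\eta-\frac1{2\bar\tau}\ge\frac1{2\tau}+\frac1{2\bar\tau}+\frac\cvx2-\eta\ge\frac1{2\tau}$.
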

\begin{proof}
	Choosing $y:=\bar x$ in \eqref{eq:resolvent-evi} and recalling
	\eqref{eq:s-variant1} and \eqref{eq:b-regular} we obtain
	\begin{displaymath}
		\frac1{\tau}\dX^2(x_\tau,\bar x)\le
		\lskd{\bar x}{x_\tau}=
		\skd{\bar x}{x_\tau}-\frac\cvx2\dX^2(x_\tau,\bar x).
	\end{displaymath}
	The dissipativity \eqref{eq:b-dissipative} gives
	$\skd{\bar x}{x_\tau}\le
	\eta\,\dX^2(\bar x,x_\tau)-\skd{x_\tau}{\bar x}$, while \eqref{eq:MY}
	yields
	$-\skd{x_\tau}{\bar x}\le
	\frac1{2\bar \tau}\dX^2(\bar x,x_\tau)-\bif_{\bar \tau}(\bar x)$; hence
	\begin{displaymath}
		\Big(\frac1\tau+\frac\cvx2-\eta-\frac1{2\bar \tau}\Big)
		\dX^2(x_\tau,\bar x)\le -\bif_{\bar \tau}(\bar x).
	\end{displaymath}
	Since $\tau<\bar\tau/2$ we have
	$\frac1\tau+\frac\cvx2-\eta-\frac1{2\bar \tau}\ge \frac 1{2\tau}+
    \frac1{2\bar \tau}
    +\frac\cvx2-\eta\ge \frac 1{2\tau}$ and we
	obtain \eqref{eq:one-step-rate}; notice that
	$\bif_{\bar \tau}(\bar x)\le \sfB(\bar \tau,\bar x;\bar x,\bar x)=0$.
\end{proof}
\begin{corollary}[Density of the domain of the slope]
	\label{cor:slope-density}
	Let $\sfb$ satisfy \eqref{eq:b-regular}, \eqref{eq:b-sublevels}, and \eqref{eq:b-dissipative},
    and assume that for every $\bar x\in \DDD$ there exist
	$\bar\tau>0$ with $\xi=\frac 1{\bar \tau}+\cvx>0$ and a system
	$\barysystem{2}$ of $2$-barycentric maps in $\DDD$ for which the
	convexity condition \eqref{eq:B-two-convex} holds.
	If for every $\bar x\in \DDD$ there exists a sequence
	$\tau_\ell\down0$ such that the discrete $\sfb^\cvx$-EVI
	\eqref{eq:D-SEVI2} admits a one-step solution with initial datum
	$\bar x$ and step size $\tau_\ell$, then $\domainslope\cvx$ is dense in
	$\DDD$.
\end{corollary}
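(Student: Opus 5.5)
Fix $\bar x\in\DDD$; the plan is to produce points of $\domainslope\cvx$ arbitrarily close to $\bar x$. The candidates are the one-step solutions $x_{\tau_\ell}$ of the discrete $\sfb^\cvx$-EVI with base point $\bar x$ and step size $\tau_\ell$, whose existence is assumed.

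\textbf{Step 1: the candidates lie in $\domainslope\cvx$.} A one-step solution satisfies \eqref{eq:resolvent-evi}, which is exactly \eqref{eq:D-SEVI2} for the two-term sequence $X^0:=\bar x$, $X^1:=x_{\tau_\ell}$. Proposition~\ref{prop:iterates-slope} then gives
\[
\tau_\ell\,\skslope\cvx{x_{\tau_\ell}}\le \dX(x_{\tau_\ell},\bar x)<\infty ,
\]
so $x_{\tau_\ell}\in\domainslope\cvx$. This step needs no dissipativity.

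\textbf{Step 2: the candidates converge to $\bar x$.} I will use Lemma~\ref{le:one-step}, which requires \eqref{eq:MY} at $\bar x$ for some $\bar\tau$ with $\bar\tau(2\eta-\cvx)_+\le1$.

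\begin{itemize}
\item Lemma~\ref{le:MY-convex}, applied with the $\bar\tau$ and $\barysystem 2$ given in the hypothesis, together with \eqref{eq:b-regular} and \eqref{eq:b-sublevels}, yields \eqref{eq:MY} at $\bar x$ with step $\bar\tau$.
\item Since \eqref{eq:MY} is stable under reducing the step size, I may replace $\bar\tau$ by $\bar\tau':=\min\{\bar\tau,(2\eta-\cvx)_+^{-1}\}$, with the convention $1/0=+\infty$. Then $\bar\tau'(2\eta-\cvx)_+\le1$, and \eqref{eq:MY} still holds with $\bif_{\bar\tau'}(\bar x)\ge\bif_{\bar\tau}(\bar x)>-\infty$.
\end{itemize}

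For $\ell$ large we have $\tau_\ell<\bar\tau'/2$. Lemma~\ref{le:one-step}, using \eqref{eq:b-dissipative}, then gives
\[
\dX^2(\bar x,x_{\tau_\ell})\le 2\bigl(-\bif_{\bar\tau'}(\bar x)\bigr)\tau_\ell\to0 .
\]

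\textbf{Conclusion.} Since $\bar x$ was arbitrary, every point of $\DDD$ is a limit of points of $\domainslope\cvx$, so $\domainslope\cvx$ is dense in $\DDD$.

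\textbf{Main obstacle.} The only delicate point is matching the hypotheses of Lemma~\ref{le:one-step}, namely the step-size constraint $\bar\tau(2\eta-\cvx)_+\le1$. This is handled by the monotonicity of $\bif_{\bar\tau}$ in $\bar\tau$ noted after \eqref{eq:MY}.
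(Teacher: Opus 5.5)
Your proposal is correct and follows the paper's proof essentially step for step. The paper likewise obtains \eqref{eq:MY} from Lemma~\ref{le:MY-convex} and shrinks the step size to enforce $\bar\tau(2\eta-\cvx)_+\le 1$. It then uses Proposition~\ref{prop:iterates-slope} to place the one-step solutions in $\domainslope\cvx$ and Lemma~\ref{le:one-step} to get their convergence to $\bar x$; your explicit choice of $\bar\tau'$ and the remark that $\tau_\ell<\bar\tau'/2$ for large $\ell$ simply spell out details the paper leaves implicit.
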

\begin{proof}
	Let $\bar x\in \DDD$. Lemma~\ref{le:MY-convex} provides
	\eqref{eq:MY} at $\bar x$ and, since \eqref{eq:MY} is stable under
	reducing the step size, we can also assume
	$\bar\tau(2\eta-\cvx)_+\le1$. Since the first step of
	\eqref{eq:D-SEVI2} coincides with \eqref{eq:resolvent-evi}, the
	solutions $X^1_{\tau_\ell}$ belong to $\domainslope\cvx$ by
    Proposition~\ref{prop:iterates-slope}
    and converge to $\bar x$ by
	Lemma~\ref{le:one-step}.
\end{proof}
The one-step solvability required by Corollary~\ref{cor:slope-density}
is guaranteed by Theorem~\ref{thm:saddle-sEVI}; explicit
conditions on $\sfb$ will be provided by the well-posedness results of
Section~\ref{sec:metric_dissipative_evolutions} (Theorems
\ref{thm:main_existence} and \ref{thm:antisym_existence}), under whose
assumptions the density of $\domainslope\cvx$ thus holds automatically.
\nc

\subsection{Examples}\label{sec:ex-VMS}
We discuss examples illustrating the interplay between the convexity of $y \mapsto\sfb(y,x)$ and $\dX^2(\cdot,\bar x)$ with respect to various families of barycentric maps $\cB$. The joint convexity of $y \mapsto\sfb(y,x)$ and $\dX^2(\cdot,\bar x)$ with respect to the same $\cB$ plays a role in the well-posedness
of the variational movement scheme; see Sections~\ref{sec:resolvent} and \ref{sec:VMS}.

\subsubsection{Convexity of squared distance function}\label{sec:convexity-squared-d}
In the first example, we illustrate how the choice of curves along which the squared distance function is convex for the spherical Hellinger metric impacts the choice of energy functionals in practical applications. In the second example, convexity of the squared distance function in Wasserstein-2 space along generalized geodesics allows us to prove existence of solutions without strong ($\lambda <0$) dissipativity due to compensation from the $\frac{1}{2\tau} \dX^2(\cdot,\bar x)$ term.
\begin{example}[Convexity with Spherical Hellinger/Fisher-Rao Metric]\label{ex:convexity-spherical-Hellinger}
    Consider a system of coupled gradient flows with respect to the spherical Hellinger metric $\HH$, which gives rise to a space with nonnegative cross curvature (see Example~\ref{ex:NNCC} in Section~\ref{sec:convexity-ex}): for all $i\in [1,\dots,N]$,
    \begin{align*}
        \partial_t \rho^{(i)}(x) = -\nabla_{\HH,\rho^{(i)}}F^{(i)}[\rho^{(1)},\dots, \rho^{(N)} ] = -\Big(\delta_{\rho^{(i)}}F^{(i)}[\rho](x) -\int \delta_{\rho^{(i)}}F^{(i)}[\rho]\, \d\rho^{(i)}\Big) \rho^{(i)}(x)\,.
    \end{align*}
    We would like to prove well-posedness of the solutions to the above system of evolutions. While results in \cite{carrillo_fisher-rao_2024} show that a class of $f$-divergences are geodesically convex in the Hellinger metric, existence of solutions was left as an open problem because $\dX^2(\cdot,\rho)$ is not 1-convex along geodesics in the spherical Hellinger metric. In \cite{lascu_fisher-rao_2024}, the authors propose a min-max flow system for use in machine learning applications, studying its long time behavior.
    In recent work, \cite{Leger-Todeschi-Vialard25} showed the existence of (non-unique) curves along which the squared spherical Hellinger distance is 1-convex. These curves could serve as condidates for a suitable choice of barycentric system $\cB$. However, it is not known which nontrivial energy functionals are convex with respect to these curves. 
    This illustrates how the choice of $\cB$ can be challenging for spaces that lack non-positive curvature.
\end{example}
\begin{example}[Nonconvexity and Existence of Solutions in the Wasserstein-2 Space]
    For $\rho^{(1)},\rho^{(2)}\in\PP_2(\R)$, $a_1,a_2\in\R$, and $\cvx>0$, consider the energy functionals
    \begin{align*}
        F^{(1)}(\rho^{(1)},\rho^{(2)}) &= \frac{\cvx}{2}\int z_1^2\d \rho^{(1)}(z_1)+ 2a_1\iint z_1 z_2 \d \rho^{(1)}(z_1) \d \rho^{(2)}(z_2)\,, \\
        F^{(2)}(\rho^{(2)},\rho^{(1)}) &=\frac{\cvx}{2}\int z_2^2 \d \rho^{(2)}(z_2)+ 2a_2\iint z_1  z_2 \d \rho^{(1)}(z_1) \d \rho^{(2)}(z_2) \,.
    \end{align*}
    It was shown in \cite[Example 4.5]{conger_monotone_2025} that this system is not necessarily monotone, even though $F^{(1)}$ and $F^{(2)}$ are strongly $\kappa$-displacement convex in $\rho^{(1)}$ and $\rho^{(2)}$ respectively.  In fact, for the choice of $a_1=a_2$, the corresponding coupled Wasserstein-2 gradient flow can be written as a joint gradient flow of an energy functional that has no minimizer.  In other words, the static game has no solution (no Nash equilibrium). However, the existence theorems in Section~\ref{sec:resolvent}
     can still be applied to provide existence of solutions to the dynamic game (see Section~\ref{sec:metric_dissipative_evolutions}). Here, let us discuss existence of solutions to the VMS scheme \eqref{eq:explicit-VMS-primal} and the discrete EVI \eqref{eq:D-SEVI2} approximating the solutions to the dynamic game.
     Let $\skd \mu \rho =\sum_{i=1}^2 F^{(i)}(\mu^{(i)},\rho^{(-i)})-F^{(i)}(\rho^{(i)},\rho^{(-i)})$. Computing the interaction dissipativity according to Definition~\ref{def:monotonicity_zeroth_order}, we have, with $\gamma^{(1)}\in\Gamma_o(\rho^{(1)},\mu^{(1)})$ and $\gamma^{(2)}\in\Gamma_o(\rho^{(2)},\mu^{(2)})$ optimal transport plans,
     \begin{align*}
         \skd \rho \mu + \skd \mu \rho &=  2(a_1+a_2)\iiiint \left( z_1 z_2'+ z_1'z_2-z_1z_2- z_1'z_2' \right)\d \gamma^{(1)}(z_1,z_1') \d \gamma^{(2)} (z_2,z_2') \\
         &= 2(a_1+a_2) \iiiint (z_1-z_1')(z_2'-z_2)\d \gamma^{(1)}(z_1,z_1') \d \gamma^{(2)} (z_2,z_2')  \le |a_1+a_2| \dX^2(\rho,\mu)\,,
     \end{align*}
     where the inequality follows from Young's inequality. Therefore, $\sfb$ satisfies \eqref{eq:b-dissipative} with $\eta=|a_1+a_2|$. In particular, $\sfb$ is not antisymmetric, unless $a_1+a_2=0$ or one of the pairs $(\rho^{(i)}, \mu^{(i)})$ has the same means. Thus, we consider the existence of discrete iterates using Theorem~\ref{thm:saddle-sEVI} via the dissipativity conditions in Theorem~\ref{thm:discrete-SEVI}. For this example, we select $\cB$ as generalized geodesics (Example~\ref{ex:generalized-geo}), then $y \mapsto \sfb(y,x)$ is $\cvx$-barycentrically convexlike with respect to $\cB$ and \eqref{eq:B-bary-convex} holds. Therefore, $\lambda=|a_1+a_2|-\kappa$. Depending on the choice of $a_1$, $a_2$, and $\kappa$, it is possible that $\lambda>0$. 
     Selecting $\tau_o:=(2\eta_+)^{-1}$ results in $\xi=\tau^{-1}+\cvx > 2\eta_+$ for every $\tau<\tau_o$. For any $\rho_n \to \rho$ we have $\sfb(\rho_n,\mu) \to \sfb(\rho,\mu)$ because $\Wass_2$ convergence implies convergence of second moment terms  
     and the linear terms have Lipschitz integrands. Thus, the map $y \mapsto \sfb(y,x)$ is $\Wass_2$-continuous, and 
    it has complete sublevels due to the coercive quadratic terms and continuity of $y \mapsto \sfb(y,x)$. The map $x\mapsto \sfb(y,x)$ is continuous along all curves in $\cB$.
      The requirements for Theorem~\ref{thm:discrete-SEVI} are thus satisfied and we can apply Theorem~\ref{thm:saddle-sEVI}.
\end{example}

\subsubsection{Running Examples: Discrete $\sfb$-EVI}
\begin{itemize}
    \item \textbf{Example (I):}
    The main difference in applying Theorem~\ref{thm:equilibrium2} to $\sfa=\sfB$ rather than $\sfa=\sfb$ to achieve the result in Theorem~\ref{thm:discrete-SEVI} is the presence of $\frac{1}{2\tau}\dX^2(\cdot,\bar x)$ in the bifunction. In order for $\skd yx = \<\sfA(x),y-x>$ to satisfy the criteria for Theorem~\ref{thm:discrete-SEVI}, we can simply apply the same requirements as discussed in Section~\ref{sec:ex-existence}, because norms on Hilbert spaces satisfy the desired 1-convexity property and continuity properties. 

    \item \textbf{Example (II):} In the single species gradient flow setting, $\XXX=\XXX^{(1)}$ and energy $\varphi:\XXX \to \R$. Solving the primal equilibrium problem \eqref{eq:B-primal} for $\sfB(\tau,X^{n-1}_\tau;y,x)$ reduces to solving
\begin{align*}
     &\frac{1}{2\tau}\dX^2(X^{n-1}_\tau,y) - \frac{1}{2\tau} \dX^2(X^{n-1}_\tau,X^n_\tau)+\varphi(y) - \varphi(X^n_\tau) \ge 0\,, \quad \forall\, y\in \XXX\,,  \\
     &\quad \Rightarrow \quad \frac{1}{2\tau}\dX^2(X^{n-1}_\tau,y)+\varphi(y) \ge \frac{1}{2\tau} \dX^2(X^{n-1}_\tau,X^n_\tau) + \varphi(X^n_\tau) \,, \quad \forall\, y\in \XXX\,,
\end{align*}
which is the same minimizing movement scheme \eqref{eq:JKO} in \cite{AGS08}, known as the JKO scheme in Wasserstein-2 spaces \cite{JordanKinderlehrerOtto98}.

\item \textbf{Example (III):} 
The bifunction is given by $\skd yx = F(y^{(1)},x^{(2)})-F(x^{(1)},y^{(2)})$, and the generating bifunction is
\begin{align*}
    \begin{split}
        \sfB(\tau,X^{n-1}_\tau;y,x):=
	\frac 1{2\tau}\dX^2(y,X_\tau^{n-1})-
    \frac1{2\tau}\dX^2(x,X_\tau^{n-1}) + F(y^{(1)}, x^{(2)})-F(x^{(1)},y^{(2)})\,.
    \end{split}
\end{align*}
Let us assume sufficient regularity of $F$ so that $\sfb(\cdot,x)$ has complete sublevels, and $F$ is convex-concave such that the convexity condition \eqref{eq:B-two-convex} holds for some $\cB_{\bar x,2}$. 
Since $\sfb$ is antisymmetric, we can apply Theorem~\ref{thm:VMS-antisym} to obtain existence of $(X_\tau^n)$ solving \eqref{eq:explicit-VMS-primal} with 2-point barycentric convexity instead of the stronger barycentric convexity required for Theorem~\ref{thm:discrete-SEVI}. By Theorem~\ref{thm:saddle-sEVI}, $(X_\tau^n)$ also solves the discrete EVI \eqref{eq:D-SEVI2}.

\item \textbf{Example (IV):} We will check the convexity along generalized geodesics in the multispecies coupled gradient flow setting. 
    Let $(F^{(i)})$ be a coupled gradient flow with each species evolving in the Wasserstein-2 metric space. To show existence of a global Nash equilibrium, we can 
    check conditions on $(F^{(i)})$ so that the maps $y \mapsto \skd yx$ are $\cvx$-barycentrically convexlike with respect to $\Wass_2$  generalized geodesics (Example~\ref{ex:generalized-geo}). 
    This choice of curves applies to Examples (II)-(III) as well in the Wasserstein-2 metric setting. Dissipativity \eqref{eq:b-dissipative} in combination with upper-semicontinuity of each $F^{(i)}(y^{(i)},\cdot)-F^{(i)}(\cdot,\cdot)$ is sufficient to satisfy the conditions of Theorem~\ref{thm:discrete-SEVI}. Then by Theorem~\ref{thm:saddle-sEVI}, the VMS scheme admits a solution $(X_\tau^n)$, each iterate $X_\tau^n$ solves the dual equilibrium problem \eqref{eq:explicit-VMS-dual}, and since $\frac12 \Wass_2^2(\cdot,x)$ is $1$-convex with respect to generalized geodesics, $(X_\tau^n)$ solves the discrete $\sfb^\cvx$-EVI \eqref{eq:D-SEVI1}. 
   
\end{itemize}

\section{Metric evolutions }\label{sec:metric_dissipative_evolutions}
In this section we collect our main results concerning 
convergence of the VMS method and existence of continuous solutions $\sfx:[0,+\infty)\to \DDD$ of the
$\skdname$-Evolution Variational Inequality \eqref{introeq:EVI}
introduced in Section~\ref{sec:intro}:
\begin{equation}
\label{eq:SEVI2}
	\frac12\frac{\d} \dt \dX^2(\sfx(t),y)\le \blskd y{\sfx(t)}\quad\text{$t\in (0,+\infty)$,}\quad
	\text{for every }y\in \DDD.
	\tag{EVI$_{2,\cvx}$}
\end{equation}
\begin{definition}[Strong solutions]
	\label{def:solutions}
	A \emph{strong solution} to \eqref{eq:SEVI2} is a locally absolutely
	continuous curve $\sfx:[0,+\infty)\to \DDD$ satisfying
	\eqref{eq:SEVI2} pointwise a.e.~in $(0,+\infty)$.
\end{definition}
If $\sfx$ is locally absolutely continuous, then for every $y\in \DDD$
the map $t\mapsto \dX^2(\sfx(t),y)$ is locally absolutely continuous as
well, so that the left-hand side of \eqref{eq:SEVI2} is well defined
almost everywhere.
In the next subsections we will follow two different routes to study
\eqref{eq:SEVI2}: a compactness argument, which provides existence of
strong solutions without any dissipativity assumption (and without
uniqueness), and the dissipative theory, which yields uniqueness,
exponential contraction, and local Lipschitz regularity in time for
solutions starting from the domain of the slope. 

\subsection{Existence of Solutions to (EVI) via Compactness}\label{sec:existence-compactness}
Our first existence result for \eqref{eq:SEVI2} does not require any
dissipativity of $\sfb$ (and, correspondingly, does not provide
uniqueness). Instead, the Lyapunov decomposition
\eqref{eq:lyapunov}
of Section~\ref{sec:resolvent}, which there guaranteed the solvability
of a single step of the scheme via compactness, provides the a
priori estimates needed to pass to the limit as the step size
vanishes.

To describe the asymptotic behaviour of the scheme
we introduce the piecewise constant interpolants
	\begin{equation}\label{eq:interpolation}
    \begin{split}
		&\ol X_\tau(0):=X^0_\tau,\quad
		\ol X_\tau(t):=X^n_\tau\quad \ \ \ \text{if }t\in ((n-1)\tau,n\tau]\,, \\
        &\ul X_\tau(0):=X^0_\tau,\quad
        \ul X_\tau(t):=X^{n-1}_\tau \quad  \text{if }t\in[(n-1)\tau,n\tau)\,.
    \end{split}
	\end{equation}
\begin{lemma}[A priori estimates under the Lyapunov decomposition]
	\label{le:V-estimates}
	Let $\cvx\in\R$, let $\sfb$ satisfy \eqref{eq:lyapunov},
    and let
	$\tau_o>0$ satisfy $\tau_o\,(C_2-\cvx/2)_+\le 1/4$. Every solution
	$(X^n_\tau)_{n\in\N}$ of the discrete $\sfb^\cvx$-EVI
	\eqref{eq:D-SEVI2} with $\tau\in(0,\tau_o]$ satisfies
	\begin{equation}
		\label{eq:V-energy}
		V(X^n_\tau)+\frac1{2\tau}\,\dX^2(X^n_\tau,X^{n-1}_\tau)\le
		V(X^{n-1}_\tau)+\tau C_1^2
		\qquad\text{for every }n\ge 1.
	\end{equation}
	In particular, setting for a given time horizon $T>0$
	\begin{equation}
		\label{eq:V-constants}
		N_\tau:=\lceil T/\tau\rceil,\quad
		V_T:=V(X^0_\tau)+(T+\tau_o)C_1^2,\quad
		E_T:=2\Big(V(X^0_\tau)-\inf_\DDD V+(T+2\tau_o)C_1^2\Big),
	\end{equation}
	we have
	\begin{gather}
		\label{eq:V-bound}
		V(X^n_\tau)\le V_T,\qquad
		\dX^2(X^n_\tau,X^{n-1}_\tau)\le \tau\,E_T
		\qquad\text{for every }1\le n\le N_\tau,
		\\
		\label{eq:V-energy-sum}
		\sum_{n=1}^{N_\tau}
		\frac{\dX^2(X^n_\tau,X^{n-1}_\tau)}{\tau}\le E_T,
		\\
		\label{eq:V-hoelder}
		\dX\big(\ol X_\tau(t),\ol X_\tau(s)\big)\le
		\sqrt{(t-s+\tau)\,E_T}
		\qquad\text{for every }0\le s\le t\le T.
	\end{gather}
\end{lemma}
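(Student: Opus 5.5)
The plan is to test the discrete $\sfb^\cvx$-EVI \eqref{eq:D-SEVI2} with the choice $y:=X^{n-1}_\tau$, which makes the first difference on the left-hand side telescope against the incremental term. With $d_n:=\dX(X^n_\tau,X^{n-1}_\tau)$, the left-hand side becomes $\frac1{2\tau}(d_n^2-0)+\frac1{2\tau}d_n^2=\frac1\tau d_n^2$. The right-hand side is $\skd{X^{n-1}_\tau}{X^n_\tau}-\frac\cvx2 d_n^2$. We then insert the Lyapunov decomposition \eqref{eq:lyapunov} with $y=X^{n-1}_\tau$, $x=X^n_\tau$:
\[
\skd{X^{n-1}_\tau}{X^n_\tau}\le V(X^{n-1}_\tau)-V(X^n_\tau)+C_1 d_n+C_2 d_n^2.
\]
This yields
\[
V(X^n_\tau)+\Big(\tfrac1\tau+\tfrac\cvx2-C_2\Big)d_n^2\le V(X^{n-1}_\tau)+C_1 d_n .
\]

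Next we use the constraint on $\tau_o$. Since $\tau\le\tau_o$ and $\tau_o(C_2-\cvx/2)_+\le 1/4$, we have $C_2-\cvx/2\le \frac1{4\tau}$, so the coefficient of $d_n^2$ is at least $\frac3{4\tau}$. Young's inequality gives $C_1 d_n\le \frac1{4\tau}d_n^2+\tau C_1^2$. Absorbing this term leaves $V(X^n_\tau)+\frac1{2\tau}d_n^2\le V(X^{n-1}_\tau)+\tau C_1^2$, which is \eqref{eq:V-energy}. The only delicate point here is bookkeeping the constants, and the $1/4$ in the hypothesis is exactly what makes the absorption work.

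The remaining estimates follow by summing \eqref{eq:V-energy} from $1$ to $n\le N_\tau$:
\[
V(X^n_\tau)+\sum_{k\le n}\frac{d_k^2}{2\tau}\le V(X^0_\tau)+n\tau C_1^2.
\]
Since $N_\tau\tau\le T+\tau\le T+\tau_o$, dropping the sum gives the bound $V_T$. Keeping the sum and using $V(X^n_\tau)\ge\inf_\DDD V$ gives \eqref{eq:V-energy-sum}; the constant $E_T$ is stated with the slack $2\tau_o$, which is harmless. For the single-step bound we use $d_n^2/\tau\le\sum_k d_k^2/\tau\le E_T$.

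For \eqref{eq:V-hoelder}, take $s\in((m-1)\tau,m\tau]$ and $t\in((n-1)\tau,n\tau]$ with $m\le n\le N_\tau$. The triangle inequality and Cauchy--Schwarz give
\[
\dX(X^n_\tau,X^m_\tau)\le\sum_{k=m+1}^n d_k\le \sqrt{(n-m)\tau}\,\Big(\sum_k d_k^2/\tau\Big)^{1/2}.
\]
Since $(n-m)\tau\le t-s+\tau$, the claim follows; the case $s=0$ is handled in the same way with $m=0$.
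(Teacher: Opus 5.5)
Your proof is correct and follows the paper's argument: test \eqref{eq:D-SEVI2} with $Y=X^{n-1}_\tau$, insert \eqref{eq:lyapunov}, absorb $C_1 d_n$ via Young's inequality using $\tau(C_2-\cvx/2)_+\le 1/4$, then telescope and apply Cauchy--Schwarz. The only cosmetic difference is the single-step bound $d_n^2\le\tau E_T$. You read it off directly from \eqref{eq:V-energy-sum}, whereas the paper obtains it from \eqref{eq:V-energy} together with $V(X^{n-1}_\tau)\le V_T$ and $V\ge\inf_\DDD V$, which is where the $2\tau_o$ in $E_T$ comes from.
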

\begin{proof}
	Choosing $Y:=X^{n-1}_\tau$ in \eqref{eq:D-SEVI2} and recalling
	\eqref{eq:s-variant1} we obtain, with
	$d_n:=\dX(X^n_\tau,X^{n-1}_\tau)$,
	\begin{displaymath}
		\frac1\tau\,d_n^2\le
		\lskd{X^{n-1}_\tau}{X^n_\tau}
		=\skd{X^{n-1}_\tau}{X^n_\tau}-\frac\cvx2\,d_n^2.
	\end{displaymath}
	By \eqref{eq:lyapunov} and Young's inequality
	$C_1\,d_n\le \frac1{4\tau}d_n^2+\tau C_1^2$,
	\begin{displaymath}
		\Big(\frac3{4\tau}-C_2+\frac\cvx2\Big)d_n^2\le
		V(X^{n-1}_\tau)-V(X^n_\tau)+\tau C_1^2,
	\end{displaymath}
	and since $\tau\,(C_2-\cvx/2)_+\le 1/4$ we have
	$\frac3{4\tau}-C_2+\frac\cvx2\ge \frac1{2\tau}$, which yields
	\eqref{eq:V-energy}.
	Summing \eqref{eq:V-energy} over $l$ for $1\le l\le n\le N_\tau$ and using
	$n\tau\le T+\tau_o$ we get
	\begin{displaymath}
		V(X^n_\tau)+\frac12\sum_{l=1}^n \frac{d_l^2}\tau\le
		V(X^0_\tau)+n\tau C_1^2\le V_T,
	\end{displaymath}
	which gives the first bound in \eqref{eq:V-bound} and, since
	$V(X^n_\tau)\ge \inf_\DDD V$, also \eqref{eq:V-energy-sum}. The
	second bound in \eqref{eq:V-bound} follows from
	\eqref{eq:V-energy}, \eqref{eq:V-bound}, and
	$V(X^n_\tau)\ge\inf_\DDD V$.
	Finally, if $0\le s\le t\le T$ with $s\in ((m-1)\tau,m\tau]$ and
	$t\in ((n-1)\tau,n\tau]$, then $(n-m)\tau\le t-s+\tau$ and the
	Cauchy--Schwarz inequality gives
	\begin{displaymath}
		\dX^2\big(\ol X_\tau(t),\ol X_\tau(s)\big)\le
		\Big(\sum_{l=m+1}^n d_l\Big)^2\le
		(n-m)\sum_{l=m+1}^n d_l^2\le
		\frac{t-s+\tau}\tau\,\tau\,E_T=(t-s+\tau)\,E_T.
		\qedhere
	\end{displaymath}
\end{proof}
The next lemma isolates the mechanism by which limits of discrete
solutions produce strong solutions; it will be applied both here and,
in the dissipative setting. 
\begin{lemma}[Convergence to strong solutions]
	\label{le:discrete-to-continuous}
	Let $\cvx\in\R$, let $\tau_j\down0$, and for every $j\in\N$ let
	$(X^n_{\tau_j})_{n\in\N}$ be a solution of the discrete
	$\sfb^\cvx$-EVI \eqref{eq:D-SEVI2} with step size $\tau_j$.
	Suppose that there exists a curve $\sfx:[0,+\infty)\to \DDD$ such
	that for every $T>0$:
	\begin{enumerate}
		\item[\rm(i)] $\ol X_{\tau_j}(t)\to \sfx(t)$ as $j\to\infty$,
		for every $t\in [0,T]$;
		\item[\rm(ii)] There exists $\tilde E_T<\infty$ such that $\displaystyle
		\sup_{j\in\N}\
		\sum_{n=1}^{\lceil T/\tau_j\rceil}
		\frac{\dX^2(X^n_{\tau_j},X^{n-1}_{\tau_j})}{\tau_j}
		\le \tilde E_T$;
		\item[\rm(iii)] for every $y\in \DDD$,
		$\displaystyle\sup_{j\in\N}\ \sup_{t\in[0,T]}
		\skd y{\ol X_{\tau_j}(t)}<+\infty$;
		\item[\rm(iv)] for every $y\in \DDD$ and a.e.~$t\in (0,T)$,
		$\displaystyle\limsup_{j\to\infty}
		\skd y{\ol X_{\tau_j}(t)}\le \skd y{\sfx(t)}$.
	\end{enumerate}
	Then $\sfx$ is a strong solution to \eqref{eq:SEVI2}; moreover its
	metric derivative satisfies
	$\int_0^T|\dot\sfx|^2(t)\,\d t\le \tilde E_T$ for every $T>0$.
\end{lemma}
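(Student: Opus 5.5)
We have to prove Lemma~\ref{le:discrete-to-continuous}: pointwise limits of discrete $\sfb^\cvx$-EVI solutions are strong solutions. The plan has three steps. First, regularity of $\sfx$ from (ii). Second, an integrated form of \eqref{eq:D-SEVI2}. Third, a passage to the limit by Fatou, then differentiation.

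\textbf{Step 1: absolute continuity of $\sfx$.} Introduce the piecewise linear interpolation of the metric increments. Set $d_n:=\dX(X^n_{\tau_j},X^{n-1}_{\tau_j})$ and $m_j(t):=d_n/\tau_j$ on $((n-1)\tau_j,n\tau_j]$. For $s\in((m-1)\tau_j,m\tau_j]$ and $t\in((n-1)\tau_j,n\tau_j]$ we have $\dX(\ol X_{\tau_j}(s),\ol X_{\tau_j}(t))\le\sum_{l=m+1}^n d_l\le\int_{s}^{t+\tau_j}m_j$. By (ii), $\|m_j\|_{L^2(0,T+1)}^2\le\tilde E_{T+1}$. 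Extract a subsequence with $m_j\rightharpoonup m$ weakly in $L^2(0,T)$ for every $T$, using a diagonal argument. Passing to the limit with (i) gives $\dX(\sfx(s),\sfx(t))\le\int_s^t m$. Hence $\sfx$ is locally absolutely continuous, $|\dot\sfx|\le m$ a.e., and by weak lower semicontinuity $\int_0^T|\dot\sfx|^2\le\liminf\|m_j\|^2_{L^2(0,T)}\le\tilde E_T$.

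\textbf{Step 2: integrated discrete inequality.} Fix $y\in\DDD$. Drop the nonnegative term $\frac1{2\tau}\dX^2(X^n,X^{n-1})$ in \eqref{eq:D-SEVI2}, multiply by $\tau_j$, and sum from $n=m+1$ to $n$. With $s=m\tau_j$ and $t=n\tau_j$ this gives
\[
\tfrac12\dX^2(\ol X_{\tau_j}(t),y)-\tfrac12\dX^2(\ol X_{\tau_j}(s),y)\le\int_s^t\Big(\skd y{\ol X_{\tau_j}(r)}-\tfrac\cvx2\dX^2(\ol X_{\tau_j}(r),y)\Big)\d r .
\]
For general $0\le s<t$ we choose the grid points $s_j,t_j$ converging to $s,t$ from the appropriate side. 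The mismatch is controlled by (iii) and by the local boundedness of $\dX(\ol X_{\tau_j},y)$, which follows from Step 1.

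\textbf{Step 3: passage to the limit.} The left-hand side converges by (i). On the right, $\dX^2(\ol X_{\tau_j}(r),y)\to\dX^2(\sfx(r),y)$ pointwise and boundedly, so dominated convergence applies. For the $\sfb$ term we use reverse Fatou, which is allowed because (iii) gives a uniform upper bound, together with (iv) a.e. This yields
\[
\tfrac12\dX^2(\sfx(t),y)-\tfrac12\dX^2(\sfx(s),y)\le\int_s^t\lskd y{\sfx(r)}\,\d r .
\]
Reverse Fatou also shows that the positive part of $r\mapsto\skd y{\sfx(r)}$ is bounded by the same constant, so the integral is well defined in $[-\infty,\infty)$.

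Since $t\mapsto\dX^2(\sfx(t),y)$ is locally absolutely continuous, its derivative exists a.e. At Lebesgue points of $r\mapsto\lskd y{\sfx(r)}$ we then get $\frac12\frac{\d}{\dt}\dX^2(\sfx(t),y)\le\lskd y{\sfx(t)}$. This is exactly \eqref{eq:SEVI2}.

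\textbf{Main obstacle.} The delicate point is integrability of $r\mapsto\skd y{\sfx(r)}$ from below, which is needed for the Lebesgue point argument. Absolute continuity of the left-hand side resolves it. If the integral over some interval were $-\infty$, the inequality would be contradicted, so the negative part is locally integrable. Measurability of $\skd y{\sfx(\cdot)}$ is also needed. If it is not available directly, we replace the function by its upper envelope $\limsup_j\skd y{\ol X_{\tau_j}(\cdot)}$, which is measurable and dominated by $\skd y{\sfx(\cdot)}$ a.e. by (iv), and run the same argument.
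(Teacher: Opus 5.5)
Your architecture is the paper's. Weak $L^2$ compactness of the discrete speeds $m_j$ gives local absolute continuity of $\sfx$ and $\int_0^T|\dot\sfx|^2\,\d t\le\tilde E_T$. An integrated form of \eqref{eq:D-SEVI2} is passed to the limit by reverse Fatou using (iii), and the EVI is recovered at Lebesgue points. For the integrated inequality the paper uses the piecewise affine interpolant of the values $\dX^2(X^n_{\tau_j},y)$, which gives the estimate for all $0\le s\le t$ with no endpoint bookkeeping. Your grid-point summation also works, provided you take $s_j\le s$ and $t_j\ge t$, so that the added pieces only need an upper bound. You should also note that $\dX(\ol X_{\tau_j}(s_j),\ol X_{\tau_j}(s))\le\max_n\dX(X^n_{\tau_j},X^{n-1}_{\tau_j})\to0$ by (ii).

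There is a wrong step in Step 3. You claim that reverse Fatou bounds the positive part of $r\mapsto\skd y{\sfx(r)}$ by the constant in (iii). It does not. Hypothesis (iv) reads $\limsup_j\skd y{\ol X_{\tau_j}(r)}\le\skd y{\sfx(r)}$, so (iii) bounds the upper envelope from above, while $\skd y{\sfx(r)}$ is only bounded \emph{from below} by that envelope. Nothing in (i)--(iv) bounds $\skd y{\sfx(\cdot)}$ from above; even when it is measurable, its integral is a priori only in $(-\infty,+\infty]$.

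Consequently you have not shown that $r\mapsto\lskd y{\sfx(r)}$ is locally integrable, so the differentiation ``at Lebesgue points of $r\mapsto\lskd y{\sfx(r)}$'' is unjustified. Your discussion of the main obstacle rests on the same false claim.

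The repair is what the paper does: run the Lebesgue-point argument on $g_y(r):=\limsup_j\lskd y{\ol X_{\tau_j}(r)}$ instead.
\begin{itemize}
\item It is measurable, being a $\limsup$ of piecewise constant functions.
\item It is bounded above on $[0,T]$, by (iii) together with the uniform bound on $\dX(\ol X_{\tau_j}(r),y)$.
\item For every $0\le s<t\le T$ you have $\int_s^t g_y\,\d r>-\infty$, because this integral dominates the finite quantity $\frac12\dX^2(\sfx(t),y)-\frac12\dX^2(\sfx(s),y)$.
\end{itemize}
Hence $g_y\in L^1(0,T)$. At every point where $\dX^2(\sfx(\cdot),y)$ is differentiable and which is a Lebesgue point of $g_y$, you get $\frac12\frac{\d}{\dt}\dX^2(\sfx(t),y)\le g_y(t)\le\lskd y{\sfx(t)}$, the last inequality holding a.e.\ by (iv). So the upper envelope is needed for integrability, not merely as a fallback for measurability, and it should be your primary object from the start.
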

\begin{proof}
	Throughout the proof we fix $T>0$ and set
	$d_n^j:=\dX(X^n_{\tau_j},X^{n-1}_{\tau_j})$.

	\emph{Step 1: Regularity of $\sfx$.}
	From (ii), arguing as in the proof of \eqref{eq:V-hoelder}, we have for large enough $j\in\N$ that 
	\begin{equation}
		\label{eq:hoelder-j}
		\dX\big(\ol X_{\tau_j}(t),\ol X_{\tau_j}(s)\big)\le
		\sqrt{(t-s+\tau_j)\tilde E_T}\qquad\text{for }0\le s\le t\le T,
	\end{equation}
	so that, by (i), $\dX(\sfx(t),\sfx(s))\le \sqrt{(t-s)\tilde E_T}$: $\sfx$
	is continuous. Let
	$m_j:(0,T)\to[0,+\infty)$ be the piecewise constant function equal
	to $d^j_n/\tau_j$ on $((n-1)\tau_j,n\tau_j)$; by (ii)
	$\|m_j\|^2_{L^2(0,T)}\le \tilde E_T$, so a subsequence (not relabelled)
	converges weakly in $L^2(0,T)$ to some $m$ with
	$\|m\|^2_{L^2(0,T)}\le \liminf_j \|m_j\|^2_{L^2(0,T)}\le \tilde E_T$. 
	For $0\le s\le t\le T$, the triangle inequality gives
	$\dX(\ol X_{\tau_j}(t),\ol X_{\tau_j}(s))\le
	\int_{s'}^{t'} m_j(r)\,\d r$ for some $s',t'$ with $|s'-s|,|t'-t|\le \tau_j$, and
	passing to the limit
	\begin{displaymath}
		\dX(\sfx(t),\sfx(s))\le \int_s^t m(r)\,\d r,
	\end{displaymath}
    so that $\sfx$ is absolutely continuous in $[0,T]$ with metric
	derivative $|\dot\sfx|\le m$ a.e.

	\emph{Step 2: Integral form of the discrete EVI.}
	Fix $y\in \DDD$ and let $\zeta_j(\cdot,y):[0,T]\to\R$ be the
	piecewise affine interpolant of the values
	$\big(\dX^2(X^n_{\tau_j},y)\big)_n$ at the nodes $(n\tau_j)_n$.
	For $t\in ((n-1)\tau_j,n\tau_j)$, the inequality
	\eqref{eq:D-SEVI2} gives
	\begin{displaymath}
		\partial_t \zeta_j(t,y)=
		\frac{\dX^2(X^n_{\tau_j},y)-\dX^2(X^{n-1}_{\tau_j},y)}{\tau_j}
		\le 2\,\lskd y{X^n_{\tau_j}}-\frac{(d^j_n)^2}{\tau_j}
		\le 2\,\lskd y{\ol X_{\tau_j}(t)},
	\end{displaymath}
	so that, for every $0\le s\le t\le T$,
	\begin{equation}
		\label{eq:integrated-discrete}
		\zeta_j(t,y)-\zeta_j(s,y)\le
		2\int_s^t \lskd y{\ol X_{\tau_j}(r)}\,\d r.
	\end{equation}

	\emph{Step 3: Passage to the limit.}
    By the triangle inequality and \eqref{eq:hoelder-j} applied with $s=0$, we have
    	\begin{equation*}
        \dX\big(\ol X_{\tau_j}(t),y\big)
        \le 
		\dX\big(\ol X_{\tau_j}(t),\ol X_{\tau_j}(0)\big)
        +  \dX\big(\ol X_{\tau_j}(0),y\big)
        \le
		\sqrt{(T+\tau_j)\tilde E_T} +  \dX\big(\ol X_{\tau_j}(0),y\big)\,,
	\end{equation*}
    which is bounded above uniformly in $t\in[0,T]$ and $j\in\N$.
	Since $\max_n d^j_n\le \sqrt{\tau_j \tilde E_T}\to0$ by (ii) and
	$\sup_{j\in\N} \sup_{[0,T]}\dX(\ol X_{\tau_j}(\cdot),y)$ is bounded, we have
	$\sup_{[0,T]}\big|\zeta_j(\cdot,y)-
	\dX^2(\ol X_{\tau_j}(\cdot),y)\big|\to 0$, so that by (i) the
	left-hand side of \eqref{eq:integrated-discrete} converges to
	$\dX^2(\sfx(t),y)-\dX^2(\sfx(s),y)$.
	The integrands in the right-hand side are measurable (being
	piecewise constant) and uniformly bounded above by (iii) and the
	boundedness of $\dX(\ol X_{\tau_j}(\cdot),y)$; Fatou's Lemma
	(in limsup form) then yields
	\begin{displaymath}
		\dX^2(\sfx(t),y)-\dX^2(\sfx(s),y)\le
		2\int_s^t g_y(r)\,\d r,\qquad
		g_y(r):=\limsup_{j\to\infty}\lskd y{\ol X_{\tau_j}(r)},
	\end{displaymath}
	where $g_y$ is measurable, locally bounded above, and satisfies
	$g_y(r)\le \lskd y{\sfx(r)}$ for a.e.~$r\in (0,T)$ by (iv) and (i).
	Since $t\mapsto \dX^2(\sfx(t),y)$ is absolutely continuous,
	differentiating at every $t$ which is both a differentiability
	point and a Lebesgue point of $g_y$ we obtain
	\begin{displaymath}
		\frac12\frac\d\dt \dX^2(\sfx(t),y)\le g_y(t)\le
		\lskd y{\sfx(t)}
		\qquad\text{for a.e.~}t\in (0,T).
	\end{displaymath}
	As $y\in \DDD$ and $T>0$ are arbitrary, $\sfx$ is a strong solution
	to \eqref{eq:SEVI2}.
    Finally, since $|\dot\sfx|(t)\le m(t)$ for a.e. $t\in[0,T]$ by Step 1, and $\|m\|^2_{L^2(0,T)}\le \tilde E_T$, we conclude $\int_0^T|\dot\sfx|^2(t)\,\d t\le \int_0^T|m(t)|^2\,\d t\le\tilde E_T$.
\end{proof}
The structure of \eqref{eq:lyapunov} also provides the upper
semicontinuity of $\sfb$ with respect to its second variable required
by condition (iv) of Lemma~\ref{le:discrete-to-continuous}: if
$x_n\to x$ in $\DDD$ then, by the lower semicontinuity of $V$ and the
upper semicontinuity of $\bif'(y,\cdot)$,
\begin{equation}
	\label{eq:V-usc}
	\limsup_{n\to\infty}\skd y{x_n}\le
	V(y)-\liminf_{n\to\infty}V(x_n)+
	\limsup_{n\to\infty}\bif'(y,x_n)\le
	\skd yx
	\qquad\text{for every }y\in \DDD.
\end{equation}
Now, we are ready to show existence of strong solutions to \eqref{eq:SEVI2} via compactness by applying Theorem~\ref{thm:VMS-compact}. Note that here, compact sublevels of $V$ play a crucial role, whereas the a priori estimates in Lemma~\ref{le:V-estimates} only used the upper bound on $\sfb$ in \eqref{eq:lyapunov}, not the compactness.
\begin{theorem}[Existence via compactness]
	\label{thm:existence-compact}
	Let $\cvx\in\R$ and let $\sfb$ satisfy \eqref{eq:b-regular} and the
	Lyapunov decomposition \eqref{eq:lyapunov} with compact sublevels
	$\big\{x\in \DDD: V(x)\le c\big\}$, $c\in\R$.
	Let $\tau_o>0$ satisfy $\tau_o\,(C_2-\cvx/2)_+\le 1/4$,
	$2\tau_o C_2<1$, and
	$\tau_o^{-1}+\cvx>0$, and suppose that for every $\tau\in(0,\tau_o]$
	and every base point $\bar x\in \DDD$
	\begin{enumerate}
		\item[\rm(1)] 
        the maps $y\mapsto \sfB(\tau,\bar x;y,x)$,
		$x\in \DDD$, are jointly barycentrically quasi-convexlike with
		respect to a {\em continuous} system $\barysystem{}$ of barycentric maps in $\DDD$;
		\item[\rm(2)] the convexity condition \eqref{eq:B-two-convex}
		holds with $\xi:=\dfrac1\tau+\cvx$.
	\end{enumerate}
	Then for every $x_0\in \DDD$ there exist a vanishing sequence
	$\tau_j\down0$ and a strong solution $\sfx$ to \eqref{eq:SEVI2}
	such that $\sfx(0)=x_0$, $|\dot\sfx|\in L^2_{\rm loc}([0,+\infty))$,
	and
	$\ol X_{\tau_j}(t)\to\sfx(t)$ locally uniformly in $[0,+\infty)$,
	where $(X^n_{\tau_j})$ solve \eqref{eq:D-SEVI2} with
	$X^0_{\tau_j}=x_0$.
\end{theorem}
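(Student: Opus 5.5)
We first construct the discrete solutions, then extract a convergent subsequence of interpolants, and finally pass to the limit with Lemma~\ref{le:discrete-to-continuous}.

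\emph{Discrete solutions.} Fix $x_0\in\DDD$ and $\tau\in(0,\tau_o]$. At each step we apply Theorem~\ref{thm:VMS-compact} with base point $\bar x=X^{n-1}_\tau$, checking its three hypotheses in turn.
\begin{itemize}
\item Hypothesis (i) is assumption (1).
\item Hypothesis (ii), upper semicontinuity of $x\mapsto\skd yx$, follows from \eqref{eq:V-usc}, i.e.\ from the lower semicontinuity of $V$ and the upper semicontinuity of $\bif'(y,\cdot)$.
\item Hypothesis (iii) follows from Lemma~\ref{lem:compact}, since \eqref{eq:lyapunov} with compact sublevels of $V$ and $2\tau_oC_2<1$ gives \eqref{eq:b-growth}. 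Lemma~\ref{lem:compact} is stated for $\tau<\tau_o$; if $\tau=\tau_o$ is needed, we shrink $\tau_o$ slightly, which preserves all three constraints on $\tau_o$.
\end{itemize}
Theorem~\ref{thm:saddle-sEVI}(i),(iii) together with assumption (2), using $\xi=\tau^{-1}+\cvx>0$, then produce a VMS sequence $(X^n_\tau)$ with $X^0_\tau=x_0$ solving the discrete EVI \eqref{eq:D-SEVI2}.

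\emph{Compactness.} Lemma~\ref{le:V-estimates} applies because $\tau_o(C_2-\cvx/2)_+\le1/4$. It gives $V(\ol X_\tau(t))\le V_T$ on $[0,T]$, where $V_T$ does not depend on $\tau$ because $X^0_\tau=x_0$. It also gives the energy bound \eqref{eq:V-energy-sum} and the approximate H\"older estimate \eqref{eq:V-hoelder}. So for every $t$ all values $\ol X_\tau(t)$ lie in the compact set $K_T=\{V\le V_T\}$, and the curves are equicontinuous up to an error $\sqrt{\tau E_T}$. We then run the refined Ascoli--Arzel\`a argument of \cite[Prop.~3.3.1]{AGS08}. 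First a diagonal extraction gives convergence on a countable dense set of times, for $T=1,2,\dots$. The H\"older bound then extends this to pointwise convergence at every $t$, with limit $\sfx$ that is $\frac12$-H\"older. The same bound upgrades it to locally uniform convergence $\ol X_{\tau_j}\to\sfx$, and $\sfx(0)=x_0$. Since $K_T$ is compact, hence closed, $\sfx(t)\in\DDD$.

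\emph{Passage to the limit.} We check the hypotheses of Lemma~\ref{le:discrete-to-continuous}.
\begin{itemize}
\item (i) was just proved.
\item (ii) holds with $\tilde E_T=E_T$; this is uniform in $j$ since $V(X^0)=V(x_0)$.
\item (iii) follows from the upper bound in \eqref{eq:lyapunov}: $\skd y{\ol X_{\tau_j}(t)}\le V(y)-\inf V+C_1\dX+C_2\dX^2$, and the distances $\dX(y,\ol X_{\tau_j}(t))$ are bounded by the H\"older estimate.
\item (iv) is exactly \eqref{eq:V-usc}, applied along $\ol X_{\tau_j}(t)\to\sfx(t)$.
\end{itemize}
The lemma then shows that $\sfx$ is a strong solution with $|\dot\sfx|\in L^2_{\rm loc}$.

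The main obstacle is the compactness step: obtaining convergence for every $t$ rather than almost every $t$, and uniformly rather than only pointwise, from the non-continuous piecewise constant interpolants. This is exactly what the diagonal argument combined with \eqref{eq:V-hoelder} is meant to handle.
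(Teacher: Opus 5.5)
Your proposal is correct and follows the paper's proof essentially step by step: the iterates come from Theorem~\ref{thm:VMS-compact} (via Lemma~\ref{lem:compact} and \eqref{eq:V-usc}) combined with Theorem~\ref{thm:saddle-sEVI}, compactness from Lemma~\ref{le:V-estimates} and the refined Ascoli--Arzel\`a theorem, and the limit from Lemma~\ref{le:discrete-to-continuous}, with conditions (ii)--(iv) checked exactly as you check them. The only cosmetic difference is the endpoint $\tau=\tau_o$: the paper enlarges the threshold to some $\tau_o'>\tau_o$ with $2\tau_o'C_2<1$, whereas you shrink $\tau_o$; both work, since the conclusion only requires a vanishing sequence of step sizes.
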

\begin{proof}
	Since $2\tau_oC_2<1$, we can choose $\tau_o'>\tau_o$ with
	$2\tau_o'C_2<1$; as explained in the paragraph after \eqref{eq:V-lsc-map}, the Lyapunov decomposition \eqref{eq:lyapunov} with
	compact sublevels implies \eqref{eq:b-growth} 
    with the threshold
	$\tau_o'$, so that, by Lemma~\ref{lem:compact}, condition (iii) of
	Theorem~\ref{thm:VMS-compact} holds for every $\tau\in(0,\tau_o]$
	and every base point $\bar x\in \DDD$. Moreover, under
	\eqref{eq:lyapunov} the map $x\mapsto \skd yx=V(y)-V(x)+\bif'(y,x)$
	is upper semicontinuous for every $y\in \DDD$, since $V$ is lower
	semicontinuous and $\bif'(y,\cdot)$ is upper semicontinuous; hence
	condition (ii) of Theorem~\ref{thm:VMS-compact} holds as well.
	Together with (1), all the assumptions of
	Theorem~\ref{thm:VMS-compact} are thus satisfied at every base point;
	by (2) and Theorem~\ref{thm:saddle-sEVI}, for every
	$\tau\in(0,\tau_o]$ there exists a solution $(X^n_\tau)$ of the VMS
	with $X^0_\tau=x_0$, which solves the discrete $\sfb^\cvx$-EVI
	\eqref{eq:D-SEVI2}.
    
    Let $V_T,E_T$ be
	defined as in \eqref{eq:V-constants} (they do not depend on $\tau$).
	By Lemma~\ref{le:V-estimates}, for every $T>0$ the iterates
	$X^n_\tau$, $0\le n\le N_\tau$, belong to the compact sublevel
	$K_T:=\{V\le V_T\}$, and the interpolants satisfy the H\"older
	bound \eqref{eq:V-hoelder}.

    By a refined version of Ascoli-Arzel\`a Theorem (see, e.g., \cite[Thm.~3.3.1]{AGS08}),
    we can find a sequence $\tau_j\down0$ and a continuous curve $\sfx:[0,+\infty)\to \DDD$ such that
    (i) of Lemma~\ref{le:discrete-to-continuous} holds, and the convergence is locally uniform in $[0,+\infty)$.

	It remains to check conditions (ii)--(iv) of
	Lemma~\ref{le:discrete-to-continuous}: (ii) follows from \eqref{eq:V-energy-sum} with $\tilde E_T=E_T$. For (iii), the
	Lyapunov decomposition \eqref{eq:lyapunov} gives, for every $y\in \DDD$
	and $t\in [0,T]$,
	\begin{displaymath}
		\skd y{\ol X_{\tau_j}(t)}\le
		V(y)-\inf_\DDD V+C_1\,R_T+C_2 \,R_T^2,\qquad
		R_T:=\sup_{j\in\N}\sup_{t\in[0,T]}\dX\big(\ol X_{\tau_j}(t),y\big)<+\infty.
	\end{displaymath}
    Here, the finiteness of $R_T$ follows since $       \dX\big(\ol X_{\tau_j}(t),y\big)
        \le 
		\dX\big(\ol X_{\tau_j}(t),x_0\big)
        +  \dX\big(x_0,y\big)
        \le
		\sqrt{(T+\tau_o)\tilde E_T} +  \dX\big(x_0,y\big)$ by \eqref{eq:hoelder-j}.
	Finally, (iv) follows from \eqref{eq:V-usc}, since
	$\ol X_{\tau_j}(t)\to\sfx(t)$ for every $t$.
	Lemma~\ref{le:discrete-to-continuous} then shows that $\sfx$ is a
	strong solution.
\end{proof}

\subsection{Properties of Dissipative EVI Solutions}\label{sec:admissible-b}
In this subsection we study the \emph{structural} properties of the EVI
solutions of \eqref{eq:SEVI2} in the $\eta$-interaction dissipative case, when
\begin{subequations}
    \label{subeq:structure-dissipativity}
\begin{equation}
	\label{eq:S-lambda}
	\skd xx=0,\qquad
	\skd yx+\skd xy\le \eta\,\dX^2(x,y)
	\quad\text{for every }x,y\in \DDD;
\end{equation}
in particular, we will discuss the equivalent
first-order and dual formulations, the contraction between solutions,
uniqueness, and the resulting EVI flow. Here the generating bifunction
$\sfB$ and the variational movement scheme play no role: everything is
expressed directly in terms of $\bif$, of its dissipativity, and of its
slope. Existence of solutions is a separate matter, obtained either by
compactness (Section~\ref{sec:existence-compactness}) or by the explicit error estimates for the Variational Movement Scheme (Section~\ref{sec:soln_to_discrete_EVI} and Theorems~\ref{thm:tau_to_zero}, \ref{thm:main_existence}, and \ref{thm:antisym_existence}).

Throughout this subsection we fix $\cvx,\eta\in\R$ and 
in addition to 
\eqref{eq:S-lambda} we will assume that
$\bif:\DDD\times \DDD\to\R$ satisfies the complete
sublevel property (cf.~\eqref{eq:b-sublevels})
		\begin{equation}
			\label{eq:b-sublevels-bis}
			\text{for every }x\in \DDD\quad
			\text{the map }y\mapsto \skd yx\ \text{has complete sublevels,}
		\end{equation}
and that the following conditional upper semicontinuity holds:
\begin{equation}
\label{eq:conditional-usc}
	y,x,x_n\in \DDD,\ x_n\to x,\ \sup_n\skslope\cvx{x_n}<+\infty\quad
	\Rightarrow
	\quad
	\limsup_{n\to\infty}\skd y{x_n}\le \skd yx.
\end{equation}
Finally, we introduce
the contraction rate $\lambda$ 
of the dissipative evolution 
and the \emph{$\eta$-conjugate bifunction}
$\cbskdname$ 
obtained from $\bif$ by exchanging its arguments and reversing the
sign, up to the dissipativity shift $\eta\,\dX^2$.
\begin{equation}
	\label{eq:parameters}
	\lambda:=\eta-\cvx,\qquad
    \cbskdname(y,x):=\eta\,\dX^2(x,y)-\skd xy,\qquad x,y\in\DDD.
\end{equation}
\end{subequations}
$\cbskdname$ vanishes on the
diagonal and the $\eta$-conjugation is involutive (conjugating $\cbskdname$
returns $\bif$); its $\cvx$-perturbation satisfies
\begin{equation}
	\label{eq:conjugate-identity}
	\cblskd yx=\lambda \dX^2(x,y)-
    \blskd{x}{y}=-\bif^{\cvx_{*}}(x,y),\qquad
	\cvx_{*}:=2\eta-\cvx=\cvx+2\lambda,
\end{equation}
where the conjugate parameter $\cvx_{*}$ is the reflection of $\cvx$
about $\eta$.\nc
\begin{remark}\label{rem:skew-symmetric}
The simplest situation is when $\bif$ is \emph{skew-symmetric}, i.e.
\begin{equation}
	\label{eq:skew-symmetric}
	\skd yx+\skd xy=0,\quad\text{i.e.}\quad
	\skd yx=-\skd xy,\quad\text{for every }
	x,y\in \DDD.
\end{equation}
In this case $\eta=0$ and $\cbskdname(y,x)=-\skd xy=\skd yx$, so
that $\bif$ is \emph{self-conjugate}; moreover $x\mapsto \skd yx$ is upper semicontinuous
and therefore the conditional upper semicontinuity
\eqref{eq:conditional-usc}
is a consequence of \eqref{eq:b-sublevels-bis}.
\end{remark}
More generally, $\cblskdname$ satisfies the dissipativity property
\begin{equation}
    \label{eq:general-skew-symmetry}
    \blskd yx+\cblskd xy= \lambda\dX^2(x,y)
    \quad\text{for every $x,y\in \DDD$.}
\end{equation}
\begin{remark}\label{rem:slope-lsc}
It is worth noticing that \eqref{eq:conditional-usc}
implies that
$\skslope\cvx\cdot$ is lower semicontinuous.
In fact, if $(x_n)_{n\in \N}$ is a sequence in $\DDD$ converging to $x\in \DDD$
such that
$\skslope\cvx{x_n}\le L$, then
we have
\begin{displaymath}
    \frac{\kappa}2\dX^2(y,x_n)\le
    L\dX(y,x_n)+\skd y{x_n}
\end{displaymath}
and we can pass to the limit in the inequality
as $n\to\infty$ thanks to \eqref{eq:conditional-usc},
obtaining $\skslope\cvx{x}\le L$ as well.
\end{remark}
We now consider a dual and relaxed formulation of
\eqref{eq:SEVI2}, which involves the dissipativity of $\sfb$.
First of all we observe that, 
thanks to the $\eta$-interaction dissipativity 
\eqref{eq:S-lambda}
and \eqref{eq:conjugate-identity}, we have
\begin{equation}
    \label{eq:obvious}
    \blskd xy +\blskd yx\le \lambda\dX^2(x,y),
    \qquad
    \blskd yx\le \lambda \dX^2(y,x)
    -\blskd xy=
    \cblskd yx,
\end{equation}
so that 
any solution to \eqref{eq:SEVI2}
also solves 
the dual EVI formulation
\begin{equation}
    \label{eq:cSEVI2}
	\frac12\frac{\d} \dt \dX^2(\sfx(t),y)\le \cblskd y{\sfx(t)}\quad\text{$t\in (0,+\infty)$,}\quad
	\text{for every }y\in
     \domainslope{\cvx}.
	\tag{d-EVI$_{2,\cvx}$}
\end{equation}
Note that the primal and dual formulations are indeed equivalent in the skew-symmetric case (Remark~\ref{rem:skew-symmetric}).
The advantage of using the conjugate bifunction $\cblskdname$ is twofold: first, thanks to 
\begin{align}
    \cblskd yx=
    -\blskd xy+\lambda \dX^2(x,y)
    \label{eq:above}
    \le \skslope{\cvx}{y}\dX(x,y)+
    \lambda \dX^2(x,y),
\end{align}
the right-hand side of \eqref{eq:cSEVI2} is bounded from above by a quadratic function of $\dX(x,y)$, which makes it easier to handle.
Second, the upper semicontinuity of $\cbskdname(y,\cdot)$  provides a better stability of the solutions. 

Since we want to allow the solutions to also take values in
$\overline\DDD$, we extend $\cbskdname(y,\cdot)$, $y\in \DDD$, to
$\overline\DDD$ by upper semicontinuity, i.e.~setting
$\cbskdname(y,x):=-\infty$ for every $x\in \overline\DDD\setminus\DDD$:
indeed, the completeness of the sublevels \eqref{eq:b-sublevels-bis}
forces $\skd {x_n}y\to+\infty$, i.e.~$\cbskdname(y,x_n)\to-\infty$,
whenever $x_n\in \DDD$ converge to a point $x\in
\overline\DDD\setminus\DDD$, since any subsequence with
$\sup_k\skd{x_{n_k}}y<\infty$ would lie in a complete sublevel of
$\skd\cdot y$, forcing $x\in \DDD$.

 \begin{definition}[Weak dual EVI solutions and EVI flows]
	\label{def:weak-solutions}
    Let us assume that $\bif$ satisfies 
    {\em (\ref{subeq:structure-dissipativity}a-d)}.
    A \emph{weak dual} solution is a continuous curve
	$\sfx:[0,+\infty)\to \overline\DDD$ that satisfies
	\eqref{eq:cSEVI2} in the distributional sense.

	A \sEVI\ flow in $\DDD\subset \XXX$ is a family of continuous maps
	$\sfS_t:
    \overline{\DDD}\to \overline{\DDD}$ such that for every $x_0\in \overline{\DDD}$ 
	\begin{equation}
		\sfS_{t+h}(x_0)=\sfS_h(\sfS_t(x_0)),\quad 
		\lim_{h\down0}\sfS_h(x_0)=x_0,\quad
		t\mapsto \sfS_t(x_0)\text{ is a weak solution to\ } \eqref{eq:cSEVI2}
	\end{equation}
    and
    \begin{equation}
        \label{eq:strong-solution}
       t\mapsto  \sfS_t(x_0)\text{ is a strong solution to \eqref{eq:SEVI2} whenever $x_0\in \domainslope{\cvx}$.}
    \end{equation}
\end{definition} 
We already observed that, using \eqref{eq:obvious}, 
every strong solution is also a weak dual solution. 
According to Lemma \ref{le:ulDini},
weak solutions can be characterized by
the integral inequalities
\begin{equation}
    \label{eq:integral}
    \frac12\dX^2(y,\sfx(t))-
    \frac12\dX^2(y,\sfx(s))\le
    \int_s^t
    \cblskd y{\sfx(r)}
    \,\d r
    \quad
    \text{for every }y\in \domainslope{\cvx},\quad
    0<s<t.
\end{equation}
Note that for every $y\in \domainslope{\cvx}$ the map
$x\mapsto \cblskd yx$ is upper semicontinuous in $\overline\DDD$ and,
along locally uniformly convergent curves, locally uniformly bounded
from above thanks to \eqref{eq:above}. 
Passing to the limit in the integral formulation \eqref{eq:integral}
it is clear that
the class of weak solutions is stable under locally uniform
convergence (in particular, locally uniform limits of strong solutions
are weak solutions).
\begin{remark}\label{rem:Lip-slope}
    If $\sfx$ is a strong solution
    which is locally Lipschitz continuous
     in $(0,+\infty)$ 
    and $L_{a,b}$ is its Lipschitz constant
    in $[a,b]\subset (0,+\infty)$
    we easily deduce
    \begin{displaymath}
        -\blskd y{\sfx(t)}\le L_{a,b}\dX(y,\sfx(t))
        \quad\text{for every }y\in \DDD,\ t\in [a,b],
    \end{displaymath}
    so that 
    $\sup_{t\in [a,b]}\skslope{\cvx}{\sfx(t)}\le L_{a,b}$.
    It follows that the map 
    $t\mapsto \blskd y{\sfx(t)}$
    is upper semicontinuous in $(0,+\infty)$ and in particular
    locally bounded from above for every $y\in \DDD.$ Thus, it satisfies the assumptions of Lemma~\ref{le:ulDini}.

    If $\sfx$ is a weak solution,
    the map $t\mapsto \cblskd y{\sfx(t)}$ is upper semicontinuous,
    so that Lemma~\ref{le:ulDini}
    can be applied as well.
\end{remark}
\begin{remark}[Weak solutions take values in $\DDD$ for a.e.~time]
    \label{rem:weak-in-D}
    If $\sfx$ is a weak solution, then $\sfx(t)\in \DDD$ for
    a.e.~$t>0$. Indeed, the left-hand side of \eqref{eq:integral} is
    finite and, by \eqref{eq:above}, the integrand is locally bounded
    from above, so that $t\mapsto \cblskd y{\sfx(t)}$ is locally
    integrable in $(0,+\infty)$; in particular, 
     it is finite for a.e.~$t>0$, i.e.\ $\sfx(t)\in\DDD$ for a.e.~$t>0$.
     Moreover, $\sfx(\bar t)\notin\DDD$, i.e.\ $\cblskd y{\sfx(\bar t)}=-\infty$, may only occur at times $\bar t$ where the upper
    left Dini derivative of $t\mapsto \frac12\dX^2(y,\sfx(t))$ is
    $-\infty$  (by the upper semicontinuity of $t\mapsto\cblskd y{\sfx(t)}$ and Lemma~\ref{le:ulDini}), i.e.~where $\sfx$ has infinite left metric speed; it is
    thus excluded for locally Lipschitz weak solutions. 
\end{remark}
\begin{remark}
    \label{rem:exponential-trick}
    Notice that we can obtain a more symmetric
    formulations of \eqref{eq:SEVI2}
    and \eqref{eq:cSEVI2}
    by multiplying both inequalities by
    $\rme^{-\lambda t}$ obtaining
    \begin{equation}
        \label{eq:more-symm}
        \begin{aligned}
            \frac 12\frac \d{\d t}
            \Big(\rme^{-\lambda t}
            \dX^2(\sfx(t),y)\Big)
            &\le 
            \rme^{-\lambda t}
            \bif^{\eta}(y,\sfx(t)),
            \\
            \frac 12\frac\d{\d t}
            \Big(\rme^{-\lambda t}
            \dX^2(\sfx(t),y)\Big)
            &\le 
            \rme^{-\lambda t}
            \hatsk^{\eta}(y,\sfx(t)),
        \end{aligned}
        \qquad
        \bif^{\eta}(y,x)+\hatsk^{\eta}(x,y)= 0.
    \end{equation}
\end{remark}

We conclude with the equivalent first-order formulations.
Recall the normalized bifunction $\skuname$ and its perturbation
$\lskuname$ introduced in \eqref{eq:link} and \eqref{eq:s-variant2}.
The dissipativity condition \eqref{eq:S-lambda} yields
\begin{equation}
	\label{eq:S-eta1}
	\sku yx+	\bsku xy\le \eta \dX(x,y),
\end{equation}
and therefore, with $\lambda:=\eta-\cvx$,
\begin{align}\label{eq:S-lambda1}
    \lsku yx + \blsku xy \le \lambda \dX(x,y).
\end{align}
\begin{lemma}
	\label{le:equivalence12}
	A locally absolutely continuous
    curve $\sfx:[0,+\infty)\to
    \DDD$ is a strong solution to
	\eqref{eq:SEVI2}
	if and only if it is a solution to
	\begin{equation}
\label{eq:SEVI1}
	\frac\d \dt \dX(\sfx(t),y)\le \blsku y{\sfx(t)}\quad\text{ a.e.~in $(0,\infty)$, 
    for every }y\in \DDD.
	\tag{EVI$_{1,\cvx}$}
\end{equation}
(or, equivalently, according to one of the forms of Lemma \ref{le:ulDini}).

A continuous curve $\sfx:[0,+\infty)\to \overline\DDD$ 
is a weak solution to 
	\eqref{eq:cSEVI2}  
	if and only if it is a solution to 
	\begin{equation}
\label{eq:cSEVI1}
	\frac\d \dt \dX(\sfx(t),y)\le \cblsku y{\sfx(t)}
    \quad\text{for every }y\in
     \domainslope{\cvx}.
	\tag{d-EVI$_{1,\cvx}$}
\end{equation}
according to one of the equivalent forms of Lemma \ref{le:ulDini}.
\end{lemma}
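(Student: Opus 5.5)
The plan is to reduce both equivalences to a pointwise comparison between the derivatives of $\zeta(t):=\dX(\sfx(t),y)$ and of $\zeta^2$, exactly as in the discrete Lemma~\ref{le:simple-but-tricky}. The key algebraic fact is the relation
\[
\tfrac12\tfrac{\d}{\dt}\zeta^2=\zeta\,\tfrac{\d}{\dt}\zeta,
\]
combined with $\lskd yx=\dX(x,y)\,\lsku yx$ and, analogously, $\cblskd yx=\dX(x,y)\,\cblsku yx$ for $x\neq y$. We read the normalized conjugate $\cblsku yx$ as $\cblskd yx/\dX(x,y)$, set to $0$ on the diagonal as in \eqref{eq:link}.

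For the strong case, let $\sfx$ be locally absolutely continuous. Then $\zeta$ is locally absolutely continuous, hence so is $\zeta^2$, and at a.e. $t$ both are differentiable with $\frac12(\zeta^2)'=\zeta\zeta'$.

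At times with $\zeta(t)>0$, \eqref{eq:SEVI2} reads $\zeta\zeta'\le \zeta\,\lsku y{\sfx(t)}$. Dividing by $\zeta$ gives \eqref{eq:SEVI1}, and multiplying back gives the converse.

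At times where $\zeta(t)=0$ and $\zeta$ is differentiable, $\zeta\ge0$ forces $\zeta'(t)=0$. Both inequalities then reduce to $0\le 0$, using $\skd yy=0$ from \eqref{eq:S-lambda} and $\lsku yy=0$. The set of remaining times has measure zero, so the a.e. statements are equivalent.

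The weak dual case is the main obstacle, since we only have continuity and distributional inequalities. Here we use Lemma~\ref{le:ulDini} together with Remark~\ref{rem:Lip-slope}, which gives upper semicontinuity of $t\mapsto\cblskd y{\sfx(t)}$ (and local boundedness from above via \eqref{eq:above}). This lets us work with the pointwise form involving the lower left Dini derivative $\Lld$, checked only off the zero set $Z$ of $\zeta$.

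We need to verify the lemma's hypotheses for the two candidate functions.

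For $\zeta^2$ with $\upsilon=\cblskd y{\sfx(\cdot)}$: on $Z$ we have $\upsilon=\cbskdname(y,y)=0$, since $\hatsk$ vanishes on the diagonal.

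For $\zeta$ with $\upsilon_1=\cblsku y{\sfx(\cdot)}$: on $Z$ the value is $0$ by convention. Off $Z$, $\upsilon_1=\upsilon/\zeta$ is the quotient of an upper semicontinuous function by a continuous positive one. Its local upper bound follows from \eqref{eq:above}, which gives $\upsilon_1\le \skslope\cvx y+\lambda\zeta$. One must check that $\upsilon_1$ is upper semicontinuous on $(a,b)\setminus Z$; a quotient with positive continuous denominator preserves this, including at points where $\upsilon=-\infty$ (there we use the extension to $\overline\DDD$ by $-\infty$).

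With the hypotheses in place, at $t\notin Z$ the chain rule for Dini derivatives of $\zeta^2$ with $\zeta$ continuous and positive gives
\[
\Lld(\tfrac12\zeta^2)(t)=\zeta(t)\,\Lld\zeta(t).
\]
Hence $\Lld(\frac12\zeta^2)\le\upsilon$ if and only if $\Lld\zeta\le\upsilon_1$. Applying Lemma~\ref{le:ulDini} in each direction converts these pointwise inequalities into the distributional forms \eqref{eq:cSEVI2} and \eqref{eq:cSEVI1}, which concludes the equivalence.
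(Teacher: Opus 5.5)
Your proposal is correct and follows essentially the same route as the paper. For strong solutions you use the a.e. chain rule $\frac12(\zeta^2)'=\zeta\zeta'$ with $\zeta'=0$ on the zero set; for weak solutions you use the identity $\frac12\Lld\zeta^2=\zeta\,\Lld\zeta$ off $Z$ together with Lemma~\ref{le:ulDini}, with local upper bounds from \eqref{eq:above}. Your explicit check that $t\mapsto\cblsku y{\sfx(t)}$ vanishes on $Z$ and is upper semicontinuous off $Z$ (even where the value is $-\infty$) spells out hypotheses that the paper leaves implicit.
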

\begin{proof}
 For strong solutions, the equivalence
is easy since
the local absolute continuity implies
\begin{displaymath}
    \frac 12\frac\d{\d t}\dX^2(\sfx(t),y)
    =
    \dX(\sfx(t),y)
    \cdot
    \frac\d{\d t}\dX(\sfx(t),y),\quad
    \frac\d{\d t}\dX(\sfx(t),y)=0
    \text{ a.e.\ where $\sfx(t)=y$}.
\end{displaymath}
In the case of weak solutions, 	we apply 
	Lemma \ref{le:ulDini} and we prove the equivalence 
	of \eqref{eq:cSEVI1} and \eqref{eq:cSEVI2}
	using the lower left Dini derivative.

	The implication \eqref{eq:cSEVI1}$\Rightarrow$\eqref{eq:cSEVI2}
	is an immediate consequence of the formula
	\begin{equation}
	\label{eq:liminf}
		\frac 12 \Lld\zeta^2(t)=
		\liminf_{h\down0}\frac{\zeta^2(t)-\zeta^2(t-h)}{2h}=
		\zeta(t)		\liminf_{h\down0}\frac{\zeta(t)-\zeta(t-h)}{h}=
		\zeta(t)\,\Lld\zeta(t)
	\end{equation}
	which holds for every nonnegative continuous function $\zeta$.
	
	Let us now prove the converse implication:
	we suppose that 
     \eqref{eq:cSEVI2} holds, 
    we fix $y\in \domainslope{\cvx}$, and we want to prove that
	\begin{equation}
		\label{eq:toprove}
		\Lld \dX(\sfx(t),y)\le
        \cblsku y{\sfx(t)}  \quad\text{for every }t\in (0,\infty)\text{ such that }\sfx(t)\neq y,
	\end{equation} 
    and that 
    the right-hand side of 
    \eqref{eq:cSEVI1} is locally bounded from above.
	\eqref{eq:toprove} can be obtained by applying
	\eqref{eq:liminf}.
    We also have
    \begin{align*}
        \cblsku y{\sfx(t)}
        =
        -\blsku {\sfx(t)}y
        +\lambda \dX(\sfx(t),y)
        \le \skslope{\cvx}{y}
        +\lambda \dX(\sfx(t),y),
    \end{align*}
    which clearly shows the 
    required local boundedness.
\end{proof}
The dissipativity of $\bif$ yields the contraction between EVI
solutions, their uniqueness, and the existence of their associated EVI flows. We split the
argument in three statements: strong solutions are contractive among
themselves and enjoy a regularizing effect, quantified by the
exponential control \eqref{eq:mder-decay} of the upper right metric
derivative; the contraction estimate then extends to the comparison
between a strong and a weak solution; the existence of strong
solutions starting from every point of $\domainslope\cvx$ 
(dense in $\overline\DDD$)
generates the \sEVI\ flow in $\overline\DDD$.
\begin{theorem}[Contraction and regularization of strong solutions]
	\label{thm:strong-solutions}
	Let $\sfb$ satisfy the dissipativity \eqref{eq:S-lambda}  
    and let 
	$\lambda=\eta-\cvx$. The following properties hold.
	\begin{enumerate}
		\item\label{item:strong-contraction}
		If $\sfx,\sfy$ are strong solutions to \eqref{eq:SEVI2}, then
		\begin{equation}
			\label{eq:estimate-strong}
			\dX(\sfy(t),\sfx(t))\le
			\rme^{\lambda(t-s)}\dX(\sfy(s),\sfx(s))
			\quad\text{whenever}\quad 0\le s\le t;
		\end{equation}
		in particular, for every initial datum there is at most one
		strong solution to \eqref{eq:SEVI2}.
		\item\label{item:mder-decay}
		Every strong solution $\sfx$ to \eqref{eq:SEVI2} satisfies
		\begin{equation}
			\label{eq:mder-decay}
			\urmder \sfx t\le \rme^{\lambda(t-s)}
			\urmder \sfx s
			\quad \text{whenever}\quad
			0\le s\le t;
		\end{equation}
        in particular $\sfx$ is locally Lipschitz in $(0,+\infty)$.
        \item\label{item:Lip-at-zero}
		If $\sfx(0)\in \domainslope{\cvx}$, then any strong solution $\sfx$ is
		locally Lipschitz in $[0,+\infty)$ and
		\begin{equation}
			\label{eq:Lip-at-zero}
			\urmder\sfx t\le \rme^{\lambda t}\,\skslope\cvx{\sfx(0)}
			\quad\text{for every }t\ge0.
		\end{equation}
		\item 
        \label{item:slope} Finally, if in addition the conditional upper semicontinuity \eqref{eq:conditional-usc} holds, then
		\begin{equation}
			\label{eq:slope-mder}
			\skslope\cvx{\sfx(t)}= \urmder\sfx t
			\quad\text{for every }t\ge0,
		\end{equation}
		so that $\sfx$ takes values in $\domainslope\cvx$ for $t>0$ whenever $\sfx(0)\in\domainslope\cvx$.
	\end{enumerate}
\end{theorem}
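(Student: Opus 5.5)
The plan is the classical EVI contraction argument via doubling of the test point, followed by the semigroup-type use of time shifts.

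\textbf{Item 1.} Let $\sfx,\sfy$ be strong solutions and fix $t$ in a full-measure set where both EVIs hold. Test the EVI for $\sfx$ with $y:=\sfy(t)$ and the EVI for $\sfy$ with $y:=\sfx(t)$. The standard lemma on derivatives of $t\mapsto \dX^2(\sfx(t),\sfy(t))$ for absolutely continuous curves is the one used in \cite{AGS08,Muratori-SavareI}: at a.e.~$t$ the derivative is bounded by the sum of the two partial derivatives, computed by freezing one curve. This yields
\[
\tfrac12\tfrac{\d}{\dt}\dX^2(\sfx(t),\sfy(t))\le \blskd{\sfy(t)}{\sfx(t)}+\blskd{\sfx(t)}{\sfy(t)}\le \lambda\,\dX^2(\sfx(t),\sfy(t)),
\]
where the last inequality is \eqref{eq:obvious}. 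Gronwall then gives \eqref{eq:estimate-strong}, and uniqueness follows with $s=0$.

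\textbf{Item 2.} I will apply item 1 to $\sfx$ and the shifted curve $\sfy(t):=\sfx(t+h)$. The shifted curve is still a strong solution, since the EVI is autonomous. This gives $\dX(\sfx(t+h),\sfx(t))\le \rme^{\lambda(t-s)}\dX(\sfx(s+h),\sfx(s))$. Dividing by $h$ and taking $\limsup_{h\downarrow0}$ yields \eqref{eq:mder-decay}. For local Lipschitz continuity on $(0,\infty)$ one needs $\urmder\sfx s<\infty$ for some $s$ near $0$. Absolute continuity gives $\mder\sfx s<\infty$ for a.e.~$s$, so for every $a>0$ there is an $s<a$ with finite speed. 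Then $\urmder\sfx t$ is bounded on $[a,b]$, and a curve with bounded upper right Dini speed is Lipschitz, by the standard Dini argument applied to $t\mapsto\dX(\sfx(t),\sfx(r))$.

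\textbf{Item 3.} By item 2 it suffices to show $\urmder\sfx 0\le\skslope\cvx{\sfx(0)}$. Use the integrated EVI$_{1,\cvx}$ (Lemma~\ref{le:equivalence12}) with $y=\sfx(0)$ and rewrite through the conjugate. Since $\sfx$ is a strong solution it also solves the dual EVI; by \eqref{eq:above},
\[
\tfrac{\d}{\dt}\dX(\sfx(t),\sfx(0))\le \skslope\cvx{\sfx(0)}+\lambda\,\dX(\sfx(t),\sfx(0)).
\]
Gronwall from $t=0$, where the distance vanishes, gives $\dX(\sfx(h),\sfx(0))\le \skslope\cvx{\sfx(0)}\,h+o(h)$, hence the bound at $0$. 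This step is where I expect the main obstacle: the dual EVI is only guaranteed in distributional or Dini form near $t=0$, so I must invoke Lemma~\ref{le:ulDini} carefully. The required upper semicontinuity is trivial here because the right-hand side is explicit in $\dX$.

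\textbf{Item 4.} For ``$\le$'': apply the EVI$_{1,\cvx}$ at times $r$ slightly larger than $t$, in integrated form on $[t,t+h]$, together with the triangle inequality $\dX(\sfx(t),y)-\dX(\sfx(t+h),y)\le \dX(\sfx(t),\sfx(t+h))$. This gives
\[
-\tfrac1h\int_t^{t+h}\blsku y{\sfx(r)}\,\d r\le \tfrac1h\dX(\sfx(t),\sfx(t+h)).
\]
Remark~\ref{rem:Lip-slope} bounds $\skslope\cvx{\sfx(r)}$ locally, so \eqref{eq:conditional-usc} can be used to pass to $\limsup$. As $h\downarrow 0$ we get $-\blsku y{\sfx(t)}\le \urmder\sfx t$, and the supremum over $y$ yields $\skslope\cvx{\sfx(t)}\le\urmder\sfx t$. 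At $t=0$ this requires Lipschitz continuity near $0$, which item 3 provides when $\sfx(0)\in\domainslope\cvx$; the case of infinite slope is trivial. For ``$\ge$'': apply item 3 to the shifted solution $\sfx(t+\cdot)$, which gives $\urmder\sfx t\le\skslope\cvx{\sfx(t)}$.
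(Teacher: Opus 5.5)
Your proposal is essentially the paper's proof. Item 1 uses the derivative of the distance along the diagonal (\cite[Lemma 4.3.4]{AGS08}), dissipativity and Gronwall. Item 2 uses the time shift $\sfx(\cdot+h)$. Item 3 tests with $y=\sfx(0)$ and uses the bound $\lsku{\sfx(0)}{\sfx(t)}\le \skslope\cvx{\sfx(0)}+\lambda\,\dX(\sfx(t),\sfx(0))$. Item 4 combines Remark~\ref{rem:Lip-slope} with \eqref{eq:conditional-usc} for one inequality and applies item 3 to $\sfx(t+\cdot)$ for the other. Working with $\dX^2$ and \eqref{eq:obvious} in item 1 is harmless, and making explicit that $\urmder\sfx s<\infty$ for a.e.\ $s$ fills in a detail the paper leaves implicit.

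There is one genuine slip and two points to tidy.

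\textbf{The slip (item 4 at $t=0$).} Your case distinction is the wrong one. When $\skslope\cvx{\sfx(0)}=+\infty$, the inequality $\skslope\cvx{\sfx(0)}\le\urmder\sfx 0$ is not trivial: it asserts $\urmder\sfx 0=+\infty$. Split on the speed instead, as the paper does (``assuming $\urmder\sfx t<\infty$'').
\begin{itemize}
\item If $\urmder\sfx 0=+\infty$, there is nothing to prove.
\item If $\urmder\sfx 0<\infty$, then item 2 (not item 3) gives $\urmder\sfx r\le \rme^{\lambda_+T}\urmder\sfx 0$ on $[0,T]$. Hence $\sfx$ is Lipschitz on $[0,T]$, Remark~\ref{rem:Lip-slope} bounds the slopes uniformly on $(0,T]$, and your difference-quotient argument applies at $t=0$.
\end{itemize}

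\textbf{Passage to the limit in item 4.} Restrict to $y\ne\sfx(t)$. Condition \eqref{eq:conditional-usc} controls $\sfb(y,\cdot)$, and dividing by $\dX(\cdot,y)$ preserves upper semicontinuity only away from $y$. The case $y=\sfx(t)$ is trivial.

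\textbf{The obstacle you anticipate in item 3 is not there.} The bound above is a pointwise consequence of \eqref{eq:S-lambda1} and the definition of the global slope. Moreover, $\delta(t)=\dX(\sfx(t),\sfx(0))$ is absolutely continuous on $[0,h]$ with $\delta(0)=0$. So the a.e.\ inequality $\delta'\le\skslope\cvx{\sfx(0)}+\lambda\delta$ integrates directly, and you need neither the dual EVI nor Lemma~\ref{le:ulDini}.
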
\nc
\begin{proof}
	{\em Claim \ref{item:strong-contraction}.}
	Since $\sfx$ and $\sfy$ are locally absolutely continuous,
	using \cite[Lemma 4.3.4]{AGS08}, setting $g(t,s)=\dX(\sfy (t), \sfx(s))$ and using the equivalence of right- and left-hand derivatives,
    \begin{align*}
        \frac{\d}{\d t} g(t,t) &\le \limsup_{h \downarrow 0} \frac{\dX(\sfx(t),\sfy(t))-\dX(\sfx(t-h),\sfy(t))}{h} + \limsup_{h \downarrow 0} \frac{\dX(\sfx(t),\sfy(t+h))-\dX(\sfx(t),\sfy(t))}{h} \\
        & = {\dds} \big[\dX(\sfx(s),\sfy(t)) \big]\big|_{s=t} + {\dds} \big[\dX(\sfx(t),\sfy(s)) \big]\big|_{s=t}\,,
    \end{align*}
    we get by Lemma \ref{le:equivalence12} and \eqref{eq:S-lambda1}
	\begin{equation}
		\label{eq:d-inequality}
		\frac\d{\dt}\dX(\sfy(t),\sfx(t))\le \lsku{\sfy(t)}{\sfx(t)}+
		\lsku{\sfx(t)}{\sfy(t)}
		\le \lambda \dX(\sfy(t),\sfx(t))
		\quad\text{a.e.~in }(0,+\infty),
	\end{equation}
	which immediately yields \eqref{eq:estimate-strong}.

	{\em Claim \ref{item:mder-decay}.}
	Applying \eqref{eq:estimate-strong} to the strong solutions $\sfx(t)$ and $\sfx_h(t):=\sfx(t+h)$
	we get
	\begin{equation*}
		\dX(\sfx(t+h),\sfx(t))\le \rme^{\lambda(t-s)}
		\dX(\sfx(s+h),\sfx(s))
		\quad\text{if }0\le s\le t;
	\end{equation*}
	dividing by $h$ and passing to the limit as $h\down0$ we get
	\eqref{eq:mder-decay}.
    The fact that $\sfx$ is locally
	Lipschitz in $(0,+\infty)$ follows: 
    \eqref{eq:mder-decay} shows that
	$\urmder\sfx t
    <\infty$ for every $t>0$ and that the restriction of
	$\sfx$ to every interval $[t,T]$, $0<t\le T$, is Lipschitz with
	constant
	$L_{t,T}\le 
    \inf_{0<s\le t}\rme^{\lambda_+(T-s)}\,\urmder\sfx s$.
	
	{\em Claim \ref{item:Lip-at-zero}.}
	Let us set $S:=\skslope\cvx{\sfx(0)}$ and
	$\delta(t):=\dX(\sfx(t),\sfx(0))$.
	Choosing $y:=\sfx(0)$ in \eqref{eq:SEVI1} and using
	\eqref{eq:S-lambda1} and the definition \eqref{eq:local-slope} of
	the global slope, we get
	\begin{displaymath}
		\frac\d{\dt}\delta(t)\le \lsku {\sfx(0)}{\sfx(t)}\le
		\lambda\,\delta(t)+S
		\quad\text{a.e.~in }(0,+\infty).
	\end{displaymath}
	since $\delta(0)=0$, a
	standard comparison argument yields
	$\delta(h)\le S\int_0^h\rme^{\lambda (h-r)}\,\d r$.
    Dividing by $h>0$ and taking the superior limit as $h\down0$ we obtain
    \begin{displaymath}
        \urmder\sfx 0=\limsup_{h\down0}\frac{\delta(h)}{h}\le S.
    \end{displaymath}
	Applying \eqref{eq:mder-decay}  
    we obtain \eqref{eq:Lip-at-zero} 
    and the fact that $\sfx$ is Lipschitz in every
	bounded interval $[0,T]$.

    {\em Claim \ref{item:slope}.}   
     Assuming $\urmder\sfx t<\infty$, we can apply Remark~\ref{rem:Lip-slope} together with Remark~\ref{rem:slope-lsc}
     (which needs \eqref{eq:conditional-usc})
     we obtain $\skslope\cvx{\sfx(t)}\le 
     \rme^{\lambda_+(T-t)}\urmder\sfx t$,
	and 
    $\skslope\cvx{\sfx(t)}\le 
    \urmder\sfx t$ follows 
    letting $T\down t$. 
    \eqref{eq:slope-mder} then follows from \eqref{eq:mder-decay} and \eqref{eq:Lip-at-zero} 
    (for the curve $s\mapsto \sfx(t+s)$).
    \nc
\end{proof}
We can now compare a strong with a weak solution. Thanks to the
regularizing effect of Theorem~\ref{thm:strong-solutions}, no
condition on the initial datum $\sfx(0)$ of the strong solution is
required.
\begin{theorem}[Comparison between strong and weak solutions]
	\label{thm:cEVI_solutions}
    Let $\sfb$ satisfy \eqref{eq:S-lambda},
	\eqref{eq:b-sublevels-bis} and \eqref{eq:conditional-usc} with
	$\lambda:=\eta-\cvx$.
	Let $\sfx$ be a strong solution to \eqref{eq:SEVI2} and let $\sfy$
	be a weak solution to \eqref{eq:cSEVI2}. Then
	\begin{equation}
		\label{eq:estimate}
		\dX(\sfy(t),\sfx(t))\le \rme^{\lambda(t-s)}\dX(\sfy(s),\sfx(s))\quad\text{whenever}\quad
		0\le s\le t.
	\end{equation}
	In particular, a strong and a weak solution starting from the same
	initial datum coincide.
\end{theorem}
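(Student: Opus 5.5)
We will compare the two solutions through the time-doubling device used in Claim~\ref{item:strong-contraction} of Theorem~\ref{thm:strong-solutions}, but testing each EVI with the \emph{other} curve frozen at a fixed time. The strong solution $\sfx$ can be tested against any $y\in\DDD$. The weak solution $\sfy$ can only be tested against points of $\domainslope\cvx$. By Theorem~\ref{thm:strong-solutions}, claims~\ref{item:mder-decay} and~\ref{item:slope} (which use \eqref{eq:conditional-usc}), $\sfx$ is locally Lipschitz in $(0,+\infty)$ and $\sfx(t)\in\domainslope\cvx$ for every $t>0$, with $\skslope\cvx{\sfx(t)}=\urmder\sfx t$ locally bounded. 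Hence $y:=\sfx(r)$, $r>0$, is an admissible test point in \eqref{eq:cSEVI2}. Since both sides of \eqref{eq:estimate} are continuous in $s$, it suffices to prove the estimate for $0<s\le t$ and then let $s\downarrow0$.

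\textbf{Step 1 (integrated inequalities).} Work in the first-order form given by Lemma~\ref{le:equivalence12}, or directly with the squared-distance integral form \eqref{eq:integral}. For fixed $r,\rho>0$:
\begin{itemize}
\item from \eqref{eq:SEVI2}, tested with $y=\sfy(\rho)$ when $\sfy(\rho)\in\DDD$ (true for a.e.\ $\rho$ by Remark~\ref{rem:weak-in-D}),
\[
\tfrac12\dX^2(\sfx(t),\sfy(\rho))-\tfrac12\dX^2(\sfx(s),\sfy(\rho))\le\int_s^t\blskd{\sfy(\rho)}{\sfx(r)}\,\d r ;
\]
\item from \eqref{eq:integral}, tested with $y=\sfx(r)$,
\[
\tfrac12\dX^2(\sfy(t),\sfx(r))-\tfrac12\dX^2(\sfy(s),\sfx(r))\le\int_s^t\cblskd{\sfx(r)}{\sfy(\rho)}\,\d\rho .
\]
\end{itemize}
The key algebraic fact is the identity \eqref{eq:general-skew-symmetry}, $\blskd yx+\cblskd xy=\lambda\dX^2(x,y)$. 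Adding the two integrands at the pair $(r,\rho)$ therefore gives exactly $\lambda\dX^2(\sfx(r),\sfy(\rho))$. Equivalently, as in Remark~\ref{rem:exponential-trick}, first multiply by $\rme^{-\lambda t}$ so that the coupling cancels to $0$.

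\textbf{Step 2 (Kobayashi doubling).} Integrate the first inequality in $\rho$ over $[s,t]$ and the second in $r$ over $[s,t]$, and add. With $\zeta(r,\rho):=\tfrac12\dX^2(\sfx(r),\sfy(\rho))$, the left side becomes a ``boundary'' expression in the square $[s,t]^2$ and the right side becomes $\lambda\iint\zeta$. Following \cite{kobayashi_difference_1975,Nochetto-Savare06} and the proof of \cite[Thm.~4.3.?]{AGS08}, apply this on small squares $[r_0,r_0+h]^2$ along the diagonal, divide by $h^2$, and use the continuity of $\zeta$ (both curves are continuous). This yields the distributional inequality
\[
\frac{\d}{\d t}\,\tfrac12\dX^2(\sfx(t),\sfy(t))\le\lambda\,\dX^2(\sfx(t),\sfy(t)) \quad\text{in }\mathscr D'(0,+\infty).
\]
Lemma~\ref{le:ulDini} then converts this into monotonicity of $\rme^{-2\lambda t}\dX^2$, which is \eqref{eq:estimate}. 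Uniqueness follows by taking $s=0$.

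\textbf{Main obstacle.} The hard part is justifying Fubini and the passage to the limit on the diagonal, since the integrands are only upper semicontinuous and bounded above, not integrable a priori. We need uniform upper bounds on $[s,t]^2$:
\begin{itemize}
\item $\cblskd{\sfx(r)}{\sfy(\rho)}\le\skslope\cvx{\sfx(r)}\dX+\lambda\dX^2$ by \eqref{eq:above}, which is bounded using the local Lipschitz bound on $\skslope\cvx{\sfx(r)}$ for $r\ge s>0$;
\item the strong-side integrand is bounded above via the same identity \eqref{eq:general-skew-symmetry}.
\end{itemize}
The value $-\infty$ on the null set where $\sfy\notin\DDD$ only helps. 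With these bounds, Fatou's lemma in limsup form and the upper semicontinuity of the sum $\lambda\dX^2$ (which is actually continuous) allow the diagonal limit to be taken.
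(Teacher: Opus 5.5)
Your proof is correct and uses the same ingredients as the paper: doubling of variables, the conjugate identity \eqref{eq:general-skew-symmetry} to cancel the coupling, and the interior slope bounds of Theorem~\ref{thm:strong-solutions}, which make $\sfx(r)$ an admissible test point and bound the weak side from above. Where you differ is in how you collapse onto the diagonal. The paper works with the first-order form and the weight $\rme^{-\frac12\lambda(s+t)}$, so the doubled inequality has no source for $s>0$. It then shifts time so that $\sfx(0)\in\domainslope\cvx$, extends $\sfx$ constantly to $s<0$, and integrates over the oblique strip $\{t-\eps<s<t\}$. The flux through the oblique edges vanishes, and the only error is the initial triangle, of size $O(\eps^2)\,\skslope\cvx{\sfx(0)}$; this gives the estimate on $[0,T]$ in one stroke. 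Your squares stay in the interior and need only local bounds on $[s_0,\infty)$ with $s_0>0$; you recover $s=0$ by continuity, so you need neither the extension nor the initial layer.

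Two points should be corrected.

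First, you cannot take the diagonal limit at a single square by comparing the boundary terms with $\zeta(r_0+h,r_0+h)-\zeta(r_0,r_0)$. The mismatch is of order $h$ times the modulus of continuity of your $\zeta$, which is not $o(h^2)$, since $\sfy$ is merely continuous. The right observation is that, $\zeta$ being continuous, your left-hand side on $[a,a+h]^2$ is exactly $\frac{\d}{\d a}\iint_{[a,a+h]^2}\zeta$. So the square inequality says $Z_h'\le 2\lambda Z_h$ for $Z_h(a):=h^{-2}\iint_{[a,a+h]^2}\zeta$, and Gronwall for $Z_h$ together with $Z_h(a)\to\zeta(a,a)$ concludes.

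Second, the strong-side integrand $\blskd{\sfy(\rho)}{\sfx(r)}$ is bounded \emph{below}, by $-\skslope\cvx{\sfx(r)}\,\dX(\sfx(r),\sfy(\rho))$, directly from the definition of the slope. The identity turns this lower bound into the upper bound \eqref{eq:above} for the weak side. An upper bound on the strong side is neither available nor needed. Integrating the weak-side inequality in $r$ shows $\cblskd{\sfx(r)}{\sfy(\rho)}\in L^1$ of the square, hence $\blskd{\sfy(\rho)}{\sfx(r)}=\lambda\dX^2(\sfx(r),\sfy(\rho))-\cblskd{\sfx(r)}{\sfy(\rho)}$ is integrable too. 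After summation the integrand is continuous, so no Fatou argument is required. Joint measurability for Fubini is taken for granted in the paper as well.
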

\begin{proof}
	By Theorem~\ref{thm:strong-solutions}, $\sfx$ is locally Lipschitz
	in $(0,+\infty)$ and $\sfx(t)\in \domainslope\cvx$ for every $t>0$.
	Since $\sfx$ and $\sfy$ are continuous, it is sufficient to prove
	\eqref{eq:estimate} when $0<s\le t$; up to replacing the two
	solutions with $\sfx(s+\cdot)$ and $\sfy(s+\cdot)$, we can assume
	that $s=0$, that $\sfx(0)\in \domainslope{\cvx}$, and that $\sfx$
	is Lipschitz in $[0,T]$ for every $T>0$.
	We argue by a standard doubling variables
	technique\nc\ (see e.g.~\cite{Nochetto-Savare06}
    and Appendix~\ref{app:cauchy_estimate_proof}).
    We extend $\sfx$ in the interval $(-\infty,0)$
    by setting $\sfx(t):=\sfx(0)$ for $t<0.$
    Let us set
    $\zeta(s,t):=
    \rme^{-\frac 12\lambda(s+t)}\dX(\sfx(s),
    \sfy(t))$;  we easily see from \eqref{eq:SEVI1}, \eqref{eq:cSEVI1} and \eqref{eq:above} that
    \begin{displaymath}
        \frac{\partial}{\partial s}
        \zeta(s,t)+
        \frac{\partial}{\partial t}
        \zeta(s,t)\le f(s,t)
        \quad\text{in }\mathscr D'(\R\times (0,+\infty))
    \end{displaymath}
    where
    \begin{displaymath}
        f(s,t):=
        \begin{cases}
            0&\text{if }s>0,\\
            -
            \rme^{-\frac\lambda 2(s+t)}
            \lsku{\sfy(t)}{\sfx(0)}
            &\text{if }s\le 0.
        \end{cases}
    \end{displaymath}
    Indeed, since $\sfx$ is Lipschitz, for a.e.~$t$ (where
    $\sfy(t)\in\DDD$, Remark~\ref{rem:weak-in-D}) the map
    $s\mapsto\zeta(s,t)$ satisfies the corresponding a.e.~inequality by
    \eqref{eq:SEVI1}, while for every $s$ the map $t\mapsto\zeta(s,t)$
    satisfies the distributional inequality given by \eqref{eq:cSEVI1}
    and Lemma~\ref{le:ulDini}, whose right-hand side is locally
    integrable in $t$ and, by \eqref{eq:above} and
    \eqref{eq:Lip-at-zero}, locally uniformly bounded from above;
    testing with a nonnegative $\phi\in C^\infty_\rmc(\R\times(0,+\infty))$
    and combining the two inequalities via Fubini's theorem, the
    first-order form of the identity \eqref{eq:general-skew-symmetry}
    yields $f$ as above. Notice that for a.e.~$t>0$ we have
    $-\lsku{\sfy(t)}{\sfx(0)}\le \skslope\cvx{\sfx(0)}$ by
    Definition~\ref{def:bounded_Delta}.
    For $0<\sigma<T$ we integrate the inequality over the strip
    $\{(s,t):\sigma<t<T,\ t-\eps<s<t\}$; the flux of the field
    $(\zeta,\zeta)$ through the two oblique edges vanishes, and, letting
    $\sigma\down0$, by the continuity of $\zeta$ up to $t=0$ we obtain,
    for every $0<\eps\le T$,
    \begin{displaymath}
        \int_{T-\eps}^{T}\zeta(s,T)\,\d s\le
        \int_{-\eps}^{0}\zeta(s,0)\,\d s
        +\iint_{Q(\eps)}f(s,t)\,\d s\,\d t,
    \end{displaymath}
    where $Q(\eps):=\{(s,t):-\eps<s<0,\ 0<t<s+\eps\}$ is the portion of
    the strip where $s\le 0$. Using \eqref{eq:above} and $\sfx(0)\in\domainslope\cvx$, $f$ is bounded from above by $e^{\lambda \eps/2}\skslope\cvx{\sfx(0)}$ on
    $Q(\eps)$. Since $\sfx(0)\in \domainslope{\cvx}$, and
    $|Q(\eps)|=\eps^2/2$, dividing by $\eps>0$ and passing to the limit
    as $\eps\down0$ we obtain
    $\rme^{-\lambda T}\dX(\sfy(T),\sfx(T))\le \dX(\sfy(0),\sfx(0))$,
    i.e.~\eqref{eq:estimate}.\nc
\end{proof}
Finally, we combine the previous results with the stability
of weak solutions under locally uniform convergence, assuming that a
strong solution exists for every initial datum in $\domainslope\cvx$
and that the latter is dense in $\DDD$.
\begin{theorem}[Generation of the \sEVI\ flow]
	\label{thm:EVI-flow}
    Let $\sfb$ satisfy \eqref{eq:S-lambda},
	\eqref{eq:b-sublevels-bis} and \eqref{eq:conditional-usc} with
	$\lambda:=\eta-\cvx$.
	Let us suppose that the domain of the global slope is dense
	in $\DDD$,
	\begin{equation}
		\label{eq:domain-slope}
		\text{the set }\domainslope{\cvx}\text{ (Definition~\ref{def:bounded_Delta}) is dense in }\DDD,
	\end{equation}
	and that\nc\ for every $x_0\in \domainslope{\cvx}$ there
	exists a strong solution to \eqref{eq:SEVI2} starting from $x_0$.
	Then there exists a unique \sEVI\ flow
	$\sfS_t:\overline\DDD\to\overline\DDD$ according to
	Definition~\ref{def:weak-solutions}; it satisfies the contraction
	property
	\begin{equation}
		\label{eq:flow-contraction}
		\dX(\sfS_tx_0,\sfS_ty_0)\le \rme^{\lambda t}\,\dX(x_0,y_0)
		\quad\text{for every }x_0,y_0\in \overline\DDD,\ t\ge0,
	\end{equation}
	and for every $x_0\in \overline\DDD$ the curve
	$t\mapsto \sfS_tx_0$ is the unique weak solution to
	\eqref{eq:cSEVI2} starting from $x_0$.
\end{theorem}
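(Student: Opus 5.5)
We first define the flow on $\domainslope\cvx$ and then extend it by density and contraction. For $x_0\in\domainslope\cvx$ let $\sfS_tx_0$ be the strong solution starting from $x_0$; it exists by assumption and is unique by Theorem~\ref{thm:strong-solutions}(\ref{item:strong-contraction}). By Theorem~\ref{thm:strong-solutions}(\ref{item:slope}) it stays in $\domainslope\cvx$ for every $t>0$. The semigroup property on $\domainslope\cvx$ then follows from uniqueness, because $s\mapsto\sfS_{t+s}x_0$ is a strong solution starting from $\sfS_tx_0\in\domainslope\cvx$. The contraction \eqref{eq:estimate-strong} with $s=0$ gives \eqref{eq:flow-contraction} for $x_0,y_0\in\domainslope\cvx$. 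Since $\domainslope\cvx$ is dense in $\DDD$, hence in $\overline\DDD$, each map $\sfS_t$ is $\rme^{\lambda t}$-Lipschitz on a dense set. It therefore extends uniquely to a Lipschitz map on the completion. Here we must check that the extension takes values in $\overline\DDD$: the Cauchy sequences $\sfS_tx_0^n$ lie in $\DDD$, and we take $\overline\DDD$ to be the closure in the complete ambient space, which is the natural setting of Definition~\ref{def:weak-solutions}.

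Next we show that the extended curves are weak solutions. Fix $x_0\in\overline\DDD$ and $x_0^n\in\domainslope\cvx$ with $x_0^n\to x_0$. The contraction gives $\sup_{t\le T}\dX(\sfS_tx_0^n,\sfS_tx_0)\le\rme^{\lambda_+T}\dX(x_0^n,x_0)$, so the strong solutions $t\mapsto\sfS_tx_0^n$ converge locally uniformly to $t\mapsto\sfS_tx_0$. Every strong solution is a weak dual solution by \eqref{eq:obvious}, and weak solutions are stable under locally uniform convergence (the remark after \eqref{eq:integral}). Hence $t\mapsto\sfS_tx_0$ is a weak solution. Its continuity in $t$, including $\lim_{h\down0}\sfS_hx_0=x_0$, is inherited from the uniform limit of continuous curves. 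The semigroup law and \eqref{eq:flow-contraction} pass to the limit through the Lipschitz continuity of each $\sfS_t$.

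Finally we prove uniqueness. Let $\sfy$ be any weak solution starting from $x_0\in\overline\DDD$. For each $n$ apply Theorem~\ref{thm:cEVI_solutions} to the strong solution $\sfS_\cdot x_0^n$ and the weak solution $\sfy$, which gives
\[
\dX(\sfy(t),\sfS_tx_0^n)\le\rme^{\lambda t}\dX(x_0,x_0^n)\to0 .
\]
Hence $\sfy(t)=\sfS_tx_0$. This also shows that any other \sEVI\ flow coincides with $\sfS$, since its trajectories are weak solutions. Property \eqref{eq:strong-solution} holds by construction.

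The main obstacle is the application of Theorem~\ref{thm:cEVI_solutions} when the weak solution starts at a point of $\overline\DDD\setminus\DDD$. The doubling argument there uses the continuity of $\zeta$ up to $t=0$, and Remark~\ref{rem:weak-in-D} places $\sfy(t)$ in $\DDD$ only for a.e.\ $t$. We will check that the proof of Theorem~\ref{thm:cEVI_solutions} only needs $\sfy$ continuous into $\overline\DDD$; if it does not cover this case directly, we first apply it on $[s,t]$ for $s>0$ and then let $s\down0$ using continuity.
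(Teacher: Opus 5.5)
Your proposal is correct and follows essentially the paper's proof. Strong solutions define $\sfS_t$ on $\domainslope\cvx$, and Theorem~\ref{thm:strong-solutions}(\ref{item:slope}) together with uniqueness gives the semigroup law. Density and \eqref{eq:flow-contraction} extend the flow to $\overline\DDD$, stability of weak solutions under locally uniform limits gives the weak-solution property, and applying Theorem~\ref{thm:cEVI_solutions} to the approximating strong solutions gives uniqueness.

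The obstacle you flag does not arise. The proof of Theorem~\ref{thm:cEVI_solutions} first reduces to $0<s\le t$ by continuity, which is your fallback. Otherwise it uses $\sfy$ only through its continuity into $\overline\DDD$ and the fact that $\sfy(t)\in\DDD$ for a.e.~$t$. Likewise, the paper also tacitly assumes that the Cauchy sequences $\sfS_tx_{0,n}$ converge in $\overline\DDD$, so your explicit completeness caveat is consistent with its proof.
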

\begin{proof}
	If $x_0\in \domainslope\cvx$, the strong solution starting from
	$x_0$ is unique by
	Theorem~\ref{thm:strong-solutions}(\ref{item:strong-contraction})
	and we denote it by $t\mapsto \sfS_tx_0$;
	\eqref{eq:estimate-strong} yields \eqref{eq:flow-contraction} for
	initial data in $\domainslope\cvx$. Since
	$\sfS_hx_0\in \domainslope\cvx$ for every $h>0$ by
	Theorem~\ref{thm:strong-solutions}(\ref{item:slope}) and
	$t\mapsto \sfS_{t+h}x_0$ is a strong solution starting from
	$\sfS_hx_0$, uniqueness yields the semigroup property
	$\sfS_{t+h}x_0=\sfS_t(\sfS_hx_0)$.

	If $x_0\in \overline\DDD$, the density \eqref{eq:domain-slope} of
	$\domainslope\cvx$ in $\DDD$ (and thus in $\overline\DDD$) provides
	a sequence $x_{0,n}\in \domainslope\cvx$ converging to $x_0$; by
	\eqref{eq:flow-contraction} the curves $t\mapsto \sfS_tx_{0,n}$
	converge locally uniformly to a continuous curve
	$t\mapsto \sfS_tx_0$ starting from $x_0$, which does not depend on
	the approximating sequence and still satisfies
	\eqref{eq:flow-contraction} and the semigroup property. Being a
	locally uniform limit of strong solutions, it is a weak solution
	to \eqref{eq:cSEVI2} by the stability observed after
	\eqref{eq:integral}.

	Finally, if $\sfy$ is a weak solution starting from
	$x_0\in \overline\DDD$, then Theorem~\ref{thm:cEVI_solutions}
	yields 
	\begin{displaymath}
		\dX(\sfy(t),\sfS_tx_0)=
		\lim_{n\to\infty}\dX(\sfy(t),\sfS_tx_{0,n})\le
		\limsup_{n\to\infty}\rme^{\lambda t}\,\dX(x_0,x_{0,n})=0,
	\end{displaymath}
	so that $\sfy(t)$ coincides with $\sfS_t x_0$ for all $t\ge 0$: for every initial
	datum in $\overline\DDD$ there is exactly one weak solution. The
	uniqueness of the \sEVI\ flow follows as well: any two \sEVI\
	flows consist of weak solutions, so that they coincide on $\DDD$,
	and thus on $\overline\DDD$ by the continuity of the maps
	$\sfS_t$.
\end{proof}
\begin{remark}[Use of the assumptions]
	\label{rem:which-assumptions}
	Claims (\ref{item:strong-contraction})--(\ref{item:Lip-at-zero})
	of Theorem~\ref{thm:strong-solutions} 
    rely on the dissipativity
	\eqref{eq:S-lambda} only; the conditional upper semicontinuity
	\eqref{eq:conditional-usc} enters just in the proof of
	\eqref{eq:slope-mder}, through Remark~\ref{rem:Lip-slope}. The
	density condition \eqref{eq:domain-slope} is only needed to
	generate the flow on the whole of $\overline\DDD$ in
	Theorem~\ref{thm:EVI-flow}. Let us
	recall that, by Corollary~\ref{cor:slope-density}, it is
	automatically satisfied whenever every $\sfb(\cdot,\bar x)$,
	$\bar x\in \DDD$, is quadratically bounded from below \eqref{eq:MY}
	and the scheme admits one-step solutions from $\bar x$ for
	arbitrarily small step sizes.\nc
\end{remark}
    
\subsection{Convergence of the discrete $\sk$-EVI}
Recall from Definition~\ref{def:discrete-bEVI} the discrete
$\sfb$-Evolution Variational Inequality \eqref{eq:D-SEVI2} generated by
$\sfb$ with step size $\tau$, obtained in Section~\ref{sec:VMS} as the
recursive form of the single-step resolvent inequality
\eqref{eq:resolvent-evi}:
\begin{equation*}
		\frac1{2\tau}\Big(\dX^2(X^n_\tau,y)-\dX^2(X^{n-1}_\tau,y)\Big)
		+
		\frac1{2\tau}\dX^2(X^n_\tau,X^{n-1}_\tau)
		\le
		\lskd y{X^n_\tau}
		\qquad\text{for every }y\in \DDD\,.
	\end{equation*}
In this subsection 
we assume a priori the solvability of the scheme (studied in
Section~\ref{sec:soln_to_discrete_EVI} by the equilibrium theory of
Section~\ref{sec:equilibrium_problems}) 
and we prove a
\emph{structural} result: whenever the discrete
$\sfb$-EVI admits solutions for every sufficiently small step size and
$\sfb$ is dissipative, the piecewise constant interpolants converge,
with an explicit rate of order $\sqrt\tau$, to the unique strong
solution of \eqref{eq:SEVI2}. It is worth noticing that 
all the information is coded in \eqref{eq:D-SEVI2} and 
no
compactness and  no convexity are needed
here. The completeness of the sublevels \eqref{eq:b-sublevels-bis} is
sufficient, since the dissipativity confines the discrete solutions in
a complete sublevel of $\sfb(\cdot,x_0)$.

The starting point is a discrete counterpart of the regularization
estimates of Theorem~\ref{thm:strong-solutions}. Recall the
first-order form \eqref{eq:D-SEVI1} of the scheme
(Lemma~\ref{le:simple-but-tricky}) and the slope bound
\eqref{eq:iterates-slope} of Proposition~\ref{prop:iterates-slope},
which hold for every solution of \eqref{eq:D-SEVI2} without any
dissipativity assumption on $\sfb$. Combining \eqref{eq:D-SEVI1} with
the dissipativity condition \eqref{eq:S-lambda1}, we obtain a discrete
Lipschitz estimate, propagating the initial value of the slope along
the iterates.
\begin{proposition}[Stability of the slope along discrete iterates]
	\label{le:apriori}
	Let $\sfb$ satisfy the dissipativity condition \eqref{eq:S-lambda},
	let $\lambda:=\eta-\cvx$, and let $\tau>0$ with $\lambda\tau<1$,
	$\tilde\tau:=\big(\frac1\tau-\lambda\big)^{-1}$. Every solution
	$(X^n_\tau)_{n\in\N}$ of \eqref{eq:D-SEVI2} with
	$X^0_\tau\in \domainslope\cvx$ takes values in $\domainslope\cvx$
	and satisfies, for every $n\ge1$,
	\begin{gather}
		\label{eq:discrete-Lipschitz}
		\tau\,\skslope{\cvx}{X_\tau^n}\le
		\dX(X_\tau^n,X_\tau^{n-1})\le
		\tilde\tau\,\skslope{\cvx}{X_\tau^{n-1}},
		\\
		\label{eq:slope-stability}
		\skslope{\cvx}{X^n_\tau}\le
		(1-\lambda\tau)^{-n}\,\skslope{\cvx}{X^0_\tau}.
	\end{gather}
	In particular, for every $T>0$ and $\tau_o>0$ with
	$\lambda\tau_o<1$ we have
	\begin{equation}
		\label{eq:monotonicity}
		\skslope{\cvx}{X^n_\tau}\le c_0\,\skslope{\cvx}{X^0_\tau}
		\qquad\text{for every }\tau\in(0,\tau_o],\
		0\le n\le\Big\lceil\frac T\tau\Big\rceil,
	\end{equation}
	where
	\begin{equation*}
		c_0=c_0(\tau_o,T,\lambda):=
		\begin{cases}
			1&\text{ if }\lambda\le0,\\
			(1-\lambda\tau_o)^{-(T/\tau_o+1)}&\text{ if }\lambda>0.
		\end{cases}
	\end{equation*}
\end{proposition}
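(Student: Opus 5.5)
The left inequality in \eqref{eq:discrete-Lipschitz} is exactly \eqref{eq:iterates-slope} of Proposition~\ref{prop:iterates-slope}, which needs no dissipativity. The work is the right inequality; \eqref{eq:slope-stability} and \eqref{eq:monotonicity} then follow by iteration. I would argue by induction on $n$, assuming $X^{n-1}_\tau\in\domainslope\cvx$. For $n=1$ this is the hypothesis on $X^0_\tau$.

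For the right inequality I would test the first-order discrete EVI \eqref{eq:D-SEVI1} of Lemma~\ref{le:simple-but-tricky} with $Y:=X^{n-1}_\tau$. Writing $d_n:=\dX(X^n_\tau,X^{n-1}_\tau)$, this gives
\begin{displaymath}
\frac{d_n}{\tau}\le \lsku{X^{n-1}_\tau}{X^n_\tau}.
\end{displaymath}
If $d_n=0$ there is nothing to prove, so assume $d_n>0$. The first-order dissipativity \eqref{eq:S-lambda1} gives
\begin{displaymath}
\lsku{X^{n-1}_\tau}{X^n_\tau}\le \lambda d_n-\lsku{X^n_\tau}{X^{n-1}_\tau}.
\end{displaymath}
By the definition \eqref{eq:local-slope} of the global slope, $-\lsku{X^n_\tau}{X^{n-1}_\tau}\le \big(\lskd{X^n_\tau}{X^{n-1}_\tau}\big)_-/d_n\le \skslope\cvx{X^{n-1}_\tau}$. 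Combining these,
\begin{displaymath}
\Big(\frac1\tau-\lambda\Big)d_n\le \skslope\cvx{X^{n-1}_\tau}.
\end{displaymath}
Since $\lambda\tau<1$, we can divide by $\frac1\tau-\lambda>0$ to get $d_n\le\tilde\tau\,\skslope\cvx{X^{n-1}_\tau}$. Together with the left inequality this yields
\begin{displaymath}
\skslope\cvx{X^n_\tau}\le \frac{\tilde\tau}{\tau}\,\skslope\cvx{X^{n-1}_\tau}=(1-\lambda\tau)^{-1}\skslope\cvx{X^{n-1}_\tau}<\infty.
\end{displaymath}
This closes the induction (so $X^n_\tau\in\domainslope\cvx$), and iterating gives \eqref{eq:slope-stability}.

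For \eqref{eq:monotonicity}, take $n\le\lceil T/\tau\rceil$. If $\lambda\le0$, then $(1-\lambda\tau)^{-n}\le1$. If $\lambda>0$, then $(1-\lambda\tau)^{-n}\le(1-\lambda\tau)^{-(T/\tau+1)}$. The map $\tau\mapsto(1/\tau+1)\log(1-\lambda\tau)^{-1}$, taken with $T/\tau$ in place of $1/\tau$, is nondecreasing on $(0,\tau_o]$, because $-\log(1-x)/x$ is increasing in $x$ and $-\log(1-\lambda\tau)$ is increasing in $\tau$. Hence the bound is at most $(1-\lambda\tau_o)^{-(T/\tau_o+1)}=c_0$.

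The only step that needs any care is the use of the global (rather than local) slope of $X^{n-1}_\tau$ evaluated at the particular point $X^n_\tau$, together with the sign conventions relating $\lsku{}{}$ to $(\lskd{}{})_-$. The monotonicity check in $\tau$ for $c_0$ is routine.
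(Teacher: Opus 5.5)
Your proof is correct and follows the paper's argument essentially verbatim. You test \eqref{eq:D-SEVI1} with $Y=X^{n-1}_\tau$, apply \eqref{eq:S-lambda1} and the definition of the global slope at $X^{n-1}_\tau$, combine with \eqref{eq:iterates-slope}, and iterate; your monotonicity-in-$\tau$ check just spells out the ``elementary computation'' the paper leaves implicit. (The induction is not actually needed for membership in $\domainslope\cvx$, since \eqref{eq:iterates-slope} already gives $\skslope\cvx{X^n_\tau}\le d_n/\tau<\infty$ for every $n\ge1$.)
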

\begin{proof}
	Choosing $Y:=X^{n-1}_\tau$ in \eqref{eq:D-SEVI1} and using
	\eqref{eq:S-lambda1} we get
	\begin{equation}
		\label{eq:discrete-derivative}
		\frac{\dX(X^n_\tau,X^{n-1}_\tau)}\tau
		\le \lsku{X^{n-1}_\tau}{X^n_\tau}
		\le -\lsku{X^{n}_\tau}{X^{n-1}_\tau}
		+\lambda\,\dX(X_\tau^n,X_\tau^{n-1})
		\le \skslope{\cvx}{X^{n-1}_\tau}
		+\lambda\,\dX(X_\tau^n,X_\tau^{n-1}),
	\end{equation}
	where the last inequality follows from the definition
	\eqref{eq:local-slope} of the global slope. Since $\lambda\tau<1$,
	\eqref{eq:discrete-derivative} yields the second inequality of
	\eqref{eq:discrete-Lipschitz}; the first one is
	\eqref{eq:iterates-slope}. Combining the two inequalities we obtain
	\begin{equation*}
		\skslope{\cvx}{X^n_\tau}\le
		\frac{\tilde\tau}\tau\,\skslope{\cvx}{X^{n-1}_\tau}
		=(1-\lambda\tau)^{-1}\,\skslope{\cvx}{X^{n-1}_\tau},
	\end{equation*}
	and an induction gives \eqref{eq:slope-stability}. If $\lambda\le0$
	then $(1-\lambda\tau)^{-n}\le1$; if $\lambda>0$, for every
	$n\le\lceil T/\tau\rceil$ an elementary computation shows that
	$(1-\lambda\tau)^{-n}\le(1-\lambda\tau)^{-(T/\tau+1)}\le
	(1-\lambda\tau_o)^{-(T/\tau_o+1)}$, whence \eqref{eq:monotonicity}.
\end{proof}
To approximate a continuous-time solution, we use the piecewise
constant interpolants $\ol X_\tau,\ul X_\tau$ introduced in
\eqref{eq:interpolation}. The key ingredient is the following Cauchy
estimate, comparing two discrete solutions with different step sizes;
its proof, which relies on the dissipativity of $\sfb$ only (through
\eqref{eq:S-lambda1} and Proposition~\ref{le:apriori}), is postponed
to Appendix~\ref{app:cauchy_estimate_proof} and is an adaptation of
the approach in \cite{Nochetto-Savare06}.\nc
\begin{proposition}[Cauchy estimate]\label{prop:cauchy_condition}
Let $\sfb$ satisfy the dissipativity condition
\eqref{eq:S-lambda} and let $\lambda:=\eta-\kappa$.\ Fix $T>0$ and $\tau_o$ small enough such that $\lambda<1/\tau_o$. For any $\tau_1, \tau_2\in (0,\tau_o]$,
	if $X^0_{\tau_1},Y^0_{\tau_2}$ belong to $\domainslope{\cvx}$, then the curves
	$\overline X_{\tau_1}(t), \overline Y_{\tau_2}(t)$  constructed from  the discrete solution to \eqref{eq:D-SEVI2}
	satisfy the Cauchy condition
    \begin{align*}
    \dX(\ol X_{\tau_1}(T),\ol Y_{\tau_2}(T))
    \le  e^{\lambda T}\dX(X^0_{\tau_1},Y^0_{\tau_2}) + C_\tau(T)\,.
\end{align*}
where for $\tilde\tau:=\max\{\tilde\tau_1, \tilde\tau_2\}$ and $\tilde\tau_i := \left(\frac{1}{\tau_i}- \lambda \right)^{-1}$ we define
\begin{gather*}
    C_\tau(T)= a_\tau(T) +\lambda \int_0^T a_\tau(t) e^{\lambda(T-t)}\d t\\
    a_\tau(T)= 2c_T \sqrt{\tilde\tau} \left[\sqrt{\tilde\tau} +
  \sqrt{\left(1+\frac{\tilde \tau}{\tau_1 }\right)(h_{\tau_1}(T)+h_{\tau_2}(T))}
  \right]
\left(\Delta^\kappa (X_{\tau_1}^0) +\Delta^\kappa (Y_{\tau_2}^0)\right)\,.
\end{gather*}
with $h_\tau(T)=\frac{T}{2}+\frac \tau2\ell_\tau(T)(1-\ell_\tau(T))$ for $\ell_\tau(T):=T/\tau-n$ if $n\tau< T\le (n+1)\tau$, and $c_T>0$ a constant that only depends on $\tau_o, T, \lambda$ and increases in $T$. In particular, 
\begin{equation}\label{eq:Cauchy-est}
    \dX(\ol X_{\tau_1}(T),\ol Y_{\tau_2}(T))-e^{\lambda T}\dX(X^0_{\tau_1},Y^0_{\tau_2})=\mathcal{O}\left(\max\{\tau_1,\tau_2\}/\sqrt{\tau_1}\right)\to 0
\end{equation}
for any $\tau_1,\tau_2 \to 0$ satisfying  $\tau_2^2/\tau_1\to 0$.
\end{proposition}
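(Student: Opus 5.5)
We plan to adapt the doubling-of-variables method of \cite{Nochetto-Savare06} to the discrete first-order inequality \eqref{eq:D-SEVI1}. The interaction dissipativity enters only through \eqref{eq:S-lambda1}. Let $X^n:=X^n_{\tau_1}$ and $Y^m:=Y^m_{\tau_2}$, and write $t_n=n\tau_1$, $s_m=m\tau_2$.

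\textbf{Step 1: a discrete doubled inequality.} Take $Y=Y^m$ in \eqref{eq:D-SEVI1} for the $X$-scheme and $Y=X^n$ in the one for the $Y$-scheme. This gives
\begin{displaymath}
\frac{\dX(X^n,Y^m)-\dX(X^{n-1},Y^m)}{\tau_1}\le \lsku{Y^m}{X^n},\qquad
\frac{\dX(X^n,Y^m)-\dX(X^n,Y^{m-1})}{\tau_2}\le \lsku{X^n}{Y^m}.
\end{displaymath}
Summing and using \eqref{eq:S-lambda1} yields $\lsku{Y^m}{X^n}+\lsku{X^n}{Y^m}\le\lambda\dX(X^n,Y^m)$. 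This is exactly what makes the diagonal term contractive.

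\textbf{Step 2: continuous interpolation in two variables.} Introduce the piecewise linear interpolants $\ell_{\tau_i}$ in time, as in the notation list. Define the function $\zeta(s,t)$ as the bilinear interpolation of $\dX(X^n,Y^m)$ on the grid, multiplied by $\rme^{-\lambda(s+t)/2}$. Step 1 gives
\begin{displaymath}
\partial_s\zeta+\partial_t\zeta\le \lambda\text{-correction}+R(s,t).
\end{displaymath}
The residual $R$ comes from the mismatch between the grid value $\dX(X^n,Y^m)$ and the interpolated one. It is controlled by the increments $\dX(X^n,X^{n-1})$ and $\dX(Y^m,Y^{m-1})$. By Proposition~\ref{le:apriori}, \eqref{eq:discrete-Lipschitz}--\eqref{eq:monotonicity}, these increments are bounded by $\tilde\tau_i c_T\skslope\cvx{\cdot^0}$.

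We also extend the sequences to negative times by constants. As in the proof of Theorem~\ref{thm:cEVI_solutions}, the boundary term at $s\le0$ is bounded by $\skslope\cvx{X^0}$ or $\skslope\cvx{Y^0}$, using the definition of the global slope.

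\textbf{Step 3: integration over the strip.} Integrate the inequality over the strip $\{t-\eps<s<t\}$, as in Theorem~\ref{thm:cEVI_solutions}. The error terms are then handled by Cauchy--Schwarz, using $\sum_n\dX^2(X^n,X^{n-1})/\tau_1$, and this is where the factor $h_\tau(T)$ should appear. Optimizing $\eps\sim\sqrt{\tilde\tau}$ produces the term $a_\tau(t)$ on the right of a differential inequality of the form $u'\le\lambda u+a'$. Gronwall's lemma then gives $C_\tau(T)=a_\tau(T)+\lambda\int_0^T a_\tau\rme^{\lambda(T-t)}\d t$.

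\textbf{Main obstacle.} We expect the hardest step to be the bookkeeping of the residuals in Step 2. In particular, the cross term $\tilde\tau/\tau_1$ arises when the coarse and fine grids are misaligned, and it must be controlled without any convexity, using only the slope bounds. The asymptotic claim \eqref{eq:Cauchy-est} then follows by inspecting the order of $a_\tau$.
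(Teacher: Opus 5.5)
Your Step~1 is exactly the pairing the paper uses, but Step~2 fails as stated, and Step~2 is the heart of the proof. Take the bilinear interpolant $Z$ of $D_{n,m}:=\dX(X^n,Y^m)$ on the cell $((n-1)\tau_1,n\tau_1)\times((m-1)\tau_2,m\tau_2)$, with $\alpha=\ell_{\tau_1}(s)$ and $\beta=\ell_{\tau_2}(t)$. A direct computation gives
\begin{displaymath}
\partial_sZ+\partial_tZ=A_{n,m}+B_{n,m}+(1-\beta)\big(A_{n,m-1}-A_{n,m}\big)+(1-\alpha)\big(B_{n-1,m}-B_{n,m}\big),
\end{displaymath}
where $A_{n,k}:=(D_{n,k}-D_{n-1,k})/\tau_1\le\lsku{Y^k}{X^n}$ and $B_{k,m}:=(D_{k,m}-D_{k,m-1})/\tau_2\le\lsku{X^k}{Y^m}$. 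Only the matched pair $A_{n,m}+B_{n,m}$ closes via \eqref{eq:S-lambda1}. The leftover terms are not a mismatch between a grid distance and an interpolated one. They compare difference quotients at different grid levels, and $\sfb$ has no continuity, so the only pointwise information is the triangle inequality:
\begin{displaymath}
|A_{n,m-1}-A_{n,m}|\le 2\min\{\dX(X^n,X^{n-1}),\dX(Y^m,Y^{m-1})\}/\tau_1 .
\end{displaymath}
By \eqref{eq:discrete-Lipschitz} this is of order $c_0\Delta^\cvx$, not $O(\tilde\tau)$, for instance when $\tau_1=\tau_2$. The bound is sharp: in $\R$ with $X^{n-1}=0$, $X^n=\tau$, $Y^{m-1}=-\tau$, $Y^m=2\tau$ one gets $A_{n,m-1}-A_{n,m}=2$. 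A pointwise $O(1)$ residual, integrated over a strip of area $\eps T$ and divided by $\eps$, leaves an $O(T)$ error and no convergence. Also, if your $R$ really were $O(\tilde\tau)$ pointwise, there would be no $1/\eps$ term for your choice $\eps\sim\sqrt{\tilde\tau}$ to balance. This signals that the bookkeeping you flag as the main obstacle is not actually resolved.

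The missing idea is not to use a single function of $(s,t)$. The paper takes $\zeta_{\tau_1}(s,Y)$, the linear interpolation in $s$ of $\dX(X^n,Y)$ at fixed $Y$, and $\omega_{\tau_2}(X,t)$, the linear interpolation in $t$ at fixed $X$. It then applies the divergence theorem on $Q^\eps_{0,T}$ to the field with two \emph{different} components $\big(\zeta_{\tau_1}(s,\ol Y_{\tau_2}(t)),\,\omega_{\tau_2}(\ol X_{\tau_1}(s),t)\big)$, rather than $(\zeta,\zeta)$. Its divergence is exactly $A_{n,m}+B_{n,m}\le\lambda\dX(\ol X_{\tau_1}(s),\ol Y_{\tau_2}(t))$, or the slope bound for $s<0$, with no interior residual.

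The price is that the flux through the two oblique edges no longer vanishes. It equals $\pm(\zeta-\omega)$, which is bounded by $\delta_X+\delta_Y$ with $\delta_X\le(1-\ell_{\tau_1})\,\tilde\tau_1 c_0\Delta^\cvx(X^0_{\tau_1})$ and similarly for $\delta_Y$, and $\int_0^T(1-\ell_\tau)\,\d t=h_\tau(T)$. This, not Cauchy--Schwarz on the discrete energy, is where $h_\tau(T)$ comes from, and it produces the $h/\eps$ term that the choice of $\eps$ balances. The factor $\tilde\tau/\tau_1$ has a different origin from grid misalignment: it comes from counting $\tau_1$-steps in a window of length $\eps$, $n-m\le\eps/\tau_1+1$, when estimating $\dX(\ol X_{\tau_1}(s),\ol X_{\tau_1}(t))$ in $a_0$ and $a_1$.

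Your bilinear route can be rescued, but only through an integrated cancellation your proposal does not contain. At fixed $s$, $\int(1-\beta)(A_{n,m-1}-A_{n,m})\,\d t$ over the strip's $t$-section telescopes in $m$ to $O(\tau_2\,\tilde\tau_1c_0\Delta^\cvx/\tau_1)$. Symmetrically, the $B$-terms telescope in $n$ at fixed $t$.
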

We can now state the main result of this subsection.
\begin{theorem}[Convergence to the strong solution]\label{thm:tau_to_zero}
	Let $\sfb$ satisfy \eqref{eq:S-lambda},
	\eqref{eq:b-sublevels-bis} and \eqref{eq:conditional-usc}, and
	assume that $\sfb(\cdot,\bar x)$ is quadratically bounded from below
	\eqref{eq:MY} for every $\bar x\in\DDD$. Let
	$\lambda:=\eta-\cvx$, $x_0\in \domainslope\cvx$, and let $\tau_o>0$
	with $\lambda\tau_o<1$. If for every $\tau\in(0,\tau_o]$ there
	exists a solution $(X^n_\tau)_{n\in\N}$ of \eqref{eq:D-SEVI2} with
	$X^0_\tau=x_0$, then \eqref{eq:SEVI2} admits a unique strong
	solution $\sfx$ with $\sfx(0)=x_0$ and for every $T>0$ we have
	\begin{equation}
		\label{eq:tau-rate}
		\dX(\ol X_\tau(t),\sfx(t))\le C\sqrt\tau\,\skslope\cvx{x_0}
		\quad\text{for every }t\in[0,T],\ \tau\in(0,\tau_o],
	\end{equation}
	with a constant $C=C(\tau_o,T,\lambda)$; in particular
	$\ol X_\tau,\ul X_\tau\to\sfx$ uniformly in $[0,T]$ as
	$\tau\down0$.
\end{theorem}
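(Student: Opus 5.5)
The plan is to use the Cauchy estimate of Proposition~\ref{prop:cauchy_condition} to build the limit curve, and then the convergence criterion of Lemma~\ref{le:discrete-to-continuous} to show it is a strong solution. Uniqueness is already available from Theorem~\ref{thm:strong-solutions}(\ref{item:strong-contraction}).

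\textbf{Step 1: a priori estimates.} Fix $T>0$. By Proposition~\ref{le:apriori}, every iterate with $n\le\lceil T/\tau\rceil$ satisfies $\skslope\cvx{X^n_\tau}\le c_0\skslope\cvx{x_0}$. The same proposition gives the discrete Lipschitz bound $\dX(X^n_\tau,X^{n-1}_\tau)\le\tilde\tau c_0\skslope\cvx{x_0}$. It follows that
\[
\sum_{n\le \lceil T/\tau\rceil}\dX^2(X^n_\tau,X^{n-1}_\tau)/\tau\le \tilde E_T
\]
uniformly in $\tau\in(0,\tau_o]$, so condition (ii) of Lemma~\ref{le:discrete-to-continuous} holds. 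We also get $\dX(\ol X_\tau(t),x_0)\le C_T\skslope\cvx{x_0}$ on $[0,T]$.

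\textbf{Step 2: construction of the limit.} Apply Proposition~\ref{prop:cauchy_condition} with $X^0=Y^0=x_0$, so the initial term vanishes. For dyadic steps $\tau_k=\tau_o2^{-k}$ the condition $\tau_2^2/\tau_1\to0$ holds, so $(\ol X_{\tau_k}(t))_k$ is Cauchy uniformly on $[0,T]$. The curve stays in a complete set for the following reason. For $n\ge1$, testing the discrete EVI with $y=x_0$ and using dissipativity as in Lemma~\ref{le:one-step} bounds $\skd{X^n_\tau}{x_0}$ from above by $\lambda\dX^2+\skslope\cvx{x_0}\dX$ plus the quadratic lower bound \eqref{eq:MY}. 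Hence all iterates lie in a fixed sublevel of $\sfb(\cdot,x_0)$, which is complete by \eqref{eq:b-sublevels-bis}. This gives a uniform limit $\sfx(t)\in\DDD$ with $\sfx(0)=x_0$, which is condition (i).

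\textbf{Step 3: identification as a strong solution.} For condition (iii), use \eqref{eq:obvious}–\eqref{eq:above}:
\[
\skd y{\ol X}\le \cblskd y{\ol X}+\tfrac\cvx2\dX^2\le \skslope\cvx y\,\dX+(\lambda+\tfrac\cvx2)\dX^2 .
\]
This is not enough when $y\notin\domainslope\cvx$, so we bound differently: $\skd y{x}\le\eta\dX^2(x,y)-\skd xy$, together with the quadratic lower bound \eqref{eq:MY} at the point $x$. To make this uniform along the iterates, I would instead exploit the slope bound of $\ol X$. By definition of $\skslope\cvx{X^n_\tau}$ we have $-\lskd y{X^n_\tau}\le \skslope\cvx{X^n_\tau}\dX$. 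Applied to the pair in the other order, this bounds $\skd{X^n}{y}$ from below, and dissipativity then bounds $\skd y{X^n}$ from above by $\eta\dX^2+\skslope\cvx{X^n}\dX$. Since the slopes are uniformly bounded, this gives (iii). Condition (iv) follows from the conditional upper semicontinuity \eqref{eq:conditional-usc}, because the slopes are uniformly bounded along $\ol X_{\tau_k}(t)$. Lemma~\ref{le:discrete-to-continuous} then shows that $\sfx$ is a strong solution.

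\textbf{Step 4: rate and full convergence.} Apply Proposition~\ref{prop:cauchy_condition} between a fixed $\tau$ and $\tau_k\to0$, and let $k\to\infty$. The constant $C_\tau(T)$ then reduces to $a_\tau$-type terms of order $\sqrt{\tilde\tau}\sqrt{T}\,\skslope\cvx{x_0}$. One must check that the prefactor $(1+\tilde\tau/\tau_1)$ behaves correctly when the roles of the two step sizes are arranged so that the fixed $\tau$ plays the role of $\tau_1$. This gives \eqref{eq:tau-rate}. Finally, $\ul X_\tau$ is within $\tilde\tau c_0\skslope\cvx{x_0}$ of $\ol X_\tau$, so it converges uniformly as well.

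I expect the main obstacle to be the careful bookkeeping in Step 4: the asymmetry of the Cauchy estimate in $\tau_1,\tau_2$ when one of the step sizes is sent to $0$. If the stated bound does not allow this limit directly, the plan is to return to the appendix proof and redo its final step with $\tau_2\to0$ taken before the other estimates are combined.
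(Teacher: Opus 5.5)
Your overall architecture is the paper's: the Cauchy estimate with the larger step as $\tau_1$, confinement in a complete sublevel of $\sfb(\cdot,x_0)$, Lemma~\ref{le:discrete-to-continuous} with (iv) from \eqref{eq:conditional-usc}, and uniqueness from Theorem~\ref{thm:strong-solutions}. However, your verification of condition (iii) does not work.

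The global slope $\skslope\cvx{X^n_\tau}$ is a supremum over the \emph{first} argument, with $X^n_\tau$ frozen in the second. All it gives is $\skd y{X^n_\tau}\ge\frac\cvx2\dX^2(y,X^n_\tau)-\skslope\cvx{X^n_\tau}\,\dX(y,X^n_\tau)$, which is a \emph{lower} bound on $\skd y{X^n_\tau}$ and is useless for (iii). ``Applying it to the pair in the other order'', i.e.\ bounding $\skd{X^n_\tau}y$ from below, needs the slope at $y$, not at $X^n_\tau$. That is exactly \eqref{eq:above}, which you had just (correctly) discarded because $\skslope\cvx y$ may be $+\infty$. So the bound $\eta\,\dX^2+\skslope\cvx{X^n_\tau}\,\dX$ is unjustified, and (iii) remains open precisely for the test points $y\notin\domainslope\cvx$ you identified as problematic.

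The missing idea is to use \eqref{eq:MY} at the \emph{fixed} base point $\bar x:=y$, not at the moving point $\ol X_\tau(t)$; the moving base point is why it looked non-uniform to you. With $\bar\tau=\bar\tau(y)$, the definition of $\sfb_{\bar\tau}(y)$ gives $-\skd{\ol X_\tau(t)}y\le\frac1{2\bar\tau}\dX^2(y,\ol X_\tau(t))-\sfb_{\bar\tau}(y)$. Dissipativity then gives
\[
\skd y{\ol X_\tau(t)}\le \Big(|\eta|+\frac1{2\bar\tau}\Big)\dX^2(y,\ol X_\tau(t))-\sfb_{\bar\tau}(y).
\]
This is bounded uniformly in $\tau\in(0,\tau_o]$ and $t\in[0,T]$ by your Step 1 bound on $\dX(\ol X_\tau(t),x_0)$. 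It is the only place where the hypothesis ``\eqref{eq:MY} at every $\bar x\in\DDD$'' enters the theorem.

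Conversely, \eqref{eq:MY} has no role in your Step 2. At $\bar x=x_0$ it is a lower bound on $\sfb(\cdot,x_0)$, so it cannot contribute to an upper bound on $\skd{X^n_\tau}{x_0}$. That upper bound follows from dissipativity together with either the discrete EVI at $y=x_0$ or, as in the paper, the bound $\skslope\cvx{X^n_\tau}\le c_0\skslope\cvx{x_0}$. In the latter, the slope at $X^n_\tau$ is used in the correct direction, to bound $-\skd{x_0}{X^n_\tau}$. With (iii) repaired, the rest of your plan, including Step 4, goes through as in the paper.
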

\begin{proof}
	Throughout the proof we fix $T>0$, we set
	$\Delta_0:=\skslope\cvx{x_0}$, and we denote by $C$ a generic
	positive constant depending only on $\tau_o,T,\lambda$.

	{\em Step 1: Convergence of the interpolants.}
	Let $0<\tau_2\le\tau_1\le\tau_o$. The two discrete solutions with time steps $\tau_1$ and $\tau_2$ share
	the initial datum $x_0$; applying
	Proposition~\ref{prop:cauchy_condition} in the interval $[0,t]$,
	$t\le T$, and using the monotonicity of the constants with respect
	to the time horizon, the explicit form of the error term
	$C_\tau(t)$ (where now $\tilde\tau=\tilde\tau_1\le
	C\tau_1$ and $h_{\tau_i}(t)\le C$) yields
	\begin{equation}
		\label{eq:Cauchy-rate}
		\dX(\ol X_{\tau_1}(t),\ol X_{\tau_2}(t))\le
		C\sqrt{\tau_1}\,\Delta_0
		\quad\text{for every }t\in[0,T],
	\end{equation}
	so that $(\ol X_\tau(t))_{\tau\in(0,\tau_o]}$ satisfies the Cauchy
	condition as $\tau\down0$, uniformly with respect to $t\in[0,T]$.
	Moreover, \eqref{eq:discrete-Lipschitz} and \eqref{eq:monotonicity}
	give, for $t\in((n-1)\tau,n\tau]\cap[0,T]$,
	\begin{equation}
		\label{eq:interp-bounds}
		\skslope\cvx{\ol X_\tau(t)}\le L:=c_0\Delta_0,\qquad
		\dX(\ol X_\tau(t),x_0)\le
		\sum_{i=1}^{n}\dX(X^i_\tau,X^{i-1}_\tau)\le
		\frac{L\,(t+\tau)}{1-\lambda\tau}\le R,
	\end{equation}
	with $R:=\max\{1,(1-\lambda\tau_o)^{-1}\}\,L\,(T+\tau_o)$. We claim
	that the values of all the interpolants lie in a complete sublevel
	of $\sfb(\cdot,x_0)$: indeed, by \eqref{eq:S-lambda} and the
	definition \eqref{eq:local-slope} of the global slope, every
	$x\in\DDD$ with $\skslope\cvx x\le L$ and $\dX(x,x_0)\le R$
	satisfies
	\begin{equation*}
		\sfb(x,x_0)\le \eta\,\dX^2(x,x_0)-\sfb(x_0,x)
		\le |\eta|\,R^2+LR+\frac{|\cvx|}2R^2=:b^*,
	\end{equation*}
	since $-\sfb(x_0,x)\le \big(\lskd{x_0}x\big)_-
	+\frac{|\cvx|}2\dX^2(x_0,x)\le
	\skslope\cvx x\,\dX(x_0,x)+\frac{|\cvx|}2\dX^2(x_0,x)$. The
	sublevel $\{y\in\DDD:\sfb(y,x_0)\le b^*\}$ is complete by
	\eqref{eq:b-sublevels-bis}; therefore for every $t\in[0,T]$ the
	limit $\sfx(t):=\lim_{\tau\down0}\ol X_\tau(t)$ exists in $\DDD$,
	the convergence is uniform in $[0,T]$, and \eqref{eq:tau-rate}
	follows from \eqref{eq:Cauchy-rate} letting $\tau_2\down0$ with
	$\tau_1=\tau$. Since
	$\dX(\ol X_\tau(t),\ul X_\tau(t))\le\tilde\tau L\to0$ by
	\eqref{eq:discrete-Lipschitz}, the interpolants $\ul X_\tau$
	converge to the same limit; clearly $\sfx(0)=x_0$.

	{\em Step 2: Regularity of the limit.} If $0\le s<t\le T$ with
	$s\in((m-1)\tau,m\tau]$ and $t\in((n-1)\tau,n\tau]$, then
	$(n-m)\tau < t-s+\tau$ and \eqref{eq:discrete-Lipschitz},
	\eqref{eq:monotonicity} give
	\begin{equation*}
		\dX(\ol X_\tau(t),\ol X_\tau(s))\le
		\sum_{i=m+1}^n\dX(X^i_\tau,X^{i-1}_\tau)\le
		(n-m)\,\tilde\tau\,L<\frac{L\,(t-s+\tau)}{1-\lambda\tau};
	\end{equation*}
	passing to the limit as $\tau\down0$ we obtain
	$\dX(\sfx(t),\sfx(s))\le L\,(t-s)$, so that $\sfx$ is Lipschitz in
	$[0,T]$. Since $\skslope\cvx\cdot$ is lower semicontinuous
	(Remark~\ref{rem:slope-lsc}), \eqref{eq:interp-bounds} also yields
	$\skslope\cvx{\sfx(t)}\le L$ for every $t\in[0,T]$.

	{\em Step 3: The limit is a strong solution.} 
    We apply
	Lemma~\ref{le:discrete-to-continuous} along any sequence
	$\tau_j\down0$. Condition~(i) is the uniform convergence of Step~1.
	For~(ii), by \eqref{eq:discrete-Lipschitz} and
	\eqref{eq:monotonicity} we have
	$\dX(X^n_\tau,X^{n-1}_\tau)\le\tilde\tau\,\skslope\cvx{X^{n-1}_\tau}
	\le\tilde\tau L$, whence
	\begin{equation*}
		\sum_{n=1}^{\lceil T/\tau\rceil}
		\frac{\dX^2(X^n_\tau,X^{n-1}_\tau)}{\tau}
		\le \Big\lceil\tfrac T\tau\Big\rceil\,\frac{\tilde\tau^2}{\tau}\,L^2
		\le \frac{(T+\tau_o)\,L^2}{(1-\lambda_+\tau_o)^2}=:E_T'.
	\end{equation*}
	 For~(iii), writing $R_Y:=R+\dX(x_0,Y)$ and choosing
	$\bar\tau=\bar\tau(Y)$ as in \eqref{eq:MY}, the dissipativity
	\eqref{eq:S-lambda}, the lower bound \eqref{eq:MY} at $\bar x:=Y$,
	and the uniform bounds \eqref{eq:interp-bounds} give,
	for every $Y\in\DDD$ and $t\in[0,T]$,
	\begin{equation*}
		\skd Y{\ol X_\tau(t)}\le
		\eta\,\dX^2(\ol X_\tau(t),Y)-\skd{\ol X_\tau(t)}Y
		\le \Big(|\eta|+\tfrac1{2\bar\tau}\Big)R_Y^2-\bif_{\bar\tau}(Y),
	\end{equation*}
	which is finite and independent of $\tau,t$. Finally, condition~(iv)
	is precisely the conditional upper semicontinuity
	\eqref{eq:conditional-usc}, valid at every $t$ because
	$\ol X_{\tau_j}(t)\to\sfx(t)$ and
	$\sup_j\skslope\cvx{\ol X_{\tau_j}(t)}\le L<\infty$ by
	\eqref{eq:interp-bounds}. Lemma~\ref{le:discrete-to-continuous} then
	shows that $\sfx$ is a strong solution to \eqref{eq:SEVI2} (with
	$|\dot\sfx|\in L^2(0,T)$); uniqueness follows from
	Theorem~\ref{thm:strong-solutions}(\ref{item:strong-contraction}).
\end{proof}

\subsection{Well-posedness of the continuous $\sk$-EVI}
 We finally combine the three main ingredients developed so far:
(i) the solvability of the single-step problem
(Section~\ref{sec:resolvent}), iterated by the Variational Movement
Scheme (Theorem~\ref{thm:saddle-sEVI}), (ii) the structural convergence of
the discrete solutions (Theorem~\ref{thm:tau_to_zero}), (iii) and the
dissipative theory of Section~\ref{sec:admissible-b}. We state two
well-posedness results, corresponding to the dissipative route of
Theorem~\ref{thm:discrete-SEVI} and to the antisymmetric route of
Theorem~\ref{thm:VMS-antisym}; the compact route of
Theorem~\ref{thm:existence-compact} is discussed in
Remark~\ref{rem:compact-dissipative}, and the generation of the
\sEVI\ flow is stated in Corollary~\ref{cor:well-posed-flow}.

Let us first observe that, in the presence of the two-point
convexity conditions underlying all these results, the global slope
$\skslope\cvx\cdot$ of Definition~\ref{def:bounded_Delta} coincides
with the local slope $\slope\cdot$. Indeed, if the two-point conditions
\eqref{eq:gg-dist-2} and \eqref{eq:gg-bif-2} hold at the base point
$\bar x:=x$ with some parameter $\kappa_0$, they also hold for every
$\cvx\le \kappa_0$, since \eqref{eq:gg-bif-2} weakens as $\cvx$
decreases; Proposition~\ref{prop:local-global-slope} and
Remark~\ref{rem:local-global-weaker} then yield
$\skslope\cvx x=\slope x$ for every $\cvx\le \kappa_0$, so that the
choice of the parameter $\cvx$ in the initial condition
$x_0\in \domainslope\cvx$ is immaterial. In particular, under the
assumptions of Theorem~\ref{thm:main_existence} or of
Theorem~\ref{thm:antisym_existence} below, the domain
$\domainslope\cvx$ coincides with the domain $\domainslope{}$ of the
local slope.
\begin{theorem}[Well-posedness: the dissipative case]
	\label{thm:main_existence}
	Let $\sfb:\DDD\times \DDD\to\R$ satisfy \eqref{eq:S-lambda},
	\eqref{eq:b-sublevels-bis}, and \eqref{eq:conditional-usc}, let
	$\lambda:=\eta-\cvx$, and let 
    $\tau_o^{-1}:=2\eta_+-\cvx$ if
	$\cvx<2\eta_+$ and $\tau_o:=+\infty$ otherwise. Suppose that for
	every $\bar x\in \DDD$ there exists a system $\barysystem{\bar x}$
	of \emph{continuous} barycentric maps in $\DDD$ such that
	\begin{enumerate}
		\item[(1)] the convexity conditions \eqref{eq:gg-dist} and
		\eqref{eq:gg-bif} hold for $\barysystem{\bar x}$;
		\item[(2)] for every $y\in \DDD$ the map
		$x\mapsto \skd yx$ is upper hemicontinuous with
		respect to the system of $2$-barycentric maps of
		$\barysystem{\bar x}$ (Definition~\ref{def:hemicontinuity}). 
	\end{enumerate}
	Then the following hold.
	\begin{enumerate}
		\item[\rm(i)] For every $\tau\in(0,\tau_o)$ and every
		$X^0_\tau\in \DDD$ 
        the scheme (VMS) admits a unique solution
		$(X^n_\tau)_{n\in\N}$, which solves the discrete EVI
		\eqref{eq:D-SEVI2}.
		\item[\rm(ii)] For every $x_0\in \domainslope{}$ there
		exists a unique strong solution $\sfx$ to \eqref{eq:SEVI2} with
		$\sfx(0)=x_0$; the interpolants $\ol X_\tau$ of the discrete
		solutions starting from $x_0$ converge to $\sfx$ uniformly on
		compact intervals as $\tau\down0$, with the rate 
        	\begin{equation*}
		\dX(\ol X_\tau(t),\sfx(t))\le C\sqrt\tau\,\skslope\cvx{x_0}
		\quad\text{for every }t\in[0,T],\ \tau\in(0,\tau_o],
	\end{equation*} 
        and the contraction estimate \eqref{eq:estimate-strong} holds
		between any two strong solutions.
	\end{enumerate}
\end{theorem}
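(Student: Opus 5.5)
The proof assembles earlier results in three steps: single-step solvability, passage to the limit, and identification of the slope domains.

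\textbf{Step 1: part (i).} Fix $\tau\in(0,\tau_o)$ and a base point $\bar x\in\DDD$, and set $\xi:=\tau^{-1}+\cvx$. By the choice of $\tau_o$ we have $\xi>2\eta_+$. In particular $\xi>0$, since $\tau<\tau_o$ gives $\tau^{-1}>2\eta_+-\cvx$ when $\cvx<2\eta_+$, and $\xi>\tau^{-1}>0$ otherwise.

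We then verify the hypotheses of Theorem~\ref{thm:discrete-SEVI} at $\bar x$:
\begin{itemize}
\item Condition (i) of that theorem follows from assumption (1) and Proposition~\ref{prop:B-convexity}(2), which give \eqref{eq:B-bary-convex} with respect to $\barysystem{\bar x}$.
\item Condition (ii) follows from assumption (2) together with Remark~\ref{rem:B-hemicontinuity}. That remark applies because the maps of $\barysystem{\bar x}$ are continuous, so the squared-distance terms in $\sfB$ are continuous along them.
\item Dissipativity and complete sublevels are exactly \eqref{eq:S-lambda} and \eqref{eq:b-sublevels-bis}.
\end{itemize}
Theorem~\ref{thm:saddle-sEVI}(i)--(ii) then yields a unique VMS solution for every initial datum. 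By Proposition~\ref{prop:B-convexity}(1), assumption (1) also gives \eqref{eq:B-two-convex} with $\xi>0$. Theorem~\ref{thm:saddle-sEVI}(iii) therefore shows that this solution solves \eqref{eq:D-SEVI2}.

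\textbf{Step 2: part (ii).} We apply Theorem~\ref{thm:tau_to_zero}, whose structural hypotheses \eqref{eq:S-lambda}, \eqref{eq:b-sublevels-bis} and \eqref{eq:conditional-usc} are assumed. Quadratic boundedness \eqref{eq:MY} at each $\bar x$ comes from Lemma~\ref{le:MY-convex}, applied with any $\bar\tau\in(0,\tau_o)$ and using \eqref{eq:B-two-convex} from Step~1.

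That theorem also requires $\lambda\tau_o'<1$ for the step threshold $\tau_o'$ it uses. When $\tau_o<\infty$ we have $\lambda\tau_o=(\eta-\cvx)/(2\eta_+-\cvx)\le 1$. We expect this to be the most delicate bookkeeping point: the inequality can be an equality when $\eta\ge0$, and $\tau_o$ may be $+\infty$. To handle it, we apply Theorem~\ref{thm:tau_to_zero} with a slightly smaller threshold $\tau_o'<\tau_o$, chosen so that $\lambda\tau_o'<1$. This gives the rate estimate for $\tau\in(0,\tau_o']$, which is what is meant, since discrete solutions exist only for $\tau<\tau_o$.

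\textbf{Step 3: domains and contraction.} Theorem~\ref{thm:tau_to_zero} is stated for $x_0\in\domainslope\cvx$, while part (ii) is stated for $x_0\in\domainslope{}$. We bridge the two with the observation preceding the theorem. Assumption (1) restricted to $2$-point maps gives \eqref{eq:gg-dist-2} and \eqref{eq:gg-bif-2} at $\bar x:=x$, for every $\tau$, along a system independent of $\tau$. Proposition~\ref{prop:local-global-slope} and Remark~\ref{rem:local-global-weaker} then give $\skslope\cvx x=\slope x$, so $\domainslope\cvx=\domainslope{}$.

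Uniqueness of the strong solution and the contraction estimate \eqref{eq:estimate-strong} follow from Theorem~\ref{thm:strong-solutions}(\ref{item:strong-contraction}). Uniform convergence on compact intervals is the final claim of Theorem~\ref{thm:tau_to_zero}.
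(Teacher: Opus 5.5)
Your proposal is correct and follows essentially the same route as the paper. For (i) you feed Proposition~\ref{prop:B-convexity} and Remark~\ref{rem:B-hemicontinuity} into Theorem~\ref{thm:discrete-SEVI} and Theorem~\ref{thm:saddle-sEVI}. For (ii) you use Theorem~\ref{thm:tau_to_zero} with \eqref{eq:MY} from Lemma~\ref{le:MY-convex}, Theorem~\ref{thm:strong-solutions} for uniqueness and contraction, and Proposition~\ref{prop:local-global-slope} for $\domainslope\cvx=\domainslope{}$. Your explicit choice of a finite threshold $\tau_o'<\tau_o$ with $\lambda\tau_o'<1$ is a welcome bit of extra care that the paper leaves implicit; it only notes $\lambda\tau<1$ for $\tau<\tau_o$, and equality $\lambda\tau_o=1$ occurs exactly when $\eta=0$.
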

\begin{proof}
	By Proposition~\ref{prop:B-convexity}, Condition (1) implies that for
	every $\tau>0$ and $\bar x\in \DDD$ the maps
	$y\mapsto \sfB(\tau,\bar x;y,x)$, $x\in \DDD$, satisfy the
	barycentric convexity \eqref{eq:B-bary-convex} and the two-point
	condition \eqref{eq:B-two-convex} with $\xi=\frac1\tau+\cvx$; if
	$\tau<\tau_o$ then $\xi>2\eta_+$ by Remark~\ref{rem:explain}.
	Since the maps of $\barysystem{\bar x}$ are continuous, Condition (2)
	and Remark~\ref{rem:B-hemicontinuity} yield the upper
	hemicontinuity of $x\mapsto \sfB(\tau,\bar x;y,x)$ with respect to
	the $2$-barycentric maps of $\barysystem{\bar x}$. Together with
	\eqref{eq:S-lambda} and \eqref{eq:b-sublevels-bis}, the assumptions
	of Theorem~\ref{thm:discrete-SEVI} thus hold for every base point,
	so that Theorem~\ref{thm:saddle-sEVI} provides a unique solution of
	(VMS), which solves \eqref{eq:D-SEVI2}: this proves {\rm(i)}.

	Since $\tau_o^{-1}\ge 2\eta_+-\cvx\ge \eta-\cvx=\lambda$, every
	$\tau<\tau_o$ satisfies $\lambda\tau<1$: the existence and
	convergence in claim {\rm(ii)} then follow from
	Theorem~\ref{thm:tau_to_zero},  whose lower bound \eqref{eq:MY}
	at every $\bar x\in\DDD$ is provided by Lemma~\ref{le:MY-convex}
	thanks to Condition (1) and \eqref{eq:b-sublevels-bis}, while its uniqueness and the
	contraction estimate follow from
	Theorem~\ref{thm:strong-solutions}(\ref{item:strong-contraction}).
\end{proof}

When $\sfb$ is antisymmetric the assumptions simplify considerably in requiring only two points rather than $J\ge 2$ points for convexity and completeness of the sublevels.
\begin{theorem}[Well-posedness: the antisymmetric case]
	\label{thm:antisym_existence}
	Let $\sfb:\DDD\times \DDD\to\R$ be antisymmetric
	\eqref{eq:antisymmetry} and satisfy \eqref{eq:b-sublevels-bis}, let
	$\cvx\in\R$, $\lambda:=-\cvx$, and suppose that for every
	$\bar x\in \DDD$ and every $x_0,x_1\in \DDD$ there exists a family
	$(z_t)_{t\in[0,1]}$ satisfying the two-point convexity conditions
	\eqref{eq:gg-dist-2} and \eqref{eq:gg-bif-2}. Then the conclusions
	{\rm(i)--(ii)} of Theorem~\ref{thm:main_existence} hold with
	$\eta=0$ and $\tau_o$ characterized by
	$\tau_o^{-1}=(-\cvx)_+=\lambda_+$ if $\cvx<0$, and $\tau_o=+\infty$ otherwise.
\end{theorem}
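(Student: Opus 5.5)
Following the proof of Theorem~\ref{thm:main_existence}, I would replace the dissipative resolvent (Theorem~\ref{thm:discrete-SEVI}) with the antisymmetric one (Theorem~\ref{thm:VMS-antisym}); this is why only the two-point conditions are needed.

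First, I would collect the structural hypotheses of Section~\ref{sec:admissible-b} with $\eta=0$. Antisymmetry \eqref{eq:antisymmetry} gives $\skd xx=0$ and the dissipativity \eqref{eq:S-lambda} with $\eta=0$. The conditional upper semicontinuity \eqref{eq:conditional-usc} comes from Remark~\ref{rem:skew-symmetric}: the identity $\skd yx=-\skd xy$ together with \eqref{eq:b-sublevels-bis} makes $x\mapsto\skd yx$ upper semicontinuous. So $\lambda=-\cvx$. Choosing $\tau_o$ as in the statement gives, for every $\tau<\tau_o$, that $\xi=\tau^{-1}+\cvx>0$ and $\lambda\tau<1$.

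For claim (i), I fix $\tau\in(0,\tau_o)$ and a base point $\bar x$. Proposition~\ref{prop:B-convexity}(1) turns \eqref{eq:gg-dist-2}--\eqref{eq:gg-bif-2} into \eqref{eq:B-two-convex} with $\xi>0$, along the system $\barysystem{\bar x,2}$ given by the families $(z_t)$. Theorem~\ref{thm:VMS-antisym} then gives a unique common primal/dual solution of a single step. Iterating via Theorem~\ref{thm:saddle-sEVI}(ii)--(iii) yields a unique VMS solution, and it solves \eqref{eq:D-SEVI2}. No hemicontinuity hypothesis is needed here, because Theorem~\ref{thm:antisymmetric} obtains the primal property through the upper semicontinuity inherited from antisymmetry.

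For claim (ii), I would apply Theorem~\ref{thm:tau_to_zero}. Its hypotheses \eqref{eq:S-lambda}, \eqref{eq:b-sublevels-bis} and \eqref{eq:conditional-usc} were checked above. The lower bound \eqref{eq:MY} at each $\bar x$ follows from Lemma~\ref{le:MY-convex}, using \eqref{eq:B-two-convex} with some $\bar\tau<\tau_o$. Solvability of the scheme for every $\tau\in(0,\tau_o)$ is part (i). Theorem~\ref{thm:tau_to_zero} is stated for $\tau\in(0,\tau_o]$, so I would run it with a slightly smaller $\tau_o'<\tau_o$ when $\tau_o<\infty$, and note that the rate constant depends only on $(\tau_o,T,\lambda)$.

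The theorem asks for initial data in $\domainslope{}$, while Theorem~\ref{thm:tau_to_zero} is stated for $\domainslope\cvx$. To bridge this I use the localization discussed before Theorem~\ref{thm:main_existence}. The two-point conditions at $\bar x=x$ give \eqref{eq:B-two-convex} for all small $\tau$ along a $\tau$-independent system, so Proposition~\ref{prop:local-global-slope} gives $\skslope\cvx x=\slope x$. Uniqueness and the contraction estimate \eqref{eq:estimate-strong} then follow from Theorem~\ref{thm:strong-solutions}(\ref{item:strong-contraction}).

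The one delicate point is this $\tau$-independence. The families $(z_t)$ are given for each $\bar x$ but not for each $\tau$, so the assumption of Proposition~\ref{prop:local-global-slope} does hold; this is exactly what Remark~\ref{rem:local-global-weaker} says. The rest is routine bookkeeping of the constants and thresholds.
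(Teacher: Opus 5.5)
Your proposal is correct and follows the paper's proof. Antisymmetry gives \eqref{eq:S-lambda} with $\eta=0$, and Remark~\ref{rem:skew-symmetric} gives the conditional upper semicontinuity; Proposition~\ref{prop:B-convexity}(1) with Theorems~\ref{thm:VMS-antisym} and~\ref{thm:saddle-sEVI} yields (i); and Theorems~\ref{thm:tau_to_zero} and~\ref{thm:strong-solutions} yield (ii), which is how the paper argues by deferring to the proof of Theorem~\ref{thm:main_existence}. The paper leaves implicit your checks of \eqref{eq:MY} via Lemma~\ref{le:MY-convex} and of $\domainslope\cvx=\domainslope{}$ via Proposition~\ref{prop:local-global-slope}, as well as your passage to some $\tau_o'<\tau_o$. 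That last step is genuinely necessary, since $\lambda\tau_o=1$ when $\cvx<0$, and the rate constant then depends on $\tau_o'$ rather than on $\tau_o$.
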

\begin{proof}
	The antisymmetry implies the $0$-interaction dissipativity and the
	diagonal condition, i.e.~\eqref{eq:S-lambda} with $\eta=0$;
	moreover, by Remark~\ref{rem:skew-symmetric}, the conditional upper
	semicontinuity \eqref{eq:conditional-usc} is a consequence of
	\eqref{eq:b-sublevels-bis}. By
	Proposition~\ref{prop:B-convexity}(1), the two-point conditions
	imply \eqref{eq:B-two-convex} with $\xi=\frac1\tau+\cvx$, which is
	positive if and only if $\lambda\tau<1$, i.e.~$\tau<\tau_o$.
	Theorems~\ref{thm:VMS-antisym} and~\ref{thm:saddle-sEVI} then
	provide, for every $X^0_\tau\in \DDD$, a unique solution of (VMS)
	solving \eqref{eq:D-SEVI2}, and we conclude as in the proof of
	Theorem~\ref{thm:main_existence} by applying
	Theorems~\ref{thm:tau_to_zero}
	and~\ref{thm:strong-solutions}(\ref{item:strong-contraction}).
\end{proof}
\begin{remark}[The compact case]
	\label{rem:compact-dissipative}
	Theorem~\ref{thm:existence-compact} provides strong solutions to
	\eqref{eq:SEVI2} starting from \emph{every} $x_0\in \DDD$ under
	compactness and convexity assumptions, with no dissipativity of
	$\sfb$, but without uniqueness. If $\sfb$ is in addition
	$\eta$-interaction dissipative, i.e.~\eqref{eq:S-lambda} holds,
	then Theorem~\ref{thm:strong-solutions} immediately upgrades that
	existence result: strong solutions are unique, satisfy the
	contraction estimate \eqref{eq:estimate-strong}, and enjoy the
	regularizing estimates \eqref{eq:mder-decay} and
	\eqref{eq:Lip-at-zero} (recall
	Remark~\ref{rem:which-assumptions}). If moreover
	\eqref{eq:b-sublevels-bis} and \eqref{eq:conditional-usc} hold,
	then for every $x_0\in \domainslope{}$ the \emph{whole} family of
	interpolants $\ol X_\tau$ converges to the solution with the rate
	\eqref{eq:tau-rate}, by Theorem~\ref{thm:tau_to_zero}: notice that
	the possible non-uniqueness of the discrete iterates plays no role
	there. 
\end{remark}
\begin{corollary}[Generation of the \sEVI\ flow]
	\label{cor:well-posed-flow}
	Under the assumptions of Theorem~\ref{thm:main_existence} or of
	Theorem~\ref{thm:antisym_existence}, there exists a unique
	\sEVI\ flow $\sfS_t:\overline\DDD\to\overline\DDD$ in the sense of
	Definition~\ref{def:weak-solutions}, satisfying the contraction
	property \eqref{eq:flow-contraction}. In particular, for every
	$x_0\in \overline\DDD$ there exists a unique weak solution to
	\eqref{eq:cSEVI2} starting from $x_0$.
\end{corollary}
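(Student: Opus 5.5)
The proof reduces Corollary~\ref{cor:well-posed-flow} to Theorem~\ref{thm:EVI-flow}, so the work is to check that theorem's hypotheses. These are the structural conditions \eqref{eq:S-lambda}, \eqref{eq:b-sublevels-bis} and \eqref{eq:conditional-usc}, the density \eqref{eq:domain-slope} of $\domainslope\cvx$ in $\DDD$, and the existence of a strong solution from every $x_0\in\domainslope\cvx$. Once these hold, Theorem~\ref{thm:EVI-flow} gives the unique \sEVI\ flow, the contraction \eqref{eq:flow-contraction}, and the uniqueness of weak solutions from every $x_0\in\overline\DDD$.

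\textbf{Structural conditions.} In the dissipative case, \eqref{eq:S-lambda}, \eqref{eq:b-sublevels-bis} and \eqref{eq:conditional-usc} are assumptions of Theorem~\ref{thm:main_existence}. In the antisymmetric case, \eqref{eq:antisymmetry} gives \eqref{eq:S-lambda} with $\eta=0$ and vanishing diagonal. Remark~\ref{rem:skew-symmetric} then shows that \eqref{eq:conditional-usc} follows from \eqref{eq:b-sublevels-bis}.

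\textbf{Existence of strong solutions.} This is claim (ii) of Theorem~\ref{thm:main_existence}, or of Theorem~\ref{thm:antisym_existence}. It is stated for $x_0\in\domainslope{}$. Under the two-point convexity conditions, Proposition~\ref{prop:local-global-slope} together with Remark~\ref{rem:local-global-weaker} gives $\skslope\cvx x=\slope x$ (see the discussion before Theorem~\ref{thm:main_existence}). Hence $\domainslope{}=\domainslope\cvx$, and a strong solution exists for every initial datum in $\domainslope\cvx$.

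\textbf{Density of $\domainslope\cvx$.} I expect this to be the only step needing care, and I would obtain it from Corollary~\ref{cor:slope-density}. Its hypotheses are:
\begin{itemize}
\item \eqref{eq:b-regular}, \eqref{eq:b-sublevels} and \eqref{eq:b-dissipative}, all already verified above;
\item for every $\bar x$, some $\bar\tau>0$ with $\bar\tau^{-1}+\cvx>0$ for which \eqref{eq:B-two-convex} holds. This follows from Proposition~\ref{prop:B-convexity}(1), applied to the two-point conditions of either theorem for any small $\bar\tau$;
\item for every $\bar x$, one-step solutions of \eqref{eq:D-SEVI2} along some sequence $\tau_\ell\down0$. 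These come from claim (i) of the respective theorem, which gives solutions for every $\tau\in(0,\tau_o)$ with $\tau_o>0$.
\end{itemize}
One point to watch is that Corollary~\ref{cor:slope-density} relies on Lemma~\ref{le:MY-convex} for the quadratic lower bound \eqref{eq:MY}. That lemma needs only \eqref{eq:B-two-convex} and the completeness of sublevels, both available here.

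With the three conditions in place, Theorem~\ref{thm:EVI-flow} applies and yields every claim of Corollary~\ref{cor:well-posed-flow}.
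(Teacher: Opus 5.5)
Your proposal is correct and follows the same route as the paper. You reduce to Theorem~\ref{thm:EVI-flow}, obtain the density of $\domainslope\cvx$ from Corollary~\ref{cor:slope-density} (using the two-point convexity \eqref{eq:B-two-convex} and the one-step solutions of claim (i)), and take the strong solutions from claim (ii). You also spell out the structural hypotheses in the antisymmetric case and the identification $\domainslope{}=\domainslope\cvx$, which the paper leaves to the proof of Theorem~\ref{thm:antisym_existence} and the discussion preceding Theorem~\ref{thm:main_existence}.
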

\begin{proof}
	The convexity and completeness assumptions of  Theorems~\ref{thm:main_existence} and \ref{thm:antisym_existence}
	guarantee, via Proposition~\ref{prop:B-convexity}(1), the two-point
	convexity \eqref{eq:B-two-convex} at every base point
	$\bar x\in \DDD$; since both theorems also provide, for every
	$\bar x\in \DDD$ and every $\tau\in(0,\tau_o)$, a one-step solution
	of the discrete $\sfb^\cvx$-EVI \eqref{eq:D-SEVI2}, the density
	condition \eqref{eq:domain-slope} follows from
	Corollary~\ref{cor:slope-density}.
	Theorems~\ref{thm:main_existence} and~\ref{thm:antisym_existence}
	provide a strong solution starting from every initial datum in
	$\domainslope\cvx$, so that all the assumptions of
	Theorem~\ref{thm:EVI-flow} are satisfied.
\end{proof}

\subsection{Running Examples: Strong and Weak EVI Solutions}
\begin{itemize}
    \item \textbf{Example (I): Monotone operator.}
	In the monotone operator setting,
	\begin{equation*}
		\skd yx:=\langle \sfA (x),y-x\rangle,
        \quad \cvx=0\,.
	\end{equation*}
	 Assume that $\<\sfA(x)-\sfA(y),x-y> \ge -\eta \norm{x-y}^2$ for some $\eta\in\R$ (with $\eta\le 0$ if $\sfA$ is monotone), so that $\sfb$ is $\eta$-interaction dissipative and $\lambda=\eta$; notice also that $\skslope0x=\norm{\sfA(x)}$, so that the dual form \eqref{eq:cSEVI2} is tested on the domain of $\sfA$. The strong form \eqref{eq:SEVI2} writes
     \begin{align*}
        \frac12 \ddt \norm{\sfx(t)-y}^2 \le \<\sfA(\sfx(t)),y-\sfx(t)> \quad \text{for all }y\in \DDD\,,
     \end{align*}
     while the weak dual form \eqref{eq:cSEVI2} means $\sfx(t)$ satisfies in the sense of distributions
     \begin{align*}
       \frac12  \ddt \norm{\sfx(t)-y}^2 \le \<\sfA(y),y-\sfx(t)> +\eta \norm{\sfx(t)-y}^2\quad \text{for all }y\in  \domainslope0\nc\,.
     \end{align*}
     Notice that if the domain of $\sfA$ requires some regularity, the weak dual form allows for less regularity in $\sfx$ because the operator $\sfA$ is applied only to $y$, whereas in the strong form it is applied to $\sfx$.
    \item \textbf{Example (II): Single species gradient flow.} Choosing $\skd yx:=\varphi(y)-\varphi(x)$
	\eqref{eq:SEVI2} corresponds to the 
	{\rm EVI$_{\cvx}$} formulation of metric gradient flows
	\cite{AGS08,Muratori-SavareI}. In this setting, $\eta=0$, and the strong \eqref{eq:SEVI2} and weak dual \eqref{eq:cSEVI2} evolutions are the same,
    \begin{align*}
       \frac{1}{2} \ddt \dX^2(\sfx(t),y) + \frac{\kappa}{2}\dX^2(\sfx(t),y) \le \varphi(y)-\varphi(\sfx(t)) \quad \text{for all }y\in\DDD\,,
    \end{align*}
    because $\lskd yx = \varphi(y)-\varphi(x) - \frac\cvx2 \dX^2(y,x) = -\sfb^{-\cvx}(x,y)= \cblskd yx$.
    \item \textbf{Example (III): Min-max gradient flows.} With  $\skd yx := F(y^{(1)},x^{(2)})-F(x^{(1)},y^{(2)})$, again we have $\eta=0$. The strong \eqref{eq:SEVI2} and weak dual \eqref{eq:cSEVI2} evolutions are again the same, because $\sfb^\cvx(y,\sfx)=-\sfb^{\cvx}(x,y)=\cblskd y\sfx$:
    \begin{align*}
        \frac12 \ddt \dX^2(\sfx(t),y) + \frac{\cvx}2\dX^2(\sfx(t),y) \le F(y^{(1)},\sfx^{(2)}(t))-F(\sfx^{(1)}(t),y^{(2)}) \quad \text{for all } y\in \domainslope\cvx\,.
    \end{align*}
    \item \textbf{Example (IV): Multispecies gradient flows.} Here, $\skd yx  = \sum_{i=1}^N F^{(i)}(y^{(i)},x^{(-i)})-F^{(i)}(x^{(i)},x^{(-i)})$ and the conjugate bifunction $\cblskdname$ is given by
    \begin{align*}
        \cblskd yx = \sum_{i=1}^N F^{(i)}(y^{(i)},y^{(-i)}) -F^{(i)}(x^{(i)},y^{(-i)}) + \frac{\cvx_{*}}{2} \dX^2(x,y)\,.
    \end{align*}
    We observe that \eqref{eq:cSEVI2} is given by  
    \begin{align*}
        \frac 12 \ddt \dX^2(\sfx(t),y) + \frac{\cvx-2\eta}{2} \dX^2(\sfx(t),y) \le \sum_{i=1}^N F^{(i)}(y^{(i)},y^{(-i)}) -F^{(i)}(x^{(i)},y^{(-i)}) \quad  \text{for all } y\in \domainslope\cvx\nc\,,
    \end{align*}
    which parallels the structure of Example (II), with a different convexity parameter and with $F^{(i)}$ dependent on the test point $y^{(-i)}$. The function
    $x \mapsto  \sum_{i=1}^N F^{(i)}(y^{(i)},y^{(-i)}) -F^{(i)}(x^{(i)},y^{(-i)})$ is $\cvx$-barycentrically concavelike, as is $x\mapsto \varphi(y)-\varphi(x)$.
\end{itemize}

\section{Multispecies Systems}\label{sec:multispecies}
In this section, we provide additional results and examples for multispecies coupled gradient flows, including a proof that $\eta\ge 0$ (Section~\ref{sec:role-eta-multispecies}), and that the steady state of the system is the unique Nash equilibrium of the corresponding energy functionals (Section~\ref{sec:multispecies-Nash}). Then we discuss multispecies gradient flows in Euclidean space (Section~\ref{sec:Euclidean_flows}) and the Wasserstein-2 metric (Section~\ref{sec:W2-flow}). In both settings, we show that the zeroth-order notion of monotonicity that we propose in Definition~\ref{def:monotonicity_zeroth_order} implies the respective first order conditions. In the Euclidean setting, we show how coupling can worsen overall dissipativity, and in the Wasserstein-2 setting we provide an example of energy functionals with a commonly-analyzed nonlocal interaction term.

\subsection{Properties of Multispecies Systems in General Metric Spaces}\label{sec:coupled_GF}
In the setting where $\sk$ specifies a coupled $N$-species gradient flow system, where each species aims to minimize $F^{(i)}:\DDD \to \R$, with $\DDD:=\prod_{i=1}^n \DDD^{(i)}$ and $\sk:\DDD\times\DDD\to \R$ taking the form
\begin{align}\label{eq:sk_coupled_gradient_flow}
    \skd yx = \sum_{i=1}^N F^{(i)}(y^{(i)},x^{(-i)})-F^{(i)}(x^{(i)},x^{(-i)})\,.
\end{align}
In the game theory literature, strong monotonicity of a game is a sufficient condition for existence of a Nash equilibrium. In Euclidean space, a game $(F^{(i)})$ is $-\lambda$-monotone if 
\begin{align}\label{eq:Euclidean_monotonicity}
    \sum_{i=1}^N \<\nabla_{x^{(i)}}F^{(i)}(x)-\nabla_{x^{(i)}}F^{(i)}(y), x^{(i)}-y^{(i)}> \ge -\lambda \norm{x-y}^2 \quad \text{for all }x,y\in\R^d\,.
\end{align}
When $\lambda<0$, it can be shown that coupled gradient descent converges to the unique Nash equilibrium of the game $(F^{(i)})$; however, this definition relies on a notion of gradient of $F^{(i)}$. The $\eta$ interaction dissipativity  and barycentrically $\cvx$-convexlike conditions for the dual equilibrium problem point toward a zeroth-order condition for monotonicity. Recall from Section~\ref{sec:contributions} our proposed  zeroth order definition of monotonicity for coupled games in  metric spaces. 
\begin{definition}[Definition~\ref{def:monotonicity_zeroth_order}: Game-Theoretic Monotonicity]
    A set of energy functionals $(F^{(i)})$ are $(\eta,\cvx)$-monotone, with rate $\lambda = \eta-\cvx$,  if
    \begin{itemize}
        \item $\cvx$ convexity: $y\mapsto \bif(y,x)$ is barycentrically $\cvx$-convexlike with respect to a system $\cB$.
        \item $\eta$ dissipativity: $\skd yx + \skd xy \le \eta \dX^2(x,y)$.
    \end{itemize}
    Equivalently, each 
    $F^{(i)}(\cdot,x^{(-i)})$
    is barycentrically $\cvx$-convexlike and for every $x,y \in \DDD$,
        \begin{align*}
    \sum_{i=1}^N F^{(i)}(y^{(i)},x^{(-i)}) - F^{(i)}(x^{(i)},x^{(-i)})+
   F^{(i)}(x^{(i)},y^{(-i)})-  F^{(i)}(y^{(i)},y^{(-i)}) \le \eta \dX^2(x,y)\,.
    \end{align*}
\end{definition}
In Lemma~\ref{lem:Euclidean_monotonicity}, we show that Definition~\ref{def:monotonicity_zeroth_order} implies the standard Euclidean game theory monotonicity inequality \eqref{eq:Euclidean_monotonicity} when $\cB_2$ includes linear interpolations, and in Lemma~\ref{lem:W2_monotonicity} we show Definition~\ref{def:monotonicity_zeroth_order} implies the $-\lambda$-monotonicity as defined for coupled Wasserstein-2 gradient flows in \cite{conger_monotone_2025} when $\cB_2$ includes geodesics for $J=2$. In the Wasserstein case, the set of curves needed for first-order monotonicity are the geodesics, $\cG_2$, which are a  2-point, rather than $J$-point, interpolation. For many common functionals such as potential, self-interacting, and entropy functionals, geodesic convexity implies convexity along generalized geodesics \cite[Section 4.2.2]{AGS08}, which can then be used to show barycentric convexity.

The reverse direction in both Euclidean and Wasserstein-2 spaces does not necessarily hold. This is because the zeroth order condition requires an upper-bound on $F^{(i)}$, whereas the first order condition requires a lower bound on $\nabla F^{(i)}$, and so the value of $\lambda$ obtained from the first order condition can be tighter. See Example~\ref{ex:zeroth_vs_first_order_monotonicity} in Section~\ref{sec:Euclidean_flows} for an example of this gap. 

\subsubsection{Interaction Dissipativity for Coupled Gradient Flow System}\label{sec:role-eta-multispecies}
The  parameter $\kappa$ specifies the convexity of each $F^{(i)}(\cdot, x^{(-i)})$, 
  and $\eta$ is the dissipativity lost (or gained, if $\eta<0$) from the coupling structure.  While our results are formulated for $\eta\in\R$ for the case of general $\sk$, we can show that in the coupled gradient flow setting, $\eta \ge 0$, that is, the coupling structure cannot improve system dissipativity in comparison with each species evolving separately. 
\begin{proposition}\label{prop:eta_ge0}
    Let $\sk$ take the coupled gradient flow form \eqref{eq:sk_coupled_gradient_flow}. Then $\eta\ge 0$.
\end{proposition}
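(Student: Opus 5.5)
The idea is to test the $\eta$-interaction dissipativity inequality \eqref{eq:bif-monotone} on pairs of points that differ in exactly one species. For such pairs the coupled structure \eqref{eq:sk_coupled_gradient_flow} makes the symmetrized sum $\skd yx+\skd xy$ vanish identically, while $\dX^2(x,y)>0$. Hence no negative $\eta$ can be admissible. Here ``$\eta\ge0$'' is read as: if \eqref{eq:bif-monotone} holds for $\sk$ with some $\eta$, then $\eta\ge0$, or equivalently the optimal interaction constant is nonnegative.

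\textbf{Step 1: choose the test pair.} Since $\DDD=\prod_i\DDD^{(i)}$ is a product, pick an index $i$ for which $\DDD^{(i)}$ contains two distinct points $a\ne a'$. The degenerate case where every $\DDD^{(i)}$ is a singleton makes $\DDD$ a single point; there the statement is vacuous (any $\eta$ works), and I would remark on this convention. Fix any $z\in\DDD$ and set $x:=(a,z^{(-i)})$ and $y:=(a',z^{(-i)})$. Both lie in $\DDD$ by the product structure, they satisfy $x^{(-i)}=y^{(-i)}$, and $x^{(j)}=y^{(j)}$ for $j\neq i$.

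\textbf{Step 2: compute the symmetrized sum termwise.} Expanding \eqref{eq:sk_coupled_gradient_flow},
\[
\skd yx+\skd xy=\sum_{j=1}^N\Big[F^{(j)}(y^{(j)},x^{(-j)})-F^{(j)}(x)+F^{(j)}(x^{(j)},y^{(-j)})-F^{(j)}(y)\Big].
\]
\begin{itemize}
\item For $j=i$: since $x^{(-i)}=y^{(-i)}$, we have $F^{(i)}(y^{(i)},x^{(-i)})=F^{(i)}(y)$ and $F^{(i)}(x^{(i)},y^{(-i)})=F^{(i)}(x)$, so the bracket is zero.
\item For $j\ne i$: since $y^{(j)}=x^{(j)}$, we have $F^{(j)}(y^{(j)},x^{(-j)})=F^{(j)}(x)$ and $F^{(j)}(x^{(j)},y^{(-j)})=F^{(j)}(y)$, so the bracket is again zero.
\end{itemize}
Thus $\skd yx+\skd xy=0$.

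\textbf{Step 3: conclude.} For this pair, \eqref{eq:bif-monotone} reads $0\le\eta\,\dX^2(x,y)$. Since $\dX^2(x,y)=\dXi^2(a,a')>0$ for the product metric, this forces $\eta\ge0$.

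There is no real obstacle here. The only points requiring care are the interpretation of the statement (as a constraint on admissible $\eta$) and the degenerate singleton case. Both rely on the product structure of $\DDD$, which guarantees that single-species perturbations stay in the domain.
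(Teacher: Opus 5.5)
Your proof is correct, and it takes a genuinely different and simpler route than the paper's.

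\textbf{The paper's route.} It fixes an arbitrary pair $x,y$ with $\dX(x,y)>0$ and symmetrizes over coordinate swaps. It writes $\mathsf b(y,x)+\mathsf b(x,y)=\sum_i G_i(x,y)$, where each $G_i$ changes sign under the swap $S^{(i)}$ of the $i$-th entries. For $N=2$, the single swap $S^{(1)}$ flips the whole sum, and the paper adds the two resulting dissipativity inequalities. For general $N$, it sums over all $2^N$ swapped pairs $S^A(x,y)$ and cancels each $G_i$ by pairing $A$ with $A\triangle\{i\}$. This needs every swapped pair to lie in $\DDD$ and to sit at the same distance, which uses the product form $\dX^2=\sum_i\dXi^2$.

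\textbf{Your route.} You pick a pair differing in a single species $i$. On such a pair $\mathsf b(y,x)$ collapses to $F^{(i)}(y)-F^{(i)}(x)$, a gradient-type antisymmetric bifunction as in Example (II), so the symmetrized sum vanishes identically. This handles every $N$ in one step with no averaging. It also uses only $\dX(x,y)>0$ for $x\neq y$, so it works for any metric on the product set, not just the $\ell^2$-type product metric.

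\textbf{Comparison.} The paper's orbit argument yields slightly more: for any pair $(x,y)$, the symmetrized sums over its swap orbit average to zero, so the orbit contains a pair at the same distance with nonnegative symmetrized sum. That extra information is not needed for the proposition. Your explicit treatment of the degenerate case, where every $\DDD^{(i)}$ is a singleton and no $\eta$ is excluded, is a point the paper leaves implicit when it chooses $x,y$ with $\dX(x,y)>0$.
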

\begin{proof}
    Suppose $\sfb$ is $\eta$-interaction dissipative according to \eqref{eq:bif-monotone},
    \begin{align*}
        \skd yx + \skd xy \le \eta \dX^2(x,y)\,, \qquad \forall\, x,y\in \DDD \,,
    \end{align*}
    and fix $x,y\in \DDD$ with $\dX(x,y)>0$. Let the swap operator $S^{(i)}:\DDD \times \DDD \to \DDD \times \DDD$, which switches the $i$-th entry in the two input elements, be defined as
    \begin{align*}
        S^{(i)}(x,y) = (\tilde x, \tilde y) \,, \qquad \tilde x^{(j)}= x^{(j)}, \ \tilde y^{(j)}=y^{(j)} \quad \forall j\ne i\,, \quad \text{and }\tilde x^{(i)}=y^{(i)}, \ \tilde y^{(i)} = x^{(i)}\,.
    \end{align*}
    Since $\DDD=\prod_i\DDD^{(i)}$ is a product, $S^{(i)}(x,y)\in\DDD\times\DDD$, and swapping coordinates leaves $\dX(x,y)$ unchanged. We rewrite
    \begin{align*}
       \skd yx+\skd xy&= \sum_{i=1}^N G_i(x,y)\,,\\
       G_i(x,y)&:=   F^{(i)}(x^{(i)},y^{(-i)})-F^{(i)}(x^{(i)},x^{(-i)}) +F^{(i)}(y^{(i)},x^{(-i)})- F^{(i)}(y^{(i)},y^{(-i)})\,,
    \end{align*}
    and note that each $G_i$ changes sign when its own coordinate is swapped: $G_i(S^{(i)}(x,y))=-G_i(x,y)$.

    We consider the case $N=2$ for simplicity. Then the single swap $S^{(1)}$ flips \emph{both} terms: $G_1$ by the identity above, and $G_2$ because species $1$ is the opponent of species $2$, so $S^{(1)}$ swaps precisely the two values $x^{(1)},y^{(1)}$ appearing in $G_2$. Hence $\skd yx+\skd xy$ is antisymmetric under $S^{(1)}$,
    \begin{align*}
        \big(\skd yx+\skd xy\big)+\big(\skd{\tilde y}{\tilde x}+\skd{\tilde x}{\tilde y}\big)=0\,,\qquad (\tilde x,\tilde y):=S^{(1)}(x,y)\,.
    \end{align*}
    Applying \eqref{eq:bif-monotone} to the pairs $(x,y)$ and $(\tilde x,\tilde y)$ and adding, the left-hand side vanishes while, since $\dX(\tilde x,\tilde y)=\dX(x,y)$, the right-hand side equals $2\eta\,\dX^2(x,y)$; therefore $0\le 2\eta\,\dX^2(x,y)$ and $\eta\ge 0$.

     For general $N$ one sums \eqref{eq:bif-monotone} over the $2^N$ pairs $S^A(x,y)$, $A\subset\{1,\dots,N\}$, where $S^A$ composes the swaps $S^{(i)}$, $i\in A$ (all pairs lie in $\DDD\times\DDD$ at the same distance $\dX(x,y)$, since $\dX^2=\sum_i\sfd^2_{\XXX^{(i)}}$): as $G_i\circ S^{A\triangle\{i\}}=-G_i\circ S^A$, pairing $A$ with $A\triangle\{i\}$ cancels the contributions of $G_i$ for every $i$, and $0\le 2^N\eta\,\dX^2(x,y)$ follows.
\end{proof}

\subsubsection{Steady State is Nash Equilibrium for Coupled Multispecies Systems}\label{sec:multispecies-Nash}

To show the existence of a Nash equilibrium, we have two options: proceed via the Dual Equilibrium Problem using Theorem~\ref{thm:equilibrium2}, and then appeal to Minty's Lemma to obtain a solution to the Primal Equilibrium Problem, or directly show existence for the Primal Equilibrium Problem using Theorem~\ref{thm:primal_soln}. Note that the second approach avoids convexity assumptions at the cost of requiring some compactness, and it only yields existence, not uniqueness. Here, we present both approaches in Corollary~\ref{cor:ss_is_Nash} and Corollary~\ref{cor:ss_is_Nash-2} below.

\begin{remark}[Steady states and Nash equilibria]\label{rem:steady-nash}
	Let $x_*\in\DDD$ be a Nash equilibrium of $(F^{(i)})$, i.e.\
	$\skd y{x_*}\ge0$ for every $y\in\DDD$. Then the constant curve
	$\sfx\equiv x_*$ is a steady state of \eqref{eq:SEVI2} in either of
	the following situations.
	\begin{enumerate}
		\item[\rm(i)] If $y\mapsto\skd y{x_*}$ is $\cvx$-barycentrically
		convexlike, then applying the two-point convexity inequality to
		the interpolation between $x_*$ and $y$, using $\skd{x_*}{x_*}=0$
		and $\skd\cdot{x_*}\ge0$, and letting the interpolation parameter
		tend to $0$, gives $\skd y{x_*}\ge\frac\cvx2\dX^2(y,x_*)$ for
		every $y$, which is the steady-state form of \eqref{eq:SEVI2}.
		\item[\rm(ii)] If $\sfb$ is $\eta$-interaction dissipative and the
		flow of \eqref{eq:SEVI2} is well posed,  and $x_*\in\domainslope\cvx$
		(automatic if $\cvx\le0$, or under the convexity of (i)),
		so that the strong solution issued from $x_*$ exists, then the solution $\sfx$
		issued from $x_*$ is stationary: testing \eqref{eq:SEVI2} at
		$y=x_*$ and using $\skd{\sfx(t)}{x_*}\ge0$ and the dissipativity,
		\begin{displaymath}
			\tfrac12\ddt\dX^2(\sfx(t),x_*)\le
			\skd{x_*}{\sfx(t)}-\tfrac\cvx2\dX^2(\sfx(t),x_*)
			\le\Big(\eta-\tfrac\cvx2\Big)\dX^2(\sfx(t),x_*),
		\end{displaymath}
		so that $\dX^2(\sfx(t),x_*)\le\rme^{(2\eta-\cvx)t}\dX^2(x_*,x_*)=0$.
		More generally, this holds whenever
		$\skd y{x_*}+\beta\,\dX^2(y,x_*)\ge0$ for every $y$ and some $\beta\in\R$.
	\end{enumerate}
	Conversely, if $\cvx\ge0$, every steady state $\bar x$ satisfies
	$\skd y{\bar x}\ge\frac\cvx2\dX^2(y,\bar x)\ge0$ and is therefore a
	Nash equilibrium; steady states and Nash equilibria then coincide.
\end{remark}

   \begin{corollary}[Steady State is Nash Equilibrium I]\label{cor:ss_is_Nash}
  Let $(F^{(i)})$ be $(\eta,\cvx)$-monotone according to Definition~\ref{def:monotonicity_zeroth_order} with respect to some $\cB$, with rate $\lambda=\eta-\kappa$ and $\kappa>2\eta_+$. Assume that the maps $y\mapsto \skd yx$, $x\in\DDD$, have complete sublevels and that, for every $y\in\DDD$, the map $x\mapsto\skd yx$ is upper hemicontinuous with respect to the $2$-barycentric maps $\cB_2$ of $\cB$.\nc 
   Then there exists a unique steady state $x_\infty\in \DDD$, which is also the unique Nash equilibrium of $(F^{(i)})$.
\end{corollary}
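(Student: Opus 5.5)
The plan is to apply the dual existence theory of Section~\ref{sec:dual} directly to $\sfa:=\sfb$, then translate between equilibria and steady states through Remark~\ref{rem:steady-nash}.

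\textbf{Step 1: existence and uniqueness of the dual solution.} First, $\sfb$ vanishes on the diagonal by construction \eqref{eq:sk_coupled_gradient_flow}, so \eqref{eq:diagonal} holds. By $(\eta,\cvx)$-monotonicity, $\sfb$ is $\eta$-interaction dissipative. The maps $y\mapsto\skd yx$ are jointly barycentrically $\cvx$-convexlike with respect to $\cB$, with complete sublevels. Since $\xi:=\cvx>2\eta_+$, Theorem~\ref{thm:equilibrium2} yields a unique $y_*\in\DDD$ solving \eqref{eq:equilibrium'}.

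\textbf{Step 2: the dual solution is the unique Nash equilibrium.} The upper hemicontinuity assumption is exactly condition (2) of Lemma~\ref{le:metric-Minty}. Corollary~\ref{cor:primal-consequences}(ii) then shows that $y_*$ also solves \eqref{eq:equilibrium}, and that it is the unique primal solution. A solution of \eqref{eq:equilibrium} for $\sfb$ of the form \eqref{eq:sk_coupled_gradient_flow} is a Nash equilibrium: test with $y$ differing from $y_*$ only in the $i$-th coordinate, so that all other summands vanish. Conversely, summing the Nash inequalities over $i$ gives $\skd y{x_*}\ge0$, so every Nash equilibrium solves \eqref{eq:equilibrium}. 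Hence $y_*$ is the unique Nash equilibrium.

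\textbf{Step 3: Nash equilibria and steady states coincide.} Since $\cvx>2\eta_+\ge0$, the converse part of Remark~\ref{rem:steady-nash} shows that every steady state $\bar x$ satisfies $\skd y{\bar x}\ge\frac\cvx2\dX^2(y,\bar x)\ge0$. So every steady state is a Nash equilibrium, and therefore equals $y_*$.

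In the other direction, Remark~\ref{rem:steady-nash}(i) applies because $y\mapsto\skd y{y_*}$ is $\cvx$-barycentrically convexlike. It gives $\skd y{y_*}\ge\frac\cvx2\dX^2(y,y_*)$, so the constant curve $\sfx\equiv y_*$ satisfies \eqref{eq:SEVI2}, whose left-hand side is zero. Thus $x_\infty:=y_*$ is a steady state, and by the previous paragraph it is the only one.

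The main point needing care is the meaning of ``steady state''. I take it to be a constant strong solution, i.e.\ a point $\bar x$ with $\lskd y{\bar x}\ge0$ for all $y$. With this reading, uniqueness follows from the Nash characterization above, not from the contraction estimate. If instead one wants $x_\infty$ to be the limit of the flow, then $\lambda=\eta-\cvx<0$ follows from $\cvx>2\eta_+\ge\eta$. Theorem~\ref{thm:strong-solutions} then makes every strong solution converge exponentially to the stationary solution $y_*$, which gives the same identification under the well-posedness assumptions.
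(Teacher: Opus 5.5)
Your proposal is correct and follows essentially the same route as the paper. You use Theorem~\ref{thm:equilibrium2} for the unique dual solution, then Minty's lemma together with Proposition~\ref{prop:primal-implies-dual} (invoked in packaged form via Corollary~\ref{cor:primal-consequences}(ii)) to make it the unique primal solution, and finally Remark~\ref{rem:steady-nash} to identify it with the unique steady state. Your explicit check that solutions of \eqref{eq:equilibrium} for the bifunction \eqref{eq:sk_coupled_gradient_flow} coincide with Nash equilibria (single-coordinate test points in one direction, summing over $i$ in the other) usefully fills in a step the paper only asserts.
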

\begin{proof}     From Theorem~\ref{thm:equilibrium2}, there exists a unique $y_*\in\DDD$ such that 
    \begin{align*}
        \sum_{i=1}^N F^{(i)}(y^{(i)}_*,x^{(-i)})- F^{(i)}(x^{(i)},x^{(-i)})\le 0 \quad \text{for all }x\in\DDD\,.
    \end{align*}
    In order for $y_*$ to be a solution to the primal problem, we need that for $\bary x t\in\cB_2$ such that $\bary x {(1,0)} =y_*$ and $\bary x {(0,1)}=y$, we have $t \mapsto \sum_{i=1}^N F^{(i)}(y^{(i)},\bary x {(1-t,t)}^{(-i)})- F^{(i)}(\bary x {(1-t,t)}^{(i)},\bary x {(1-t,t)}^{(-i)})$ is upper-hemicontinuous at $t=0$ for all $y\in\DDD$, which we obtain from the upper-hemicontinuity assumption.  By Lemma~\ref{le:metric-Minty}, $y_*$ solves the primal problem. Conversely, every solution $x^*$ of the primal problem is a solution of the dual problem by Proposition~\ref{prop:primal-implies-dual}, since the $\cvx$-barycentric convexity of $y\mapsto\skd yx$ yields the radial convexity \eqref{eq:radial-convex} with $\xi=\cvx\ge 2\eta_+$; the uniqueness part of Theorem~\ref{thm:equilibrium2} then gives $x^*=y_*$. Thus $x^*:=y_*$ is the unique solution of the primal problem and satisfies
    \begin{align*}
    \sum_{i=1}^N F^{(i)}(y^{(i)},x^{*(-i)}) - F^{(i)}(x^{*(i)},x^{*(-i)})\ge 0 \quad \text{for all } y\in \DDD\,,
    \end{align*}
    showing that $x^*\in\DDD$ is the unique Nash equilibrium for $(F^{(i)})$. 

    Finally, since $y\mapsto\skd y{x^*}$ is $\cvx$-barycentrically convexlike, Remark~\ref{rem:steady-nash}{\rm(i)} shows that $x^*$ is a steady state; and, since $\cvx>2\eta_+\ge0$, the converse in Remark~\ref{rem:steady-nash} shows that every steady state is a Nash equilibrium, hence coincides with $x^*$. Therefore $x^*$ is the unique steady state.
\end{proof}

   \begin{corollary}[Steady State is Nash Equilibrium II]\label{cor:ss_is_Nash-2}
   Assume $(F^{(i)})$ satisfies for every $J\ge 2$, $x\in\DDD$, $\baryname y\in\cB_J$ and $\aalpha\in \simplex {J}$,
     \begin{align*} 
        \sum_{i=1}^N F^{(i)}(\bary {y^{(i)}} \aalpha,x^{(-i)})\le
        \max\left\{\sum_{i=1}^N F^{(i)}(\baryname {y^{(i)}}_j ,x^{(-i)}):
	j\in \{1,\cdots,J\},
	\alpha_j>0\right\}
    \end{align*}
    Further, assume that the superlevel sets
    \begin{align*}
        \sigma^+(y):=\left\{x\in \DDD:
		 \sum_{i=1}^N F^{(i)}(y^{(i)},x^{(-i)})\ge \sum_{i=1}^N F^{(i)}(x^{(i)},x^{(-i)})\right\}
    \end{align*} are
		closed in $\DDD$ and at least one of them is compact. 
        Assume moreover that, for every $y\in\DDD$, the map $x\mapsto\sum_{i=1}^N F^{(i)}(y^{(i)},x^{(-i)})-F^{(i)}(x^{(i)},x^{(-i)})$ is upper hemicontinuous 
        and $\cB$ is continuous.
   Then there exists a Nash equilibrium $x^*\in\DDD$ of $(F^{(i)})$; if in addition $\sfb$ is $\eta$-interaction dissipative with $\cvx\ge0$ and the flow of \eqref{eq:SEVI2} is well posed, then $x^*$ is also a steady state, and conversely every steady state is a Nash equilibrium.
\end{corollary}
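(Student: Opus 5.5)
The existence part follows from Theorem~\ref{thm:primal_soln} applied to $\sfa:=\sfb$, with $\sfb$ given by \eqref{eq:sk_coupled_gradient_flow}.

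First, $\sfb(x,x)=0$ for every $x\in\DDD$, so the diagonal condition \eqref{eq:diagonal} holds. Next, for fixed $x$, the map $y\mapsto\skd yx$ differs from $y\mapsto\sum_i F^{(i)}(y^{(i)},x^{(-i)})$ only by the constant $-\sum_iF^{(i)}(x^{(i)},x^{(-i)})$. The assumed max-inequality is therefore exactly barycentric quasi-convexlikeness \eqref{eq:quasi-convex} of $y\mapsto\skd yx$. Since the system $\cB$ is common to all $x$, these maps are jointly barycentrically quasi-convexlike, which is condition~1.

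For condition~2, I would use the assumed upper hemicontinuity of $x\mapsto\skd yx$ along the (continuous) maps of $\cB$. If only upper semicontinuity is meant, the remark after Theorem~\ref{thm:primal_soln} covers it, because $\cB$ is continuous. Condition~3 is verbatim the hypothesis on $\sigma^+(y)=\{x:\skd yx\ge0\}$. Theorem~\ref{thm:primal_soln} then gives $x^*$ with $\skd y{x^*}\ge0$ for all $y\in\DDD$.

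To see that $x^*$ is a Nash equilibrium, fix $i$ and choose $y$ with $y^{(j)}=x^{*(j)}$ for $j\ne i$. This is admissible because $\DDD$ is a product. All terms with $j\ne i$ vanish, leaving $F^{(i)}(y^{(i)},x^{*(-i)})\ge F^{(i)}(x^{*(i)},x^{*(-i)})$.

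For the second part, assume in addition $\eta$-interaction dissipativity, $\cvx\ge0$ and well-posedness, and invoke Remark~\ref{rem:steady-nash}(ii). The Nash property gives $\skd y{x^*}\ge0$, so the hypothesis there holds with $\beta=0$. The remaining point is that the strong solution issued from $x^*$ exists, i.e.\ $x^*\in\domainslope\cvx$. Here I would use dissipativity:
\[
-\skd y{x^*}\ge-\eta\,\dX^2(x^*,y)+\skd{x^*}{y}.
\]
This is awkward, so I would instead read ``the flow is well posed'' as in Corollary~\ref{cor:well-posed-flow}, where weak solutions exist from every point. I would then rerun the computation of Remark~\ref{rem:steady-nash}(ii) with the dual/integral form \eqref{eq:integral} tested at points $y\in\domainslope\cvx$ approximating $x^*$, using density (Corollary~\ref{cor:slope-density}) and the contraction \eqref{eq:flow-contraction}. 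I expect this justification of stationarity, when $x^*$ may not lie in $\domainslope\cvx$, to be the main obstacle.

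The converse follows from the last statement of Remark~\ref{rem:steady-nash}. A steady state $\bar x$ satisfies \eqref{eq:SEVI2} with zero left-hand side, so $\skd y{\bar x}\ge\frac\cvx2\dX^2(y,\bar x)\ge0$ because $\cvx\ge0$. By the same coordinatewise choice of $y$ as above, $\bar x$ is a Nash equilibrium.
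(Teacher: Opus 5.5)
Your existence argument and your converse are the same as the paper's. For existence, you apply Theorem~\ref{thm:primal_soln} to $\sfb$, note that quasi-convexity is unaffected by the constant shift, and get the Nash property by varying a single coordinate in the product $\DDD$.

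The gap is the step you flag yourself: stationarity of the solution issued from $x^*$ when $\cvx>0$. The paper's proof simply cites Remark~\ref{rem:steady-nash}(ii). There, the hypothesis $x_*\in\domainslope\cvx$ only serves to produce a \emph{strong} solution from $x_*$. If ``the flow is well posed'' is understood to provide that, you can test \eqref{eq:SEVI2} at $y=x^*$ and conclude by Gr\"onwall with no slope condition. For $\cvx=0$ the condition is automatic anyway, since $\lskd y{x^*}=\skd y{x^*}\ge0$ gives $\skslope0{x^*}=0$.

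Under your weaker reading (only the weak flow of Corollary~\ref{cor:well-posed-flow}), the repair you sketch does not close. Testing the dual inequality at $y_n\in\domainslope\cvx$ with $y_n\to x^*$, you must bound $-\skd{\sfx(t)}{y_n}$ from above. The Nash inequality $\skd{\sfx(t)}{x^*}\ge0$ does not transfer to $y_n$ without a lower semicontinuity of $\sfb(z,\cdot)$, which is not assumed. The only available bound, \eqref{eq:above}, involves $\skslope\cvx{y_n}$, which may blow up as $y_n\to x^*$.

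Approximate the initial datum instead of the test point. Pick $y_n\in\domainslope\cvx$ with $y_n\to x^*$ (density, Corollary~\ref{cor:slope-density}). The strong solutions $\sfS_t y_n$ satisfy \eqref{eq:SEVI2} for \emph{every} test point in $\DDD$, in particular $y=x^*$. The computation of Remark~\ref{rem:steady-nash}(ii) then gives $\dX(\sfS_t y_n,x^*)\le\rme^{(\eta-\cvx/2)t}\dX(y_n,x^*)$. Combined with \eqref{eq:flow-contraction},
\[
\dX(\sfS_t x^*,x^*)\le\rme^{\lambda t}\dX(x^*,y_n)+\rme^{(\eta-\cvx/2)t}\dX(y_n,x^*)\longrightarrow0 .
\]

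Alternatively, the well-posedness Theorems~\ref{thm:main_existence}, \ref{thm:antisym_existence} and \ref{thm:existence-compact} all supply \eqref{eq:B-two-convex} at the base point $x^*$. Exactly as in the proof of Proposition~\ref{prop:local-global-slope} (or Remark~\ref{rem:steady-nash}(i)), combined with $\skd{z_t}{x^*}\ge0$, this yields $\skd y{x^*}\ge\frac\cvx2\dX^2(y,x^*)$. Hence $\skslope\cvx{x^*}=0$, and the constant curve is itself a strong solution.
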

\begin{proof}
    The existence of a Nash equilibrium $x^*$ follows directly from Theorem~\ref{thm:primal_soln}. The steady-state property, and its converse, then follow from Remark~\ref{rem:steady-nash}.
\end{proof}

\subsection{Euclidean Multispecies Gradient Flows}\label{sec:Euclidean_flows}
First, we show that monotonicity according to Definition~\ref{def:monotonicity_zeroth_order} (zeroth order) implies the classical game-theoretic notion of monotonicity for games in Euclidean spaces (first order). The converse does not hold in general. In Example~\ref{ex:zeroth_vs_first_order_monotonicity}, we illustrate a case where the first-order condition does not imply the zeroth order condition. Finally, in Example~\ref{ex:eta-effect}, we demonstrate that coupling between species \emph{in Euclidean space} causes contractivity to slow down.

\begin{lemma}[Monotone Functions in $\R^d$]\label{lem:Euclidean_monotonicity}
    Let $\XXX^{(i)}=\R^{d_i}$, $\XXX=\R^d$ with $d=\sum_i d_i$, and $\dXi$ be the Euclidean norm. If $(F^{(i)})$ are $(\eta,\kappa)$-monotone with respect to linear interpolations according to Definition~\ref{def:monotonicity_zeroth_order}, and $F^{(i)}(\cdot,x^{(-i)}) \in C^1(\R^{d_i};\R)$ for all $i$, then the following classical definition of monotonicity holds:
    \begin{align*}
       \sum_{i=1}^N \<\nabla_{x^{(i)}} F^{(i)}(x^{(i)},x^{(-i)})-\nabla_{y^{(i)}} F^{(i)}(y^{(i)},y^{(-i)}),  x^{(i)}-y^{(i)}> \ge -\lambda \norm{x-y}^2\,.
   \end{align*}
\end{lemma}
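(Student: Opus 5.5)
The plan is to extract a first-order inequality from each of the two zeroth-order conditions, and then add the two.

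\textbf{Step 1 (convexity).} Fix $x$ and $y$ and write $\sfb(\cdot,\cdot)$ for the bifunction \eqref{eq:sk_coupled_gradient_flow}. Along the linear interpolation $z_t=(1-t)x+ty$ we have $\skd xx=0$. Barycentric $\kappa$-convexity of $z\mapsto\skd zx$ with $J=2$ gives
\[
\skd{z_t}x\le t\,\skd yx-\tfrac\kappa2 t(1-t)\norm{x-y}^2 .
\]
Divide by $t>0$ and let $t\downarrow0$. The left side is $\sum_i \big(F^{(i)}(z_t^{(i)},x^{(-i)})-F^{(i)}(x^{(i)},x^{(-i)})\big)/t$. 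Since each $F^{(i)}(\cdot,x^{(-i)})$ is $C^1$, it converges to $\sum_i\langle\nabla_{i}F^{(i)}(x),y^{(i)}-x^{(i)}\rangle$. This yields
\[
\sum_i\langle\nabla_iF^{(i)}(x),y^{(i)}-x^{(i)}\rangle\le \skd yx-\tfrac\kappa2\norm{x-y}^2 .
\]
Exchanging the roles of $x$ and $y$ gives
\[
\sum_i\langle\nabla_iF^{(i)}(y),x^{(i)}-y^{(i)}\rangle\le \skd xy-\tfrac\kappa2\norm{x-y}^2 .
\]

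\textbf{Step 2 (dissipativity).} Add the two inequalities and use $\skd yx+\skd xy\le\eta\norm{x-y}^2$. This gives
\[
-\sum_i\langle\nabla_iF^{(i)}(x)-\nabla_iF^{(i)}(y),x^{(i)}-y^{(i)}\rangle\le(\eta-\kappa)\norm{x-y}^2=\lambda\norm{x-y}^2 .
\]
This is exactly the claimed inequality. Note that only partial differentiability of $F^{(i)}$ in its own variable is needed, since every difference quotient varies only $x^{(i)}$ with $x^{(-i)}$ fixed.

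\textbf{Main obstacle.} The only delicate point is the limit in Step 1. The difference quotients of the right-hand side pose no problem, because that side is affine in $t$ up to the quadratic term. The left-hand limit is a one-sided directional derivative, and it exists and equals the gradient pairing by the $C^1$ assumption. One must also check that the linear interpolations of the product space act componentwise, so that $z_t^{(i)}=(1-t)x^{(i)}+ty^{(i)}$. This holds for the canonical convex combinations in $\R^d$ (Example~\ref{ex:interp_vector_space}).
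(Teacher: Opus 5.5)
Your proof is correct and follows the paper's argument. Like the paper, you extract the first-order inequality comparing $(y,x)$ with $(x,x)$ from the two-point $\kappa$-convexity along linear interpolations, swap $x$ and $y$, add the two inequalities, and apply $\eta$-interaction dissipativity to get the bound with $\lambda=\eta-\kappa$. Your explicit justification of the limit $t\downarrow0$, using the $C^1$ assumption in each species' own variable, makes precise a step the paper leaves implicit.
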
 
\begin{proof}  
   To show this, we first use that $y \mapsto \skd yx$ is $\kappa$-barycentrically convex, selecting the family linear interpolations for $\cB_2$. The first-order $\kappa$ convexity inequality, comparing $(y,x)$ and $(x,x)$, gives
   \begin{align*}
        \sum_{i=1}^N F^{(i)}(y^{(i)},x^{(-i)}) - F^{(i)}(x^{(i)},x^{(-i)}) \ge \sum_{i=1}^N \bigg(F^{(i)}(x^{(i)},x^{(-i)}) - F^{(i)}(x^{(i)},x^{(-i)})
        \\
       + \nabla_{y^{(i)}} (F^{(i)}(y^{(i)},x^{(-i)}) - F^{(i)}(x^{(i)},x^{(-i)}))\big|_{y=x} (y^{(i)}-x^{(i)}) \bigg) + \frac{\cvx}{2} \norm{x-y}^2\,.
   \end{align*}
   which simplifies to
    \begin{align*}
        \sum_{i=1}^N F^{(i)}(y^{(i)},x^{(-i)}) - F^{(i)}(x^{(i)},x^{(-i)}) \ge \sum_{i=1}^N \nabla_{x^{(i)}} F^{(i)}(x^{(i)},x^{(-i)}) \cdot  (y^{(i)}-x^{(i)})  + \frac{\cvx}{2} \norm{x-y}^2\,.
   \end{align*}
   Repeating the calculation with test points $(x,y)$ and $(y,y)$ and summing, we have
   \begin{align*}
       &\sum_{i=1}^N F^{(i)}(y^{(i)},x^{(-i)}) - F^{(i)}(x^{(i)},x^{(-i)}) + F^{(i)}(x^{(i)},y^{(-i)}) - F^{(i)}(y^{(i)},y^{(-i)}) \\ 
       & \quad \ge 
        \sum_{i=1}^N \<\nabla_{x^{(i)}} F^{(i)}(x^{(i)},x^{(-i)})-\nabla_{y^{(i)}} F^{(i)}(y^{(i)},y^{(-i)}),  y^{(i)}-x^{(i)}> + \cvx \norm{x-y}^2\,.
   \end{align*}
   Now we apply the $\eta$ interaction dissipativity bound to the left-hand side and rearrange to obtain
   \begin{align*}
       \sum_{i=1}^N \<\nabla_{x^{(i)}} F^{(i)}(x^{(i)},x^{(-i)})-\nabla_{y^{(i)}} F^{(i)}(y^{(i)},y^{(-i)}),  x^{(i)}-y^{(i)}> \ge (\cvx-\eta) \norm{x-y}^2\,,
   \end{align*}
   which is exactly the first-order game theoretic monotonicity condition with $\lambda=\eta-\kappa$.

\end{proof}

\begin{example}[Zeroth- and first-order monotonicity discrepancy]\label{ex:zeroth_vs_first_order_monotonicity}
    This example illustrates the gap between first order and zeroth order monotonicity. Consider the two-player game in $X^{(1)}=\R^{d_1}$, $X^{(2)}=\R^{d_2}$ with energies
    \begin{align*}
    F^{(1)}(z) &= \frac{1}{2} z^{(1)^\top} Q_{1} z^{(1)} + z^{(1)^\top} P z^{(2)}\,, \qquad 
    F^{(2)}(z) = \frac{1}{2} z^{(2)^\top} Q_{2} z^{(2)} 
    \end{align*}
    The convexity parameter is $\kappa=\lambda_{\underline Q}$  (for symmetric $Q_1$, $Q_2$), where  $\lambda_{\underline Q}$ is the smallest eigenvalue among the eigenvalues of $Q_1$ and $Q_2$.
    The first-order monotonicity condition depends on the smallest eigenvalue of $M$, denoted $\lambda_{\underline M}$, where
    \begin{align*}
    \sum_{i=1}^2 \langle \nabla_{z^{(i)}} F^{(i)}(z) - \nabla_{z^{(i)}} F^{(i)}(y), z^{(i)}-y^{(i)} \rangle &= (z-y)^\top M (z-y)\ge \lambda_{\underline M}\norm{z-y}^2\,, \quad M = \begin{bmatrix}
        Q_1 & \frac{1}{2} P \\ \frac{1}{2} P^\top & Q_2 
    \end{bmatrix}\,.
\end{align*}
The interaction dissipativity condition is
\begin{align*}
&\sfb(y,z)+\sfb(z,y)\\
    & \ =\sum_{i=1}^2  F^{(i)}(y^{(i)},z^{(-i)})-  F^{(i)}(z^{(i)},z^{(-i)})+
  F^{(i)}(z^{(i)},y^{(-i)}) -F^{(i)}(y^{(i)},y^{(-i)})= -(z^{(1)}-y^{(1)})^\top P (z^{(2)}-y^{(2)}) \\
    &\ \le \sigma_{\max}(P)\,\norm{z^{(1)}-y^{(1)}}\,\norm{z^{(2)}-y^{(2)}} \le \tfrac12\sigma_{\max}(P) \norm{z-y}^2\,,
\end{align*}
where $\sigma_{\max}(P)=\norm{P}_{\mathrm{op}}$ is the largest singular value (equivalently, the operator norm) of $P$ and the bound is sharp, so that $\eta = \tfrac12\sigma_{\max}(P)$. According to the first-order monotonicity definition \eqref{eq:Euclidean_monotonicity}, $\lambda=-\lambda_{\ul M}$, where $\lambda_{\underline M}$ is the smallest eigenvalue of $M$, whereas according to the zeroth order definition (Definition~\ref{def:monotonicity_zeroth_order}), $\lambda=\eta-\kappa=\tfrac12\sigma_{\max}(P)-\lambda_{\underline Q}$. Recall Weyl's inequality: for symmetric matrices $A,B$ one has $\lambda_{\min}(A+B)\ge\lambda_{\min}(A)-\norm{B}_{\mathrm{op}}$. Since $M$ decomposes as $\operatorname{diag}(Q_1,Q_2)$ plus the off-diagonal coupling block $E$ (with $\norm{E}_{\mathrm{op}}=\tfrac12\sigma_{\max}(P)$), it gives $\lambda_{\underline M}\ge \lambda_{\underline Q}-\tfrac12\sigma_{\max}(P)$, so the first-order condition is always at least as tight, and strictly tighter in general.
\end{example}

The following example illustrates the role of $\eta$ interaction dissipativity; here it quantifies how much dissipativity is lost due to the coupling between the two players.
\begin{example}[Non-symmetric two-player game]\label{ex:eta-effect}
   This example shows the setting when $\eta>0$, that is, the coupling term causes contractivity to slow down compared to the convexity of each term independently. Let $\XXX^{(1)}=\XXX^{(2)}=\R$, and $a, b\in\R$, and consider the functionals
    \begin{align*}
        F^{(1)}(x^{(1)},x^{(2)}) = \frac{a}{2}x^{{(1)}^2} + x^{(1)} x^{(2)}\,, \qquad F^{(2)}(x^{(2)},x^{(1)}) = \frac{a}{2}(x^{(2)}-b)^2\,.
    \end{align*}
    The second species is independent of the first species, but the first species evolves based on the second. The first order notion of monotonicity, according to \eqref{eq:Euclidean_monotonicity}, is
    \begin{align*}
        &\< \begin{bmatrix}
            \nabla_{x^{(1)}}F^{(1)}(x^{(1)},x^{(2)}) - \nabla_{y^{(1)}} F^{(1)}(y^{(1)},y^{(2)}) \\
            \nabla_{x^{(2)}}F^{(2)}(x^{(2)},x^{(1)}) - \nabla_{y^{(2)}} F^{(2)}(y^{(2)},y^{(1)})
        \end{bmatrix}, \begin{bmatrix}
            x^{(1)}  - y^{(1)} \\ x^{(2)} - y^{(2)}
        \end{bmatrix} >\\
        &= 
        a (x^{(1)}-y^{(1)})^2 + (x^{(2)}-y^{(2)})(x^{(1)}-y^{(1)}) + a(x^{(2)}-y^{(2)})^2  \\
        &= \left(a-\frac{1}{2}\right)\norm{x-y}^2 + \frac{1}{2}(x^{(2)}-y^{(2)}+x^{(1)}-y^{(1)})^2 
         \ge \left(a-\frac{1}{2}\right)\norm{x-y}^2\,,
    \end{align*}
    and therefore $\lambda=-(a-1/2)$. Moreover, $y\mapsto b(y,x)= \sum_{i=1}^N F^{(i)}(y^{(i)},x^{(-i)})-F^{(i)}(x^{(i)},x^{(-i)})$ is $a$-barycentrically convex with respect to the linear barycentric system in $\R^2$, and checking monotonicity according to the zeroth-order definition, we have
    \begin{align*}
        &\sum_{i=1,2}  F^{(i)}(x^{(i)},y^{(-i)})-F^{(i)}(y^{(i)},y^{(-i)}) +  F^{(i)}(y^{(i)},x^{(-i)})-F^{(i)}(x^{(i)},x^{(-i)})\\
        & = x^{(1)} y^{(2)}-y^{(1)}y^{(2)} +y^{(1)} x^{(2)} -  x^{(1)} x^{(2)}
        = \frac{1}{2} \norm{x-y}^2 -\frac{1}{2}(x^{(1)}-y^{(1)}+x^{(2)}-y^{(2)})^2 \le \frac{1}{2} \norm{x-y}^2\,,
    \end{align*}
    which satisfies the $\eta$-dissipativity condition in Definition~\ref{def:monotonicity_zeroth_order} with $\eta=1/2$. Note that $\kappa=a$ must be large enough for $\lambda<0$ to obtain contraction; this reflects that the strength of the convexity must be large enough to overcome the  effects of the coupling. Additionally, any value of $\lambda\ge -a+1/2$ maintains the first-order monotonicity inequality, in the same way that a $\kappa$-convex function is also $\kappa-\varepsilon$ convex, for any $\varepsilon \ge 0$.
\end{example}

\subsection{Multispecies Coupled Wasserstein-2 Gradient Flows}\label{sec:W2-flow}
In this section, we discuss two different notions of monotonicity for evolving systems of measures, illustrate how the zeroth-order definition of monotonicity (Definition~\ref{def:monotonicity_zeroth_order}) implies the notion of monotonicity proposed in \cite{conger_monotone_2025} for coupled Wasserstein-2 gradient flows (Lemma~\ref{lem:W2_monotonicity}), formally show that the Wasserstein-2 gradient flow PDEs satisfy the EVI (Lemma~\ref{lem:W2GF_satisfies_EVI}), and illustrate these results with the example of a system of coupled nonlocal interaction equations (Example~\ref{ex:W2-interactions}).

\subsubsection{Wasserstein-2 Displacement Monotonicity}\label{sec:W2-monotonicity}

In this subsection, we focus on the displacement monotone setting, with $\cB_2=\mathcal{G}_2$ geodesic curves in Wasserstein-2 space.

\begin{lemma}[Wasserstein-2 Displacement Monotonicity]\label{lem:W2_monotonicity}
    Let $\XXX^{(i)}=\PP_2(\R^{d_i})$, $\dXi=\Wass_2$, and let each $F^{(i)}$ be smooth enough so that $\nabla_{x^{(i)}} \delta_{\rho^{(i)}} F^{(i)}[\rho^{(i)},\rho^{(-i)}](x^{(i)})$ is weakly well-posed. If $(F^{(i)})$ are $(\eta,\cvx)$-monotone with respect to geodesics $\cG_2$ according to Definition~\ref{def:monotonicity_zeroth_order}, then the monotonicity condition in \cite{conger_monotone_2025} holds,
    \begin{align*}
        \sum_{i=1}^N
        \int\< \nabla_{z_i} \delta_{\rho^{(i)}} F^{(i)}[\rho^{(i)},\rho^{(-i)}] (z_i)-\nabla_{v_i} \delta_{\mu^{(i)}} F^{(i)}[\mu^{(i)},\mu^{(-i)}] (v_i), z_i-v_i> \d \gamma^{(i)}(z_i,v_i)\\ \ge  -\lambda \sum_{i=1}^N \Wass_2^2(\rho^{(i)},\mu^{(i)})\,,
    \end{align*}
    where $\gamma^{(i)}\in\Gamma_o(\rho^{(i)},\mu^{(i)})$ are Wasserstein-2 optimal plans with marginals $\rho^{(i)}$ and $\mu^{(i)}$.
\end{lemma}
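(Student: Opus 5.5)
The proof mirrors Lemma~\ref{lem:Euclidean_monotonicity}, with linear interpolations replaced by Wasserstein-2 geodesics and Euclidean gradients replaced by Wasserstein gradients computed along optimal plans. Fix $\rho=(\rho^{(i)})$ and $\mu=(\mu^{(i)})$ in $\DDD$, and fix optimal plans $\gamma^{(i)}\in\Gamma_o(\rho^{(i)},\mu^{(i)})$. Let $\mu^{(i)}_t:=((1-t)z_i+tv_i)_\sharp\gamma^{(i)}$ be the induced geodesic from $\rho^{(i)}$ to $\mu^{(i)}$. The product curve $\mu_t=(\mu^{(i)}_t)_i$ is a $2$-barycentric map in $\cG_2$ joining $\rho$ to $\mu$. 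Along it we have $\dX^2(\rho,\mu)=\sum_i\Wass_2^2(\rho^{(i)},\mu^{(i)})$, and $\dX^2(\rho,\mu_t)=t^2\dX^2(\rho,\mu)$.

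\textbf{Step 1 (first-order convexity in $y$).} Apply the $\cvx$-convexlike inequality \eqref{eq:B2-convex} to $y\mapsto\skd y\rho$ along $\mu_t$. Since $\skd\rho\rho=0$, this gives
\begin{displaymath}
\frac{\skd{\mu_t}\rho}{t}\le \skd\mu\rho-\frac\cvx2(1-t)\dX^2(\rho,\mu).
\end{displaymath}
Next, pass to the limit as $t\downarrow0$. The left-hand side equals $\sum_i\big(F^{(i)}(\mu^{(i)}_t,\rho^{(-i)})-F^{(i)}(\rho^{(i)},\rho^{(-i)})\big)/t$. By the assumed well-posedness of $\nabla_{z_i}\delta_{\rho^{(i)}}F^{(i)}$, i.e.\ the standard first variation of an energy along a geodesic generated by a plan (as in \cite[Ch.~10]{AGS08}), this quotient converges to
\begin{displaymath}
\sum_i\int\big\langle\nabla_{z_i}\delta_{\rho^{(i)}}F^{(i)}[\rho](z_i),\,v_i-z_i\big\rangle\,\d\gamma^{(i)}(z_i,v_i).
\end{displaymath}
The conclusion of this step is
\begin{displaymath}
\skd\mu\rho\ge\sum_i\int\langle\nabla_{z_i}\delta_{\rho^{(i)}}F^{(i)}[\rho](z_i),v_i-z_i\rangle\,\d\gamma^{(i)}+\frac\cvx2\dX^2(\rho,\mu).
\end{displaymath}

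\textbf{Step 2 (symmetric inequality).} Repeat Step 1 with the roles of $\rho$ and $\mu$ exchanged, using the reversed geodesic, whose plan is the transpose of $\gamma^{(i)}$ and is still optimal. This bounds $\skd\rho\mu$ from below by $\sum_i\int\langle\nabla_{v_i}\delta_{\mu^{(i)}}F^{(i)}[\mu](v_i),z_i-v_i\rangle\,\d\gamma^{(i)}+\frac\cvx2\dX^2(\rho,\mu)$.

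\textbf{Step 3 (combine with dissipativity).} Add the two inequalities. The right-hand side becomes
\begin{displaymath}
-\sum_i\int\langle\nabla_{z_i}\delta_{\rho^{(i)}}F^{(i)}[\rho](z_i)-\nabla_{v_i}\delta_{\mu^{(i)}}F^{(i)}[\mu](v_i),\,z_i-v_i\rangle\,\d\gamma^{(i)}+\cvx\,\dX^2(\rho,\mu).
\end{displaymath}
The left-hand side is $\skd\mu\rho+\skd\rho\mu\le\eta\,\dX^2(\rho,\mu)$ by $\eta$-interaction dissipativity. Rearranging yields the claimed inequality with constant $\cvx-\eta=-\lambda$.

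The main obstacle is the derivative identification in Step 1. We need that $t^{-1}\big(F^{(i)}(\mu^{(i)}_t,\rho^{(-i)})-F^{(i)}(\rho^{(i)},\rho^{(-i)})\big)$ converges to the integral of the Wasserstein gradient against the displacement $v_i-z_i$ \emph{along the chosen plan} $\gamma^{(i)}$, which need not be induced by a map. I would invoke the smoothness hypothesis precisely in this form: interpret ``weakly well-posed'' as guaranteeing this chain rule, or at least the inequality ``$\liminf$ of the quotient is $\ge$ the integral'', which is all that is needed here. The argument then remains at the formal level adopted for this lemma in the paper. A minor side point is that the $\cG_2$ system must contain the geodesic generated by this particular $\gamma^{(i)}$. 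Since joint convexlike with respect to a subsystem only guarantees some geodesic, I would assume, as in the displacement convexity setting of McCann, that convexity holds along every such geodesic, or else choose $\gamma^{(i)}$ to be the plan inducing the geodesic supplied by $\cB_2$.
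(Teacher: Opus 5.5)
Your proposal is correct and follows essentially the same route as the paper's proof. Both apply the first-order convexity inequality along the geodesic from $\rho$ to $\mu$ and along the reversed geodesic (with the same plan $\gamma^{(i)}$, transposed), sum the two inequalities, and bound the left-hand side by $\eta$-interaction dissipativity. Your remarks on the chain rule along a plan that need not be induced by a map, and on the geodesic system having to contain the geodesic induced by $\gamma^{(i)}$, make explicit the formal smoothness assumptions the paper leaves implicit.
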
 
\begin{proof}
    Using the $\kappa$-convexity of $y \mapsto \skd yx$ along geodesics $\cG_2$, using $(\mu,\rho)$ and $(\rho,\rho)$ as comparison points in the convexity inequality,
    we have for $(\rho^{(i)}_s)_{s\in[0,1]}$ a geodesic between $\rho^{(i)}$ and $\mu^{(i)}$,
    \begin{align*}
        &\sum_{i=1}^N F^{(i)}(\mu^{(i)},\rho^{(-i)}) -F^{(i)}(\rho^{(i)},\rho^{(-i)}) 
        \ge \frac{d}{ds}\left.\sum_{i=1}^N F^{(i)}\left(\rho^{(i)}_s,\rho^{(-i)}\right) \right|_{s=0} 
        + \frac{\kappa}{2} \Wass_2^2(\rho,\mu) 
    \end{align*}
    which can be rewritten, with enough smoothness in $(F^{(i)})$, using the optimal maps $\gamma^{(i)}\in\Gamma_o(\rho^{(i)},\mu^{(i)})$, as
    \begin{align}\label{eq:W2_convexity_inequality}
        &\sum_{i=1}^N F^{(i)}(\mu^{(i)},\rho^{(-i)}) -F^{(i)}(\rho^{(i)},\rho^{(-i)}) \notag
        \\
        &\quad \ge
          \sum_{i=1}^N \bigg(  \int\< \nabla_{z_i} \delta_{\rho^{(i)}}F^{(i)}[\rho^{(i)},\rho^{(-i)}](z_i), v_i-z_i> \d \gamma^{(i)}(z_i,v_i)  +\frac{\kappa}{2} \Wass_2^2(\rho^{(i)},\mu^{(i)})\bigg)
    \end{align}
    Now using the convexity inequality again with test points $(\rho,\mu)$ and $(\mu,\mu)$ and summing,
        \begin{align*}
         \sum_{i=1}^N F^{(i)}(\mu^{(i)},\rho^{(-i)}) -F^{(i)}(\rho^{(i)},\rho^{(-i)}) + F^{(i)}(\rho^{(i)},\mu^{(-i)}) -F^{(i)}(\mu^{(i)},\mu^{(-i)})
         \ge \sum_{i=1}^N \bigg(  \kappa \Wass_2^2(\rho^{(i)},\mu^{(i)})
        \\-  \int\< \nabla_{z_i} \delta_{\rho^{(i)}} F^{(i)}[\rho^{(i)},\rho^{(-i)}] (z_i)-\nabla_{v_i} \delta_{\mu^{(i)}} F^{(i)}[\mu^{(i)},\mu^{(-i)}] (v_i), z_i-v_i> \d \gamma^{(i)}(z_i,v_i) \bigg) \,.
    \end{align*}
   \sloppy From the $\eta$-dissipativity of $\skd yx$, we upper-bound the left-hand side of the inequality by $\eta \sum_{i=1}^N \Wass_2^2(\rho^{(i)},\mu^{(i)})$ to obtain
    \begin{align*}
      & ( \eta-\kappa) \sum_{i=1}^N \Wass_2^2(\rho^{(i)},\mu^{(i)})
        \\ &\quad \ge \sum_{i=1}^N \bigg(
        -  \int\< \nabla_{z_i} \delta_{\rho^{(i)}} F^{(i)}[\rho^{(i)},\rho^{(-i)}] (z_i)-\nabla_{v_i} \delta_{\mu^{(i)}} F^{(i)}[\mu^{(i)},\mu^{(-i)}] (v_i), z_i-v_i> \d \gamma^{(i)}(z_i,v_i) \bigg) \,,
    \end{align*}
    and inverting the inequality results in exactly $-\lambda=\kappa-\eta$ monotonicity of $(F^{(i)})$ from \cite{conger_monotone_2025},
    \begin{align*}
        \sum_{i=1}^N
        \int\< \nabla_{z_i} \delta_{\rho^{(i)}} F^{(i)}[\rho^{(i)},\rho^{(-i)}] (z_i)-\nabla_{v_i} \delta_{\mu^{(i)}} F^{(i)}[\mu^{(i)},\mu^{(-i)}] (v_i), z_i-v_i> \d \gamma^{(i)}(z_i,v_i)\\ \ge  -\lambda \sum_{i=1}^N \Wass_2^2(\rho^{(i)},\mu^{(i)})\,.
    \end{align*}
\end{proof}
In the next result, we show formally that multispecies gradient flow dynamics of the form $\ddt \rho^{(i)}(t)=\div{\rho^{(i)}\nabla_{x^{(i)}}\delta_{\rho^{(i)}}F^{(i)}[\rho]}$ for all $i\in[1,\dots,N]$ satisfy 
\eqref{eq:SEVI2}.
\begin{lemma}[Multispecies Wasserstein-2 Gradient Flow Satisfies EVI (Formal)]\label{lem:W2GF_satisfies_EVI}
    Consider dynamics given by
    \begin{align}\label{eq:W2_dynamics}
        \ddt \rho^{(i)}=\div{\rho^{(i)}\nabla_{x^{(i)}}\delta_{\rho^{(i)}}F^{(i)}(\rho)} \quad \text{for all } i\in[1,\dots,N]
    \end{align}
    for functional $F^{(i)}:\DDD\to\R$.
    Let $(F^{(i)})$ be $(\eta,\cvx)$-monotone according to Definition~\ref{def:monotonicity_zeroth_order} with $\cB_2=\mathcal{G}_2$ given by Wasserstein-2 geodesics. Any (weak, in the sense of distributions) solution to \eqref{eq:W2_dynamics} satisfies the EVI, for all $\mu\in\DDD$ and all $t \ge 0$,
    \begin{align*}
       \frac12 \ddt \sum_{i=1}^N\Wass_2^2(\rho^{(i)}(t),\mu^{(i)}) + \frac \cvx 2  \sum_{i=1}^N\Wass_2^2(\rho^{(i)}(t),\mu^{(i)}) \le \sum_{i=1}^N F^{(i)}(\mu^{(i)},\rho^{(-i)}(t)) - F^{(i)}(\rho^{(i)}(t),\rho^{(-i)}(t)) \,.
    \end{align*}
\end{lemma}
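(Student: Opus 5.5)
The approach is formal and uses Otto calculus throughout. I will differentiate the squared Wasserstein distance along the flow species by species, bound the resulting first-order term with the $\cvx$-convexity of $y\mapsto\skd yx$ along geodesics, and then sum over $i$.

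First, for each species $i$ and fixed $\mu^{(i)}$, I use the standard formula for the derivative of $\Wass_2^2$ along a solution of a continuity equation $\partial_t\rho^{(i)}+\operatorname{div}(\rho^{(i)}v^{(i)})=0$ (cf. \cite[Theorem 8.4.7]{AGS08}):
\begin{displaymath}
\frac12\ddt\Wass_2^2(\rho^{(i)}(t),\mu^{(i)})=\int\langle v^{(i)}_t(z_i),z_i-w_i\rangle\,\d\gamma^{(i)}_t(z_i,w_i),\qquad \gamma^{(i)}_t\in\Gamma_o(\rho^{(i)}(t),\mu^{(i)}).
\end{displaymath}
By \eqref{eq:W2_dynamics} the velocity is $v^{(i)}_t=-\nabla_{x^{(i)}}\delta_{\rho^{(i)}}F^{(i)}[\rho(t)]$. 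Summing over $i$ gives
\begin{displaymath}
\frac12\ddt\sum_i\Wass_2^2(\rho^{(i)}(t),\mu^{(i)})=\sum_i\int\langle\nabla_{z_i}\delta_{\rho^{(i)}}F^{(i)}[\rho(t)](z_i),w_i-z_i\rangle\,\d\gamma^{(i)}_t.
\end{displaymath}

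Second, I invoke the convexity inequality already derived in the proof of Lemma~\ref{lem:W2_monotonicity}, namely \eqref{eq:W2_convexity_inequality}, with $\rho=\rho(t)$ and the same optimal plans $\gamma^{(i)}_t$. It bounds exactly the sum on the right-hand side by
\begin{displaymath}
\sum_i\Big(F^{(i)}(\mu^{(i)},\rho^{(-i)}(t))-F^{(i)}(\rho^{(i)}(t),\rho^{(-i)}(t))\Big)-\frac\cvx2\sum_i\Wass_2^2(\rho^{(i)}(t),\mu^{(i)}).
\end{displaymath}
That inequality comes from $\cvx$-convexity of $s\mapsto\skd{\rho_s}{\rho(t)}$ along the product geodesic from $\rho(t)$ to $\mu$. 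Differentiating at $s=0$ is allowed because $\skd{\rho(t)}{\rho(t)}=0$ and the curve is convex, so the difference quotient is monotone. Moving the $\cvx$-term to the left-hand side yields the claimed EVI.

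Only the $\cvx$-convexity part of the monotonicity is used. The $\eta$-dissipativity plays no role here and enters only later, in the contraction estimates.

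The main obstacle is the rigorous justification of the derivative formula and of the first variation along geodesics. This requires absolute continuity of $t\mapsto\rho^{(i)}(t)$ in $\Wass_2$, enough integrability of the velocity fields, and the identification of the directional derivative of $F^{(i)}(\cdot,\rho^{(-i)})$ at $s=0$ with the integral of $\nabla\delta F^{(i)}$ against the plan. In general one only has the inequality for some optimal plan, or the subdifferential characterization of \cite[Chapter 10]{AGS08}. Since the statement is formal, I would assume smoothness and existence of optimal maps, for instance by taking $\rho^{(i)}(t)$ absolutely continuous, so that plans are unique and both formulas hold with equality for a.e.\ $t$.
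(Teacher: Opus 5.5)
Your proposal is correct and is essentially the paper's proof. You compute $\frac12\ddt\Wass_2^2(\rho^{(i)}(t),\mu^{(i)})$ via the AGS derivative formula with velocity $-\nabla_{x^{(i)}}\delta_{\rho^{(i)}}F^{(i)}$, bound the resulting sum by the geodesic convexity inequality \eqref{eq:W2_convexity_inequality} from Lemma~\ref{lem:W2_monotonicity}, and move the $\cvx$-term to the left; as you note, only the convexity half of the monotonicity is used.

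Your worry about matching optimal plans is unnecessary. The AGS formula holds at a.e.\ $t$ for every optimal plan, so you can use directly the plan that induces the geodesic.
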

\begin{proof}
    Since $\mu\mapsto \skd \mu {\rho(t)}$ is $\kappa$-convex with respect to  geodesics we can use again \eqref{eq:W2_convexity_inequality}
    \begin{align*}
        &\frac \cvx 2 \sum_{i=1}^N \Wass_2^2(\rho^{(i)}(t),\mu^{(i)}) + \sum_{i=1}^N\int \<\nabla_{z_i} \delta_{\rho^{(i)}}F^{(i)}[\rho^{(i)}(t),\rho^{(-i)}(t)](z_i),v_i-z_i> \d \gamma^{(i)}(t,z_i,v_i)\\
        &\le \sum_{i=1}^N F^{(i)}(\mu^{(i)},\rho^{(-i)}(t)) - F^{(i)}(\rho^{(i)}(t),\rho^{(-i)}(t)) \quad \text{for all } \mu \in\DDD\,,
    \end{align*}
    where $\gamma^{(i)}(t) \in \Gamma_o(\rho^{(i)}(t),\mu^{(i)})$. Identifying 
    \begin{align*}
        \frac{1}{2} \ddt \Wass_2^2(\rho^{(i)}(t),\mu^{(i)}) = \int \<\nabla_{z_i} \delta_{\rho^{(i)}}F^{(i)}[\rho^{(i)}(t),\rho^{(-i)}(t)](z_i),v_i-z_i> \d \gamma^{(i)}(t,z_i,v_i)\,,
    \end{align*}
    using \cite[Lemma 8.4.7]{AGS08} allows us to conclude the EVI.
\end{proof}

The following example illustrates how Definition~\ref{def:monotonicity_zeroth_order} can be applied to a specific choice of coupling between two species in the Wasserstein-2 space.
\begin{example}[Attractive-Repulsive Kernels]\label{ex:W2-interactions} 
Let $\hat F^{(1)}:\mathcal{P}_2(\mathbb{R}^d)\to \mathbb{R}$ and $\hat F^{(2)}:\mathcal{P}_2(\mathbb{R}^d) \to \mathbb{R}$ be $\kappa_1>0$ and $\kappa_2>0$ displacement convex. Consider the setting with a nonlocal interaction kernel $W\in C^2(\mathbb{R}^d,\mathbb{R})$ that is a symmetric function $W(x)=W(-x)$, and energy functionals
\begin{align*}
    F^{(1)}(\rho^{(1)},\rho^{(2)}) = \hat F^{(1)}(\rho^{(1)}) + \iint W(x_1-x_2) \, \mathrm{d} \rho^{(1)}(x_1) \, \mathrm{d}\rho^{(2)}(x_2)\,, \qquad F^{(2)}(\rho) = \hat F^{(2)}(\rho^{(2)})\,.
\end{align*}
First we check the $\eta$ interaction dissipativity inequality from Definition~\ref{def:monotonicity_zeroth_order}. Only the cross-interaction term coupling the two species appears:
\begin{align*}
&\sfb(\rho, \mu)+\sfb(\mu,\rho)\\
    &=-\iint [ W(y_1 - y_2)-W(x_1-y_2) +  W(x_1-x_2)-W(y_1 - x_2)]\, \mathrm{d} \rho^{(1)}(x_1)\, \mathrm{d}\mu^{(1)}(y_1)\, \mathrm{d}\rho^{(2)}(x_2)\, \mathrm{d}\mu^{(2)}(y_2)  \\
    &=\iint  \int_0^1 \int_0^1
    \langle x_1 - y_1, \nabla^2 W(I(s,\tau,x,y)) \cdot (x_2-y_2) \rangle \, \mathrm{d} s \, \mathrm{d} \tau \,  
    \mathrm{d} \gamma^{(1)}(x_1,y_1) \, \mathrm{d} \gamma^{(2)}(x_2,y_2)
\end{align*}
where $I(s,\tau,x,y):=(1-s)x_1+sy_1-(1-\tau)x_2 - \tau y_2$ and $\gamma^{(1)}\in \Gamma_o(\rho^{(1)},\mu^{(1)})$, $\gamma^{(2)}\in \Gamma_o(\rho^{(2)},\mu^{(2)})$. The $\eta$ interaction dissipativity inequality holds if for a.e. $(x_1,y_1)\in\supp\gamma^{(1)}$, $(x_2,y_2)\in\supp\gamma^{(2)}$,
\begin{align}\label{eq:Hess-W-bound}
    \int_0^1 \int_0^1 \langle x_1 - y_1, \nabla^2 W(I(s,\tau,x,y)) \cdot (x_2-y_2) \rangle \, \mathrm{d} s \, \mathrm{d} \tau \le \eta \|x-y\|^2\,.
\end{align}
For example, consider the case where $W=W^{(m)}$ is an attractive-repulsive kernel, with $l_a>l_r>0$, $C_r>C_a>0$: 
\begin{align*}
    W^{(m)}(x) = C_r \exp\bigg(-\frac{\|x\|^2}{l_r}\bigg) - C_a \exp\bigg(-\frac{\|x\|^2}{l_a}\bigg)\,.
\end{align*}
This kernel has been studied in a number of applications; see \cite{balague_confinement_2012,craig_convergence_2016} and references therein. The Hessian of $W^{(m)}$ is uniformly bounded in $x$ because
\begin{align*}
    \nabla^2{W^{(m)}}(x) &= 2I_{d}\left( - \frac{C_r}{l_r} \exp(-\|x\|^2/l_r) + \frac{C_a}{l_a} \exp(-\|x\|^2/l_a )\right) 
    \\&\quad 
    + 4x x^\top\left(\frac{C_r}{l_r^2} \exp(-\|x\|^2/l_r) -  \frac{C_a}{l_a^2} \exp(-\|x\|^2/l_a )\right)\,.
\end{align*}
Then, since
\begin{align*}
    \langle x_1-y_1, \nabla^2 W(z)\,(x_2-y_2)\rangle
    \le \|\nabla^2 W\|_{L^\infty}\,|x_1-y_1|\,|x_2-y_2|
    \le \tfrac12\|\nabla^2 W\|_{L^\infty}\,\|x-y\|^2
\end{align*}
for every $z$ (recall $\|x-y\|^2=|x_1-y_1|^2+|x_2-y_2|^2$), the bound \eqref{eq:Hess-W-bound} holds with $\eta:=\tfrac12\|\nabla^2 W\|_{L^\infty}$; integrating this pointwise inequality against $\gamma^{(1)}\otimes\gamma^{(2)}$ and using the optimality of the plans reproduces the $\eta$-interaction dissipativity. For $W=W^{(m)}$, bounding the two terms of its Hessian separately---the isotropic one by $4\max\{C_r/l_r,C_a/l_a\}=4 C_r/l_r$ (opposite signs and $\rme^{-\|x\|^2/l}\le1$), the $4xx^\top$ one via $\|x\|^2\rme^{-\|x\|^2/l}\le l\,\rme^{-1}$ (as $\sup_{t\ge0}t\rme^{-t}=\rme^{-1}$)---gives
\begin{align*}
    \|\nabla^2 W^{(m)}\|_{L^\infty}\le \Big(4+\tfrac{4}{\rme}\Big)\tfrac{C_r}{l_r}
    \qquad \Rightarrow \qquad \eta \le 2\Big(1+\tfrac{1}{\rme}\Big)\tfrac{C_r}{l_r}\,.
\end{align*}
The coupling also affects the convexity parameter: freezing $\rho^{(2)}$, the map $\rho^{(1)}\mapsto F^{(1)}(\rho^{(1)},\rho^{(2)})$ carries the potential energy of $W\ast\rho^{(2)}$, whose displacement-convexity modulus is bounded below by $-c_W$, with $c_W:=\sup_z\big(\lambda_{\min}\nabla^2 W(z)\big)_-\le \|\nabla^2 W\|_{L^\infty}$. Hence the joint convexity parameter is $\kappa=\min\{\kappa_1-c_W,\kappa_2\}$ and
\begin{align*}
    \lambda = \eta-\kappa = \tfrac12\|\nabla^2 W\|_{L^\infty}-\min\{\kappa_1-c_W,\kappa_2\}\,,
\end{align*}
which is negative---so that the dynamics contracts---precisely when $\min\{\kappa_1-c_W,\kappa_2\}>\tfrac12\|\nabla^2 W\|_{L^\infty}$, i.e.\ when the convexity of $(\hat F^{(i)})$ dominates the Hessian of the interaction kernel. Here the only interaction between species is generated by the convolution potential $W$, and the $\eta$-interaction dissipativity is thus a Hessian control of $W$. The convexity above is stated along geodesics, which suffices for the monotonicity and the contraction estimate; well-posedness of the associated scheme instead requires convexity along generalized geodesics (Example~\ref{ex:generalized-geo}), since the term $\tfrac{1}{2\tau}\Wass_2^2(\cdot,\bar\rho)$ enters the generating bifunction. As the $\hat F^{(i)}$ and $W\ast\rho^{(-i)}$ are internal and potential energies, their geodesic convexity---and hence the parameter $\kappa$ above---transfers to generalized geodesics \cite[Section 9.3]{AGS08}.

Alternatively, one can also proceed via compactness, if $(\hat F^{(i)})$ and $W$ satisfy suitable assumptions. Let us assume for instance that $\hat F^{(i)}$ are proper, lower semicontinuous with respect to $\Wass_2$, and have compact sublevels, and $W$ satisfies $L_W:=\|\nabla W\|_{L^\infty}<\infty$ (these assumptions are chosen for convenience; weaker assumptions are possible). Then $V(\rho):=\hat F^{(1)}(\rho^{(1)})+\hat F^{(2)}(\rho^{(2)})$ has compact sublevels on $\mathcal{P}_2(\mathbb{R}^d)\times \mathcal{P}_2(\mathbb{R}^d)$, and we can write $\sfb(\mu,\rho)=V(\mu)-V(\rho)+\sfb'(\mu,\rho)$ with
\begin{align*}
    \sfb'(\mu,\rho)&:=\iint W(y_1-x_2)\d \mu^{(1)}(y_1)\d \rho^{(2)}(x_2) - \iint W(x_1-x_2)\d \rho^{(1)}(x_1)\d \rho^{(2)}(x_2) \\
    &=\int \Psi_2(x_1) \d (\mu^{(1)}-\rho^{(1)})(x_1)\,,\qquad
    \Psi_2(x_1):=(W\ast \rho^{(2)})(x_1)\\
    &\le L_W \Wass_1(\mu^{(1)}, \rho^{(1)})\le  L_W \Wass_2(\mu^{(1)}, \rho^{(1)})\,.
\end{align*}
Hence, the Lyapunov decomposition \eqref{eq:lyapunov} from Section~\ref{sec:resolvent} holds with $C_1=L_W$ and $C_2=0$. 
Upper semicontinuity of $\sfb'(\mu,\rho)$ in $\rho$ is immediate thanks to $W$ being Lipschitz continuous and bounded, so $\sfb'$ is even continuous under $\Wass_2$ convergence. 
In order to apply Theorem~\ref{thm:existence-compact}, additional assumptions on $(\hat F^{(i)})$ and $W$ are needed to guarantee Conditions (1) and (2). Let $\hat F^{(i)}(\rho^{(i)})=\int U_i(x_i)\d\rho^{(i)}(x_i)$ for $U_i\in C^2(\R^d, \R)$ with $\nabla^2 U_i \ge \cvx_i I$, and $\cvx:=\min\{\cvx_1-c_W,\cvx_2\}\ge 0$. Then 
Conditions (1) and (2) in Theorem~\ref{thm:existence-compact} hold along generalized Wasserstein-2 geodesics with base point $\bar\mu$ (Example~\ref{ex:generalized-geo}) for any $\tau_o>0$. We conclude that for any $x_0\in\DDD$ there exists a strong solution to \eqref{eq:SEVI2}.
\end{example}

\subsubsection{$L^2$ versus $\Wass_2$ Geometries}\label{sec:L2vsW2}

In this subsection, we apply our framework to provide well-posedness and existence results for the evolution considered in the recent work \cite{wang_local_2026} (Corollary~\ref{cor:wellposednessL2} and Corollary~\ref{cor:existenceL2}), by carefully relating properties in the $L^2$ and $\Wass_2$ geometries, filling a gap in the literature. In doing so, we illustrate the role of a suitable choice of curves for existence of solutions to the EVI in the $\Wass_2$ geometry when considering evolutions driven by general velocity fields: for convexity of $\Wass_2(\cdot,\bar\rho)$, we need to use generalized Wasserstein-2 geodesics with basepoint $\bar\rho$, whereas for convexity of the corresponding bifunction, a different basepoint is required. This can be addressed by switching the bifunction $\sfb$ to a choice derived from $L^2$ dynamics instead.

\begin{example}[Linear versus Displacement Monotonicity]\label{ex:L2_W2_monotonicity}
In this example, we study a set of $N$ measures evolving according to a set of velocity fields, which do not necessarily have a gradient flow structure. The flow considered in \cite{wang_local_2026} uses the $\Wass_2$ metric but with a notion of monotonicity corresponding to $L^2$ dynamics. To analyze this flow, we first consider dynamics in $\Wass_2$ and $L^2$ separately for measures in $\PP_2$.
Let $\XXX^{(i)}=\PP_2(\R^{d_i})$, $\DDD\subset \Pi_{i=1}^N \XXX^{(i)}$, and $f^{(i)}:\DDD \to C^1(\R^{d_i};\R)$ any potential (not necessarily coming from the first variation of an energy functional).
The $\Wass_2$ flow dynamics are given by
\begin{align}\label{eq:team_MFG}
    \dot \rho^{(i)} = \nabla \cdot (\rho^{(i)} \nabla f^{(i)}[\rho]) 
    \quad \text{for all }i\in[1,\dots,N]\,.
\end{align}
The $L^2$ flow dynamics are given by
\begin{align}\label{eq:L2_flow}
    \dot \rho^{(i)} = -f^{(i)}[\rho]
    \,, \quad \int f^{(i)}[\rho](v_i)\d v_i = 0 \quad \text{for all }\rho \in \DDD\,, \quad \text{for all }i\in[1,\dots,N]\,.
\end{align}
The work \cite{wang_local_2026} proposes a definition of \textit{linear monotonicity}, given by
\begin{align}\label{eq:linear_monotonicity}
   \sum_{i=1}^N \int \left(f^{(i)}[\rho](v_i)-f^{(i)}[\mu](v_i) \right) \d (\rho^{(i)}-\mu^{(i)})(v_i) \ge 0 \quad \text{for all }\rho, \mu\in\DDD\,.
\end{align}
This notion of monotonicity is closely tied to the $L^2$ metric, as we will see when considering the form of the EVI.
In \cite{conger_monotone_2025} a \textit{displacement monotonicity} definition is given by
\begin{align}\label{eq:displacement_monotonicity}
   \sum_{i=1}^N \int \<\nabla f^{(i)}[\rho](v_i)-\nabla f^{(i)}[\mu](z_i),v_i-z_i > \d \gamma^{(i)}(v_i,z_i) \ge 0\,,
\end{align}
where $\gamma^{(i)}\in\Gamma_o(\rho^{(i)},\mu^{(i)})$.
The following $L^2$-bifunction recovers the $L^2$-flow \eqref{eq:L2_flow} with metric $d_\XXX=L^2$:
\begin{align*}
    \sfb_{L^2}( \mu, \rho) = -\sum_{i=1}^N \int f^{(i)}[\rho](v_i) \,\d (\rho^{(i)}-\mu^{(i)})(v_i)\,.
\end{align*}
Similarly, the following $\Wass_2$-bifunction recovers the $\Wass_2$-flow \eqref{eq:team_MFG} with metric $d_\XXX=\Wass_2$:
\begin{align*}
    \sfb_{\Wass_2}( \mu, \rho) = -\sum_{i=1}^N \int \<\nabla f^{(i)}[\rho](v_i),v_i-z_i>\d\gamma^{(i)}(v_i,z_i) \,,
\end{align*}
where $\gamma^{(i)}\in\Gamma_o(\rho^{(i)},\mu^{(i)})$. In both cases, this choice of bifunction $\sfb$ recovers interaction dissipativity with $\eta=0$ since linear monotonicity \eqref{eq:linear_monotonicity} in $L^2$ (respectively displacement monotonicity \eqref{eq:displacement_monotonicity} in $\Wass_2$) implies
$$\skd \rho \mu + \skd \mu \rho \le 0\,.$$

Consider now the question of well-posedness for \eqref{eq:team_MFG}-\eqref{eq:L2_flow}. The bifunction 
 $\sfb_{L^2}$ is affine in $\mu$, hence $0$-barycentrically convexlike along linear interpolations $\mu_t=(1-t)\mu_0+t\mu_1$; along $\Wass_2$ generalized geodesics $\mu_t=((1-t)T_0+tT_1)_\#\nu$, however, $\int f\,\d\mu_t=\int f\circ((1-t)T_0+tT_1)\,\d\nu$, so that convexity of $\mu\mapsto\sfb_{L^2}(\mu,\rho)$ along them amounts to convexity of the potentials $f^{(i)}[\rho]$. Let $\cB_{\bar \rho}$ and $\cB_\rho$ be two families of generalized geodesics (Example~\ref{ex:generalized-geo} and Section~\ref{sec:ex-VMS}) with base points $\bar \rho$ and $ \rho$ respectively. The bifunction $\mu \mapsto \sfb_{\Wass_2}(\mu, \rho)$ is 0-barycentrically convexlike with respect to generalized geodesics $\cB_\rho$ with basepoint $\rho$, but not necessarily  with respect to $\cB_{\bar \rho}$. This poses an issue because $\frac{1}{2}\Wass_2^2(\cdot,\bar \rho)$ is $1$-barycentrically convexlike with respect to  $\cB_{\bar \rho}$, but not with respect to $\cB_{\rho}$ for any other basepoint $\rho$. Then $\mu \mapsto \sfB(\tau,\bar \rho;\mu,\rho)=\sfb_{\Wass_2}(\mu,\rho)-\frac{1}{2\tau}\Wass_2^2(\rho,\bar \rho)+\frac{1}{2\tau}\Wass_2^2(\mu,\bar \rho)$ is \textit{not} $1/\tau$ barycentrically convexlike with respect to either $\cB_{\bar \rho}$ or $\cB_\rho$. We will work around this issue later by using  $\sfb_{L^2}$ rather than  $\sfb_{\Wass_2}$, whilst still fixing the choice of metrix as $\Wass_2$. 
Checking the form of the EVIs, we compute the time derivative of the distance function in $\Wass_2$ and $L^2$ along solutions to \eqref{eq:team_MFG} and \eqref{eq:L2_flow} respectively,
\begin{align}
     \frac 12 \ddt \Wass_2^2(\rho(t),\mu) &= -\sum_{i=1}^N\int \<\nabla f^{(i)}[\rho](v_i),v_i-z^{(i)}> \d \gamma^{(i)}(v_i,z^{(i)}) \,, \label{eq:W2_EVI} \\
  \frac 12 \ddt \sum_{i=1}^N\int\norm{\rho^{(i)}(t,v_i)-\mu^{(i)}(v_i)}^2 \d v_i &= -\sum_{i=1}^N\int f^{(i)}[\rho](v_i)\, \d (\rho^{(i)}-\mu^{(i)})(v_i) \,. \label{eq:L2_EVI}
\end{align}
This shows that solutions to \eqref{eq:SEVI2} with $\sfb_{\Wass_2}$ solve \eqref{eq:team_MFG} and solutions to \eqref{eq:SEVI2} with $\sfb_{L^2}$ solve \eqref{eq:L2_flow}. However, in \cite{wang_local_2026}, the linear monotonicity condition \eqref{eq:linear_monotonicity}, which comes from the $L^2$ geometry, is used with $\Wass_2$ dynamics \eqref{eq:team_MFG}, with well-posedness and existence of equilibrium assumed. In particular, \cite{wang_local_2026} considers the EVI
\begin{align}\label{eq:mixed_metric}
   \frac 12 \ddt \Wass_2^2(\rho(t),\mu) \le \sfb_{L^2}(\mu,\rho(t))  \,.
\end{align}
To relate these dynamics to the $\Wass_2$-flow \eqref{eq:W2_EVI}, we derive a bound for the corresponding bifunctions. This will allow us to show well-posedness and existence of an equilibrium for the mixed dynamics \eqref{eq:mixed_metric}, a question that was left open in \cite{wang_local_2026}.
\begin{lemma}\label{lem:bifunction_ineq}
    Let $\nabla^2 f^{(i)}[\rho](z_i) \succeq 0$ for all $\rho\in\DDD$, $z_i\in\R^{d_i}$, and $i\in[1,\dots,N]$. Then $ \sfb_{\Wass_2}(\mu,\rho)\le \sfb_{L^2}(\mu,\rho)$ for all $\rho,\mu\in\DDD$.
\end{lemma}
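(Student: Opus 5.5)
The comparison reduces to a pointwise inequality for each species under the optimal plan, since both bifunctions are sums over $i$ of integrals against measures with marginals $\rho^{(i)}$ and $\mu^{(i)}$. Fix $\rho,\mu\in\DDD$, $i$, and an optimal plan $\gamma^{(i)}\in\Gamma_o(\rho^{(i)},\mu^{(i)})$. Write $g:=f^{(i)}[\rho]\in C^1(\R^{d_i};\R)$, which is convex by the assumption $\nabla^2 g\succeq0$. I will express the $L^2$ term through the plan. Since $\gamma^{(i)}$ has first marginal $\rho^{(i)}$ and second marginal $\mu^{(i)}$,
\begin{displaymath}
\int g\,\d(\rho^{(i)}-\mu^{(i)})=\int \big(g(v_i)-g(z_i)\big)\,\d\gamma^{(i)}(v_i,z_i).
\end{displaymath}
Hence
\begin{displaymath}
\sfb_{L^2}(\mu,\rho)=\sum_{i=1}^N\int\big(g(z_i)-g(v_i)\big)\,\d\gamma^{(i)},
\qquad
\sfb_{\Wass_2}(\mu,\rho)=\sum_{i=1}^N\int\langle\nabla g(v_i),z_i-v_i\rangle\,\d\gamma^{(i)}.
\end{displaymath}
Both expressions now use the same coupling.

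The key step is the first-order characterization of convexity, $g(z)\ge g(v)+\langle\nabla g(v),z-v\rangle$ for all $v,z$. This follows from $\nabla^2g\succeq0$ by Taylor's formula with integral remainder along the segment $[v,z]$. It gives the pointwise bound
\begin{displaymath}
\langle\nabla g(v_i),z_i-v_i\rangle\le g(z_i)-g(v_i).
\end{displaymath}
Integrating against $\gamma^{(i)}$ and summing over $i$ yields $\sfb_{\Wass_2}(\mu,\rho)\le\sfb_{L^2}(\mu,\rho)$.

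The only delicate point is integrability, so that splitting $\int(g(v)-g(z))\,\d\gamma$ into the two marginal integrals is legitimate and both sides are finite. I will use the standing assumption that $f^{(i)}[\rho]$ and $\nabla f^{(i)}[\rho]$ are integrable against measures in $\DDD$, for instance with at most quadratic and linear growth respectively, since $\PP_2$ then handles moments. Even without finiteness, the pointwise inequality shows that the negative part of $g(z)-g(v)$ is dominated by $|\nabla g(v)||z-v|$, which is $\gamma$-integrable by Cauchy–Schwarz. The inequality therefore remains valid in the extended sense. Finally, I note that optimality of $\gamma^{(i)}$ is not needed for the inequality itself; it only matches the definition of $\sfb_{\Wass_2}$.
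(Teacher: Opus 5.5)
Your proof is correct and is essentially the paper's argument. You rewrite the $L^2$ term as an integral against the plan $\gamma^{(i)}$ using its marginals, apply the first-order convexity inequality $g(z)\ge g(v)+\langle\nabla g(v),z-v\rangle$ (from Taylor's formula with $\nabla^2 g\succeq 0$), and integrate; your integrability remarks and the observation that optimality of $\gamma^{(i)}$ is not needed go slightly beyond the paper, which leaves these points implicit.
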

\begin{proof}
First, we use that
\begin{align*}
    \int f^{(i)}[\rho](v_i) \d (\rho^{(i)}-\mu^{(i)}) = \int (f^{(i)}[\rho](v_i)-f^{(i)}[\rho](z_i)) \d \gamma^{(i)}(v_i,z_i)
\end{align*}
for $\gamma^{(i)}\in\Gamma_o(\rho^{(i)},\mu^{(i)})$.
Next we use the Taylor expansion 
\begin{align*}
    f^{(i)}[\rho](z_i) &= f^{(i)}[\rho](v_i) + \nabla f^{(i)}[\rho](v_i) \cdot (z_i-v_i) \\
    &+ (z_i-v_i) \cdot \int_0^1 \nabla^2 f^{(i)}[\rho]((1-s)v_i+sz_i)\cdot (z_i-v_i) \d s \\
    &\ge f^{(i)}[\rho](v_i) + \nabla f^{(i)}[\rho](v_i) \cdot (z_i-v_i) 
\end{align*}
using that $\nabla^2 f^{(i)}[\rho](z_i) \succeq 0$.    
\end{proof}
\begin{remark}
    The previous lemma illustrates the relationship between  Wasserstein-2 convexity and linear convexity. Suppose a sufficiently smooth  functional $F:\PP_2 \to \R$ satisfies displacement convexity:
\begin{align*}
    \text{Hess}_{\Wass_2} F[\nu](\Phi,\Phi) &= \int \Phi^\top(z) \nabla^2 \delta_{\nu}F[\nu](z)\cdot \Phi(z) \,\d \nu(z)
    \\
    &\quad
    +\iint \Phi^\top(z) \nabla^2_{z,\hat z } \delta^2_{\nu} F[\nu](z,\hat z) \cdot \Phi(\hat z) \, \d \nu(z)\d \nu(\hat z) \ge 0\,,  \quad \text{for all }\Phi\in L^2(\nu)\,.
\end{align*}
\sloppy 
Consider the $\Wass_2$-gradient flow of $F$, that is, \eqref{eq:team_MFG} with $N=1$ and $f^{(1)}=\delta_\rho F[\rho]$.
The linear monotonicity condition \eqref{eq:linear_monotonicity}  only constrains the second term, $\iint \Phi^\top(z) \nabla^2_{z,\hat z } \delta^2_{\nu} F[\nu](z,\hat z) \cdot \Phi(\hat z) \, \d \nu(z)\d \nu(\hat z)\ge 0$ (indeed, this can be derived by choosing $\rho=\nu_t:=\nu+t\sigma$ and $\mu=\nu$ in \eqref{eq:linear_monotonicity}, where $\sigma:=-\nabla \cdot(\nu\Phi)$ for $\Phi\in L^2(\nu)$; dividing by $t^2$, letting $t\to0$, and integrating by parts, one obtains the second term above), whereas the additional requirement in Lemma~\ref{lem:bifunction_ineq} provides the other term, $\int \Phi^\top(z) \nabla^2 \delta_{\nu}F[\nu](z)\cdot \Phi(z) \,\d \nu(z) \ge 0$.
\end{remark}
By Lemma~\ref{lem:bifunction_ineq}, $ \sfb_{\Wass_2}(\mu,\rho(t))\le \sfb_{L^2}(\mu,\rho(t))$, and so any solution to \eqref{eq:W2_EVI} solves \eqref{eq:mixed_metric}, and therefore solutions to \eqref{eq:team_MFG} also solve \eqref{eq:mixed_metric}. Conversely, if we can show that \eqref{eq:mixed_metric} has a unique strong solution, then we 
have also shown uniqueness of strong solutions for \eqref{eq:team_MFG}. Here, we will show existence of a unique strong solution to \eqref{eq:mixed_metric} directly via the dual formulation.
    \begin{corollary}[Well-Posedness]\label{cor:wellposednessL2}
        Let linear monotonicity, given by \eqref{eq:linear_monotonicity}, hold for all $\mu,\rho\in\DDD$, and $\nabla^2 f^{(i)}[\rho](z_i) \succeq 0$ for all $\rho\in\DDD$, $z_i\in\R^{d_i}$, and $i\in[1,\dots,N]$. Under \eqref{eq:b-sublevels-bis}-\eqref{eq:conditional-usc} and upper-hemicontinuity of $\rho\mapsto \sfb_{L^2}(\mu,\rho)$ for all $\mu \in \DDD$ with respect to 2-barycentric maps in $\cB_{2,\bar\mu}$, 
        there exists a unique strong solution $\rho(t)$ to \eqref{eq:mixed_metric} starting from $$\rho(0)\in\DDD(\Delta):=\{\rho\in\DDD\,:\,\sum_{i=1}^N\int_{\R^{d_i}}|\nabla f^{(i)}[\rho](v_i)|^2\,\d\rho^{(i)}(v_i) <\infty \}\,.$$
    \end{corollary}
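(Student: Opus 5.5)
The plan is to apply Theorem~\ref{thm:main_existence} (the dissipative well-posedness result) to the mixed problem \eqref{eq:mixed_metric}. The metric is $\dX^2=\sum_i\Wass_2^2$ and the driving bifunction is $\sfb:=\sfb_{L^2}$. Uniqueness will then come from Theorem~\ref{thm:strong-solutions}(\ref{item:strong-contraction}).

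The first step is the structural condition \eqref{eq:S-lambda}. Clearly $\sfb_{L^2}(\rho,\rho)=0$. Expanding the definitions gives
\begin{displaymath}
\sfb_{L^2}(\mu,\rho)+\sfb_{L^2}(\rho,\mu)=-\sum_i\int \big(f^{(i)}[\rho]-f^{(i)}[\mu]\big)\,\d(\rho^{(i)}-\mu^{(i)})\le 0
\end{displaymath}
by linear monotonicity \eqref{eq:linear_monotonicity}. Hence $\eta=0$ and $\tau_o=+\infty$ once $\cvx\ge 0$. The assumptions \eqref{eq:b-sublevels-bis} and \eqref{eq:conditional-usc} are assumed in the statement.

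The second step is the convexity condition (1) of Theorem~\ref{thm:main_existence}. I take $\barysystem{\bar\mu}$ to be the generalized geodesics with base point $\bar\mu$ (Example~\ref{ex:generalized-geo}), applied componentwise. With this choice, \eqref{eq:gg-dist} is the standard $1$-convexity of $\frac12\Wass_2^2(\cdot,\bar\mu)$.

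For \eqref{eq:gg-bif}, the point of switching from $\sfb_{\Wass_2}$ to $\sfb_{L^2}$ is that the map $\mu\mapsto\sfb_{L^2}(\mu,\rho)$ equals $\sum_i\int f^{(i)}[\rho]\,\d\mu^{(i)}$ up to a constant independent of $\mu$. Along $\bary\mu\aalpha=(\sum_j\alpha_jT_j)_\sharp\P$ this is $\int f^{(i)}[\rho]\circ(\sum_j\alpha_jT_j)\,\d\P$. Since $\nabla^2 f^{(i)}[\rho]\succeq0$, Jensen's inequality gives barycentric $0$-convexity with respect to $\barysystem{\bar\mu}$ for every $J$, independently of the base point. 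So $\cvx=0$, $\lambda=0$, and $\xi=1/\tau>0=2\eta_+$.

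Condition (2), upper hemicontinuity along $2$-barycentric maps, is part of the hypotheses. Continuity of the generalized-geodesic maps $\aalpha\mapsto\bary\mu\aalpha$ in $\Wass_2$ follows from $L^2$-continuity of $\sum_j\alpha_jT_j$. Theorem~\ref{thm:main_existence} then yields the VMS and a unique strong solution from every $\rho(0)\in\domainslope{}$.

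The remaining step, which I expect to be the main obstacle, is to show that $\DDD(\Delta)$ as defined in the statement is contained in $\domainslope{}$ (equivalently $\domainslope{0}$, by Proposition~\ref{prop:local-global-slope}).

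\begin{itemize}
\item The approach is to bound $(\sfb_{L^2}(\mu,\rho))_-$ using Lemma~\ref{lem:bifunction_ineq}: $\sfb_{L^2}(\mu,\rho)\ge\sfb_{\Wass_2}(\mu,\rho)$.
\item Cauchy--Schwarz on an optimal plan gives $-\sfb_{\Wass_2}(\mu,\rho)\le\big(\sum_i\int|\nabla f^{(i)}[\rho]|^2\d\rho^{(i)}\big)^{1/2}\Wass_2(\rho,\mu)$.
\item Combining the two, $\skslope{0}{\rho}$ is at most the $L^2(\rho)$ norm of $\nabla f[\rho]$. This is finite on $\DDD(\Delta)$, so $\DDD(\Delta)\subset\domainslope{0}$.
\end{itemize}

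The delicate points are the measurability and integrability needed to justify the Taylor expansion in Lemma~\ref{lem:bifunction_ineq} for general $\mu$, and the identification of the first variation along plans. I would handle these with a truncation or approximation argument if needed.
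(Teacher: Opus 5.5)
Your proposal is correct and follows the paper's proof. Both take generalized geodesics based at $\bar\mu$, get $0$-convexity of $\mu\mapsto\sfb_{L^2}(\mu,\rho)$ from $\nabla^2 f^{(i)}[\rho]\succeq0$, get $0$-interaction dissipativity from linear monotonicity, and conclude with Theorem~\ref{thm:main_existence}; your checks that $\DDD(\Delta)\subset\domainslope{0}$ (via Lemma~\ref{lem:bifunction_ineq} and Cauchy--Schwarz) and that the barycentric maps are continuous supply details the paper's proof leaves implicit.
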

    \begin{proof}
        Selecting $\cB_{\bar \mu}$ to be Wasserstein-2 generalized geodesics, we have that $\mu \mapsto \sfb_{L^2} (\mu, \rho)$ is $0$-convex with respect to $\cB_{\bar \mu}$ , since $\nabla^2 f^{(i)}[\rho]\succeq0$, and $\Wass_2^2(\cdot,\bar \mu)$ is 1-convex with respect to $\cB_{\bar \mu}$. The linear monotonicity condition \eqref{eq:linear_monotonicity} gives that $\sfb_{L^2}$ is $0$-interaction dissipative. Applying Theorem~\ref{thm:main_existence}, there exists a strong solution to \eqref{eq:mixed_metric}.
    \end{proof}

    \begin{corollary}[Existence of Equilibrium]\label{cor:existenceL2}
        Let $\rho \mapsto \sfb_{L^2}( \mu, \rho)$ be upper hemicontinuous with respect to a barycentric system 
        $\cB$  of linear interpolations $\mu_t=(1-t)\mu_0+t\mu_1$ (so that $\DDD$ is assumed to be convex under mixtures) and superlevels $\sigma^+(\mu)=\{\rho \in \DDD \ : \ \sfb_{L^2}(\mu,\rho) \ge 0\}$ be closed in $\DDD$ with a least one compact.
        Then the bifunction $\sfb_{L^2}$ has a $\rho^*$ such that $\sfb(\mu,\rho^*) \ge 0$ for all $\mu\in\DDD$. 
    \end{corollary}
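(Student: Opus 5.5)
The plan is to apply Theorem~\ref{thm:primal_soln} directly to $\sfa:=\sfb_{L^2}$ on $\DDD$, using the system $\cB$ of linear (mixture) interpolations $\bary{\mu}{\aalpha}=\sum_j\alpha_j\mu_j$. Since $\DDD$ is assumed convex under mixtures, these maps take values in $\DDD$. The evaluation map is surjective onto $\DDD^J$, so $\cB$ is an admissible barycentric system in the sense of Definition~\ref{def:barycentric maps}. The conclusion $\sfb_{L^2}(\mu,\rho^*)\ge0$ for all $\mu$ is exactly the primal problem \eqref{eq:equilibrium}. It therefore suffices to verify the diagonal condition \eqref{eq:diagonal} together with the three hypotheses of Theorem~\ref{thm:primal_soln}.

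\textbf{Diagonal condition.} From the definition, $\sfb_{L^2}(\rho,\rho)=-\sum_i\int f^{(i)}[\rho]\,\d(\rho^{(i)}-\rho^{(i)})=0$, so \eqref{eq:diagonal} holds.

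\textbf{Hypothesis 1 (joint barycentric quasi-convexity).} Fix $\rho$. The map $\mu\mapsto\sfb_{L^2}(\mu,\rho)=-\sum_i\int f^{(i)}[\rho]\,\d\rho^{(i)}+\sum_i\int f^{(i)}[\rho]\,\d\mu^{(i)}$ is affine in $\mu$ with respect to mixtures. Evaluated at $\bary\mu\aalpha$ it therefore equals $\sum_j\alpha_j\,\sfb_{L^2}(\mu_j,\rho)$. This is at most $\max\{\sfb_{L^2}(\mu_j,\rho):\alpha_j>0\}$, which is \eqref{eq:quasi-convex}. The system $\cB$ does not depend on $\rho$, so the family is \emph{jointly} quasi-convexlike. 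One point needs care: the integrals $\int f^{(i)}[\rho]\,\d\mu^{(i)}$ must be finite for $\mu\in\DDD$, so that $\sfb_{L^2}$ is real valued. I would take this as implicit in $\sfb_{L^2}:\DDD\times\DDD\to\R$ being well defined, and note that affinity then holds by linearity of the integral in the measure.

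\textbf{Hypotheses 2 and 3.} Upper hemicontinuity of $\rho\mapsto\sfb_{L^2}(\mu,\rho)$ with respect to $\cB$ is assumed verbatim, which is condition 2 of Theorem~\ref{thm:primal_soln}. Closedness of the superlevels $\sigma^+(\mu)$, with at least one of them compact, is also assumed, which is condition 3. Theorem~\ref{thm:primal_soln} then yields $\rho^*$, via the KKM argument already carried out there.

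\textbf{Main obstacle.} The step requiring the most care is Hypothesis 2, specifically the hemicontinuity with respect to the \emph{full} simplex $\simplex J$ for every $J$, not only along segments. This is needed so that the sets $\cS(k)$ in the proof of Theorem~\ref{thm:primal_soln} are closed. If the assumption is read as hemicontinuity along two-point mixtures only, I would check whether it extends: I would try to show that the composition $\aalpha\mapsto\sfb_{L^2}(\mu,\sum_j\alpha_j\rho_j)$ is upper semicontinuous on $\simplex J$ by using the structure of $f^{(i)}[\cdot]$. Otherwise I would interpret the hypothesis, as in Definition~\ref{def:hemicontinuity}, with respect to the whole system $\cB$.
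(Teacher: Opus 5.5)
Your proposal is correct and takes the same approach as the paper: apply Theorem~\ref{thm:primal_soln} with the mixture system $\cB$. Since $\mu\mapsto\sfb_{L^2}(\mu,\rho)$ is affine along mixtures, it is $0$-barycentrically convexlike and hence quasi-convexlike, while the hemicontinuity and the closed/compact superlevels come directly from the hypotheses. Your explicit check that $\sfb_{L^2}(\rho,\rho)=0$, and your reading of hemicontinuity over the full simplices as in Definition~\ref{def:hemicontinuity}, supply details the paper leaves implicit.
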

    \begin{proof}
        In order to apply Theorem~\ref{thm:primal_soln}, we need that $\mu \mapsto \sfb_{L^2}(\mu,\rho)$ is jointly barycentrically quasi-convexlike with respect to 
        $\cB$.  Since $\mu \mapsto \sfb_{L^2}(\mu,\rho)$ is affine along linear interpolations, it is $0$-barycentrically convexlike with respect to $\cB$, hence also quasi-convexlike, and we apply Theorem~\ref{thm:primal_soln}.
    \end{proof}
    We remark that in order to use the linear monotonicity condition \eqref{eq:linear_monotonicity} to show existence of a \textit{unique} equilibrium point via the dual equilibrium problem, we would need $\mu \mapsto \sfb_{L^2}(\mu,\rho)$ to be $\kappa>0$ barycentrically convexlike with respect to some family $\cB$ in order to apply Theorem~\ref{thm:equilibrium2};  however, this fails along linear interpolations, since $\mu\mapsto\sfb_{L^2}(\mu,\rho)$ is affine, and along $\Wass_2$ generalized geodesics it requires the potentials $f^{(i)}[\rho]$ to be uniformly $\kappa$-convex.
\end{example}

\appendix

\section{Proof of Cauchy Estimate}\label{app:cauchy_estimate_proof}
In this section we present the proof of the Cauchy estimate,  Proposition~\ref{prop:cauchy_condition}; it relies on the discrete slope estimates of Proposition~\ref{le:apriori}.

 Recall that $\overline X_\tau(t)$ denotes the piecewise constant function
taking the value $X^n_\tau$ in $((n-1)\tau,n\tau]$, that
$\underline X_\tau(t)$ takes the value $X^{n-1}_\tau$ in $[(n-1)\tau,n\tau)$, and that $\overline X_\tau(0)=\underline X_\tau(0)=X^0_\tau$.
\begin{proof}[Proof of Proposition~\ref{prop:cauchy_condition}]
We now introduce the functions
\begin{equation*}
	\ell(t):=t-n\quad \text{if }n< t\le n+1,\quad 
	\ell_\tau(t):=\ell(t/\tau), \quad \ell_\tau(0):=0,
\end{equation*}
and for a discrete solution $X^n_{\tau_1}$ to \eqref{eq:D-SEVI2},
\begin{equation*}
	\zeta_{\tau_1}(s,Y):=(1-\ell_{\tau_1}(s))\dX(\underline X_{\tau_1}(s),Y)+
	\ell_{\tau_1}(s)\dX(\overline X_{\tau_1}(s),Y)
\end{equation*}
Similarly, for another discrete solution $Y^n_{\tau_2}$ to \eqref{eq:D-SEVI2}, we set
\begin{equation*}
	\omega_{\tau_2}(X,t):=(1-\ell_{\tau_2}(t))\dX(\ul Y_{\tau_2}(t),X)+
	\ell_{\tau_2}(t)\dX(\ol Y_{\tau_2}(t),X).
\end{equation*}
The functions $\zeta_{\tau_1}(s,Y)$ and $\omega_{\tau_2}(X,t)$ are continuous and piecewise linear; hence, they are locally Lipschitz. It immediately follows from  Lemma~\ref{le:simple-but-tricky} that
\begin{equation*}
	\partial_s \zeta_{\tau_1}(s,Y)\le \lsku Y{\overline X_{\tau_1}(s)}\,,\quad
	\partial_t \omega_{\tau_2}(X,t)\le \lsku X{\overline Y_{\tau_2}(t)}\,, \quad \text{for a.e. } t,s>0\,.
\end{equation*}
Choosing $Y:=\overline Y_{\tau_2}(t)$ in the first inequality and
$X:=\overline X_{\tau_1}(s)$ in the second one, using \eqref{eq:S-lambda1},
and extending $\overline X_{\tau_1}(s)\equiv X^0_{\tau_1}$
for $s<0$, we get, for a.e. $s, t$,
\begin{equation}\label{eq:discretization_error_estimate}
	\partial_s \zeta_{\tau_1}(s,\overline Y_{\tau_2}(t))+
	\partial_t \omega_{\tau_2}(\overline X_{\tau_1}(s),t)\le f(s,t)\,,
\end{equation}
where
\begin{equation}
    f(s,t) = \begin{cases}
        \lsku {X^0_{\tau_1}}{\overline Y_{\tau_2}(t)}
	& \quad\text{if }s<0,\\
	 \lambda \dX(\overline X_{\tau_1}(s),\overline Y_{\tau_2}(t)) & \quad \text{ otherwise}
    \end{cases}\,, \quad t\ge 0 \,.
\end{equation}
We will integrate over the strip $Q_{0,T}^\eps$ as in \cite{Nochetto-Savare06}:
\begin{align*}
    Q_{0,T}^\eps := \big\{ (s,t)\in\R^2 \ : \ 0\le t \le T, \quad t-\eps \le s \le t \big\}\,.
\end{align*}
The integral of \eqref{eq:discretization_error_estimate} over $Q_{0,T}^\eps$,
\begin{align}\label{eq:discretization_err_integral}
   \iint_{Q_{0,T}^\eps}  \text{div}_{s,t} \left( \begin{bmatrix}
       \zeta_{\tau_1}(s,\overline Y_{\tau_2}(t)) \\
	\omega_{\tau_2}(\overline X_{\tau_1}(s),t)
   \end{bmatrix} \right) \, \d s \d t \le \iint_{Q_{0,T}^\eps} f(s,t)\, \d s \d t:= a_0\,,
\end{align}
will require five estimates; one from the right hand side, and four over the boundary $\partial Q_{0,T}^\eps$ from the divergence theorem. In Figure~\ref{fig:strip}, the edges of $Q_{0,T}^\eps$ are labeled with the corresponding boundary integrals.  Applying the divergence theorem to the lower bound, let 
\begin{align*}
       \iint_{Q_{0,T}^\eps}  \text{div}_{s,t} \left( \begin{bmatrix}
       \zeta_{\tau_1}(s,\overline Y_{\tau_2}(t)) \\
	\omega_{\tau_2}(\overline X_{\tau_1}(s),t)
   \end{bmatrix} \right) \, \d s \d t  =\sum_{i=1}^4 a_i\,, \quad 
   a_1:=\int_{T-\eps}^T \omega_{\tau_2}(\overline X_{\tau_1}(s),T)\,\d s\,, \\ 
   a_2:= -\int_{-\eps}^0 \omega_{\tau_2}(\overline X_{\tau_1}(s),0)\,\d s \,,  \quad
   a_3:=
   \int_0^T\Big(\zeta_{\tau_1}(t,\overline Y_{\tau_2}(t))-\omega_{\tau_2}(\overline X_{\tau_1}(t),t)\Big)\,\d t\,,\\
   a_4 := 
   \int_0^T\Big(\omega_{\tau_2}(\overline X_{\tau_1}(t-\eps),t)-\zeta_{\tau_1}(t-\eps,\overline Y_{\tau_2}(t))
	\Big)\,\d t\,.
\end{align*} 
Therefore \eqref{eq:discretization_err_integral} can be written as
\begin{align}\label{eq:discretization_err_integral_coeffs}
    \sum_{i=1}^4 a_i \le a_0\,.
\end{align}

\begin{figure}
    \centering

\begin{tikzpicture}[>=latex, scale=1.1]

  \def\T{5}
  \def\eps{1}

  \coordinate (A) at (-\eps, 0);
  \coordinate (B) at (0, 0);
  \coordinate (C) at (\T, \T);
  \coordinate (D) at (\T-\eps, \T);

  \fill[gray!15] (A) -- (B) -- (C) -- (D) -- cycle;

  \draw[thick] (A) -- (B);
  \draw[thick] (B) -- (C);
  \draw[thick] (D) -- (C);
  \draw[thick] (A) -- (D);

  \draw[dashed] (0, \T) -- (\T, \T);   
  \draw[dashed] (\T, 0) -- (\T, \T);   

  \draw[->] (-\eps-0.4, 0) -- (\T+0.7, 0) node[below] {$s$};
  \draw[->] (0, -0.3)      -- (0, \T+0.7)  node[left]  {$t$};

  \node[left]  at (0, \T) {$t = T$};
  \node[below] at (\T, 0) {$s = T$};

  \draw[<->] (-\eps, -0.4) -- (0, -0.4)
    node[midway, below] {$\varepsilon$};

  \draw[<->] (\T-\eps, \T+0.4) -- (\T, \T+0.4)
    node[midway, above] {$\varepsilon$};


  \node[above] at ($0.5*(D)+0.5*(C)+(0,-0.1)$) {$a_1$};

  \node[below] at ($0.5*(A)+0.5*(B)+(0,0)$) {$a_2$};

  \node[right] at ($(B)!0.22!(C)+(0.15,-0.15)$) {$a_3$};

  \node[left]  at ($(A)!0.22!(D)+(-0.15, 0.15)$) {$a_4$};


  \node[left] at ($(A)!0.62!(D)+(-0.1, 0.2)$) {$s = t - \varepsilon$};

  \node[right] at ($(B)!0.38!(C)+(0.55,0.25)$) {$t = s$};

  \node at ($(A)!0.5!(C)+(0.0, 0)$) {$Q^{\varepsilon}_{0,T}$};

\end{tikzpicture}
    \caption{The strip $Q_{0,T}^\eps$.}
    \label{fig:strip}
\end{figure}
We compute estimates for $a_1,\dots, a_4$ from below, and for $a_0$ from above, to obtain the Cauchy estimate. We begin with $a_0$.
By Definition~\ref{def:bounded_Delta} with $x\in \domainslope{\cvx}$, we have 
    \begin{align}\label{eq:slope_estimate}
        \skslope{\cvx}x := \sup_{y'\in \DDD} \frac{{\lskd{y'}{x}}_-}{\dX(x,y')} \ge  -\lsku yx  \quad \text{for all } y\in\DDD\,.
    \end{align}
Combining this estimate with \eqref{eq:S-lambda1},
we construct the following bound on $\lsku {X^0_{\tau_1}}{\overline Y_{\tau_2}(t)}$:
\begin{align*}
    \lsku {X^0_{\tau_1}}{\overline Y_{\tau_2}(t)} \le \Delta^\kappa(X_{\tau_1}^0)  + \lambda \dX(X_{\tau_1}^0,\ol Y_{\tau_2}(t))\,.
\end{align*}
Applying this to the definition of $a_0$, and evaluating separately the part of $Q_{0,T}^\eps$ where $s<0$, we have
\begin{align*}
    a_0 &\le \int_{0}^\eps \int_{t-\eps}^{0} \skslope{\cvx}{X_{\tau_1}^0} \d s \d t + \lambda \iint_{Q_{0,T}^\eps} \dX(\overline X_{\tau_1}(s),\overline Y_{\tau_2}(t)) \d s \d t \\
    &=\frac{\eps^2}2 \skslope{\cvx}{X_{\tau_1}^0} + \lambda \iint_{Q_{0,T}^\eps} \dX(\overline X_{\tau_1}(s),\overline Y_{\tau_2}(t)) \d s \d t \,.
\end{align*}
The integral over the distance term has the upper bound
\begin{align*}
    \iint_{Q_{0,T}^\eps} \dX(\ol X_{\tau_1}(s),& \ol Y_{\tau_2}(t) ) \d s \d t 
    \le  \iint_{Q_{0,T}^\eps} \dX(\ol X_{\tau_1}(s),\ol X_{\tau_1}(t))  \d s \d t +\eps \int_0^T\dX(\ol X_{\tau_1}(t),\ol Y_{\tau_2}(t) ) \d t
\end{align*}
Let $n, m\in\N$ such that $t\in ((n-1)\tau_1,n\tau_1]$ and $s\in ((m-1)\tau_1,m\tau_1]$. Since each $(s,t)\in Q_{0,T}^\eps$ satisfies $t-\eps\le s\le t$, we conclude that $0\le n-m \le \eps/\tau_1 + 1$. Using the estimates in Proposition~\ref{le:apriori} and recalling $\tilde\tau:=\big(\frac1\tau-\lambda\big)^{-1}$, we have
\begin{align*}
    \dX(\ol X_{\tau_1}(s),\ol X_{\tau_1}(t))
    &\le  \dX(\ol X_{\tau_1}(s),X_{\tau_1}^m)+ \dX(X_{\tau_1}^m,X_{\tau_1}^n)+ \dX(X_{\tau_1}^n,\ol X_{\tau_1}(t)) =  \dX(X_{\tau_1}^m,X_{\tau_1}^n)\\
    &\le \sum_{i=m+1}^{n}  \dX(X_{\tau_1}^{i-1}, X_{\tau_1}^{i})
    \le \tilde \tau_1 \sum_{i=m+1}^{n}  \Delta^\cvx(X_{\tau_1}^{i-1})
    \le \tilde \tau_1 (n-m)  c_0 \Delta^\cvx(X_{\tau_1}^0)\\
    &\le \tilde \tau_1 (\eps/\tau_1 + 1)  c_0 \Delta^\cvx(X_{\tau_1}^0)\,.
\end{align*}
Combining with the previous estimates, and using that the volume of $Q_{0,T}^\eps$ is $\eps T$, we conclude that
\begin{align}\label{eq:f_upper_bd_Cauchy}
    a_0 
    &\le\frac{\eps^2}2 \skslope{\cvx}{X_{\tau_1}^0} + \lambda \tilde \tau_1(\eps/\tau_1+1)  \eps T  c_0 \Delta^\cvx(X_{\tau_1}^0)
    + \eps \lambda  \int_0^T\dX(\ol X_{\tau_1}(t),\ol Y_{\tau_2}(t) ) \d t \,.
\end{align}
Next, to bound $a_1$ from below, we use the triangle inequality twice to obtain
\begin{align*}
    \dX(\ol X_{\tau_1}(T),\ol Y_{\tau_2}(T)) &\le \dX(\ol X_{\tau_1}(s),\ol Y_{\tau_2}(T)) + \dX(\ol X_{\tau_1}(T),\ol X_{\tau_1}(s))
    \\ & \le\ell_{\tau_2}(T)\dX(\ol X_{\tau_1}(s),\ol Y_{\tau_2}(T))+ (1-\ell_{\tau_2}(T)) \left( \dX(\ol X_{\tau_1}(s), \ul Y_{\tau_2}(T))\right.\\
    &\left.\quad + \dX(\ul Y_{\tau_2}(T),\ol Y_{\tau_2}(T)) \right) + \dX(\ol X_{\tau_1}(T),\ol X_{\tau_1}(s)) \\
    & \le \omega_{\tau_2}(\ol X_{\tau_1}(s),T)+ \dX(\ol X_{\tau_1}(T),\ol X_{\tau_1}(s)) + \dX(\ol Y_{\tau_2}(T),\ul Y_{\tau_2}(T)) \,.
\end{align*}
Hence,
\begin{align*}
    a_1=\int_{T-\eps}^T \omega_{\tau_2}(\ol X_{\tau_1}(s),T) \d s
    \ge \eps \dX(\ol X_{\tau_1}(T),\ol Y_{\tau_2}(T)) - \int_{T-\eps}^T\dX(\ol X_{\tau_1}(T),\ol X_{\tau_1}(s))\d s - \eps\dX(\ol Y_{\tau_2}(T),\ul Y_{\tau_2}(T))
\end{align*}
To lower bound the two terms on the right hand side, we note that, again by Proposition~\ref{le:apriori},
\begin{align*}
    \int_{T-\eps}^T \dX(\ol X_{\tau_1}(T),\ol X_{\tau_1}(s)) \d s  \le \eps \tilde \tau_1 \left(\frac{\eps}{\tau_1}+1\right) c_0 \Delta^\kappa (X_{\tau_1}^0)\,,
\end{align*}
and for $n:= \lceil T/\tau_2\rceil$, we have
\begin{align*}
    \dX(\ol Y_{\tau_2}(T),\ul Y_{\tau_2}(T)) \le \dX(Y_{\tau_2}^{n},Y_{\tau_2}^{n-1}\nc)\le \tilde \tau_2 c_0 \Delta^\kappa (Y_{\tau_2}^0\nc)\,.
\end{align*}
Thus, we conclude
\begin{align*}
   a_1\ge  \eps\left[\dX(\ol X_{\tau_1}(T),\ol Y_{\tau_2}(T)) - c_0(\tilde \tau_1\left(\frac{\eps}{\tau_1}+1\right)+\tilde \tau_2) \left(\Delta^\kappa (X_{\tau_1}^0) +\Delta^\kappa (Y_{\tau_2}^0)\right) \right] \,.
\end{align*}
For $a_2$, we get the explicit expression
\begin{equation}
	a_2= -\int_{-\eps}^0 \omega_{\tau_2}(\overline X_{\tau_1}(s),0)\,\d s = -\int_{-\eps}^0 \dX(\overline Y_{\tau_2}(0),\overline X_{\tau_1}(0)\nc ) \d s =
	-\eps \dX(X^0_{\tau_1},Y^0_{\tau_2})\,.
\end{equation}
Next we will bound $a_3$ and $a_4$. 
We set
\begin{equation}
	\delta_{X,\tau_1}(s):=\sup_{Y\in\DDD} \Big|\zeta_{\tau_1}(s,Y)-\dX(\overline X_{\tau_1}(s),Y)\Big|,\quad
	\delta_{Y,{\tau_2}}(t):=\sup_{X\in\DDD} \Big|\omega_{\tau_2}(X,t)-\dX(\overline Y_{\tau_2}(t),X)\Big|
\end{equation}
observing that 
\begin{align*}
    &\ \delta_{Y,\tau_2}(t) = \sup_X \Big|(1-\ell_{\tau_2}(t))\dX(\ul Y_{\tau_2}(t),X)+
	\ell_{\tau_2}(t)\dX(\ol Y_{\tau_2}(t),X)-\dX(\overline Y_{\tau_2}(t),X)\Big| \\
    &= \sup_X \Big|(1-\ell_{\tau_2}(t))\big[\dX(\ul Y_{\tau_2}(t),X)-\dX(\overline Y_{\tau_2}(t),X)\big]\Big|\\
    &=(1-\ell_{\tau_2}(t)) \sup_X \Big|\dX(\ul Y_{\tau_2}(t),X)-\dX(\overline Y_{\tau_2}(t),X)\Big| \le (1-\ell_{\tau_2}(t)) \dX(\ul Y_{\tau_2}(t),\overline Y_{\tau_2}(t))\,,
\end{align*}
and similarly, $\delta_{X,\tau_1}(s)\le (1-\ell_{\tau_1}(s)) \dX(\ul X_{\tau_1}(s),\overline X_{\tau_1}(s))$. Using again Proposition~\ref{le:apriori}, we have
\begin{equation*}
\begin{split}
    	\delta_{X,\tau_1}(s)\le(1-\ell_{\tau_1}(s)) \tilde \tau_1 c_0 \Delta^{\cvx} (X^0_{\tau_1}),\\
	\delta_{Y,\tau_2}(t)\le (1-\ell_{\tau_2}(t)) \tilde \tau_2 c_0\Delta^{\cvx} (Y^0_{\tau_2})\,.
\end{split}
\end{equation*}
Define $h_\tau(t)=\frac t2+\frac \tau2\ell_\tau(t)(1-\ell_\tau(t))$. Then $h_\tau'(t)=1-\ell_\tau(t)$ for a.e. $t$, and $h_\tau(0)=0$. Hence $\int_0^T(1-\ell_\tau(t))=h_\tau(T)$, and so
\begin{align*}
	\rmI_{X,{\tau_1}} :={}&\int_0^T \delta_{X,\tau_1}(s)\,\d s \le \tilde \tau_1 c_0 h_{\tau_1}(T) \Delta^{\cvx} (X^0_{\tau_1}),\\
	\rmI_{Y,\tau_2}:=&
	\int_0^T \delta_{Y,{\tau_2}}(s)\,\d s\le 
	\tilde \tau_2 c_0 h_{\tau_2}(T)\Delta^{\cvx} (Y^0_{\tau_2}) \,.
\end{align*}
Therefore
\begin{align*}
	a_3 &=
    -\int_0^T\Big(\omega_{\tau_2}(\overline X_{\tau_1}(t),t)-
	\dX(\overline X_{\tau_1}(t),\overline Y_{\tau_2}(t))+
		\dX(\overline X_{\tau_1}(t),\overline Y_{\tau_2}(t))-
	\zeta_{\tau_1}(t,\overline Y_{\tau_2}(t))\Big)\,\d t	
	\\&\ge 
	-
    \int_0^T\Big(\delta_{X,{\tau_1}}(t)+
	\delta_{Y,{\tau_2}}(t)\Big)\,\d t=
    -
     \left(\rmI_{X,{\tau_1}}+\rmI_{Y,{\tau_2}} \right)\,.
\end{align*}
A similar calculation for $a_4$ gives
\begin{align*}
	a_4 &=
	-
    \int_0^T\Big(\zeta_{\tau_1}(t-\eps,\overline Y_{\tau_2}(t))-
	\dX(\overline X_{\tau_1}(t-\eps),\overline Y_{\tau_2}(t))+
		\dX(\overline X_{\tau_1}(t-\eps),\overline Y_{\tau_2}(t))
        -
	\omega_{\tau_2}(\overline X_{\tau_1}(t-\eps),t)\Big)\,\d t	
	\\&\ge -
	\int_0^T\Big(\delta_{X,{\tau_1}}(t-\eps)+
	\delta_{Y,{\tau_2}}(t)\Big)\,\d t=
    -
    \left(\rmI_{X,\tau_1}+\rmI_{Y,{\tau_2}} \right)\,.
\end{align*}
Combining the estimates for the five terms results in
\begin{align*}
&\eps\left[\dX(\ol X_{\tau_1}(T),\ol Y_{\tau_2}(T)) - c_0\left(\tilde \tau_1 \left(\frac{\eps}{\tau_1}+1\right)+\tilde \tau_2\right) \left(\Delta^\kappa (X_{\tau_1}^0) +\Delta^\kappa (Y_{\tau_2}^0)\right) \right] 
-\eps \dX(X^0_{\tau_1},Y^0_{\tau_2})
-
2 
\left(\rmI_{X,\tau_1}+\rmI_{Y,{\tau_2}} \right)\\
&\le
    \frac{\eps^2}2 \skslope{\cvx}{X_{\tau_1}^0} + \lambda \tilde \tau_1(\eps/\tau_1+1)  \eps T  c_0 \Delta^\cvx(X_{\tau_1}^0)
    + \eps \lambda  \int_0^T\dX(\ol X_{\tau_1}(t),\ol Y_{\tau_2}(t) ) \d t \,.
\end{align*}
Rearranging, dividing by $\eps$, and defining $\tilde\tau:=\max\{\tilde\tau_1, \tilde\tau_2\}$,  gives
\begin{align*}
\dX(\ol X_{\tau_1}(T),\ol Y_{\tau_2}(T)) \le &c_0\left(\tilde \tau_1\left(\frac{\eps}{\tau_1}+1\right)+\tilde \tau_2\right) \left(\Delta^\kappa (X_{\tau_1}^0) +\Delta^\kappa (Y_{\tau_2}^0)\right) 
+ \dX(X^0_{\tau_1},Y^0_{\tau_2})
+\frac{ 2}{\eps}\left(\rmI_{X,\tau_1}+\rmI_{Y,{\tau_2}} \right)\\
&+
    \frac{\eps}2 \skslope{\cvx}{X_{\tau_1}^0} + \lambda \tilde \tau_1(\eps/\tau_1+1)   T  c_0 \Delta^\cvx(X_{\tau_1}^0)
    +  \lambda  \int_0^T\dX(\ol X_{\tau_1}(t),\ol Y_{\tau_2}(t) ) \d t\\
 \le &c_0\left(\tilde \tau_1\left(\frac{\eps}{\tau_1}+1\right)+\tilde \tau_2\right) \left(\Delta^\kappa (X_{\tau_1}^0) +\Delta^\kappa (Y_{\tau_2}^0)\right) 
+ \dX(X^0_{\tau_1},Y^0_{\tau_2})\\
&+\frac{2}{\eps}(\tilde \tau_1 +\tilde \tau_2)c_0 (h_{\tau_1}(T)+h_{\tau_2}(T))( \Delta^{\cvx} (X^0_{\tau_1} +\Delta^{\cvx} (Y^0_{\tau_2}))\\
&+
    \frac{\eps}2 \skslope{\cvx}{X_{\tau_1}^0} + \lambda \tilde \tau_1(\eps/\tau_1+1)  T  c_0 \Delta^\cvx(X_{\tau_1}^0)
    +  \lambda  \int_0^T\dX(\ol X_{\tau_1}(t),\ol Y_{\tau_2}(t) ) \d t\\
\le
& c_T \tilde \tau \left(2+\frac{(h_{\tau_1}(T)+h_{\tau_2}(T))}{\eps} +\frac{\eps}{\tilde \tau} + \frac{\eps}{\tau_1}\right)
\left(\Delta^\kappa (X_{\tau_1}^0) +\Delta^\kappa (Y_{\tau_2}^0)\right)
\\&+\dX(X^0_{\tau_1},Y^0_{\tau_2})
+  \lambda  \int_0^T\dX(\ol X_{\tau_1}(t),\ol Y_{\tau_2}(t) ) \d t
\end{align*}
for some constant $c_T$ depending on $\tau_o, T, \lambda$ only. 
Optimizing w.r.t.~$\eps$, we choose $$\eps=(h_{\tau_1}(T)+h_{\tau_2}(T))^{1/2}\left(\frac{\tilde \tau}{1+\tilde \tau/\tau_1}\right)^{1/2}$$ to obtain
\begin{align*}
\dX(\ol X_{\tau_1}(T),\ol Y_{\tau_2}(T))
\le\dX(X^0_{\tau_1},Y^0_{\tau_2})+
 a_\tau(T)+  \lambda  \int_0^T\dX(\ol X_{\tau_1}(t),\ol Y_{\tau_2}(t) ) \d t\,,
\end{align*}
where
\begin{align*}
  a_\tau(T):= 2c_T \sqrt{\tilde\tau} \left[\sqrt{\tilde\tau} +
  \sqrt{\frac{\tau_1 + \tilde \tau}{\tau_1 }(h_{\tau_1}(T)+h_{\tau_2}(T))}
  \right]
\left(\Delta^\kappa (X_{\tau_1}^0) +\Delta^\kappa (Y_{\tau_2}^0)\right)\,.
\end{align*} 
Writing $g(t):=\dX(\ol X_{\tau_1}(t),\ol Y_{\tau_2}(t))$, the above estimate can be written as 
\begin{align*}
g(T)
\le g(0)+ a_\tau(T)+  \lambda  \int_0^T g(t) \d t\,.
\end{align*}
Applying Gr\"onwall's inequality, we conclude
\begin{align*}
    g(T)\le  e^{\lambda T} g(0) + C_\tau(T)\,,\qquad
    C_\tau(T):= a_\tau(T) +\lambda \int_0^T a_\tau(t) e^{\lambda(T-t)}\d t\,.
\end{align*}
Finally, we investigate the behavior of the upper bound as $\tau_1, \tau_2\to 0$. Notice that $\tilde\tau_i=\mathcal O(\tau_i)$ as $\tau_i\to0$. Since $h_{\tau_1}(T)+h_{\tau_2}(T)\to T$ as $\tau_1, \tau_2\to 0$, we have
\begin{align*}
    \sqrt{\tilde\tau}\left[\sqrt{\tilde\tau} +
  \sqrt{\frac{\tau_1 + \tilde \tau}{\tau_1 }(h_{\tau_1}(t)+h_{\tau_2}(t))}  
  \right] 
  &\sim 
  \sqrt{T \max\{\tau_1,\tau_2\}\left(1+\frac{\max\{\tau_1,\tau_2\}}{\tau_1}\right)} \\
 &\sim
  \begin{cases}
   \sqrt{2T\tau_1}   &\text{ if } \tau_1\ge\tau_2\,,\\
   \sqrt{2T\tau_2^2/\tau_1}   &\text{ if } \tau_1<\tau_2\,.
  \end{cases}
\end{align*}
Hence, as long as $\tau_2^2/\tau_1\to 0$, we have $a_\tau(t)=\mathcal O\left(\max\{\tau_1,\tau_2\}/\sqrt{\tau_1}\right)\to 0$ as $\tau_1, \tau_2\to 0$ uniformly for $t\in [0,T]$. We conclude that $C_\tau(T)\to0$ at the same rate. And so, 
$$\dX(\ol X_{\tau_1}(T),\ol Y_{\tau_2}(T))- e^{\lambda T}\dX(X^0_{\tau_1},Y^0_{\tau_2})=\mathcal O\left(\max\{\tau_1,\tau_2\}/\sqrt{\tau_1}\right)\to 0$$ 
goes to zero for $\tau_1,\tau_2 \to 0$ with $\tau_2^2/\tau_1\to 0$.
\end{proof}

\section*{Acknowledgments}
This collaboration was initiated at the conference "Aggregation-Diffusion Equations \& Collective Behavior:, Analysis, Numerics and Applications" (April 2024) at CIRM in Marseille, France, with continued discussion at the MFO workshop "Flows on Measure Spaces and Applications in Machine Learning" (March 2026) in Oberwolfach, Germany, with L.C.'s attendance supported by the US Junior Oberwolfach Fellows program, NSF award DMS-2230648. The authors are grateful for discussions with Guillaume Wang regarding Example~\ref{ex:L2_W2_monotonicity}.
L.C. is supported by NSF MSPRF 2602677, and was partially supported by a 2026 Carver Mead New Adventures Fund, by a US Air Force NDSEG Fellowship, and a PIMCO Fellowship. 
F.H. is supported by start-up funds at the California Institute of Technology and by NSF CAREER Award 2340762. 
 G.S.\ acknowledges support from IMATI (CNR), Pavia.
G.S.\ has been partially supported by funding from the European
Research Council (ERC) under the European Union's Horizon Europe
research and innovation programme (grant agreement No.\ 101200514,
project acronym OPTiMiSE).
Views and opinions expressed are however
those of the author(s) only and do not necessarily reflect those of the European Union or the European Research Council Executive Agency. Neither the European Union nor the granting authority can be held
responsible for them.

\section*{{Declaration on the use of AI tools}}
{This project was started in 2024 and its main results were obtained in 2025. During the final revision of this work, the authors used Claude (Anthropic) in a limited, auxiliary capacity: to suggest additional relevant citations which were then read and manually added to the introduction, to improve the language and readability of the authors' own text, to help word a few brief routine passages from material supplied by the authors, and to check arguments for consistency and flag possible gaps, which were then examined, corrected and verified by the authors.
All mathematical ideas, results, and proofs are the authors' own; the authors take full responsibility for the content of the manuscript.
}

\bibliographystyle{siam}
\bibliography{bibliografia2015}

\end{document}